\newtheorem{theorem}{Theorem}[section]
\newtheorem{lemma}{Lemma}[section]
\newtheorem{prop}{Proposition}[section]
\newtheorem{remark}{Remark}
\numberwithin{equation}{section}
\newcommand{\real}{\mathbb{R}}
\newcommand{\gz}{g_{\zeta}}
\newcommand{\gzone}{g_{\zeta_1}}
\newcommand{\gztwo}{g_{\zeta_2}}
\newcommand{\tr}{\textnormal{ tr}}
\def\eqn {\begin{equation}}
\def\eeqn {\end{equation}}
\def\real{{\mathbb R}}
\def\ep{\epsilon}
\def\pa{\partial}
\def\G{\mathcal G}
\def\F{\mathcal F}
\def\L{\mathcal L}
\def\M{\mathcal M}
\def\ka{\kappa}
\begin{document}
\title{STEADY STATES OF ROTATING STARS AND GALAXIES}
\author{Walter A. Strauss} 
\address{Department of Mathematics, Brown University, Providence, RI 02912}
\author{Yilun Wu}
\address{Department of Mathematics, Brown University, Providence, RI 02912}
\date{}
\maketitle

\begin{abstract}
A rotating continuum of particles attracted to each other byÊ
gravity may be modeled by the Euler-Poisson system. The existenceÊ
of solutions is a very classical problem. ÊHere it is proven that aÊ
curve of solutions exists, parametrized by the rotation speed, withÊ
a fixed mass independent of the speed. ÊThe rotation is allowed to varyÊ
with the distance to the axis. A special case is when  
the equation of state is $p=\rho^\gamma,\ 6/5<\gamma<2,\ \gamma\ne4/3$, 
in contrast to previous variational methods which have required $4/3 < 
\gamma$.  

The continuum of particles may alternatively be modeled microscopicallyÊ
by the Vlasov-Poisson system. ÊThe kinetic density is a prescribedÊ
function. ÊWe prove an analogous theorem asserting the existence of aÊ
curve of solutions with constant mass. ÊIn this model the 
whole range $(6/5,2)$ is allowed, including $\gamma=4/3$. 
\end{abstract}

\tableofcontents

\section {Introduction}

We consider a continuum of particles attracted to each other by gravity 
but subject to no other forces.  
Initially they are static and spherical but then they begin to rotate after some perturbation and 
thereby flatten at the poles and expand at the equator.  This is a simple 
model of a rotating star or planet.  It can also model a rotating galaxy
with its billions of stars.  In this paper we consider slow rotations and look
for steady states of the resulting configuration. We find a connected set
of such states with constant mass.

This is a very classical problem that goes back to 
MacLaurin, Jacobi, Poincar\'e, Liapunov et al.,  who
assumed the density of the rotating fluid to be homogeneous or almost homogeneous, which is of course physically unrealistic if we want to consider a rotating gaseous star or a rotating galaxy. See Jardetzky \cite{jardetzky2013theories} for a nice account of the classical history on this problem.
More realistic was the later work of Lichtenstein \cite{lichtenstein1933untersuchungen} and Heilig \cite{heilig1994lichtenstein}, who
approached the problem by means of an implicit function theorem (IFT) in
function space.  They made realistic assumptions on the density but the mass of their solutions changes as the body changes its speed of rotation.

A different approach was begun by Auchmuty and Beals \cite{auchmuty1971variational} using a variational
method (VAR) with a mass constraint.  
The main difficulty in this approach is to prove that the minimizing solution has compact support.  
Their approach was generalized by many authors, including Auchmuty \cite{auchmuty1991global}, Caffarelli and Friedman \cite{caffarelli1980shape}, Friedman and Turkington \cite{friedman1981existence}, Li \cite{li1991uniformly}, Chanillo and Li \cite{chanillo1994diameters}, Luo and Smoller \cite{luo2009existence}, Wu \cite{wu2015rotating}, and
Wu \cite{wu2016existence}. Compared with IFT, the VAR method has the advantage that the mass is constant.   
On the other hand,
compared with VAR, the IFT method has the advantage that the construction
provides a {\it continuous} curve of solutions depending on the angular
velocity $\omega$ and with obviously compact support of oblate shape. 

In this paper we improve the IFT approach by constructing solutions that
keep the mass constant, so that there is no loss or gain of particles when the body changes its rotation speed. 
Also, we allow the angular velocity to be non-uniform, thus including the physically interesting cases of differential rotation into our model.

Our first model, which we call EP (Euler-Poisson), is generalized from Lichtenstein and Heilig.  
In EP we assume that the particles move inside the body according
to the steady compressible Euler equations, subject to internal forces 
of Newtonian gravity given by the Poisson equation, with a variable speed $\omega$ of rotation around an axis,
and an equation of state
for the pressure $p=p(\rho)$ where $\rho$ is the density.

Our second model, which we call VP (Vlasov-Poisson), is generalized from Rein \cite{rein2000stationary}.  
In VP the particles are given by a microscopic density $f(x,v)$ satisfying the Vlasov equation, 
where the macroscopic density is given by $\rho(x)=\int_{\real^3} f(x,v)~dv$ as in kinetic
theory.  The rotation is provided by a rather arbitrary function of the
microscopic angular momentum $x_1v_2-x_2v_1$.
Although the two models are clearly different, they have some similarities. 

In this paper we treat both models by constructing a continuous curve of solutions using the IFT approach.  
The parameter along this curve is the intensity of the rotation speed.  
As distinguished from all the previous literature using the IFT approach, the mass is constant 
along the curve, and the angular velocity can be non-uniform.  

\subsection {Informal statement of results}   
For the EP model we begin with the steady compressible Euler-Poisson equations 
for the density $\rho\ge0$, subject to the internal forces of gravity due to the particles themselves.  
The speed $\omega(r)$ of rotation around the $x_3$-axis is allowed to depend on $r=r(x)=\sqrt{x_1^2+x_2^2}$.  
The inertial forces are entirely due to the rotation. In the region $\{\rho_{\kappa}>0\}$ 
the governing equation turns out to be 
\eqn \label{NG} 
\rho_\kappa *\frac 1{|x|} + \kappa\int_0^{r} \omega^2(s)s\,ds  
     -  h(\rho_\kappa)     = constant,   \eeqn
 Here $\omega(r)$ is a given function, $\kappa$ is a constant measuring the intensity of rotation, and $h$ is defined by $h'(\rho)=\frac{p'(\rho)}{\rho}$ with $h(0)=0$.  
The pressure is $p$ and the specific enthalpy is $h$.  The constant of gravity is assumed to be 1.  
Our theorem for the EP model, informally stated in a special case is as follows.  

Let the equation of state be the power law
$p(\rho)=C\rho^\gamma$ where $\frac65 < \gamma < 2$ and $\gamma\ne\frac43$.  
For any mass $M$ of a non-rotating star, there exists $\bar{\kappa}>0$ 
and a continuous curve $\kappa \mapsto \rho_\kappa$ from $(-\bar{\kappa} , \bar{\kappa})$ 
into $C_c^1(\real^3)$ such that each $\rho_\kappa$ is an 
axisymmetric solution of \eqref{NG} with total mass $M=\int_{\real^3} \rho_\kappa\ dx$.     

For the VP model we begin with the steady Vlasov-Poisson system for the 
microscopic density $f(x,v)$ and the gravitational potential $U(x)$. 
Extending Rein \cite{rein2000stationary}, we look for solutions $f$ that have the form 
\eqn \label{ReinAnsatz} 
f_\kappa(x,v)  =  C_\kappa\ \phi(E,L),\quad 
E=\tfrac12|v|^2 + U_\kappa(x), \quad L=\kappa(x_1v_2-v_1x_2),  \eeqn 
where $\phi$ is a prescribed function.
The constant $C_\kappa$ is chosen so that the total mass $\iint_{\real^6} f_\kappa(x,v)~dvdx$ 
is a given constant $M$ independent of $\kappa$.  
Because $x_1v_2-x_2v_1$ is the $x_3$ component of the angular momentum, 
$\kappa$ provides the intensity of rotation.  The governing equation then takes the form
\eqn\label{Rein governing}
-\Delta_x U_\kappa = 4\pi \int_{\real^3}f_\kappa(x,v)~dv.   \eeqn
Since  \eqref{ReinAnsatz} determines $f_\ka$ in terms of $U_\ka$, \eqref{Rein governing} 
is a single equation for $U_\kappa$.
Our theorem for the VP model, informally stated in a special case, is as follows.  

Assume that $\phi(E,L) = (-E)_+^{-\mu}\, \psi(L)$ with $-\frac72<\mu<\frac12$, and $\psi$  a suitably regular non-negative function.
For any mass $M$ of a non-rotating star, there exists $\bar{\kappa}>0$ 
and a continuous curve $\kappa \mapsto U_\kappa$ from $(-\bar{\kappa} , \bar{\kappa})$ 
into $C^3(\real^3)$ such that each $U_\kappa$ provides an axisymmetric 
solution of \eqref{Rein governing}, with total mass $M$.

When this paper was almost complete, we learned of very recent work \cite{jang2016slowly} by Jang and Makino,   
who also studied the EP model  using an IFT approach 
in the case of the power law $p=C\rho^\gamma$  and constant rotation speed.  
The perturbation they take is very different from the one of this paper.  
Rather than deforming a given non-rotating star solution as we do, 
they directly perturb the specific enthalpy in a function space, 
which appears to be a more general type of perturbation. 
However, as in Lichtenstein and Heilig's work, their perturbation also does not keep the total mass constant.  
Their analysis is restricted to the range $\frac65 < \gamma < \frac32$. 

\subsection {Technique and outline} 
Following Lichtenstein and his successors, we construct rotating solutions by deforming 
the corresponding spherically symmetric stationary solution $\rho_0$. 
The deformation $\gz$, characterized 
by a function $\zeta(x)$ that is axisymmetric around the $x_3$-axis, 
even in $x_3$, is defined by 
\eqn
\gz(x) = \left(1+\frac{\zeta(x)}{|x|^2}\right)x.
\eeqn
The factor $1/{|x|^2}$ is a convenience that differs from the previous authors. 
For EP we then define
$\rho_\zeta(y) = \M(\zeta)\,\rho(g_\zeta^{-1}(y))$, 
where $\M(\zeta)$ is chosen to assure the total mass to be independent of $\zeta$. 
For VP an analogous but slightly more complicated definition is used, 
again designed to keep the total mass independent of $\zeta$. 

Both models are formulated implicitly in the form 
$\F(\zeta,\kappa)=0 \in X$ for $\zeta$ in a function space $X$ of axisymmetric functions.  
Here $\F(0,0)=0$ corresponds to the spherically symmetric non-rotating solution and $\kappa$ measures the intensity of rotation, which is assumed to be small.  
Therefore what must be proven is that $\F$ is differentiable and that 
$\L=\frac{\pa\F}{\pa\zeta}(0,0)$ is an isomorphism.  

Sections \ref{sec: 2} and \ref{sec: 3} are devoted to a precise formulation of the EP model  
and the statement of the main EP theorem (Theorem \ref{main Euler theorem}).  
In Section \ref{sec: 5} we prove the Fr\'echet differentiability of $\F$. This turns out to be 
a surprisingly technical task.  
Our space $X$ has the norm $\|\zeta\|_X  =  \sup_{|x|\le1} |\nabla\zeta(x)|/|x|$,   
which is simpler than the norm used in the previous literature.  
The simplification is aided by our extra factor $1/|x|^2$ in the definition of $g_\zeta$.  
We compute the formal derivative, then show that it maps $X$ to $X$, and finally 
prove that it is Fr\'echet differentiable.  

In Section \ref{sec: 4} we prove for the EP model that the linearized operator $\L=\frac{\pa\F}{\pa\zeta}(0,0)$ 
is essentially of the form $I+K$, where $K$ is compact.  Thus the main task is to prove that 
the nullspace of $\L$ is trivial.  This task is considerably more difficult than the previous studies 
(Lichtenstein and Heilig) because the mass constraint adds a whole new nonlocal term to $L$  
and because the rotation speed $\omega$ depends on $r$, the distance to the rotation axis. 
There are several novel aspects to the proof.  
Using a delicate scaling argument, we prove in Theorem \ref{injectivity} that the nullspace is indeed trivial, 
assuming that the mass of a radial solution strictly changes as the density at the origin changes.  
In the example  $p(\rho)=C\rho^\gamma$, this assumption corresponds to 
the condition that $\gamma\ne\frac43$.  The power $\gamma=\frac43$ is the ``white dwarf" model, 
for which the mass is invariant under scaling and for which the nullspace of $\L$ is not trivial. 
At the end of Section \ref{sec: 4}, the constant angular velocity case is examined in more detail. 
Properties of $\L$ imply that the supports of the perturbed solutions are wider at the
equator than at the poles, confirming the usual physical intuition of the shape of slowly rotating stars.

The rest of the paper is devoted to the VP model.  In Section \ref{sec: 6} we present the precise 
formulation of the model, define an operator $\F$ different from the EP model, and state the main theorem  
(Theorem \ref{main Vlasov thm}).  
In Section \ref{sec: 8} we prove the Fr\'echet differentiability of $\F$, which is analogous to the previous proof for EP.  
Section \ref{sec: 7} is devoted to the linearized operator $\L=\frac{\pa\F}{\pa\zeta}(0,0)$ for the VP model.  
The triviality of its nullspace is quite delicate and is significantly different both from that for the EP model 
and from that in Rein \cite{rein2000stationary}. 
For the VP model there are no exceptional cases sensitive to mass invariance.  
The special choice of $\phi(E,L)$ mentioned above with $-\frac72<\mu<\frac12$   
corresponds to $\frac65<\gamma<2$, where $\gamma = 1 + (\frac32-\mu)^{-1}$.

\section{The Euler model}\label{sec: 2}
In this model, the gas is described by the compressible Euler-Poisson equations. The equations in full generality are given as
\begin{equation}\label{eq: full Euler-Poisson}
\begin{cases}
\rho _t + \nabla\cdot (\rho v)=0, \\
(\rho v)_t + \nabla\cdot(\rho v\otimes v) + \nabla p = \rho \nabla U, \\
U(x,t) = \int_{\real^3}\frac{\rho(x',t)}{|x-x'|}~dx'.
\end{cases}
\end{equation}
Here, the first two equations hold where $\rho>0$, and the last equation defines $U$ on the entire $\real^3$.
To close the system, one prescribes an isentropic equation of state
\begin{equation}
p = p(\rho).
\end{equation}
To model a rotating star, we look for steady axisymmetric rotating solutions to \eqref{eq: full Euler-Poisson}, 
i.e. we assume $\rho = \rho(x)=\rho(Ax)$ for any rotation $A$ about the 
$x_3$-axis, $v=\ka\,\omega(r)(-x_2,x_1,0) $, where $r=\sqrt{x_1^1+x_2^2}$, 
where the angular velocity distribution $\omega(r)$ is prescribed. 
With such specifications, the first equation in \eqref{eq: full Euler-Poisson} concerning mass conservation 
is identically satisfied. The second equation in \eqref{eq: full Euler-Poisson} 
concerning momentum conservation can be simplified to
\begin{equation}\label{eq: pre Euler-Poisson}
-\rho\, \ka\,\omega^2(r) e_r + \nabla p = \rho\,\nabla U, \qquad e_r=(x_1,x_2,0)
\end{equation}
The first term in \eqref{eq: pre Euler-Poisson} can be written as
$-\rho\nabla \left(\int_0^r \omega^2(s)s~ds\right).  $
If we introduce the {\it specific enthalpy} $h$ as
\begin{equation}\label{def: h from p}
h(\rho) = \int_0^{\rho}\frac{p'(\alpha)}{\alpha}~d\alpha, 
\end{equation}
then \eqref{eq: pre Euler-Poisson} becomes 
\begin{equation}\label{eq: Euler-Poisson vector}
\nabla \left(U +\ka\int_0^r \omega^2(s)s~ds - h(\rho) \right)=0,
\end{equation}
where again
\begin{equation}\label{potential} 
U(x) = \int_{\real^3}\frac{\rho(x')}{|x-x'|}~dx'.
\end{equation}
{\it Thus the Euler model is reduced to \eqref{eq: Euler-Poisson vector}, \eqref{potential}}.
We wish to solve for $\rho$ with prescribed $\omega(r)$ and $p(\rho)$. 

\subsection{Assumptions}
We now state our assumptions on $\omega(r)$ and $p(\rho)$.
For $\omega(r)$ we simply assume 
\eqn  \label{assump omega}
\omega^2(r) \in C_{loc}^{1,\beta} [0,\infty) \text{ for some }\beta=\beta(\omega)\in(0,1).
\eeqn 

For $p(s)$ we make the following three assumptions.  
\eqn\label{assum: 2}
p(s)\in C^3(0,\infty),~ p'>0.
\eeqn
			There exists $\gamma\in(1,2)$ such that,
\begin{equation}\label{assum: 3}
\lim_{s\to 0^+}s^{3-\gamma}p'''(s) = -c_0<0.
\end{equation}

			There exists  $\gamma^*\in(\frac{6}{5},2)$ such that
\begin{equation}\label{assum: 4}
\lim_{s\to \infty}s^{1-\gamma^*}p'(s) = c_1>0.
\end{equation}

\noindent {\bf Example.}  All of the above assumptions are satisfied if we let $\omega(r)$ be a constant 
and take the equation of state to be a power law $p(s) = s^{\gamma}$ 
for some $\gamma\in (\frac{6}{5},2)$.
Indeed, in this case $ \gamma^*=\gamma = c_1,\  
c_0=\gamma(\gamma-1)(2-\gamma),\ h(s)=\frac\gamma{\gamma-1}s^{\gamma-1}$.  

\subsection{Main theorem}  
The existence of {\it radial} (spherically symmetric) solutions is well-known, as 
will be discussed  in Section \ref{sec: 3}, and is summarized in the following proposition.   
			\begin{prop}			
\label{thm: existence of Lane-Emden stars}
Let $p$ satisfy the assumptions above.   
For every $R>0$, there exists a solution $\rho_0(x)$ 
of \eqref{eq: Euler-Poisson vector}, \eqref{potential}  with the following properties.  
\begin{itemize}
\item $\rho_0$ is radial (with $\omega\equiv 0$),     

\item $\rho_0 >0$ in $B_R=\{|x|<R\}$, $\rho_0 =0$ in $\real^3\backslash B_R$, 

\item $\rho_0\in C^2(B_R)\cap C^{1,\alpha}(\real^3)$,  
where $\alpha = \min\left(\frac{2-\gamma}{\gamma-1},1\right)$. 
\end{itemize}
\end{prop}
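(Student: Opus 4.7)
My plan is to reduce the system to a radial ODE initial value problem of Lane--Emden type and construct $\rho_0$ by tuning the central density $\rho_c$ so that the resulting support radius equals the prescribed $R$. Since $\omega \equiv 0$ and $\rho_0$ is radial, \eqref{eq: Euler-Poisson vector} integrates to $U - h(\rho_0) = \text{const}$ on the connected component of $\{\rho_0 > 0\}$ containing the origin. Setting $w(r) = h(\rho_0(r))$ and using $-\Delta U = 4\pi\rho_0$ together with Newton's shell theorem yields, on the support,
\begin{equation*}
w''(r) + \frac{2}{r}\,w'(r) + 4\pi\, h^{-1}(w(r)) = 0,\qquad w(0) = w_c := h(\rho_c),\ \ w'(0)=0.
\end{equation*}

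By \eqref{assum: 2}, $h$ is a $C^1$ increasing bijection of $(0,\infty)$ onto $(0,\infty)$, so $h^{-1}$ is locally Lipschitz on $(0,\infty)$ and Picard iteration produces a unique $C^2$ solution on a maximal interval $[0, R_*(w_c))$. The first integral $r^2 w'(r) = -\int_0^r 4\pi s^2 h^{-1}(w(s))\,ds$ shows $w$ is strictly decreasing while positive, so the maximal interval terminates only when $w$ reaches $0$, at which point $w'$ is strictly negative.

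The central step is to show $R_*(w_c) < \infty$ for each $w_c > 0$ and that $w_c \mapsto R_*(w_c)$ is continuous and surjective onto $(0,\infty)$. Integrating \eqref{assum: 3} gives $h(\rho) \sim c\,\rho^{\gamma-1}$ as $\rho \to 0^+$, hence $h^{-1}(w) \sim c'\,w^{1/(\gamma-1)}$ as $w \to 0^+$; \eqref{assum: 4} similarly yields $h^{-1}(w) \sim c''\,w^{1/(\gamma^*-1)}$ as $w \to \infty$, and since $\gamma^* > 6/5$ the corresponding Lane--Emden exponent $1/(\gamma^*-1)$ is subcritical. Comparison with the classical Lane--Emden problems at these two asymptotic exponents forces finite support and delivers the qualitative asymptotics $R_*(w_c) \to 0$ as $w_c \to \infty$ and $R_*(w_c) \to \infty$ as $w_c \to 0^+$. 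Continuous dependence of the IVP on $w_c$, combined with these limits, yields surjectivity; choosing $w_c$ with $R_*(w_c) = R$ and setting $\rho_0(x) = h^{-1}(w(|x|))$ on $\overline{B_R}$ and $\rho_0 \equiv 0$ outside gives the desired solution.

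For the regularity, $C^2$ smoothness of $\rho_0$ on $B_R$ is immediate from the ODE and the positivity of $w$ on compact subsets. Near $\partial B_R$, $w(r) \sim -w'(R)(R-r)$, so $\rho_0(r) \sim C(R-r)^{1/(\gamma-1)}$ and hence $|\nabla \rho_0(x)|$ decays like $(R-|x|)^{(2-\gamma)/(\gamma-1)}$, which is H\"older with exponent $\alpha = \min\!\left(\tfrac{2-\gamma}{\gamma-1},\,1\right)$; matching with the trivial extension across $\partial B_R$ yields $\rho_0 \in C^{1,\alpha}(\real^3)$. The main obstacle I anticipate is the finite-support claim: the classical Lane--Emden proof leans on the exact scale invariance of $p = C\rho^\gamma$, whereas here the trajectory of $w$ traverses both the large-$w$ regime of \eqref{assum: 4} and the small-$w$ regime of \eqref{assum: 3}, so one must track the deviation from each power-law asymptote uniformly along the orbit using barrier comparisons and energy bounds rather than a clean scaling reduction.
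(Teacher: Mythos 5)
Your strategy---shooting in the central enthalpy $w_c$ and tuning $w_c$ so that the first zero of $w$ lands at the prescribed $R$---is not the paper's route, and it has a genuine gap at exactly the step you flag as the ``main obstacle.'' The claim that $R_*(w_c)<\infty$ for every $w_c>0$ does not follow from the stated hypotheses, and in fact can fail. The behavior of the trajectory near $w=0$ is governed by the small-density exponent $\gamma$ of \eqref{assum: 3}, which is only assumed to lie in $(1,2)$; the restriction to $(\tfrac65,2)$ in \eqref{assum: 4} concerns the large-density exponent $\gamma^*$, not $\gamma$. Near $u=0$ one has $h^{-1}(u)\sim c\,u^{1/(\gamma-1)}$, and for $\gamma\le\tfrac65$ this exponent is critical or supercritical, $1/(\gamma-1)\ge5$. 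Take, for instance, $h^{-1}(u)=u^{5}$ for $u\le1$ glued smoothly to a subcritical power for $u\ge1$ (this comes from an admissible $p$ with $\gamma=\tfrac65$): since $w$ is decreasing, every trajectory with $w_c\le1$ stays in the pure $n=5$ regime, is a rescaling of $(1+r^2/3)^{-1/2}$, and remains positive on all of $[0,\infty)$, so $R_*(w_c)=\infty$ there. Your intermediate-value argument, and the limit $R_*(w_c)\to\infty$ as $w_c\to0^+$, therefore cannot be run as stated; one would have to identify the set where $R_*$ is finite and control $R_*$ near its boundary. Even when $\gamma>\tfrac65$, ``comparison with the classical Lane--Emden problems at the two asymptotic exponents'' is not yet a proof: the initial value problem for these semilinear ODEs does not admit a naive sub/supersolution comparison, and the scaling identities you would want hold only for exact power laws, so both the finiteness of $R_*$ and its asymptotics as $w_c\to0^+,\infty$ require a genuine argument that is missing.

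The paper sidesteps all of this. It fixes the ball (normalized to $B_1$), writes the problem as the Dirichlet boundary value problem $\Delta u_0=-4\pi h^{-1}(u_0)$ in $B_1$, $u_0=0$ on $\partial B_1$, as in \eqref{eq: gLane-Emden}, and invokes the variational existence theory of Ambrosetti--Rabinowitz and de Figueiredo--Lions--Nussbaum: the conditions $\lim_{s\to0^+}h^{-1}(s)/s=0$ and \eqref{eq: lim h inv infty} are precisely the superlinear-subcritical hypotheses needed, and they are derived from \eqref{assum: 2}--\eqref{assum: 4}. Radial symmetry and $u_0'<0$ then come from Gidas--Ni--Nirenberg rather than from the construction itself, and the statement for general $R$ holds because the same argument applies verbatim on any ball, not because the solutions on different balls are rescalings of one another. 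Your boundary regularity discussion is essentially the paper's (it reduces to $(h^{-1})'(s)=O(s^{(2-\gamma)/(\gamma-1)})$ as in Lemma \ref{lem: C1alpha h inv}), but the existence step needs to be replaced.
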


To state a key condition in the main theorem, we need to express the total mass of radial solutions near a given $\rho_0$. In Section \ref{sec: triviality Euler} we will show that a unique radial solution $\rho$ to \eqref{eq: Euler-Poisson vector}, \eqref{potential} with $\omega(r)=0$ exists provided the density $\rho(0)$ at the center is sufficiently close to $\rho_0(0)$, where $\rho_0$ is a solution given in Proposition \ref{thm: existence of Lane-Emden stars}. Let $M(\rho(0))=\int_{\real^3} \rho ~dx$ be the total mass of such a solution with center density $\rho(0)$. We also denote the mass of $\rho_0$ simply by $M$.  
Our main theorem regarding {\it non-radial} solutions of the EP model is as follows.  

\begin{theorem} \label{main Euler theorem}
Let $\omega$ and $p$ satisfy the assumptions above.  
Assume 
\eqn\label{cond: mass cond}
M'(\rho_0(0))\ne 0.
\eeqn 
Then there exists $\bar\kappa>0$ such that, for every $|\kappa|<\bar\kappa$, there exists 
a solution $\rho_\kappa$ of  \eqref{eq: Euler-Poisson vector}, \eqref{potential} that is 
\begin{itemize}
\item axisymmetric and even in $x_3$;

\item $\rho_\ka \ge0$ and is compactly supported (with support near $B_R$);

\item$\rho_\ka \in C^{1,\alpha}(\real ^3)$ where $\alpha$ is the same as in Proposition \ref{thm: existence of Lane-Emden stars};

\item the mapping $\ka\mapsto\rho_\ka$ is continuous from $(-\bar\ka,\bar\ka)$ to $C^1_c(\real ^3)$;

\item $\int_{\real^3} \rho_\ka~ dx = M$, where the constant $M$ is the total mass of $\rho_0$.
\end{itemize}
\end{theorem}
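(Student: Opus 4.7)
The plan is to apply the implicit function theorem to an operator $\F(\zeta,\kappa)$ whose zeros encode axisymmetric solutions of \eqref{eq: Euler-Poisson vector}--\eqref{potential} obtained by deforming the radial star $\rho_0$ of Proposition \ref{thm: existence of Lane-Emden stars}. Concretely, I would work in the Banach space $X$ of $C^1$ axisymmetric $x_3$-even functions with norm $\|\zeta\|_X = \sup_{|x|\le 1}|\nabla\zeta(x)|/|x|$; set $\gz(x) = (1+\zeta(x)/|x|^2)x$; and define $\rho_\zeta(y) = \M(\zeta)\,\rho_0(\gz^{-1}(y))$, where $\M(\zeta)\in\real$ is chosen uniquely so that $\int\rho_\zeta\,dy = M$. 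Substituting $\rho_\zeta$ into \eqref{eq: Euler-Poisson vector} and composing with $\gz$ to move the equation back to the reference ball $B_R$, one arrives at an equation of the form
\begin{equation*}
\F(\zeta,\kappa)(x) \;=\; \bigl(\rho_\zeta * \tfrac{1}{|\cdot|}\bigr)(\gz(x)) \;+\; \kappa\!\int_0^{r(\gz(x))}\!\!\omega^2(s)\,s\,ds \;-\; h\bigl(\rho_\zeta(\gz(x))\bigr) \;-\; c(\zeta,\kappa) \;=\; 0
\end{equation*}
in a suitable subspace of $X$, where $c(\zeta,\kappa)$ is a normalizing constant. At $(\zeta,\kappa)=(0,0)$ this reduces to the radial steady equation for $\rho_0$, so $\F(0,0)=0$.

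Next I would establish, as in Section \ref{sec: 5}, the Fréchet differentiability of $\F$ near the origin. The factor $|x|^{-2}$ in $\gz$ is tailored so that $\gz$ and $\gz^{-1}$ remain $C^1$-close to the identity when $\|\zeta\|_X$ is small, which keeps the pullback $\rho_\zeta\circ\gz$ in $C^{1,\alpha}$ throughout the deformation. Differentiability of the Newtonian potential and of $h(\rho_\zeta)$ with respect to $\zeta$ then follows from changes of variables combined with the smoothness of $p$ on $(0,\infty)$, while $\M(\zeta)$ is smooth because the total-mass integral depends smoothly and monotonically on $\M$.

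The main obstacle is the invertibility of the linearization $\L := \partial_\zeta\F(0,0)\colon X\to X$. A direct computation displays $\L$ as $I + K$ modulo a rank-one correction coming from the mass constraint, with $K$ compact thanks to the smoothing of convolution with $1/|x|$ and the change of integration domain. By the Fredholm alternative it suffices to show $\ker\L = \{0\}$. Given $\L\eta = 0$, I would expand $\eta$ in axisymmetric spherical harmonics; the higher ($\ell\ge 1$) modes are controlled by the non-degeneracy of an associated Sturm--Liouville-type radial operator built from $\rho_0$. For the $\ell=0$ mode, a delicate scaling argument identifies $\eta$ with a tangent vector along the one-parameter family of radial steady states $\rho_c$ parametrized by $c = \rho(0)$, whose existence and smoothness near $\rho_0(0)$ will be proved in Section \ref{sec: triviality Euler}. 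The mass derivative of this tangent is exactly $M'(\rho_0(0))$, so the mass constraint baked into $\M(\zeta)$ forces the coefficient of that direction to vanish precisely when \eqref{cond: mass cond} holds. This is the step that collapses at the scale-invariant white-dwarf exponent $\gamma = 4/3$, where $M'\equiv 0$ along the radial family.

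With Fréchet differentiability and isomorphism of $\L$ in hand, the IFT produces $\bar\kappa > 0$ and a continuous curve $\kappa\mapsto\zeta_\kappa\in X$ with $\F(\zeta_\kappa,\kappa) = 0$ and $\zeta_0 = 0$. Setting $\rho_\kappa := \rho_{\zeta_\kappa}$, the continuity of $\gz$ in $\zeta$ gives the claimed continuity $(-\bar\kappa,\bar\kappa)\to C_c^1(\real^3)$; non-negativity, axisymmetry, $x_3$-evenness, and the $C^{1,\alpha}$-regularity are inherited from $\rho_0$ through the $C^{1,\alpha}$ diffeomorphism $\gz$; compact support near $B_R$ follows from $\gz(B_R)$ being a near-sphere perturbation of $B_R$; and the mass identity $\int_{\real^3}\rho_\kappa\,dx = M$ holds by construction via the choice of $\M(\zeta_\kappa)$. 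This completes the plan.
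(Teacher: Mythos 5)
Your proposal is correct in outline and follows essentially the same route as the paper: the same deformation $g_\zeta$ with the $|x|^{-2}$ factor, the same mass factor $\M(\zeta)$, the same space $X$, the decomposition of $\L$ into an isomorphism plus a compact (including rank-one) part, the radial-kernel analysis via the scaling argument tied to $M'(\rho_0(0))\ne 0$, and the implicit function theorem to conclude. The only cosmetic difference is that you describe the $\ell\ge 1$ kernel analysis as a Sturm--Liouville non-degeneracy, whereas the paper runs a maximum-principle argument on $\psi_{lm}=\varphi_{lm}/u_0'$; these amount to the same step.
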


\begin{remark}
The prime ( $'$) in \eqref{cond: mass cond} means differentiation. This condition can be interpreted as saying that nearby solutions have genuinely different masses. In fact, we will show that \eqref{cond: mass cond} is a necessary and sufficient condition for the kernel of a key linearized operator to be trivial.
\end{remark}

\begin{remark}
$\rho_{\kappa}$ has higher regularity in the interior of its support, but in general is only $C^{1,\alpha}$ 
up to the boundary of its support.
\end{remark}

It is of interest, due to Theorem \ref{main Euler theorem}, to find conditions on $p$, in addition to the ones given in \eqref{assum: 2}, \eqref{assum: 3} and \eqref{assum: 4}, for which \eqref{cond: mass cond} is satisfied. In the following theorem, we give two types of such conditions. One of them concerns power laws $p(s)=s^{\gamma}$, and the other is a general type of condition.

\begin{theorem}\label{sec Euler theorem}
\eqref{cond: mass cond} is satisfied if either
\begin{enumerate}[(a)]
\item $p(s) = s^{\gamma}$, $\gamma\in (\frac65, 2)$, $\gamma\ne \frac43$, or

\item $p(s)$ satisfies  \eqref{assum: 2}, \eqref{assum: 3} and \eqref{assum: 4}, as well as 
\eqn \label{p extra cond}
p'(s)< h(s) \le 2 p'(s) \text{ for }s>0.    
\eeqn
\end{enumerate}
\end{theorem}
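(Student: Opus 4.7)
My plan is to treat parts (a) and (b) by different methods: scaling invariance for the power law, and an ODE-based monotonicity argument for the general case.

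For (a), the equation $h(\rho) = \rho * |x|^{-1} + C$ with $h(\rho) = \tfrac{\gamma}{\gamma-1}\rho^{\gamma-1}$ is invariant under the rescaling $\rho(x)\mapsto \lambda^{2/(2-\gamma)}\rho(\lambda x)$, the exponent $2/(2-\gamma)$ being forced by matching the degrees of homogeneity of $h(\tilde\rho)$ and $\tilde\rho*|x|^{-1}$. Applying this to $\rho_0$ produces a one-parameter family of radial solutions with center density $\lambda^{2/(2-\gamma)}\rho_0(0)$ and total mass $\lambda^{(3\gamma-4)/(2-\gamma)}M$. Since nearby radial solutions are uniquely determined by their central density (by the IVP analysis promised in Section~\ref{sec: triviality Euler}), this family exhausts them. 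Eliminating $\lambda$ gives
$$M(\alpha) \;=\; M\bigl(\alpha/\rho_0(0)\bigr)^{(3\gamma-4)/2}, \qquad M'(\rho_0(0)) \;=\; \frac{3\gamma-4}{2}\cdot\frac{M}{\rho_0(0)},$$
which is nonzero exactly when $\gamma\ne 4/3$.

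For (b), I would reduce to the radial ODE $(r^2 h(\rho)')' = -4\pi r^2 \rho$ with $\rho(0)=\alpha$, $\rho'(0)=0$, and introduce $\phi(r) = \partial_\alpha \rho(r;\alpha)$. Because $\rho$ vanishes at the free boundary $R(\alpha)$, differentiating $M(\alpha) = 4\pi\int_0^{R(\alpha)} r^2\rho\, dr$ kills the boundary contribution and yields
$$M'(\alpha) \;=\; 4\pi\int_0^{R(\alpha)} r^2 \phi(r)\, dr.$$
The aim is to show this integral is nonzero by deriving an integration-by-parts identity from the linearized equation $-(r^2(h'(\rho)\phi)')' = 4\pi r^2 \phi$. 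Multiplying by well-chosen weights built from $\rho$, $h(\rho)$, and $h(\rho)'$ and integrating from $0$ to $R(\alpha)$, the hypothesis $p'(s) < h(s) \le 2p'(s)$—equivalent to the pointwise bound $\tfrac12 \le p'(\rho)/h(\rho) < 1$, which means $h/s$ is strictly decreasing and $h/\sqrt{s}$ is nondecreasing—should force the resulting integrand to have a strict sign. Intuitively, this two-sided bound generalizes ``effective polytropic index strictly below the Chandrasekhar value $n=3$,'' staying safely away from the critical case $\gamma = 4/3$ where the scaling analysis of (a) already shows $M$ to be constant.

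The main obstacle in (b) is the free-boundary behavior at $r = R(\alpha)$: since $\gamma<2$, the coefficient $h'(\rho) = p'(\rho)/\rho$ blows up as $\rho \to 0$, and $\phi$ itself may be singular there because $R$ depends on $\alpha$. Carefully controlling the boundary contributions in the integration-by-parts identity, and isolating the strict-sign term produced by the two-sided bound on $p'/h$, are the delicate technical steps. The conditions in \eqref{p extra cond} are precisely tailored to prevent cancellations that could make $M'(\rho_0(0))$ vanish, playing the role in the general setting that the exclusion $\gamma\ne 4/3$ plays for power laws.
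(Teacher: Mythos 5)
Your part (a) is correct and is in substance the paper's own argument: the paper implements the same scaling symmetry at the level of the ODE $v''+\frac2r v'+4\pi h^{-1}(v)=0$ (the identity $R^{2(\gamma-1)/(2-\gamma)}v(Rr;a)=v(r;R^{2(\gamma-1)/(2-\gamma)}a)$, differentiated in $R$), and reads off $M'\propto(3\gamma-4)$ via $M'(u_0(0))=-v_a'(1;u_0(0))$; you instead apply the scaling $\rho\mapsto\lambda^{2/(2-\gamma)}\rho(\lambda\cdot)$ directly to the density and compute $M(\alpha)=M(\alpha/\rho_0(0))^{(3\gamma-4)/2}$. The exponents match and your appeal to local uniqueness of radial solutions with prescribed center density (which the paper does establish via the initial value problem) closes the argument, so (a) is fine.

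Part (b), however, contains a genuine gap: the entire content of the result is the ``integration-by-parts identity with well-chosen weights,'' and you have not exhibited it. A single weighted identity cannot work in the naive way you suggest, because $\phi=\partial_\alpha\rho$ is \emph{not} of one sign on $(0,R(\alpha))$ — in the paper's proof the corresponding function $z=w_a$ (with $w=rv$) is shown to have a first zero $r_0\in(0,1)$ and to be strictly negative on $(r_0,1]$ — so $M'(\alpha)=4\pi\int_0^{R}r^2\phi\,dr$ cannot be signed termwise, and any identity must exploit this sign change rather than avoid it. What the paper actually does is an extension of Ni--Nussbaum: after the substitution $w=rv$, $g(w,r)=rh^{-1}(w/r)$, it introduces the four functions $w$, $x=rw'$, $y=w'$, $z=w_a$, derives the Wronskian ODEs $W(x,z)'=z(rg_r+2g)$, $W(y,z)'=zg_r$, $W(w,z)'=z(g-wg_w)$, and runs a three-stage argument (existence of $r_0$ by contradiction from $W(w,z)$; $z<0$ on $(r_0,1]$ from $W(y,z)$; then the combinations $W(x,z)'+r_0W(w,z)'$ on $(0,r_0)$ and $W(y,z)'+W(w,z)'$ on $(r_0,1)$) in which the two-sided hypothesis enters precisely through the three structural inequalities $g-wg_w<0$, $g_r\le0$, $rg_r+3g-wg_w\ge0$. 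Your reformulation of \eqref{p extra cond} as monotonicity of $h/s$ and $h/\sqrt{s}$ is a correct restatement of the hypothesis, but it is only the input to this machinery, not a substitute for it; as written, your ``should force the resulting integrand to have a strict sign'' asserts the conclusion of the hard step without supplying it.
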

The proof of Theorem \ref{sec Euler theorem} is is given in Section 4.5.  

\begin{remark}
By the definition of $h$ given in \eqref{def: h from p}, condition \eqref{p extra cond} is positive linear in $p$. It follows that, if $p_1(s)$ and $p_2(s)$ both satisfy \eqref{p extra cond}, then any positive linear combination of them also. As a consequence, \eqref{cond: mass cond} is satisfied if, for example,  $p(s) = s^{\gamma_1}+s^{\gamma_2}$ with $\gamma_1,\gamma_2\in [\frac32, 2)$.
\end{remark}

\begin{remark}
The classical results using variational methods started by \cite{auchmuty1971variational} only include solutions for power laws with $\gamma>\frac43$. Theorem \ref{main Euler theorem} and \ref{sec Euler theorem} construct rotating star solutions for power laws with $\gamma\in (\frac65, \frac43)$ as well.
\end{remark}

\subsection{Construction}\label{sec: construction}
The solutions $\rho_\ka$ will be perturbations of the radial solution $\rho_0$ given in 
\Cref{thm: existence of Lane-Emden stars}, as we shall now describe.  
The radial solution satisfies 
\begin{equation}\label{eq: rotationless Euler-Poisson}
\Delta (h(\rho_0)) + 4\pi \rho_0 = 0, \quad u_0=h(\rho_0)
\end{equation}
Since the analysis is identical for any value of the radius $R$, 
{\it we assume without loss of generality that $R=1$}. 
As stated above, the support of the perturbed solutions will have a single connected component. 
Thus \eqref{eq: Euler-Poisson vector} can equivalently be written as
\begin{equation}\label{eq: Euler-Poisson basic}
U(x)-U(0)+\kappa\int_0^{r(x)}\omega^2(s)s~ds - h(\rho)(x) + h(\rho)(0) = 0.
\end{equation}
Here $\kappa$ is a constant quantifying the smallness of the rotation.
			Following Lichtenstein and Heilig, 
we look for solutions $\rho_\ka$ of the form 
\begin{equation}\label{def: rho zeta}
\rho_\ka(x) =  \M(\zeta)\rho_0\left(g_{\zeta}^{-1}(x)\right), \quad \zeta = \zeta_\ka, 
\end{equation}
for some axisymmetric function $\zeta:\overline{B_1}\to \real$. 
The {\it dilating function} is 
\begin{equation}\label{def: gzeta}
\gz(x)=x\left(1+\frac{\zeta(x)}{|x|^2}\right) 
\end{equation}
and the {\it mass factor} is 
\begin{equation}\label{def: script M}
\M(\zeta) = \frac{M}{\int_{B_1}\rho_0(x)\det D\gz(x)~dx}
=\frac{\int_{B_1}\rho_0(x)~dx}{\int_{B_1}\rho_0(x)\det D\gz(x)~dx}.
\end{equation}

This means that we are perturbing $\rho_0$ by composing with an axisymmetric 
diffeomorphism $\gz^{-1}$, and rescaling the whole function by $\M(\zeta)$ to get a 
new density distribution which has the same total mass as the unperturbed solution $\rho_0$. 
Such a solution is different from that of Lichtenstein and Heilig in that 
\begin{enumerate}[(i)]
\item the diffeomorphism \eqref{def: gzeta} has an $|x|^2$ factor on the denominator in the last term; 

\item the rescaling factor $\M(\zeta)$ is here to keep the total mass unchanged; and 

\item the function $\omega(r)$ is not necessarily constant, thus allowing differential rotation. 
\end{enumerate}
\noindent The difference (i) is technical and will allow us to present a more elegant argument 
of the whole construction, while (ii) and (iii) are our improvements of the basic physical construction. 

Using \eqref{def: rho zeta}, \eqref{eq: Euler-Poisson basic} becomes
\begin{align}\label{eq: Euler-Poisson basic 1}
&~\M(\zeta)\int_{\gz(B_1)}\rho_0\left(\gz^{-1}(y')\right)\left(\frac{1}{|z-y'|}-\frac{1}{|y'|}\right)~dy' 
+ \kappa\int_0^{r(z)}\omega^2(s)s~ds\notag\\
&\quad - h\left(\M(\zeta)\rho_0\left(\gz^{-1}(z)\right) \right)
+ h\left(\M(\zeta)\rho_0\left(\gz^{-1}(0)\right)\right)=0.
\end{align}
\eqref{eq: Euler-Poisson basic 1} holds for $z\in \gz(B_1)$. 
Composing 
the left hand side with $\gz$, that is $z=g_\zeta(x)$, the equation takes the form 
\begin{align}\label{eq: Euler-Poisson basic 2}
&~\M(\zeta)\int_{\gz(B_1)}\rho_0\left(\gz^{-1}(y')\right)\left(\frac{1}{|\gz(x)-y'|}-\frac{1}{|y'|}\right)~dy' + \kappa\int_0^{r(\gz(x))}\omega^2(s)s~ds\notag\\
&\quad - h\left(\M(\zeta)\rho_0(x) \right)+ h\left(\M(\zeta)\rho_0(0)\right)=0 
\end{align}
for $x\in B_1$.  

Now we rewrite \eqref{eq: Euler-Poisson basic 2} in terms of a {\it nonlinear operator} $\F$. 
Let the function $\zeta$ in \eqref{def: gzeta} belong to the space 
\eqn  \label{space X}
 X=C^1(\overline{B_1})\cap\bigg\{\zeta~\bigg|~\zeta(x) 
\text{ is axisymmetric and even in }x_3, \zeta(0)=0, 
\sup_{x\in \dot{B_1}}\frac{|\nabla \zeta (x)|}{|x|}<\infty\bigg\} \eeqn
endowed with the norm
\begin{equation}  \label{norm X}
\|\zeta\|_{X}=\sup_{x\in \dot{B_1}}\frac{|\nabla \zeta(x)|}{|x|}. 
\end{equation}
If we define the operator $\F$ as 
\begin{align}\label{def: Euler F}
\F(\zeta,\kappa) = &~\M(\zeta)\int_{\gz(B_1)}\rho_0\left(\gz^{-1}(y')\right)\left(\frac{1}{|\gz(x)-y'|}-\frac{1}{|y'|}\right)~dy' + \kappa \int_0^{r(\gz(x))}\omega^2(s)s~ds\notag\\
&\quad - h\left(\M(\zeta)\rho_0(x) \right)+ h\left(\M(\zeta)\rho_0(0)\right),  
\end{align} 
then \eqref{eq: Euler-Poisson basic 2} is equivalent to 
\begin{equation}
\F(\zeta,\kappa)=0.
\end{equation}
We will prove that $\F: B_{\epsilon}(X)\times \real \to X$, where  
 $B_{\epsilon}(X) = \{\zeta\in X~\big|~\|\zeta\|_X<\epsilon\}$, and $\epsilon$ is suitably small, as will be specified in the following discussion. 



\section{Euler model: basic properties}\label{sec: 3}
In this section, we describe the standard construction of $\rho_0$ in Proposition \ref{thm: existence of Lane-Emden stars}, and prove a few basic properties of the mapping $\gz$ defined in \eqref{def: gzeta}. As has been explained above, we work with the solutions supported on the unit ball $B_1\subset\real^3$.

\subsection{Properties of the pressure and enthalpy}  
\eqref{assum: 3} implies 
\begin{equation}\label{eq: lim p''}
\lim_{s\to 0^+}s^{2-\gamma}p''(s) = \frac{c_0}{2-\gamma}, \qquad 
\lim_{s\to 0^+}s^{1-\gamma}p'(s) = \frac{c_0}{(\gamma-1)(2-\gamma)}.
\end{equation}
					It is equally straightforward to show that 
\begin{equation}\label{eq: lim h}
\lim_{s\to 0^+}s^{1-\gamma}h(s) = \frac{c_0}{(\gamma-1)^2(2-\gamma)}, \qquad
\lim_{s\to 0^+}s^{2-\gamma}h'(s) = \frac{c_0}{(\gamma-1)(2-\gamma)},
\end{equation}
\begin{equation}\label{eq: lim h''}
\lim_{s\to 0^+}s^{3-\gamma}h''(s) = -\frac{c_0}{\gamma-1},\qquad 
\lim_{s\to 0^+}s^{4-\gamma}h'''(s) = c_0\frac{\gamma+1}{\gamma-1}.
\end{equation}
\eqn\label{eq: lim h at infty}
\lim_{s\to \infty}s^{1-\gamma^*}h(s) = \frac{c_1}{\gamma^*-1}.
\eeqn
Since $1<\gamma<2$, \eqref{eq: lim p''} implies $p'(s)/s$ is integrable at $0$, 
hence $h$ is continuous on $[0,\infty)$. 
Since $p$ is strictly increasing, $h$ is also. 
Equation \eqref{eq: lim h at infty} implies that the image of $h$ is $[0,\infty)$. 
Hence $h$ is invertible with $h^{-1}:[0,\infty)\to[0,\infty)$.  
					Furthermore, we easily see that
\eqn\label{eq: lim h inv infty}
\lim_{s\to \infty}\frac{h^{-1}(s)}{s}=\infty,\qquad 
\lim_{s\to \infty}\frac{h^{-1}(s)}{s^5}=0
\eeqn
by \eqref{eq: lim h at infty} and the fact that $\gamma^*\in(\frac65,2)$.
The reader is reminded that all of these conditions are satisfied for $p(s)=s^\gamma,\ \frac65<\gamma<2$.

\begin{lemma}\label{lem: C1alpha h inv} 
The inverse enthalpy 
$h^{-1}$ is locally $C^{1,\alpha}$ on $[0,\infty)$, where $\alpha = \min\left(\frac{2-\gamma}{\gamma-1},1\right)$. Also
$h^{-1}(0)=(h^{-1})'(0)=0. $ 
\end{lemma}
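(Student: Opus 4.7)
Since $p'>0$ on $(0,\infty)$, the defining formula $h'(s)=p'(s)/s$ gives $h'>0$ there, so $h$ is a strictly increasing $C^3$ bijection $(0,\infty)\to(0,\infty)$ with $h(0)=0$. The classical inverse function theorem immediately yields $h^{-1}\in C^3((0,\infty))$, hence $h^{-1}$ is trivially $C^{1,\alpha}$ on any compact subset of $(0,\infty)$. The entire content of the lemma therefore lies at the endpoint $t=0$.

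Write $s=h^{-1}(t)$. From \eqref{eq: lim h} the asymptotics $h(s)\sim A_1 s^{\gamma-1}$ and $h'(s)\sim A_2 s^{\gamma-2}$ hold as $s\to 0^+$, with $A_1,A_2>0$. Inverting the first gives
\[
h^{-1}(t)\sim A_1^{-1/(\gamma-1)}\,t^{1/(\gamma-1)}\qquad (t\to 0^+).
\]
Since $1/(\gamma-1)>1$ for $\gamma\in(1,2)$, this forces $h^{-1}(0)=0$. Combining $(h^{-1})'(t)=1/h'(s)$ with $h'(s)\to+\infty$ also gives $(h^{-1})'(0^+)=0$; more precisely,
\[
(h^{-1})'(t)\sim \tilde A\, t^{\alpha_0},\qquad \alpha_0:=\frac{2-\gamma}{\gamma-1},
\]
so $(h^{-1})'$ extends continuously to $[0,\infty)$ with value $0$ at the origin.

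For the H\"older estimate near $0$, differentiate once more:
\[
(h^{-1})''(t) = -\frac{h''(s)}{(h'(s))^3}.
\]
Inserting $h''(s)\sim -\frac{c_0}{\gamma-1}\,s^{\gamma-3}$ from \eqref{eq: lim h''} and the asymptotic for $h'$ yields
\[
|(h^{-1})''(t)|\le C\,s^{3-2\gamma}\le C'\,t^{(3-2\gamma)/(\gamma-1)} = C'\,t^{\alpha_0-1},
\]
valid on some interval $(0,T]$. If $\gamma\le 3/2$, then $\alpha_0\ge 1$ and $(h^{-1})''$ is bounded near $0$, so $(h^{-1})'$ is Lipschitz there, matching $\alpha=1$. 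If $\gamma>3/2$, then $\alpha=\alpha_0\in(0,1)$, and integrating the bound from $\tau_1$ to $\tau_2$ in $[0,T]$, together with the elementary inequality $\tau_2^\alpha-\tau_1^\alpha\le(\tau_2-\tau_1)^\alpha$, gives
\[
|(h^{-1})'(\tau_2)-(h^{-1})'(\tau_1)|\le \frac{C'}{\alpha}\,|\tau_2-\tau_1|^{\alpha}.
\]
Patching this with the $C^3$ smoothness on compact subsets of $(0,\infty)$ proves the local $C^{1,\alpha}$ regularity of $h^{-1}$ on $[0,\infty)$.

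The step requiring the most care is purely the bookkeeping of exponents: one must verify that $\alpha_0=(2-\gamma)/(\gamma-1)$ is the natural H\"older exponent of $(h^{-1})'$ at the origin, and that the truncation $\alpha=\min(\alpha_0,1)$ only reflects the fact that $C^{1,\alpha}$ with $\alpha>1$ is not a meaningful classical notion. Once the asymptotics of $h$, $h'$, and $h''$ near $0$ are in hand, no deeper obstacle remains.
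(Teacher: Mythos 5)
Your proof is correct and follows essentially the same route as the paper: the same asymptotics for $h$, $h'$, $h''$ near $0$ give $(h^{-1})''(t)=O(t^{\alpha_0-1})$ with $\alpha_0=\frac{2-\gamma}{\gamma-1}$, and integrating this yields the H\"older bound. The only cosmetic difference is that you extract the final estimate via the subadditivity inequality $\tau_2^{\alpha}-\tau_1^{\alpha}\le(\tau_2-\tau_1)^{\alpha}$, whereas the paper rescales the integral $\int_0^1(s_1+t(s_2-s_1))^{\alpha_0-1}\,dt$ directly; both are equivalent.
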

\begin{proof}
$h^{-1}(0)=0$ is obvious. By \eqref{eq: lim h},  
$  h^{-1}(s) = O(s^{\frac{1}{\gamma-1}}) \quad \text{as }s\to 0^+. $ 
Hence
\begin{equation}
(h^{-1})'(s) = \frac{1}{h'(h^{-1}(s))} = O(s^{\frac{2-\gamma}{\gamma-1}})
\end{equation}
as $s\to 0^+$. Since $\frac{2-\gamma}{\gamma-1}>0$ as $1<\gamma<2$, $(h^{-1})'(0)=0$ and $(h^{-1})'$ is continuous at zero.
Also  $h^{-1}$ is $C^3$ on $(0,\infty)$. Therefore we only need to show that $h^{-1}$ is locally $C^{1,\alpha}$ near zero.
To that end, we compute for $0\le s_1<s_2$ sufficiently small,
\eqn  \label{est: h inv ' holder}
~|(h^{-1})'(s_2)-(h^{-1})'(s_1)| 
\le ~(s_2-s_1) \int_0^1 |(h^{-1})''(s_1+t(s_2-s_1))|~dt.  \eeqn 
By direct calculation we have 
\begin{equation}
(h^{-1})''(s) = -\frac{h''(h^{-1}(s))}{[h'(h^{-1}(s))]^3}.
\end{equation}
As before, we may conclude that
\begin{equation}
(h^{-1})''(s) = O(s^{\frac{-2\gamma+3}{\gamma-1}})=O(s^{\frac{2-\gamma}{\gamma-1}-1})
\end{equation}
as $s\to 0^+$. 
					Therefore \eqref{est: h inv ' holder} is bounded by
\begin{align}\label{est: h inv ' holder 1}
C(s_2-s_1) \int_0^1 (s_1+t(s_2-s_1))^{\frac{2-\gamma}{\gamma-1}-1}~dt.
\end{align}
If $\frac{2-\gamma}{\gamma-1}\ge 1$, \eqref{est: h inv ' holder 1} is bounded by
$C(s_2-s_1)$. If $\frac{2-\gamma}{\gamma-1}<1$, \eqref{est: h inv ' holder 1} is bounded by
\eqn 
~C(s_2-s_1)^{\frac{2-\gamma}{\gamma-1}}\int_0^1\left(\frac{s_1}{s_2-s_1}+t\right)^{\frac{2-\gamma}{\gamma-1}-1}~dt \eeqn 
$$
\le ~C(s_2-s_1)^{\frac{2-\gamma}{\gamma-1}}\int_0^1 t^{\frac{2-\gamma}{\gamma-1}-1}~dt 
\le ~C(s_2-s_1)^{\frac{2-\gamma}{\gamma-1}}  $$
because $\frac{2-\gamma}{\gamma-1}>0$.
\end{proof}

\subsection{Properties of the radial density $\rho_0$} \label{sec: radial Euler}
If we define 
\begin{equation}\label{def: u_0}
u_0=h(\rho_0),
\end{equation}
\eqref{eq: rotationless Euler-Poisson} can be rewritten as
\begin{equation}\label{eq: gLane-Emden}
\Delta u_0 = -4\pi h^{-1}(u_0).
\end{equation}

\begin{lemma}\label{lem: u_0}
There is a positive solution $u_0\in C^2(\overline{B_1})$ with zero boundary condition to \eqref{eq: gLane-Emden}, which is radial, and which satisfies $\frac{\partial u_0}{\partial r}(r)<0$ for all $0<r\le 1$.
\end{lemma}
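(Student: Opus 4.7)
The plan is to construct $u_{0}$ by shooting from the center. Radial symmetry recasts \eqref{eq: gLane-Emden} as
\[
(r^{2}u_{0}')' \;=\; -4\pi\, r^{2}\, h^{-1}(u_{0}),\qquad u_{0}(0)=\alpha,\ u_{0}'(0)=0,
\]
equivalently $u_{0}(r)=\alpha-4\pi\int_{0}^{r}s^{-2}\int_{0}^{s}t^{2}h^{-1}(u_{0}(t))\,dt\,ds$. Since $h^{-1}$ is locally Lipschitz by Lemma \ref{lem: C1alpha h inv}, a standard Picard iteration on this integral equation yields, for each $\alpha>0$, a unique radial $C^{2}$ solution $u_{0}(\,\cdot\,;\alpha)$ on a maximal interval $[0,R(\alpha))$ on which it stays positive.

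I would next establish monotonicity by integrating once: $r^{2}u_{0}'(r)=-4\pi\int_{0}^{r}s^{2}h^{-1}(u_{0}(s))\,ds<0$ for every $0<r<R(\alpha)$, so $u_{0}'<0$ there. To see $R(\alpha)<\infty$, suppose otherwise; then $u_{0}$ decreases to some $\ell\ge 0$, and if $\ell>0$ we would have $u_{0}'(r)\le -\tfrac{4\pi}{3}h^{-1}(\ell)\,r$ for all large $r$, forcing $u_{0}(r)\to -\infty$ and contradicting $u_{0}>0$. The case $\ell=0$ is ruled out by the same comparison once $u_{0}$ is small enough that $h^{-1}(u_{0})$ is still bounded below over a long interval.

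It then remains to select $\alpha=\alpha_{0}$ with $R(\alpha_{0})=1$. Continuous dependence on initial data for the integral equation gives $R\in C((0,\infty);(0,\infty))$. Using $h^{-1}(s)\sim s^{1/(\gamma-1)}$ as $s\to 0^{+}$ (from \eqref{eq: lim h}) and $h^{-1}(s)\sim s^{1/(\gamma^{*}-1)}$ as $s\to\infty$ (from \eqref{eq: lim h at infty}), together with the a priori bound $|u_{0}(r)-\alpha|\le \tfrac{2\pi}{3}h^{-1}(\alpha)\,r^{2}$ valid while $u_{0}\ge\alpha/2$, one obtains $R(\alpha)\to\infty$ as $\alpha\to 0^{+}$ and $R(\alpha)\to 0$ as $\alpha\to\infty$; both resulting exponents are strictly negative since $\gamma,\gamma^{*}<2$. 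The intermediate value theorem then produces a suitable $\alpha_{0}$, and we set $u_{0}:=u_{0}(\,\cdot\,;\alpha_{0})$.

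The main technical obstacle is making the two limits of $R(\alpha)$ rigorous, since \eqref{eq: lim h} and \eqref{eq: lim h at infty} are only asymptotic and must be converted into uniform estimates along the ODE trajectory. Once this is done, the regularity $u_{0}\in C^{2}(\overline{B_{1}})$ is immediate from $u_{0}''=-4\pi h^{-1}(u_{0})-\tfrac{2}{r}u_{0}'$: the right-hand side is continuous up to $r=1$ (using $h^{-1}(0)=0$) and radial smoothness at the origin is standard. Finally, the strict inequality $u_{0}'(r)<0$ for $r\in(0,1]$ follows directly from the integral formula for $r^{2}u_{0}'(r)$, since the integrand $s^{2}h^{-1}(u_{0}(s))$ is strictly positive on $(0,r)$.
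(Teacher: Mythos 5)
Your proposal is a shooting argument, which is a genuinely different route from the paper's: the paper simply invokes the variational existence theory for $-\Delta u = 4\pi h^{-1}(u)$ on a ball (Ambrosetti--Rabinowitz together with the a priori bounds of de Figueiredo--Lions--Nussbaum, using exactly the verified conditions $h^{-1}(s)/s\to0$ as $s\to0^+$ and $h^{-1}(s)/s\to\infty$, $h^{-1}(s)/s^5\to0$ as $s\to\infty$), and then cites Gidas--Ni--Nirenberg for radiality and the strict monotonicity $u_0'<0$. Your approach would have the attractive feature of producing the radial solution directly and yielding $u_0'<0$ for free from the identity $r^2u_0'(r)=-4\pi\int_0^r s^2h^{-1}(u_0)\,ds$, with no need for the symmetry result.

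However, there is a genuine gap at the step ``$R(\alpha)<\infty$ for every $\alpha$.'' Your dichotomy correctly rules out $u_0\to\ell>0$, but the case $\ell=0$ is not ruled out by ``the same comparison'': if $u_0(r)\to0$ then $h^{-1}(u_0(r))\to0$, so there is no positive lower bound on the nonlinearity over long intervals, and the decay $u_0\sim c/r$ is perfectly consistent with $r^2u_0'(r)\to -4\pi\int_0^\infty s^2h^{-1}(u_0)\,ds$ being finite. Worse, the claim is actually false under the paper's hypotheses. The assumptions only force subcriticality of $h^{-1}$ for \emph{large} arguments (via $\gamma^*\in(\tfrac65,2)$), while near $u=0$ one has $h^{-1}(u)\sim Cu^{1/(\gamma-1)}$ with $\gamma\in(1,2)$ arbitrary; for $\gamma\le\tfrac65$ the exponent $1/(\gamma-1)\ge5$ is critical or supercritical, and then (exactly as for the pure supercritical Lane--Emden equation, where Pohozaev forbids a Dirichlet solution on any ball) the initial value problem with small $\alpha$ stays positive on all of $(0,\infty)$, i.e.\ $R(\alpha)=\infty$. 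So the set $\{R<\infty\}$ need not be all of $(0,\infty)$, and the intermediate value argument as stated does not go through. Relatedly, the two limits $R(\alpha)\to\infty$ and $R(\alpha)\to0$, which you yourself flag as the main technical obstacle, are only asserted: the $\alpha\to\infty$ limit in particular requires tracking the trajectory from the large-$u$ regime (governed by $\gamma^*$) all the way through the small-$u$ regime (governed by $\gamma$), and cannot be read off from the asymptotics of $h^{-1}$ at infinity alone. For the pure power law $p(s)=s^\gamma$ with $\gamma\in(\tfrac65,2)$ your argument does close, because the exact scaling $R(\alpha)=\alpha^{-(2-\gamma)/(2(\gamma-1))}R(1)$ gives everything at once; but for the general class of pressures admitted by \eqref{assum: 2}--\eqref{assum: 4} the shooting method would need substantial additional input, which is presumably why the paper routes the existence through the variational results instead.
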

\begin{proof}
Of course, \eqref{eq: gLane-Emden} is just a second-order ODE for radial functions.  
So the existence of a positive solution is classical.  But to be specific, 
by \cite{ambrosetti1973dual} and \cite{de1982priori}, \eqref{eq: gLane-Emden} has a positive solution with zero boundary condition on a ball if $h^{-1}$ is locally Lipschitz, $h^{-1}(0)=0$, and satisfies \eqref{eq: lim h inv infty}.  
By \eqref{eq: lim h} the enthalpy also satisfies 
\eqn
\lim_{s\to 0^+}\frac{h^{-1}(s)}{s}=0.   \eeqn  
By \cite{gidas1979symmetry} a positive solution must be radial  
and satisfy $\frac{\partial u_0}{\partial r}(r)<0$ for all $0<r\le 1$.
\end{proof}

We have now constructed the $\rho_0$ in Proposition \ref{thm: existence of Lane-Emden stars}. 
The next lemma establishes the claimed regularity of $\rho_0$.

\begin{lemma}\label{lem: rho_0 C1alpha}
$\rho_0\in C^{1,\alpha}(\real ^3)$ if it is extended to be zero outside $B_1$, 
where $\alpha$ is given as in \Cref{lem: C1alpha h inv}.
\end{lemma}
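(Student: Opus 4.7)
My plan is to split the claim into three pieces: interior $C^{1,\alpha}$ regularity on $\overline{B_1}$, the matching of $\rho_0$ and $\nabla\rho_0$ across $\partial B_1$, and a short case analysis promoting the interior estimate to a global H\"older estimate on $\real^3$.

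First I would show $\rho_0\in C^{1,\alpha}(\overline{B_1})$ by composition. \Cref{lem: u_0} gives $u_0\in C^2(\overline{B_1})$ with image in a bounded subinterval of $[0,\infty)$, and \Cref{lem: C1alpha h inv} says $h^{-1}$ is $C^{1,\alpha}$ on that subinterval. The chain rule then applies and yields $\nabla\rho_0(x)=(h^{-1})'(u_0(x))\,\nabla u_0(x)$ on $\overline{B_1}$. Writing
\begin{align*}
\nabla\rho_0(x)-\nabla\rho_0(y) &= \bigl[(h^{-1})'(u_0(x))-(h^{-1})'(u_0(y))\bigr]\nabla u_0(x) \\
&\quad +(h^{-1})'(u_0(y))\bigl[\nabla u_0(x)-\nabla u_0(y)\bigr]
\end{align*}
and combining the $\alpha$-H\"older bound on $(h^{-1})'$, the Lipschitz bound on $u_0$, and the Lipschitz bound on $\nabla u_0$, I obtain $|\nabla\rho_0(x)-\nabla\rho_0(y)|\le C|x-y|^\alpha$ for $x,y\in\overline{B_1}$, since $\alpha\le 1$.

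Next I would verify that extension by zero yields a $C^1$ function on $\real^3$. On $\partial B_1$ one has $u_0=0$ by \Cref{lem: u_0}, and $(h^{-1})'(0)=0$ by \Cref{lem: C1alpha h inv}. Therefore $\rho_0=h^{-1}(0)=0$ and $\nabla\rho_0=(h^{-1})'(0)\,\nabla u_0=0$ on $\partial B_1$. Since both $\rho_0$ and $\nabla\rho_0$ vanish identically outside $\overline{B_1}$, the one-sided values and gradients agree across the boundary, so the extension lies in $C^1(\real^3)$.

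Finally, for the H\"older estimate on $\nabla\rho_0$ globally, I would argue by cases. The interior-interior case is handled by step one, and the exterior-exterior case is trivial. For the mixed case $x\in B_1$, $y\notin B_1$, set $x'=x/|x|\in\partial B_1$ (any boundary point if $x=0$). Then $\nabla\rho_0(x')=0$ by step two, while
$$
|x-x'|=1-|x|\le |y|-|x|\le |x-y|,
$$
so step one gives
$$
|\nabla\rho_0(x)-\nabla\rho_0(y)|=|\nabla\rho_0(x)-\nabla\rho_0(x')|\le C|x-x'|^\alpha\le C|x-y|^\alpha.
$$
The only genuinely delicate point in the argument is the vanishing of $\nabla\rho_0$ on $\partial B_1$, which rests on $(h^{-1})'(0)=0$; this identity in turn is guaranteed by the positivity of the exponent $\tfrac{2-\gamma}{\gamma-1}$ furnished by $1<\gamma<2$ in \Cref{lem: C1alpha h inv}, and is exactly what pins down the H\"older exponent $\alpha=\min\bigl(\tfrac{2-\gamma}{\gamma-1},1\bigr)$.
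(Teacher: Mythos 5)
Your proposal is correct and follows essentially the same route as the paper: chain rule plus the local $C^{1,\alpha}$ regularity of $h^{-1}$ from \Cref{lem: C1alpha h inv} to get $\partial_i\rho_0\in C^\alpha(\overline{B_1})$, then the vanishing of $(h^{-1})'(0)$ to see that $\nabla\rho_0=0$ on $\partial B_1$ so the zero extension is globally $C^{1,\alpha}$. You merely spell out the details (the product decomposition of the gradient difference and the interior/exterior case analysis) that the paper compresses into ``we easily get.''
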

\begin{proof}
Since $\rho_0 = h^{-1}(u_0)$, $u_0\in C(\overline{B_1})$, and $h^{-1}$ is continuous on $[0,\infty)$, 
we have $\rho_0\in C(\overline{B_1})$. Since $h^{-1}(0)=0$, $\rho_0$ 
vanishes on the boundary of $B_1$. 
Hence $\rho_0$ is continuous on $\real^3$. Furthermore, we have
\begin{equation}\label{eq: partial rho partial U}
\partial_i \rho_0(x) = (h^{-1})'(u_0(x))\partial_i u_0(x)
\end{equation}
Using the fact that $\partial_i u_0\in C^1(\overline{B_1})$ and $(h^{-1})'$ is 
locally $C^{\alpha}$ on $[0,\infty)$, we easily get $\partial_i \rho_0 
\in C^{\alpha}(\overline{B_1})$. 
Since $(h^{-1})'(0)=0$, $\partial_i \rho_0$ vanishes on the boundary of $B_1$. 
Hence $\rho_0\in C^{1,\alpha}(\real^3)$.
\end{proof}
\subsection{Properties of the dilating function $\gz$}
The basic intuition is that $\gz$, defined in \eqref{def: gzeta}, 
should be close to the identity map when $\|\zeta\|_X$ is sufficiently small. More precisely,

\begin{lemma}\label{lem: basic estimates g_zeta}
There is a $C>0$ such that if $\zeta\in B_{\epsilon}(X)$ and $\epsilon$ is small enough, 
then $g_{\zeta}:\overline{B_1}\to g_{\zeta}(\overline{B_1})$ 
is a homeomorphism, 
as well as a $C^1$ diffeomorphism on $\overline{B_1}\setminus \{0\}$. 
Denoting the Jacobian matrix by $Dg_\zeta$, the following estimates hold. 
\begin{equation}\label{est: Dg}
|Dg_{\zeta}(x)-I|<C\|\zeta\|_X \quad \text{for all }x\in \overline{B_1}\setminus\{0\}.
\end{equation}

\begin{equation}\label{est: Dg inv}
|(Dg_{\zeta})^{-1}(y)-I|<C\|\zeta\|_X \quad \text{for all }y\in g_{\zeta}(\overline{B_1})\setminus\{0\}.
\end{equation}

\begin{equation}\label{est: det Dg}
|\det Dg_{\zeta}(x)-1|<C\|\zeta\|_X \quad \text{for all }x\in \overline{B_1}\setminus\{0\}.
\end{equation}

\begin{equation}\label{est: g-x}
|g_{\zeta}(x)-x|\le  \|\zeta\|_X |x| \quad \text{for all }x\in \overline{B_1}.
\end{equation}

\begin{equation}\label{est: g inv-x}
|g_{\zeta}^{-1}(y)-y|\le C \|\zeta\|_X |y| \quad \text{for all }y\in g_{\zeta}(\overline{B_1}).
\end{equation}

\begin{equation}\label{est: deviation from x-x'}
|\left(g_{\zeta}(x)-g_{\zeta}(x')\right)-(x-x')|\le C \|\zeta\|_X|x-x'| \quad \text{for all }x, x'\in \overline{B_1}.
\end{equation}

\end{lemma}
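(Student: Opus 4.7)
The approach is to first exploit $\zeta(0)=0$ together with the norm in \eqref{norm X} to obtain a quadratic bound $|\zeta(x)|\le\tfrac12\|\zeta\|_X|x|^2$, and then read off all six estimates from the explicit formula \eqref{def: gzeta}. For the quadratic bound, I would write
\begin{equation*}
\zeta(x) = \int_0^1 \nabla\zeta(tx)\cdot x\,dt
\end{equation*}
and apply $|\nabla\zeta(tx)|\le t|x|\,\|\zeta\|_X$. This gives $|\zeta(x)/|x|^2|\le\tfrac12\|\zeta\|_X$ for $x\ne0$, so that $g_\zeta(x)-x = x\,\zeta(x)/|x|^2$ immediately satisfies \eqref{est: g-x}; in particular $g_\zeta$ extends continuously to $x=0$ with $g_\zeta(0)=0$.

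Next I would differentiate directly:
\begin{equation*}
\partial_j g_\zeta^i(x) = \delta_{ij} + \frac{\delta_{ij}\zeta(x)}{|x|^2} + \frac{x_i\,\partial_j\zeta(x)}{|x|^2} - \frac{2x_ix_j\zeta(x)}{|x|^4}, \qquad x\ne 0.
\end{equation*}
Each of the last three terms is bounded by a multiple of $\|\zeta\|_X$: the first and third from the quadratic bound, and the middle from $|x_i\,\partial_j\zeta|/|x|^2\le|\nabla\zeta|/|x|\le\|\zeta\|_X$. This gives \eqref{est: Dg}. The expansion $\det(I+A)=1+\tr A+O(|A|^2)$ then yields \eqref{est: det Dg}, and for $\epsilon$ sufficiently small the Neumann series $(Dg_\zeta)^{-1}-I = \sum_{n\ge1}(I-Dg_\zeta)^n$ converges with $|(Dg_\zeta)^{-1}-I|\le 2|Dg_\zeta-I|\le C\|\zeta\|_X$, which at $y=g_\zeta(x)$ is \eqref{est: Dg inv}.

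For \eqref{est: deviation from x-x'} I would integrate along segments:
\begin{equation*}
g_\zeta(x)-g_\zeta(x')-(x-x') = \int_0^1 \bigl[Dg_\zeta(x'+t(x-x'))-I\bigr](x-x')\,dt,
\end{equation*}
where the segment meets $\{0\}$ on at most a single point, so the integral is well defined and bounded by $C\|\zeta\|_X|x-x'|$ using \eqref{est: Dg}. For $\epsilon$ small this also gives the lower bound $|g_\zeta(x)-g_\zeta(x')|\ge(1-C\|\zeta\|_X)|x-x'|$, so $g_\zeta$ is bi-Lipschitz and in particular injective; compactness of $\overline{B_1}$ promotes this to a homeomorphism onto its image, and the classical inverse function theorem applied at each $x\ne0$ upgrades it to a $C^1$ diffeomorphism of $\overline{B_1}\setminus\{0\}$. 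Finally, setting $x=g_\zeta^{-1}(y)$ in \eqref{est: g-x} and using $|x|\le|y|+|g_\zeta^{-1}(y)-y|$ yields $|g_\zeta^{-1}(y)-y|(1-\|\zeta\|_X)\le\|\zeta\|_X|y|$, which is \eqref{est: g inv-x} after absorbing constants.

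The only subtlety is the apparent singularity of $g_\zeta$ at the origin created by the $1/|x|^2$ factor in \eqref{def: gzeta}, but the norm on $X$ is chosen precisely so that $|\zeta(x)|$ vanishes quadratically at $0$ and absorbs every power of $|x|^{-2}$ or $|x|^{-4}$ that appears. This is the reason the three derivative estimates \eqref{est: Dg}, \eqref{est: Dg inv}, \eqref{est: det Dg} are stated on $\overline{B_1}\setminus\{0\}$, while the three estimates on $g_\zeta$ and $g_\zeta^{-1}$ themselves extend across $x=0$.
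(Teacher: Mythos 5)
Your proof is correct, but for the key estimate \eqref{est: deviation from x-x'} it takes a genuinely different route from the paper. The paper works from the decomposition \eqref{eq: gz(x)-gz(x')}, which isolates the term $x'\bigl(\zeta(x)/|x|^2-\zeta(x')/|x'|^2\bigr)$; bounding that difference by the mean value theorem produces a gradient of size $\|\zeta\|_X/|\bar x|$ at an intermediate point $\bar x$, which is useless if the straight segment from $x$ to $x'$ passes near the origin. The paper's fix is the symmetry-orbit construction: using the axisymmetry and evenness of $\zeta$, it replaces $x'$ by a point $x''$ with $\zeta(x'')=\zeta(x')$ and $|x''|=|x'|$, chosen so that every $\bar x$ on the segment from $x$ to $x''$ satisfies $|x''|\le\sqrt2\,|\bar x|$, which exactly cancels the prefactor $|x'|$. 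You sidestep this entirely by integrating the full Jacobian along the straight segment: in $Dg_\zeta-I=\eta(x)I+x\otimes\nabla\eta(x)$ with $\eta=\zeta/|x|^2$, the dangerous factor $1/|\bar x|$ in $\nabla\eta(\bar x)$ is multiplied by $|\bar x|$ itself rather than by $|x'|$, so $|Dg_\zeta-I|\le C\|\zeta\|_X$ uniformly on $\overline{B_1}\setminus\{0\}$ and the segment integral is harmless even if it crosses the origin (the map $t\mapsto g_\zeta(x'+t(x-x'))$ is continuous and piecewise $C^1$ with bounded derivative, hence Lipschitz and absolutely continuous, so the fundamental theorem of calculus applies). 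Your argument is shorter, makes no use of the symmetry of $\zeta$ for this lemma, and logically reverses the paper's order by deriving \eqref{est: deviation from x-x'} from \eqref{est: Dg} rather than proving the pointwise Lipschitz estimate first; the remaining items (Neumann series for \eqref{est: Dg inv}, determinant expansion for \eqref{est: det Dg}, the bi-Lipschitz lower bound for invertibility, and the substitution $x=g_\zeta^{-1}(y)$ for \eqref{est: g inv-x}) match the paper's treatment. What the paper's geometric argument buys is a self-contained estimate of the scalar difference $\zeta(x)/|x|^2-\zeta(x')/|x'|^2$, a quantity it reuses in spirit elsewhere; what yours buys is brevity and independence from the symmetry hypotheses.
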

\begin{proof}
The continuity of $\gz$ is obvious. To study the inverse, we employ a more careful estimate of $\gz(x)-\gz(x')$. Before we present the estimate, an elementary but useful geometric fact will be established. For any point $x\in \real^3$, let $Ob(x)$ be the {\it symmetry orbit} of $x$, i.e., the intersection of 
\begin{enumerate}
\item the cylinder about the $x_3$-axis through $x$ 
and 
\item the ball centered at $0$ with radius $|x|$.
\end{enumerate}
We claim that if 
 $x,x'\in \real^3$ be such that $|x|\ge|x'|$, then there is an $x''\in Ob(x')$ such that 
\begin{equation}\label{est: simple geo 0}
|x-x''|\le|x-x'|,
\end{equation}
and for any $\bar{x}$ on the line segment connecting $x$ with $x''$,
\begin{equation}\label{est: simple geo}
|x'|=|x''|\le \sqrt{2} |\bar{x}|.
\end{equation}

To prove the claim, 
\eqref{est: simple geo} is obvious if $x'=0$, so we assume $x'\ne 0$. Without loss of generality, assume $x$ is in the $x_2x_3$-plane. Then $Ob(x')$ intersects the $x_2x_3$-plane at several points (two points if $x'$ is on the $x_3$-axis, and four points if $x'$ is not on the $x_3$ axis). Choose $x''$ among these points such that $x''$ and $x$ belong to the same quadrant (including boundary) of the $x_2x_3$-plane. That \eqref{est: simple geo 0} holds is easy to see. Note that $x''$ is on the circle in the $x_2x_3$-plane with radius $|x''|$ and $x$ is outside this circle but in the same quadrant with $x''$. It is easy to see that for any such pairs of $x$ and $x''$ and any $\bar{x}$ between the two, the smallest $|\bar{x}|$ is given by $\frac{1}{\sqrt{2}}|x''|$.  This proves the claim. 

Noticing the elementary inequality 
\begin{equation}\label{est: zeta/x^2}
\frac{|\zeta(x)|}{|x|^2}\le C\|\zeta\|_X 
\end{equation}
from \eqref{norm X}, we now estimate
\begin{equation}\label{eq: gz(x)-gz(x')}
\gz(x)-\gz(x') = (x-x')\left(1+\frac{\zeta(x)}{|x|^2}\right)+x'\left(\frac{\zeta(x)}{|x|^2}-\frac{\zeta(x')}{|x'|^2}\right)
\end{equation}
as
\begin{equation}\label{est: gz(x)-gz(x')}
|\gz(x)-\gz(x')-(x-x')|\le C(1+\|\zeta\|_X)|x-x'|+|x'|\left|\frac{\zeta(x)}{|x|^2}-\frac{\zeta(x')}{|x'|^2}\right|
\end{equation}
To estimate $|x'|\left|\frac{\zeta(x)}{|x|^2}-\frac{\zeta(x')}{|x'|^2}\right|$, we assume without loss of generality that $|x|\ge|x'|$, and use the claim to find an $x''\in Ob(x')$ such that \eqref{est: simple geo 0} and \eqref{est: simple geo} hold. Now
\begin{align}
~&|x'|\left|\frac{\zeta(x)}{|x|^2}  - \frac{\zeta(x')}{|x'|^2}\right|
= ~|x'|\left|\frac{\zeta(x)}{|x|^2}-\frac{\zeta(x'')}{|x''|^2}\right|\notag\\
\le &~|x'|\left|\frac{\nabla \zeta(\bar{x})}{|\bar{x}^2|}-2\zeta(\bar{x})\frac{\bar{x}}{|\bar{x}|^4}\right||x-x''| 
\le ~C|x'|\frac{\|\zeta\|_X}{|\bar{x}|}|x-x'|
\le ~C\|\zeta\|_X|x-x'|.
\end{align}
By \eqref{est: gz(x)-gz(x')} we now have \eqref{est: deviation from x-x'}. Estimate \eqref{est: deviation from x-x'} in particular shows that 
$ |\gz(x)-\gz(x')|\ge \frac{1}{2}|x-x'| $
if $\|\zeta\|_X<\epsilon$ is small enough. 
Thus  $\gz$ is invertible and $\gz^{-1}$ is continuous, which implies that $\gz$ is a homeomorphism. To see that it is a diffeomorphism away from the origin, we compute 
\begin{equation}
D\gz(x) = \left(1+\frac{\zeta(x)}{|x|^2}\right)I+x\otimes\left(\frac{\nabla\zeta(x)}{|x|^2}-2\zeta(x)\frac{x}{|x|^4}\right).
\end{equation}
\eqref{est: Dg} is now obvious, which implies that $D\gz(x)$ is invertible when $x$ is away from $0$ and $\|\zeta\|_X<\epsilon$ is sufficiently small. Now for $y=\gz(x)$, 
\begin{equation}
D\gz^{-1}(y) = (D\gz(x))^{-1} = (I+(D\gz(x)-I))^{-1}=\sum_{k\ge 0}(-D\gz(x)+I)^k.
\end{equation}
Hence \eqref{est: Dg inv} follows. \eqref{est: det Dg} follows from \eqref{est: Dg} by elementary algebra. \eqref{est: g-x} and \eqref{est: g inv-x} can be obtained from \eqref{est: deviation from x-x'} easily by setting $x'=0$.
\end{proof}

It is crucial in our method to extend $\zeta$ to the entire $\mathbb{R}^3$ in a way that the symmetry and good properties of $g_{\zeta}$ are preserved. Such an {\it extension} is summarized in the following lemma.

\begin{lemma}\label{lem: extension}
Let $\tilde{X}$ be the same space as $X$ except that the $B_1$'s in the definition of $X$ are replaced by $\mathbb{R}^3$. There is a bounded linear map $T$ from $X$ to $\tilde{X}$ such that for all $\zeta\in X$:
\begin{enumerate}
\item $T\zeta$ is supported in $B_2$.
\item $T\zeta(x)=\zeta(x)$ for $x\in \overline{B_1}$.
\end{enumerate}
\end{lemma}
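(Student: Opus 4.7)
The plan is to extend $\zeta$ across the sphere $|x|=1$ by a Hestenes--Stein style radial reflection, using two radially rescaled copies of $\zeta$ in a linear combination whose coefficients are chosen to match both the value and the gradient at the interface, and then to multiply by a smooth radial cutoff supported well inside $B_2$. Because $\zeta$ is only $C^1$ and because we must preserve axisymmetry, evenness in $x_3$, and the weighted bound $\|\zeta\|_X=\sup_{x}|\nabla\zeta(x)|/|x|$, a \emph{purely radial} reflection is the natural choice: it commutes with all the required symmetries and sends each point of radius slightly greater than $1$ to a point in $\overline{B_1}$.

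Concretely, I would fix a cutoff $\chi\in C^\infty_c([0,3/2))$ with $\chi\equiv 1$ on $[0,5/4]$ and define
\[
T\zeta(x)=\begin{cases}\zeta(x),&|x|\le 1,\\[2pt]\chi(|x|)\bigl[\,3\,\zeta\bigl((2-|x|)\tfrac{x}{|x|}\bigr)-2\,\zeta\bigl((3-2|x|)\tfrac{x}{|x|}\bigr)\bigr],&1<|x|\le 3/2,\\[2pt]0,&|x|>3/2.\end{cases}
\]
For $|x|\in(1,3/2]$ the reflected points $y_1=(2-|x|)\hat x$ and $y_2=(3-2|x|)\hat x$ both lie in $\overline{B_1}$, so the formula is well-defined. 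Linearity of $T$ is immediate, the support lies in $B_{3/2}\subset B_2$, axisymmetry and evenness in $x_3$ are inherited since the maps $x\mapsto\alpha(|x|)\hat x$ commute with rotations around the $x_3$-axis and with $x_3\mapsto -x_3$, and $T\zeta(0)=\zeta(0)=0$.

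Next I would verify $T\zeta\in C^1(\mathbb{R}^3)$. At $|x|=3/2$ this is automatic because $\chi$ vanishes to all orders there. At $|x|=1$, a polar-coordinate computation for $\alpha_j(r)\hat x$ with $\alpha_1(r)=2-r$ and $\alpha_2(r)=3-2r$ shows that the radial derivative of $\zeta(\alpha_j(r)\hat x)$ at $r=1^+$ equals $\alpha_j'(1)\,\partial_r\zeta|_{\hat x}$, while the tangential derivative equals $\alpha_j(1)$ times the tangential derivative of $\zeta$ at $\hat x$. Value matching therefore demands $3-2=1$ and radial-derivative matching demands $3\alpha_1'(1)-2\alpha_2'(1)=3(-1)-2(-2)=1$, with the tangential derivative matching automatically since $\alpha_j(1)=1$; thus the full gradient from outside equals $\nabla\zeta|_{|x|=1}$.

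Finally, for the norm bound, integrating $|\nabla\zeta(tx)|\le\|\zeta\|_X\,t|x|$ over $t\in[0,1]$ and using $\zeta(0)=0$ yields the auxiliary pointwise estimate $|\zeta(y)|\le\tfrac12\|\zeta\|_X|y|^2$. On the annulus $1\le|x|\le 3/2$ the chain rule together with $|y_j|\le 1$ and $|\partial y_j/\partial x|\le C$ gives
\[
|\nabla T\zeta(x)|\le C\bigl(|\nabla\zeta(y_1)|+|\nabla\zeta(y_2)|+|\zeta(y_1)|+|\zeta(y_2)|\bigr)\le C\|\zeta\|_X,
\]
and dividing by $|x|\ge 1$ preserves this, while on $\overline{B_1}\setminus\{0\}$ the estimate $|\nabla T\zeta(x)|/|x|=|\nabla\zeta(x)|/|x|\le\|\zeta\|_X$ is tautological. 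The hard part will be the $C^1$ matching at $|x|=1$: the two reflections must be taken at precisely the scales $r\mapsto 2-r$ and $r\mapsto 3-2r$ and combined with precisely the coefficients $3$ and $-2$, and one must carefully track both the radial and the tangential components of $\nabla\zeta$ (since $\zeta$ is not assumed radial) to verify that the full gradient matches, not merely its radial part.
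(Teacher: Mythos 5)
Your proof is correct, and it takes a more explicit route than the paper's. The paper proceeds in two steps: it first invokes the standard extension theorem (partition of unity plus higher-order reflection in boundary-flattening coordinates, as in Evans) to produce \emph{some} $C^1$ extension $\zeta_1$ supported in $B_2$ with suitable derivative bounds, and only then restores axisymmetry and evenness in $x_3$ by averaging $\zeta_1$ over the rotations $T_\theta$ about the $x_3$-axis and over the reflection $x_3\mapsto -x_3$. You instead perform the Hestenes reflection radially and globally in one stroke: the scales $r\mapsto 2-r$ and $r\mapsto 3-2r$ with coefficients $3,-2$ solve the two linear matching conditions for the value and the radial derivative at $r=1$, while the tangential derivative matches automatically because both reflections fix the unit sphere pointwise ($\alpha_j(1)=1$); and since each map $x\mapsto \alpha(|x|)\,x/|x|$ commutes with every rotation about the $x_3$-axis and with $x_3\mapsto -x_3$, the symmetrization step becomes unnecessary. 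What your construction buys is a single self-contained formula whose linearity, support, symmetry, $C^1$ regularity across $|x|=1$, and weighted-norm bound can all be verified by hand --- in particular your pointwise estimate $|\zeta(y)|\le \tfrac12\|\zeta\|_X|y|^2$ cleanly controls the term coming from differentiating the cutoff. What the paper's route buys is independence from the spherical geometry (it would apply verbatim to a non-spherical support), at the cost of an extra averaging step whose compatibility with the norm $\sup_{x\ne 0}|\nabla\zeta(x)|/|x|$ is left implicit.
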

\begin{proof}
First of all, an extension $\zeta_1$ of $\zeta$ satisfying everything in the statement of this lemma except the symmetry requirement exists with suitable bounds on the derivatives. This can be accomplished by a partition of unity and the so-called higher-order reflection (see \cite{evans2010partial}, for instance). Once $\zeta_1$ is constructed, a symmetrized version of it can be obtained by
\begin{equation}
\zeta_2(x) = \frac{1}{4\pi}\int_0^{2\pi}[\zeta_1(T_{\theta}x)+\zeta_1(T_{\theta}Tx)]~d\theta,
\end{equation}
where
\begin{equation}
T_{\theta}(x_1,x_2,x_3)=(x_1\cos \theta+x_2\sin\theta, -x_1\sin \theta+x_2\cos\theta,x_3),
\end{equation}
and
\begin{equation}
T(x_1,x_2,x_3) = (x_1,x_2,-x_3).
\end{equation}
We easily verify that $\zeta_2$ is a $C^1$ extension of $\zeta$ satisfying all the conditions 
required by the lemma, for which the map $\zeta\mapsto\zeta_2$ is linear.
\end{proof}

By a slight abuse of notation, we will still write $\zeta$ in place of the 
extended function $T\zeta$. 
Thus in the following presentation, 
$\zeta$ is considered to be defined on the entire $\real^3$. $g_{\zeta}$ will be a 
homeomorphism on $\real^3$ and a $C^1$ diffeomorphism on $\mathbb{R}^3\setminus\{0\}$, 
where all the estimates in \Cref{lem: basic estimates g_zeta} 
are satisfied for all $x\in \mathbb{R}^3$.

Notice that $\gz(x)-x$ is linear in $\zeta$. 
Therefore all of the estimates in \Cref{lem: basic estimates g_zeta} can be applied 
to differences of $g_{\zeta_1}$ and $g_{\zeta_2}$, 
provided that $\zeta_1,\zeta_2\in B_{\epsilon}(X)$. In particular, 
\begin{equation}\label{est: Dg 12}
|Dg_{\zeta_1}(x)-Dg_{\zeta_2}(x)|<C\|\zeta_1-\zeta_2\|_X \quad \text{for all }x\in \real^3\setminus\{0\}.
\end{equation}

\begin{equation}\label{est: Dg inv 12}
|Dg_{\zeta_1}^{-1}(y)-Dg_{\zeta_2}^{-1}(y)|<C\|\zeta_1-\zeta_2\|_X 
\quad \text{for all }y\in \real^3\setminus\{0\}.
\end{equation}

\begin{equation}\label{est: det Dg 12}
|\det Dg_{\zeta_1}(x)-\det Dg_{\zeta_2}(x)|<C\|\zeta_1-\zeta_2\|_X 
\quad \text{for all }x\in \real^3\setminus\{0\}.
\end{equation}

\begin{equation}\label{est: g-x 12}
|g_{\zeta_1}(x)-g_{\zeta_2}(x)|\le  \|\zeta_1-\zeta_2\|_X |x| \quad \text{for all }x\in \real^3.
\end{equation}

\begin{equation}\label{est: g inv-x 12}
|g_{\zeta_1}^{-1}(y)-g_{\zeta_2}^{-1}(y)|\le C \|\zeta_1-\zeta_2\|_X |y| \quad \text{for all }y\in \real^3.
\end{equation}

\begin{equation}\label{est: deviation from x-x' 12}
|\left(g_{\zeta_1}(x)-g_{\zeta_1}(x')\right)-\left(g_{\zeta_2}(x)-g_{\zeta_2}(x')\right)|\le C \|\zeta_1-\zeta_2\|_X|x-x'| \quad \text{for all }x\in \real^3.
\end{equation}

To estimate the $X$ norm of the Fr\'{e}chet derivative,  the following lemma will be useful. 
						\begin{lemma}
\label{lem: Lip estimate at zero}
Let $f$ be a continuous function on $B_2$ which is axisymmetric about the $x_3$-axis and is even in $x_3$. Furthermore assume 
\begin{equation}
|f(y)-f(0)|\le C_f|y|
\end{equation}
for $y\in B_2$.
Let
\begin{equation}
V_f(y)=\int_{B_2}f(y')\frac{-(y-y')}{|y-y'|^3}~dy'. 
\end{equation}
Then for some constant $C>0$,
\begin{equation}  \label{V_f bound}
|V_f(g_{\zeta}(x))|\le C(C_f+|f(0)|)(1+\|\zeta\|_X)|x|,
\end{equation}
and for any $0<\beta<1$, there exists a constant $C_\beta$ such that 
\begin{equation}   \label{V_f difference}
|V_f(g_{\zeta_1}(x))-V_f(g_{\zeta_2}(x))|\le C_{\beta}(C_f+|f(0)|)\|\zeta_1-\zeta_2\|_X^{\beta}|x|
\end{equation}
for $x\in \overline{B_1}$ if $\zeta,\zeta_1,\zeta_2\in B_{\epsilon}(X)$ and $\epsilon$ is small enough.
\end{lemma}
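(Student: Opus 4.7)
The key structural observations are three. First, $V_f(y)=\nabla_y\Phi_f(y)$ is the gradient of the Newtonian potential $\Phi_f(y)=\int_{B_2}\frac{f(y')}{|y-y'|}\,dy'$. Second, writing $\tilde f:=f-f(0)$ so that $|\tilde f(y)|\le C_f|y|$ on $B_2$, the axisymmetry of $\tilde f$ and its evenness in $y_3$ force $V_{\tilde f}(0)=0$: axial symmetry annihilates the $y_1,y_2$-components of the integrand $\tilde f(y')\,y'/|y'|^3$, and $y_3$-evenness the third. Third, the constant part of $f$ contributes the Newtonian field of a uniformly filled ball, which by the shell theorem is linear in $y$: $\int_{B_2}\frac{-(y-y')}{|y-y'|^3}\,dy'=-\tfrac{4\pi}{3}\,y$ on $B_2$.

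For the pointwise bound \eqref{V_f bound}, the constant piece yields $|f(0)|\cdot\tfrac{4\pi}{3}|g_\zeta(x)|\le C|f(0)|(1+\|\zeta\|_X)|x|$ directly via \eqref{est: g-x}. For the $\tilde f$ piece I rewrite $V_{\tilde f}(g_\zeta(x))=V_{\tilde f}(g_\zeta(x))-V_{\tilde f}(0)$ as a single integral whose kernel is the bracket $\bigl[\frac{-(y-y')}{|y-y'|^3}-\frac{y'}{|y'|^3}\bigr]$ against $\tilde f(y')$, and split the integration into the near region $\{|y'|\le 2|y|\}$ and the far region $\{|y'|>2|y|\}$ with $y=g_\zeta(x)$. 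On the near region each summand is controlled separately using $|\tilde f(y')|\le C_f|y'|$ and the integrability of $|z|^{-2}$, producing $\le CC_f|y|^2$. On the far region a Taylor expansion of the Newtonian kernel in $y$ about $0$ gives $\bigl|\frac{-(y-y')}{|y-y'|^3}-\frac{y'}{|y'|^3}\bigr|\le C|y|/|y'|^3$, which integrated against $C_f|y'|$ on $B_2\setminus B_{2|y|}$ gives $\le CC_f|y|$. Combining and using $|y|\le(1+\|\zeta\|_X)|x|\le 2|x|$ finishes \eqref{V_f bound}.

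The estimate \eqref{V_f difference} is harder because the naive Hölder regularity of $\nabla\Phi_f$ for bounded densities yields only $|y_1-y_2|^\beta$, hence $|x|^\beta$ rather than the required $|x|$. The remedy is to introduce the scale $R:=4|x|$ and decompose $\tilde f=\tilde f^<+\tilde f^>$ with $\tilde f^<:=\tilde f\,\chi_{B_R}$. Writing $\delta:=\|\zeta_1-\zeta_2\|_X$ and $y_i:=g_{\zeta_i}(x)\in B_{R/2}$, the constant piece contributes $C|f(0)|\delta|x|$ by \eqref{est: g-x 12}. For $V_{\tilde f^>}$, since $\tilde f^>$ vanishes on $B_R$, the integrand defining $\nabla V_{\tilde f^>}$ is regular on $B_{R/2}$ with $|y-y'|\ge|y'|/2$, and the bound $|\tilde f(y')|\le C_f|y'|$ yields $|\nabla V_{\tilde f^>}|\le CC_f$ there, so the mean value theorem gives $|V_{\tilde f^>}(y_1)-V_{\tilde f^>}(y_2)|\le CC_f\delta|x|$. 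For $V_{\tilde f^<}$ I split once more: on $\{|y'-y_1|\le 2|y_1-y_2|\}$ each singular summand is bounded by $|y'-y_i|^{-2}$ against $\|\tilde f^<\|_\infty\le C_f R$, producing $CC_f\delta|x|^2$; on its complement in $B_R$ the kernel difference is $O(|y_1-y_2|/|y_1-y'|^3)$, and integration contributes a logarithm $\log(1/\delta)$, yielding $CC_f\delta|x|^2\log(1/\delta)$. Since $|x|\le 1+\epsilon$ and $\delta^{1-\beta}\log(1/\delta)$ is bounded on $(0,2\epsilon]$ for each $\beta\in(0,1)$, every contribution is $\le C_\beta(C_f+|f(0)|)\delta^\beta|x|$, establishing \eqref{V_f difference}. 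The main obstacle is this last splitting: the crucial use of $|\tilde f(y')|\le C_f|y'|$ throughout, which forces $\|\tilde f^<\|_\infty$ to scale like $C_f R\sim C_f|x|$, is what upgrades a pure Hölder bound to the required factor of $|x|$.
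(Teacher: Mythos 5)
Your proof is correct, and for the pointwise bound \eqref{V_f bound} it follows the paper's route almost exactly: subtract $f(0)$, use axisymmetry and evenness to get $V_{f-f(0)}(0)=0$, and split at the scale $|y'|\sim|x|$. The two genuine differences are in the details. First, you dispatch the constant part by the exact shell-theorem identity $\int_{B_2}\frac{-(y-y')}{|y-y'|^3}\,dy'=-\frac{4\pi}{3}y$ for $y\in B_2$ (valid here since $|g_\zeta(x)|\le 1+C\epsilon<2$), which immediately gives both the linear bound and the Lipschitz-in-$y$ difference bound; the paper instead converts to a surface integral over $\partial B_2$ and applies the mean value theorem. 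Your version is cleaner and exact, at the cost of being specific to the ball. Second, for \eqref{V_f difference} the paper splits the single kernel-difference integral into three annuli around $\bar x=\tfrac12(g_{\zeta_1}(x)+g_{\zeta_2}(x))$ at radii $\delta|x|$ and $2|x|$, whereas you truncate the density at scale $R=4|x|$ and treat the far piece $\tilde f^>$ by a uniform bound $|\nabla V_{\tilde f^>}|\le CC_f$ on $B_{R/2}$ followed by the mean value theorem along the segment $[g_{\zeta_1}(x),g_{\zeta_2}(x)]$ (contained in $B_{R/2}$ by \eqref{est: g-x}). The inner two regions, the appearance of the logarithm $\log(1/\delta)$ in the intermediate annulus, and its absorption into $\delta^\beta$ are then identical to the paper. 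What your organization buys is a conceptually transparent reason why the far field contributes only $C_f\delta|x|$ (a uniform gradient bound rather than a pointwise kernel comparison using the comparability of $|y'|$ with $|y'-\theta g_{\zeta_1}(x)-(1-\theta)g_{\zeta_2}(x)|$); what it costs is having to verify separately that the crucial factor $\|\tilde f\chi_{B_R}\|_\infty\le C_fR\sim C_f|x|$ supplies the extra power of $|x|$ in the near regions, which you do correctly.
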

\begin{proof}
The proof of this lemma is similar to that of Lemma 3.4 in \cite{rein2000stationary}.
We have
\begin{equation}\label{eq: V_f split}
V_f(\gz(x)) = f(0)\int_{B_2}\frac{-(\gz(x)-y')}{|\gz(x)-y'|^3}~dy' + \int_{B_2}(f(y')-f(0))\frac{-(\gz(x)-y')}{|\gz(x)-y'|^3}~dy'.
\end{equation}
For the first term we estimate 
\begin{align}
~\left|\int_{B_2}\frac{-(\gz(x)-y')}{|\gz(x)-y'|^3}~dy' \right| 
&= ~\left|\int_{\partial B_2}\frac{1}{|\gz(x)-y'|}n(y')~d\sigma(y') \right|  \\ 
= ~\left|\int_{\partial B_2}\left(\frac{1}{|\gz(x)-y'|}-\frac{1}{|y'|}\right)n(y')~d\sigma(y') \right| 
&\le ~C\int_{\partial B_2}\frac{1}{|\theta\gz(x)-y'|^2}|\gz(x)|~d\sigma(y') \notag\\
\le ~C\frac{1}{(2-(1+C\|\zeta\|_x))^2}(1+\|\zeta\|_X)|x| 
&\le ~C(1+\|\zeta\|_X)|x|, \notag
\end{align}
where $\theta\in (0,1)$ and the penultimate step follows by noticing that  $|y'|=2$ 
and $|\gz(x)|\le 1+C\|\zeta\|_X$ for $|x|\le 1$. On the other hand, by the symmetry of $f$ and $\zeta$, we have $$\int_{B_2}(f(y')-f(0))\frac{y'}{|y'|^3}~dy'=0.$$ 
Hence
\begin{align*}
&~\left|\int_{B_2}(f(y')-f(0))\frac{-(\gz(x)-y')}{|\gz(x)-y'|^3}~dy'\right| = \left|\int_{B_2}(f(y')-f(0))\left(\frac{-(\gz(x)-y')}{|\gz(x)-y'|^3}-\frac{y'}{|y'|^3}\right)~dy'\right|\\
\le &~C_f \left(\left|\int_{|y'|\le 2|x|}|y'|\left(\frac{-(\gz(x)-y')}{|\gz(x)-y'|^3}-\frac{y'}{|y'|^3}\right)~dy'\right|+\left|\int_{2|x|\le |y'|\le 2}|y'|\left(\frac{-(\gz(x)-y')}{|\gz(x)-y'|^3}-\frac{y'}{|y'|^3}\right)~dy'\right|\right)\\
\le &~C_f(I_1+I_2).
\end{align*}
Since $|\gz(x)|\le (1+\|\zeta\|_X)|x|$ by \eqref{est: g-x}, the ball $\{|y'|\le 2|x|\}$ is contained in the ball $\{|y'-\gz(x)|\le 4|x|\}$. Thus
\eqn
I_1\le 2\int_{|y'|\le 4|x|}\frac{1}{|y'|^2}~dy' \le C|x|.
\eeqn
We observe that $|y'|-|\gz(x)|$ is comparable to $|y'|$ when $|y'|\ge 2|x'|$ and $\|\zeta\|_X$ is small. By the mean value theorem,
\eqn
I_2\le \int_{2|x|\le| y'|\le 2}\frac{|y'|(1+\|\zeta\|_X)|x|}{|\theta \gz(x)-y'|^3}~dy' \le C (1+\|\zeta\|_X)|x| \int_{|y'|\le 2}\frac1{|y'|^2}~dy'
\eeqn
Thus \eqref{V_f bound}  follows.  
								Similarly
\begin{align}\label{eq: V_f split 12}
&~V_f(g_{\zeta_1}(x))-V_f(g_{\zeta_2}(x))\notag\\
= &~f(0)\int_{B_2}\left(\frac{-(\gzone(x)-y')}{|\gzone(x)-y'|^3}-\frac{-(\gztwo(x)-y')}{|\gztwo(x)-y'|^3}\right)~dy'\notag \\
&~+\int_{B_2}\left(f(y')-f(0)\right)\left(\frac{-(\gzone(x)-y')}{|\gzone(x)-y'|^3}-\frac{-(\gztwo(x)-y')}{|\gztwo(x)-y'|^3}\right)~dy',
\end{align}
								We can estimate the first term as before:
\begin{align}
&~\left| \int_{B_2}\left(\frac{-(\gzone(x)-y')}{|\gzone(x)-y'|^3}-\frac{-(\gztwo(x)-y')}{|\gztwo(x)-y'|^3}\right)~dy' \right|  
\le ~ \int_{\partial B_2}\left|\frac{1}{|\gzone(x)-y'|}-\frac{1}{|\gztwo(x)-y'|}\right|~d\sigma(y') \notag\\
\le &~ C \int_{\partial B_2}\frac{1}{|\theta \gzone(x)+(1-\theta)\gztwo(x) - y'|^2}|\gzone(x)-\gztwo(x)|~d\sigma(y') 
\le ~ C\|\zeta_1-\zeta_2\|_X|x|, \notag 
\end{align}
where we have used \eqref{est: g-x 12} to get the last step. The second term of \eqref{eq: V_f split 12} can be estimated as follows. Let $\bar{x}=\frac{1}{2}(\gzone(x)+\gztwo(x))$, and $\delta = \|\zeta_1-\zeta_2\|_X$. By \eqref{est: g-x 12}, $\delta|x|$ is no less than $2|\gzone(x)-\bar{x}|$ or $2|\gztwo(x)-\bar{x}|$.
\begin{align*}
&~\left|\int_{B_2}\left(f(y')-f(0)\right)\left(\frac{-(\gzone(x)-y')}{|\gzone(x)-y'|^3}-\frac{-(\gztwo(x)-y')}{|\gztwo(x)-y'|^3}\right)~dy'\right|\\
\le &~C_f \left|\int_{B_2}|y'|\left(\frac{-(\gzone(x)-y')}{|\gzone(x)-y'|^3}-\frac{-(\gztwo(x)-y')}{|\gztwo(x)-y'|^3}\right)~dy'\right|.
\end{align*}
Assuming $\|\zeta_1\|_X$ and $\|\zeta_2\|_X$ are small, we now split the integral into three pieces $I_1+I_2+I_3$, on the regions $\{|y'-\bar{x}|\le \delta |x|\}$, $\{\delta |x|\le |y'-\bar{x}|\le 2|x|\}$, and $\{|y'-\bar{x}|\ge 2|x|\}$, all within the ball $\{|y'|\le 2\}$. Since the ball $\{|y'-\bar{x}|\le \delta |x|\}$ is contained in the balls $\{|y'-\gzone(x)|\le 4\delta|x|\}$ and $\{|y'-\gztwo(x)|\le 4\delta|x|\}$, we have
\eqn
I_1\le C\int_{|y'|\le 4\delta}\frac{1}{|y'|^2}~dy' \le C\delta|x| = C\|\zeta_1-\zeta_2\|_X|x|.
\eeqn
On the other hand, when $|y'-\bar{x}|\ge \delta|x|$, $|y'-\bar{x}|$ is comparable to $|y'-\theta \gzone(x)-(1-\theta)\gztwo(x)|$ for every $\theta\in (0,1)$. By the mean value theorem,
\begin{align*}
I_2&\le C\int_{\delta|x|\le |y'-\bar{x}|\le2 |x|}\frac{|\gzone(x)-\gztwo(x)|}{|\theta \gzone(x)+(1-\theta)\gztwo(x)-y'|^3}~dy'\\
&\le C\int_{\delta|x|\le |y'-\bar{x}|\le 2|x|}\frac{|\gzone(x)-\gztwo(x)|}{|y'-\bar{x}|^3}~dy'\\\
&\le C\left(\log \frac2{\delta}\right)\delta|x| \\
&\le C_{\beta}\|\zeta_1-\zeta_2\|_X^{\beta}|x|
\end{align*}
for every $\beta\in (0,1)$. 
Finally, when $|y'-\bar{x}|\ge 2|x|$, $|y'|$ is comparable to $|y'-\theta \gzone(x)-(1-\theta)\gztwo(x)|$ for every $\theta\in (0,1)$. Again by the mean value theorem,
\begin{align*}
I_3&\le C\int_{ |y'-\bar{x}|\ge2 |x|, |y'|\le 2}\frac{|y'||\gzone(x)-\gztwo(x)|}{|\theta \gzone(x)+(1-\theta)\gztwo(x)-y'|^3}~dy'\\
&\le C\int_{|y'|\le 2}\frac{|\gzone(x)-\gztwo(x)|}{|y'|^2}~dy'\\
&\le C\|\zeta_1-\zeta_2\|_X|x|
\end{align*}
Thus \eqref{V_f difference}  follows. 
\end{proof}

\section {Euler model: analysis of the linearized operator}\label{sec: 4}
In this section we prove the main theorems (Theorem \ref{main Euler theorem} and Theorem \ref{sec Euler theorem}). 
\subsection{Linearization}  
We postpone calculating the Fr\'echet derivative of $\F(\zeta,\kappa)$ until the next section.  
$\F$ has three terms shown in \eqref{F terms} given in \eqref{F1 term}-\eqref{F3 term}.  
The formula for $\frac{\pa \F}{ \pa \zeta}$ is given in \eqref{eq: F' formula}-\eqref{eq: F3'}.  
We freely use that result here.  

It is the operator $\L=\frac{\partial \F}{\partial \zeta}(0,0)$ 
that requires detailed analysis. We write $\F$ as $\M\F_1+\kappa\F_2+\F_3$ as in \eqref{F terms}, where $\M$, $\F_1$, $\F_2$, $\F_3$ are given in \eqref{F1 term}-\eqref{F3 term}. 
First, by Lemma \ref{lem: gateaux diff} and \eqref{eq: M'} we have 
\eqn 
~\M'(0)\xi
= ~-\frac1M \int_{B_1}\rho_0(x)\nabla\cdot\left(\xi(x)\frac{x}{|x|^2}\right)~dx 
= ~\frac1M \int_{B_1}\frac{\rho_0'(x)}{|x|}\xi(x)~dx.\eeqn 
Here we integrated by parts once, and $\rho_0'$ denotes the radial derivative of $\rho_0$. 
By \eqref{F1 term},
\eqn 
\F_1(0)(x) = \int_{B_1}\rho_0(y)\left(\frac{1}{|x-y|}-\frac{1}{|y|}\right)~dy 
= h(\rho_0)(x)-h(\rho_0)(0).\eeqn 
					The last equality is a consequence of the fact that $\F(0,0)=0$, which is implied by
\begin{equation}\label{eq: restate rotless ep}
\int_{B_1}\rho_0(y)\frac{1}{|x-y|}~dy - h(\rho_0)(x) = const,
\end{equation}
which is a restatement of \eqref{eq: rotationless Euler-Poisson}. 
By \eqref{eq: F1' 0},  
\begin{align}\label{eq: F1' 0 term 2}
[\F_1'(0)\xi](x) 
= \int_{B_1}-\frac{\rho_0'(y)}{|y|}\xi(y)\left(\frac{1}{|x-y|}-\frac{1}{|y|}\right)dy + \int_{B_1}\rho_0(y)\frac{-(x-y)}{|x-y|^3}dy\cdot \xi(x)\frac{x}{|x|^2}
\end{align}
					The last term in \eqref{eq: F1' 0 term 2} can be simplified as
\begin{align}\label{eq: u_0'/x reduction}
\int_{B_1}\rho_0(y)\frac{-(x-y)}{|x-y|^3}~dy\cdot \frac{x}{|x|^2} 
= \frac{x}{|x|^2}\cdot \nabla_x \int_{B_1}\rho_0(y)\frac{1}{|x-y|}~dy 
=\frac{1}{|x|}(h(\rho_0)'(x)) 
= \frac{1}{|x|}u_0'(x).
\end{align}
Here prime ($'$) means radial derivative.
\eqref{eq: F1' 0 term 2} now becomes
\begin{align}
[\F_1'(0)\xi](x) 
= \int_{B_1}-\frac{\rho_0'(y)}{|y|}\xi(y)\left(\frac{1}{|x-y|}-\frac{1}{|y|}\right)~dy + \frac{u_0'(x)}{|x|}\xi(x).
\end{align}
By \eqref{eq: F3'}, 
\eqn
[\F_3'(0)\xi](x) = (h'(\rho_0)\rho_0(0)-h'(\rho_0)\rho_0(x))\M'(0)\xi.
\eeqn
Combining the previous equations as in Lemma \ref{lem: gateaux diff}, we have 
the following formula for the linearized operator around zero.  
\begin{align}\label{def: L}
&[\L\xi](x) = \left[\frac{\partial F}{\partial \zeta}(0,0)\xi\right](x)\notag\\
 &=  \frac{u_0'(x)}{|x|} \xi(x)   -\int_{B_1}  \frac{\rho_0'(y)}{|y|} \xi(y) \left(\frac1{|x-y|}-\frac1{|y|}\right) dy 
 +\frac{k(\rho_0)(x) - k(\rho_0)(0)}{M}  \int_{B_1} \frac{\rho_0'(y)}{|y|} \xi(y)dy ,    
\end{align}
where  $k(s)=h(s)-sh'(s)$.  

\subsection{Radial part of the Euler kernel} \label{sec: triviality Euler}
In the next two subsections we prove the following theorem 
in terms of the mass function defined in \eqref{mass function}.
\begin{theorem} \label{injectivity}
The linearized operator $\L$  is injective if and only if $M'(u_0(0))\ne 0$.
\end{theorem}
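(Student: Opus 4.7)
The plan is to decompose $\xi \in X$ into spherical harmonics and analyze the radial ($\ell = 0$) and non-radial ($\ell \ge 2$, using axisymmetry and $x_3$-evenness) parts of $\ker\L$ separately. Since $\rho_0$, $u_0 = h(\rho_0)$, and $k(\rho_0)$ are all radial, the operator $\L$ in \eqref{def: L} preserves this decomposition. The key observation is that the functional $\mathcal{I}(\xi) := \int_{B_1}(\rho_0'(|y|)/|y|)\,\xi(y)\,dy$ appearing in the third (mass-constraint) term vanishes whenever $\xi$ is $L^2$-orthogonal to radial functions, so that term acts only on the radial component. Hence $\ker\L$ splits as the direct sum of its radial and non-radial parts, and the theorem reduces to: (a) the non-radial kernel is always trivial, and (b) the radial kernel is trivial iff $M'(u_0(0)) \ne 0$.

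For the radial part I would set $\xi(x) = \eta(|x|)$ and use the explicit formula for the Newtonian potential of a radial density to rewrite the integral in \eqref{def: L} as a one-dimensional convolution, turning $\L\xi = 0$ into an integro-differential equation in $\eta$. Applying $\Delta_x$ then yields a second-order linear ODE whose homogeneous part is the Jacobi equation of the Lane--Emden equation $\Delta u_c = -4\pi h^{-1}(u_c)$, with a regular fundamental solution $\dot u := \partial_c u_c\big|_{c = u_0(0)}$. The inhomogeneity coming from the $k(\rho_0)$ term is proportional to the scalar $\mathcal{I}(\xi)$. Solving the ODE and then imposing the self-consistency equation $\mathcal{I}(\xi) = \int (\rho_0'(s)/s)\,\eta(s)\,ds$ yields a single scalar relation whose coefficient, after a rescaling argument that maps the $c$-dependent support $B_{R(c)}$ onto the fixed $B_1$, is identified with $M'(u_0(0))$. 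A nontrivial radial kernel element therefore exists iff $M'(u_0(0)) = 0$.

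For the non-radial part, which I expect to be handled in the next subsection, $\mathcal{I}(\xi) = 0$ and $\L$ reduces to the Lichtenstein-type operator $\xi \mapsto (u_0'(|x|)/|x|)\,\xi(x) - V(\xi)(x)$, where $V(\xi)$ is the Newtonian potential of $(\rho_0'(|y|)/|y|)\,\xi(y)$. Projecting onto each spherical-harmonic subspace of degree $\ell \ge 2$ reduces to a one-dimensional ODE whose Laplacian carries the centrifugal term $\ell(\ell+1)/r^2$; triviality should then follow from the strict negativity of $u_0'/r$ in $\overline{B_1}\setminus\{0\}$ (Lemma \ref{lem: u_0}) combined with a standard $L^2$ energy identity. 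The main obstacle is the identification in the second paragraph of the scaling coefficient with $M'(u_0(0))$: because the family $u_c$ has $c$-dependent support $B_{R(c)}$, one must carefully rescale it onto $B_1$ so that $\dot u$ fits the ansatz $\rho_\zeta = \M(\zeta)\,\rho_0(g_\zeta^{-1}(\cdot))$, and the boundary contributions produced by this rescaling must be shown to conspire with the $k(\rho_0)$ source term to reconstruct precisely $M'(u_0(0))$. This bookkeeping is where the hypothesis enters in an essential way.
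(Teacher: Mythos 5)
Your overall decomposition is right: the mass-constraint functional $\mathcal{I}(\xi)=\int_{B_1}\frac{\rho_0'}{|y|}\xi\,dy$ kills non-radial components, so the kernel splits, and for the radial part the paper does essentially what you propose --- apply $\Delta$, recognize the homogeneous part as the variational (Jacobi) equation of the Lane--Emden problem with regular solution $v_a=\partial_a v$, and tie the scalar coefficient to $M'(u_0(0))$ via $v_a'(1)=-M'(u_0(0))$ (the boundary term in $M'(a)$ vanishes because $h^{-1}(0)=0$, so no rescaling of $B_{R(a)}$ onto $B_1$ is needed; that worry is a red herring). What your radial sketch omits, and what actually closes the argument, are two concrete steps: first, the Neumann datum at $r=1$, obtained by integrating the linearized equation over $B_1$, which expresses $\alpha'(1)$ in terms of $\mathcal{I}(\xi)$; second, an explicit particular solution of the inhomogeneous ODE, namely $u_0-k(\rho_0)=u_0-h(\rho_0)+\rho_0 h'(\rho_0)$, whose subtraction reduces the problem to the homogeneous Jacobi equation with $w'(1)=0$. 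Without these, ``solving the ODE and imposing self-consistency'' does not produce the scalar relation you want; one must also recover the original integral equation from its Laplacian (the discrepancy is a radial harmonic function vanishing at the origin, hence zero).

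The genuine gap is in the non-radial part. The projected equation for $\varphi_{lm}$ is
$\Delta\varphi_{lm}-\frac{l(l+1)}{r^2}\varphi_{lm}=-4\pi(h^{-1})'(u_0)\varphi_{lm}$ in $B_1$, matched to $C_{lm}r^{-(l+1)}$ outside. A ``standard $L^2$ energy identity'' gives
$\int|\nabla\varphi_{lm}|^2+l(l+1)\int r^{-2}\varphi_{lm}^2=4\pi\int(h^{-1})'(u_0)\varphi_{lm}^2+(\text{boundary})$, and the potential term $4\pi(h^{-1})'(u_0)\ge0$ sits on the wrong side: it is not dominated by the left-hand side in general (indeed for $l=0$ the operator genuinely has a kernel when $M'=0$), so positivity of the quadratic form fails and the identity proves nothing. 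The paper's device is a ground-state substitution: since $u_0'$ itself satisfies $\Delta u_0'-\frac{2}{r^2}u_0'+4\pi\rho_0'=0$ (the $l=1$ translation mode), dividing by $u_0'$ and setting $\psi_{lm}=\varphi_{lm}/u_0'$ produces an equation whose zeroth-order coefficient is $\frac{2-l(l+1)}{r^2}\le0$ for $l\ge1$; the strong maximum principle then applies, and the contradiction is extracted from the sign of $\psi_{lm}'(1)$ forced by the exterior harmonic matching (which you also do not use). You need this substitution, or some equivalent comparison argument exploiting $u_0'<0$; strict negativity of $u_0'/r$ alone, fed into an energy identity, will not do it.
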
 
We begin by proving that there is no nonzero {\it radial} function in the nullspace $N(\L)$ if and only if $M'(u_0(0))\ne 0$.  This is the 
most delicate part of the analysis.  It is significantly more subtle than the corresponding analysis in 
\cite{heilig1994lichtenstein}.  

From \eqref{def: L} we have, for any $\xi\in N(\L)$, 
\begin{align}    \label{xi eqn}
0 &= \frac{u_0'(x)}{|x|}\xi(x) 
-\int_{B_1}\frac{\rho_0'(y)}{|y|}\xi(y)\left(\frac{1}{|x-y|}-\frac{1}{|y|}\right)~dy \notag \\
&\qquad + \frac{1}{M}[k(\rho_0)(x)-k(\rho_0)(0)]\int_{B_1}\frac{\rho_0'(y)}{|y|}\xi(y)~dy.  
\end{align}
{\it We now assume that $\xi\in X$ is radial. }Let $\alpha=\frac{u_0'}{|x|}\xi$. Then $\alpha$ is also radial. 
Applying $\Delta$ to both sides of \eqref{xi eqn}, we have 
\begin{equation}\label{eq: alpha kernel}
\Delta \alpha +4\pi \frac{\rho_0'}{u_0'}\alpha 
+\frac{1}{M}\Delta(k(\rho_0))\int_{B_1}\frac{\rho_0'(y)}{u_0'(y)}\alpha(y)~dy=0.
\end{equation}
Integrating this equation on $B_1$ and using prime ($'$) to denote the radial derivative, we get
\begin{equation} \label{alpha'(1)}
\alpha'(1) = -\left(1+\frac{1}{M}(k(\rho_0))'(1)\right)\int_{B_1}\frac{\rho_0'(y)}{u_0'(y)}\alpha(y)~dy.
\end{equation}
						On the other hand, 
since $k(\rho_0) = h(\rho_0) - \rho_0 h'(\rho_0)  =  u_0-\frac{u_0'}{\rho_0'}\rho_0$ 
and $\Delta u_0=-4\pi \rho_0$, we have
\begin{equation} \label{u_0-k(rho_0}
\Delta (u_0-k(\rho_0))+4\pi\frac{\rho_0'}{u_0'}(u_0-k(\rho_0))+\Delta(k(\rho_0))=0.  
\end{equation}
Integrating \eqref{eq: gLane-Emden} over $B_1$, we have 
$u_0'(1) = -\int_{B_1} \rho_0 ~dx = -M$, so that 
\begin{equation}\label{eq: beta'(1)}
(u_0-k(\rho_0))'(1) = u_0'(1)-(k(\rho_0))'(1) = -M-(k(\rho_0))'(1).
\end{equation}
						Therefore if we let 
$\beta=(u_0-k(\rho_0))\frac{1}{M}\int_{B_1}\frac{\rho_0'(y)}{u_0'(y)}\alpha(y)~dy$, 
then \eqref{u_0-k(rho_0} becomes 
\begin{equation}
\Delta \beta +4\pi \frac{\rho_0'}{u_0'}\beta 
+\frac{1}{M}\Delta(k(\rho_0))\int_{B_1}\frac{\rho_0'(y)}{u_0'(y)}\alpha(y)~dy=0.
\end{equation} 
This is the same equation as satisfied by $\alpha$.  
Furthermore from \eqref{alpha'(1)} and \eqref{eq: beta'(1)}, $\beta'(1)=\alpha'(1)$.   
Therefore the difference $w=\alpha-\beta$ satisfies 
\begin{equation}\label{eq: w homo}
\Delta w +4\pi \frac{\rho_0'}{u_0'}w=0,\quad w'(1)=0.  \end{equation}

\begin{lemma}						
The following statements are equivalent:
\begin{enumerate}
\item There is no nonzero radial solution to \eqref{eq: w homo}.
\item There is no nonzero radial function in $N(\L)$.	
\end{enumerate}
\end{lemma}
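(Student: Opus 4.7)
My plan is to prove the two implications separately by running the construction laid out just before the lemma both forward and backward. The forward direction—from a radial $\xi \in N(\L)$ to $w = \alpha - \beta$ satisfying \eqref{eq: w homo}—has already been carried out in the text; what remains is to check injectivity and to construct the inverse map.

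For (2)$\Rightarrow$(1), I argue contrapositively: assume $\xi \ne 0$ is a radial element of $N(\L)$ and check that the associated $w$ does not vanish. If $w \equiv 0$, then $\alpha = \beta$, and using the identities $u_0 - k(\rho_0) = \rho_0 h'(\rho_0)$ and $u_0' = h'(\rho_0)\rho_0'$, this reduces to $\xi(x) = c\,|x|\rho_0(x)/\rho_0'(x)$ for some constant $c$. Since $\rho_0'(r) \sim \rho_0''(0)r$ as $r \to 0$ with $\rho_0''(0) \ne 0$ (from \eqref{eq: gLane-Emden} evaluated at the origin), we get $\xi(0) = c\rho_0(0)/\rho_0''(0)$. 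Membership in $X$ forces $\xi(0)=0$, hence $c = 0$, hence $\xi \equiv 0$, a contradiction.

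For (1)$\Rightarrow$(2), given a nonzero radial $w$ solving \eqref{eq: w homo}, I would set
\[
\alpha = w - c_0(u_0 - k(\rho_0)), \qquad c_0 = \frac{w(0)}{\rho_0(0)h'(\rho_0(0))},
\]
so that $\alpha(0) = 0$. Integrating \eqref{eq: w homo} over $B_1$ and using $w'(1)=0$ yields $\int_{B_1}\frac{\rho_0'}{u_0'}w = 0$, which combined with $\int_{B_1}\frac{\rho_0'}{u_0'}(u_0 - k(\rho_0)) = \int_{B_1}\rho_0 = M$ gives $I := \int_{B_1}\frac{\rho_0'}{u_0'}\alpha = -c_0 M$; plugging this into \eqref{u_0-k(rho_0} confirms that $\alpha$ satisfies \eqref{eq: alpha kernel}. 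I then define $\xi(x) = |x|\alpha(x)/u_0'(x)$. Since $\alpha(0)=0$ and $u_0'(r) \sim u_0''(0)r \ne 0$, the function $\xi$ is $C^1$ and radial with $\xi(0)=0$, so $\xi \in X$. Moreover $\xi \not\equiv 0$, for $\xi \equiv 0$ would give $w = c_0(u_0 - k(\rho_0))$, and substituting into \eqref{eq: w homo} via \eqref{u_0-k(rho_0} would force $c_0\,\Delta k(\rho_0) \equiv 0$, contradicting $\Delta k(\rho_0) \not\equiv 0$ (a consequence of $h'' \not\equiv 0$, which is built into our assumptions on $p$).

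The main obstacle is that \eqref{eq: alpha kernel} was obtained by applying $\Delta$ to \eqref{xi eqn}, so an $\alpha$ solving \eqref{eq: alpha kernel} need not a priori correspond to a $\xi$ solving the integral equation \eqref{xi eqn}. To bridge this gap I would examine
\[
\psi(x) = \alpha(x) - \int_{B_1}\frac{\rho_0'(y)}{u_0'(y)}\alpha(y)\frac{1}{|x-y|}\,dy + \frac{I}{M}k(\rho_0)(x),
\]
which by \eqref{eq: alpha kernel} is harmonic in $B_1$, and is bounded and radial, hence constant. Evaluating at $x=0$ shows that this constant matches the value demanded by \eqref{xi eqn} precisely when $\alpha(0)=0$, which is exactly how $c_0$ was chosen; this completes the reverse construction.
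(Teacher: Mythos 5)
Your proof is correct and follows essentially the same route as the paper's: the forward direction reduces $\alpha=\beta$ to $\xi(x)=c\,|x|\rho_0/\rho_0'$ and uses $\xi(0)=0$ to kill the constant (the paper phrases this as $(u_0-k(\rho_0))(0)=\rho_0(0)h'(\rho_0(0))>0$), and the reverse direction builds $\alpha=w+C(u_0-k(\rho_0))$ with $C$ fixed by $\alpha(0)=0$ and passes from \eqref{eq: alpha kernel} back to \eqref{xi eqn} via the same bounded-radial-harmonic-function observation. Two small remarks: your implication labels are swapped (proving ``$\xi\ne0$ radial in $N(\L)$ implies $w\ne0$'' is the contrapositive of $(1)\Rightarrow(2)$, not of $(2)\Rightarrow(1)$, though both directions are in fact covered), and your explicit verification that the constructed $\xi$ is nonzero --- via $\Delta k(\rho_0)\not\equiv 0$, which follows from \eqref{eq: lim h''} --- supplies a point the paper leaves implicit.
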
					  
\begin{proof}
Suppose $w=0$ is the only radial solution to \eqref{eq: w homo}, then $\alpha=\beta$. But $\alpha(0)=0$, so that,  by definition of $\beta$, we would have 
$$
0 = \beta(0) \frac M { \int_{B_1} {\rho_0'}\alpha/{u_0'} ~dy} = (u_0-k(\rho_0))(0) 
=  \lim_{r\to 0} \frac{u_0'(r)}{\rho_0'(r)} \rho_0(0) = \rho_0(0)h'(\rho_0(0))  > 0.   $$
The only possible conclusion would be that 
$ \int_{B_1} {\rho_0'}\alpha/{u_0'} ~dy =0$.  
Looking at the definition of $\beta$, 
we see that $\beta$ vanishes.   Hence $\alpha=0$, meaning that the only radial function in $N(\L)$ is zero.
 
On the other hand, if $w$ is a nonzero solution to \eqref{eq: w homo}, 
we define
\eqn\label{def: new alpha}
\alpha = w + C(u_0-k(\rho_0)) = w+C\frac{u_0'}{\rho_0'}\rho_0.
\eeqn
It follows that $C=\frac{1}{M}\int_{B_1}\frac{\rho_0'}{u_0'}\alpha ~dy$. In fact, integrate \eqref{eq: w homo} on $B_1$ to get 
\eqn
\int_{B_1}\frac{\rho_0'(y)}{u_0'(y)}w(y)~dy=0.
\eeqn
Now integrate \eqref{def: new alpha} against $\frac{\rho_0'}{u_0'}$ on $B_1$ to get
\eqn
\int_{B_1}\frac{\rho_0'}{u_0'}\alpha ~dy = C\int_{B_1}\rho_0(y)~dy=CM.
\eeqn
By \eqref{u_0-k(rho_0} and \eqref{eq: w homo}, such an $\alpha$ satisfies \eqref{eq: alpha kernel}. We now choose $C$ such that $\alpha(0)=0$:
\eqn
0 = w(0)+C\frac{u_0'}{\rho_0'}(0)\rho_0(0),
\eeqn
and define $\xi=\frac{|x|}{u_0'}\alpha$.
It follows that the right hand side of \eqref{xi eqn} is a radial harmonic function on $B_1$ which vanishes at the origin, hence is identically zero. It is easy to verify that $\xi = \frac{|x|}{u_0'}\alpha\in X$, hence is in $N(\L)$.  
\end{proof}

Our goal is therefore to study uniqueness of radial solutions to \eqref{eq: w homo}.  
Since $w$ is radial, and $\rho_0 = h^{-1}(u_0)$, we may rewrite \eqref{eq: w homo} as 
\begin{equation}  \label{eq: w}
w''+\frac{2}{r}w+4\pi(h^{-1})'(u_0)w=0,\quad w'(0)=w'(1)=0.
\end{equation}
Let us consider the closely related {\it initial} value problem ($v=v(r;a)$) 
\begin{equation}\label{eq: sigma}
v''+\frac{2}{r}v+4\pi h^{-1}(v)=0,\quad v(0;a)=a,\quad v'(0;a)=0.
\end{equation}
Of course,  $v$ is differentiable with respect to $a$. 
Since $u_0$ solves \eqref{eq: sigma} with $a=u_0(0)$, we have $v(\cdot;u_0(0))=u_0$. 
						Also 
\begin{equation}
v(1,u_0(0))=u_0(1) = 0, \qquad v'(1,u_0(0)) = u_0'(1) \ne 0.
\end{equation}
By the implicit function theorem, there exists a function $R=R(a)$ for which $R(u_0(0))=1$ 
that solves  $v(R,a)=0$ for $(R,a)$ in a neighborhood of $(1,u_0(0))$. 
For such values of $(R,a)$, $v(\cdot;a)$ is a positive solution to the equation
\begin{equation}
\Delta v +4\pi h^{-1}(v)=0
\end{equation}
on $B_R$ with zero boundary value at $|x|=R$.        
We now define the {\it physical mass} of the solution $v(\cdot;a)$ as 
\begin{equation} \label{mass function}
M(a) = \int_{B_{R(a)}} h^{-1}(v(|x|,a))\ dx   =   \int_0^{R(a)}4\pi h^{-1}(v(r;a))\,r^2~dr. 
\end{equation}
Thus $M(u_0(0))=M$.  It suffices to prove the following lemma.  
\begin{lemma}\label{lem: w radial uniqueness} 
\eqref{eq: w homo} has a unique radial solution that is trivial if and only if $M'(u_0(0))\ne 0$.  
\end{lemma}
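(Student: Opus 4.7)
\textbf{Proof plan for Lemma \ref{lem: w radial uniqueness}.} The plan is to turn the boundary value problem \eqref{eq: w} into a question about the derivative of the mass function $M(a)$ defined in \eqref{mass function}, by comparing it with the linearization in $a$ of the initial value problem \eqref{eq: sigma}. Throughout I use $'$ for $d/dr$ and write $\partial_a$ for differentiation in the parameter.

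First, differentiate \eqref{eq: sigma} with respect to $a$ at $a=u_0(0)$ and set $\psi(r):=\partial_a v(r;a)\big|_{a=u_0(0)}$. Since $v(\cdot;u_0(0))=u_0$, the function $\psi$ satisfies the linearized ODE
\begin{equation*}
\psi''+\tfrac{2}{r}\psi'+4\pi(h^{-1})'(u_0)\,\psi=0,\qquad \psi(0)=1,\ \psi'(0)=0,
\end{equation*}
which is exactly the equation in \eqref{eq: w}. Any radial solution $w$ of \eqref{eq: w} that is regular at the origin necessarily has $w'(0)=0$, and the solution of this singular ODE with prescribed value at $0$ is unique. Therefore $w=w(0)\,\psi$, and a nontrivial solution of \eqref{eq: w} exists if and only if $\psi'(1)=0$.

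Next, I will show that $M'(u_0(0))=-\psi'(1)$, which gives the equivalence stated in the lemma. Multiplying \eqref{eq: sigma} by $r^2$ rewrites it as $-4\pi h^{-1}(v)\,r^2=(r^2 v')'$, so integration from $0$ to $R(a)$ yields the compact formula
\begin{equation*}
M(a)=-R(a)^2\,v'(R(a);a).
\end{equation*}
Differentiate in $a$ at $a=u_0(0)$, using $R(u_0(0))=1$, $v(1;u_0(0))=0$, $v'(1;u_0(0))=u_0'(1)=-M$, and $v''(1;u_0(0))=-2u_0'(1)-4\pi h^{-1}(u_0(1))=2M$. Implicit differentiation of $v(R(a);a)=0$ gives $R'(u_0(0))=\psi(1)/M$. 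Plugging in,
\begin{equation*}
M'(u_0(0))=-2R'(u_0(0))\,u_0'(1)-\bigl[v''(1;u_0(0))R'(u_0(0))+\psi'(1)\bigr]=2\psi(1)-2\psi(1)-\psi'(1)=-\psi'(1).
\end{equation*}
Combined with the observation of the first paragraph, this proves: \eqref{eq: w} admits only the trivial radial solution if and only if $M'(u_0(0))\neq 0$.

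The only step that requires some care is the reduction to a one-parameter family in the first paragraph, because the ODE is singular at $r=0$; I will justify the uniqueness by the standard power-series / Frobenius argument that among solutions of $w''+(2/r)w'+q(r)w=0$ with smooth $q$ only a one-dimensional subspace is bounded at the origin, which automatically satisfies $w'(0)=0$. The remaining manipulations are straightforward boundary evaluations.
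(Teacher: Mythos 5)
Your proposal is correct and follows essentially the same strategy as the paper: identify the radial nullspace of \eqref{eq: w} with constant multiples of $v_a(\cdot;u_0(0))$ and reduce the lemma to the identity $M'(u_0(0))=-v_a'(1;u_0(0))$. The only (minor) difference is that you obtain this identity by first writing $M(a)=-R(a)^2\,v'(R(a);a)$ and applying the chain rule, whereas the paper differentiates the mass integral directly and then integrates the linearized equation over $B_1$; both are the divergence theorem in disguise, and your boundary evaluations check out.
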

The condition of the lemma means that near $\rho_0$, the mass of a radial solution strictly changes 
as the density at the origin changes.  
						\begin{proof}
A direct computation yields
\begin{align}
M'(a)&=4\pi h^{-1}(v(R(a);a))R^2(a)R'(a) + \int_0^{R(a)}4\pi (h^{-1})'(v(s;a))v_a(s;a)s^2~ds\notag\\
&= \int_0^{R(a)}4\pi (h^{-1})'(v(s;a))v_a(s;a)s^2~ds
\end{align}
because $v(R(a);a)=0$ and $h^{-1}(0)=0$. 
In particular, putting $a=u_0(0)$, we have $R(a)=1$ 
so that 
\begin{equation}
M'(u_0(0)) = \int_0^1 4\pi (h^{-1})'(u_0(s))\ v_a(s;u_0(0))s^2~ds.
\end{equation}
On the other hand, for $a=u_0(0)$ the derivative  $v_a(\cdot;u_0(0))$ solves 
\begin{equation}
v_a'' +\frac2r v_a + 4\pi (h^{-1})'(u_0)v_a=0, \quad v_a(0,u_0(0))=1, \quad v_a'(0,u_0(0))=0.
\end{equation}
This is the same as the equation \eqref{eq: w} satisfied by $w$.  
Thus  $w$ must be a constant multiple of $v_a(\cdot;u_0(0))$. 
Integrating the equation $\Delta v_a + 4\pi (h^{-1})'(u_0)v_a=0$ on $B_1$, we get
\begin{equation}
v_a'(1;u_0(0))=-\int_0^1 4\pi (h^{-1})'(u_0(s))v_a(s;u_0(0))s^2~ds=-M'(u_0(0)). 
\end{equation}
If $M'(u_0(0))\ne 0$, together with the condition $w'(1)=0$, this  implies that $w$ must be the zero multiple of 
$v_a(\cdot;u_0(0))$.   So $w=0$. If $M'(u_0(0))=0$, then $w=v_a(\cdot; u_0(0))$ is a nontrivial radial solution to \eqref{eq: w homo}.  
\end{proof}

\begin{remark}
The mass function in this section is parametrized by the center value of $u$, while the mass function in the statement of Theorem \ref{main Euler theorem} is parametrized by the center value of $\rho$. It is easy to see that $M'$ is nonzero in one parametrization if and only if it is nonzero in the other, since $h'(s)>0$ for $s>0$.
\end{remark}

\subsection {Non-radial part of the Euler kernel}\label{sec: non-radial kernel}
In this subsection, we treat the non-radial part of $N(\L)$, thus concluding Theorem \ref{injectivity}.

\begin{lemma}  
All elements of the nullspace of $\L$ are radial.  \end{lemma}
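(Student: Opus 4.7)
The plan is to exploit the full rotation-equivariance of $\L$ to reduce triviality of $N(\L)$ to a family of one-dimensional problems. Because the weights $u_0'(|x|)/|x|$, $\rho_0'(|y|)/|y|$ and $k(\rho_0)(\cdot)$ in \eqref{def: L} depend only on $|x|$ or $|y|$, $\L$ satisfies $\L(\xi\circ R) = (\L\xi)\circ R$ for every $R\in SO(3)$, so every spherical-harmonic projection of a $\xi\in X\cap N(\L)$ again lies in $N(\L)$. Since $X$ consists of axisymmetric functions even in $x_3$, the only nontrivial projections are axisymmetric modes $f_\ell(|x|)\,P_\ell(\cos\theta)$ with $\ell$ even and $\theta=\angle(x,e_3)$. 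The $\ell=0$ mode is the radial case settled in \Cref{lem: w radial uniqueness}, so I need only rule out nontrivial modes with even $\ell\ge 2$.

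Fix $\ell\ge 2$ even. Orthogonality of $P_\ell$ against constants kills both the $-1/|y|$ piece in the integral and the entire $\int(\rho_0'/|y|)\xi\,dy$ in the mass-constraint term of \eqref{def: L}, so $\L(f_\ell P_\ell)=0$ reduces to
\[
\frac{u_0'(r)}{r}\,f_\ell(r)\,P_\ell(\cos\theta)=\int_{B_1}\frac{\rho_0'(|y|)}{|y|}\,f_\ell(|y|)\,P_\ell(\cos\theta_y)\,\frac{1}{|x-y|}\,dy,\qquad x\in B_1,
\]
with $r=|x|$. Setting $g(r):=u_0'(r)\,f_\ell(r)/r$ and letting $W(x)$ denote the right-hand side, the multipole expansion makes $W$ a decaying axisymmetric harmonic $A_\ell|x|^{-\ell-1}P_\ell(\cos\theta)$ on $\real^3\setminus\overline{B_1}$; matching $W$ and $\partial_r W$ across $\partial B_1$ forces the Robin condition $g'(1)=-(\ell+1)\,g(1)$. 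Applying $-\Delta$ inside $B_1$ yields the singular radial ODE
\[
g''(r)+\frac{2}{r}\,g'(r)-\frac{\ell(\ell+1)}{r^2}\,g(r)+4\pi\,\frac{\rho_0'(r)}{u_0'(r)}\,g(r)=0,\qquad 0<r<1,
\]
and the regularity of $\xi\in X$ forces $g(0)=0$ via $f_\ell(r)=O(r^\ell)$ near the origin.

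To rule out nontrivial $g$, multiply the ODE by $r^2 g$, integrate by parts on $(0,1)$, and substitute the Robin condition at $r=1$ to obtain the quadratic-form identity
\[
Q_\ell(g):=\int_0^1 r^2(g')^2\,dr+\ell(\ell+1)\int_0^1 g^2\,dr+(\ell+1)\,g(1)^2-4\pi\int_0^1 \frac{\rho_0'}{u_0'}\,g^2\,r^2\,dr=0.
\]
The crucial comparison is with the $\ell=1$ problem, used purely as an auxiliary operator (not as a mode of $\xi$). By translation invariance of \eqref{eq: gLane-Emden}, every $\partial_i u_0$ satisfies $\Delta v+4\pi(h^{-1})'(u_0)v=0$, so its radial profile $u_0'(r)$ solves the above ODE with $\ell$ replaced by $1$; evaluating $\Delta u_0=-4\pi\rho_0$ at $r=1$ with $\rho_0(1)=0$ gives $u_0''(1)=-2\,u_0'(1)$, which is exactly the $\ell=1$ Robin condition. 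Since $u_0'<0$ on $(0,1]$ by \Cref{lem: u_0}, $u_0'$ is nodeless, and a Perron-Frobenius argument for the singular radial Schr\"odinger-type operator at $r=0$ identifies $u_0'$ as the ground state with eigenvalue zero, yielding $Q_1(\varphi)\ge 0$ on every admissible $\varphi$ with $\varphi(0)=0$. Then
\[
Q_\ell(g)-Q_1(g)=(\ell-1)\Bigl[(\ell+2)\int_0^1 g^2\,dr+g(1)^2\Bigr]
\]
is strictly positive for $g\not\equiv 0$ and $\ell\ge 2$, so the relation $Q_\ell(g)=0$ forces $g\equiv 0$, hence $f_\ell\equiv 0$.

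The two places where I expect the real work to live are (i) cleanly extracting the ODE and Robin condition from the integral equation by multipole expansion and trace matching across $\partial B_1$, and (ii) justifying that $u_0'$ is the ground state of the singular $\ell=1$ operator, which requires combining its nonvanishing on $(0,1]$ from \Cref{lem: u_0} with a Perron-Frobenius argument handling the regular-singular point at $r=0$. Everything else is bookkeeping with Legendre coefficients and one integration by parts.
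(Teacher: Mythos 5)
Your proof is correct, but it takes a genuinely different route from the paper's. The paper also projects the kernel equation onto spherical harmonics, notes that the mass-constraint term and the $-1/|y|$ term drop out for $l\ge 1$, and divides by $u_0'$; but it then concludes by a strong maximum principle plus a Hopf-type boundary argument applied to $\psi_{lm}=\varphi_{lm}/u_0'$, reading the signs of $\psi_{lm}(1)$ and $\psi_{lm}'(1)$ off the exterior decay $\varphi_{lm}=C_{lm}r^{-(l+1)}$ and using $u_0'(1)<0$ to reach a contradiction for every $l\ge1$. You instead encode the exterior decay as the Robin condition $g'(1)=-(\ell+1)g(1)$, pair the mode ODE with $r^2g$ to get the energy identity $Q_\ell(g)=0$, and compare with the $\ell=1$ form whose zero mode is the translation mode $u_0'$. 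Both proofs rest on the same two structural facts (exterior multipole decay and nodelessness of $u_0'$), but yours turns them into an eigenvalue comparison rather than a pointwise argument; a side benefit is that it isolates exactly where the argument would fail at $\ell=1$ (the translation mode), which is harmless because $X$ contains only even-in-$x_3$, hence even-$\ell$, modes. The one step you flag, $Q_1\ge0$, is most cleanly closed not by an abstract Perron--Frobenius argument but by the ground-state substitution $\varphi=u_0'\eta$: since $v=u_0'$ solves the $\ell=1$ mode equation (this is precisely \eqref{eq: del u0'}) and satisfies the $\ell=1$ Robin condition $u_0''(1)+2u_0'(1)=-4\pi\rho_0(1)=0$, one computes $Q_1(u_0'\eta)=\int_0^1 r^2(u_0')^2(\eta')^2\,dr\ge0$ with all boundary terms vanishing. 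Two small points of hygiene: near the origin you only have $f_\ell(r)=O(r^2)$ from $\|\xi\|_X<\infty$ (not $O(r^\ell)$, since $\xi$ is merely $C^1$), but together with $u_0'(r)=O(r)$ this gives $g=u_0'f_\ell/r=O(r^2)$, which is all that is needed for $g(0)=0$, for the boundary terms at $r=0$, and for $\eta=g/u_0'$ to be admissible; and the $C^1$ matching of $W$ across $\partial B_1$ that underlies the Robin condition is justified because the source $\rho_0'(|y|)\xi(y)/|y|$ is $C^\alpha$ and vanishes on $\partial B_1$, exactly as the paper uses to get $\varphi\in C^{2,\alpha}(\real^3)$.
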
 

\begin{proof}
Let $\L\xi=0,\ \xi\in X$.  Let $Y_{lm}(\theta)$ be the standard spherical harmonics on $\mathbb{S}^2$ 
where $l=0,1,\dots ;m=-l,\dots,l$.  
For any function $\eta(x)$, we denote  its $lm$-component by  
$\eta_{lm}(r) = \langle\eta,Y_{lm}\rangle = \int_{\mathbb{S}^2} \eta(r\theta) Y_{lm}(\theta) dS_\theta,  $
where we write $r=|x|$ and $\theta=\frac xr$.  

It suffices to prove that $\xi_{lm}=0$ for all $l\ge1$.  
The equation \eqref{def: L} satisfied by $\xi$ is 
\eqn \label{xi phi eqn}
0 =  \frac{u_0'(x)}{|x|} \xi(x)   - \varphi(x)  
 +\frac{k(\rho_0)(x) - k(\rho_0)(0)}{M}  \int_{B_1} \frac{\rho_0'(y)}{|y|} \xi(y)~dy,  \eeqn
where 
\eqn\label{def: varphi}
\varphi(x) = \int_{B_1}  \frac{\rho_0'(y)}{|y|} \xi(y) \left(\frac1{|x-y|}-\frac1{|y|}\right)    ~ dy.  \eeqn
						Since 
$\rho_0\in C^{1,\alpha}(\real^3)$ by Lemma \ref{lem: rho_0 C1alpha}, 
and $\rho_0'=0$ on $\partial B_1$, we have $\varphi\in C^{2,\alpha}(\real^3)$ and
\eqn\label{eq: delta varphi}
\Delta\varphi (x)= -4\pi\frac{\rho_0'(x)}{|x|} \xi(x),
\eeqn
where the right side is taken to be zero outside $B_1$. 
Note that $\varphi_{lm}$ is bounded, radial and in $C^{2,\alpha}$ on $\real^3\setminus\{0\}$. 
We integrate \eqref{eq: delta varphi} against $Y_{lm}$ on $\mathbb{S}^2$. The left side becomes 
$$
\langle \Delta \varphi, Y_{lm}\rangle 
= \langle \partial_r^2\varphi +\frac{2}{r}\partial_r \varphi 
+\frac{1}{r^2}\Delta_{\mathbb{S}^2}\varphi, Y_{lm}\rangle $$ 
$$ 
= (\partial_r^2+\frac2r\partial_r)\varphi_{lm}
+\frac{1}{r^2}\langle \varphi, \Delta_{\mathbb{S}^2}Y_{lm}\rangle 
= \Delta\varphi_{lm}-\frac{l(l+1)}{r^2}\varphi_{lm}.$$ 
						For $r=|x| > 1$ the right side vanishes.
For $0<|x|<1$, the right side is 
$-4\pi \frac{\rho_0'}{|x|}\langle \xi, Y_{lm}\rangle. $ 
On the other hand, if we integrate \eqref{xi phi eqn} against $Y_{lm}$ with $l\ge1$, we get 
$\frac{u_0'}{|x|} \xi_{lm} = \varphi_{lm}$.  
Hence
$-4\pi \frac{\rho_0'}{|x|}\xi_{lm} = -4\pi \frac{\rho_0'}{u_0'}\varphi_{lm}. $
Thus 
$\varphi_{lm}$ satisfies the second order equation 
\eqn\label{eq: ode varphi lm}
\Delta \varphi_{lm}-\frac{l(l+1)}{r^2}\varphi_{lm} = \begin{cases}-4\pi\frac{\rho_0'}{u_0'}\varphi_{lm} &\text{if }0<|x|<1,\\ 0 &\text{if }|x|\ge 1. \end{cases}
\eeqn
For $r>1$ it follows that $\varphi$ is a linear combination of $r^l$ and $r^{-(l+1)}$.  
Being bounded at infinity, it must be that 
\eqn  \label{varphi infinity}
\varphi_{lm}(r) = C_{lm}r^{-(l+1)}  \eeqn
when $|x|\ge 1$ for some constant $C_{lm}$.

Now consider $r<1$.  
Recall that $u_0$ and $\rho_0$ are related by
$ u_0'' + \frac{2}{r}u_0' + 4\pi \rho_0 = \Delta u_0 +4\pi \rho_0 = 0. $  
Taking the radial derivative, we get
\eqn\label{eq: del u0'}
\Delta u_0'-\frac{2}{r^2}u_0' +4\pi \rho_0'=0.
\eeqn
In terms of another auxiliary function  
$\psi_{lm}(r) = \frac{\varphi_{lm}(r)}{u_0'(r)} = \frac {\xi_{lm}(r)}{r} $, 
\eqref{eq: ode varphi lm} can be written for $0<r=|x|<1$ as 
\eqn\label{eq: del psi lm 2}
\Delta (u_0' \psi_{lm} )-\frac{l(l+1)}{r^2}u_0'\psi_{lm}=-4\pi \rho_0'\psi_{lm}.
\eeqn
Dividing \eqref{eq: del psi lm 2} by $u_0'$ and using \eqref{eq: del u0'}, we see that 
$\psi_{lm}$ satisfies the equation 
\eqn\label{eq: del psi lm 1}
\Delta \psi_{lm}+2\frac{u_0''}{u_0'}\psi_{lm}' + \frac{2-l(l+1)}{r^2}\psi_{lm}=0
\eeqn
for $0<|x|< 1$. 
It suffices to prove that $\psi_{lm}=0$ for all $l\ge1$.  

Since $\left|\frac{\xi_{lm}(r)}{r}\right|\le C\|\xi\|_X r$, we know that $\psi_{lm}\in C(\overline B_1)$ and $\psi_{lm}(0)=0$.  
Note that the numerator in the last term of \eqref{eq: del psi lm 1} is $2-l(l-1)\le0$.  
Thus \eqref{eq: del psi lm 1} allows us to apply the strong maximum principle for $0<r<1$.  
There is no interior positive maximum of $\psi_{lm}$.  
Let $\Psi$ be the maximum of $\psi_{lm}$ over $\overline B_1$.  

Suppose that  $\Psi>0$.  Then the maximum occurs at $r=1$.  Furthermore, $\psi_{lm}'(1) > 0$.  
But from the definition of $\psi_{lm}$ and \eqref{varphi infinity}, we have 
$$
0< \Psi = \psi_{lm}(1) = \frac {C_{lm}} {u_0'(1)}, \qquad  
0 <  \psi_{lm}'(1) = \frac{C_{lm} (1-l)} {u_0'(1)}.  $$
We know that $u_0'(1) < 0$.  From these two inequalities we conclude that  $C_{lm}<0$ 
and $1-l>0$, which is a contradiction.   Therefore $\Psi\le0$ and $\psi_{lm}\le0$.  

There is also no interior negative minimum of $\psi_{lm}$ and by similar reasoning we deduce that 
$\psi_{lm} \ge0$.  Therefore $\psi_{lm} = 0$.  This completes the proof.  
\end{proof}

\subsection{Compactness}\label{sec: compactness}
\begin{lemma}\label{lem: structure L}
$\L: X\to X$ has the form $\L=J+K$ where $J$ is an isomorphism, and $K$ is a compact operator.
\end{lemma}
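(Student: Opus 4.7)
The plan is to separate \eqref{def: L} into its three natural terms and declare the multiplication term to be the isomorphism. Define $J\xi(x) = \frac{u_0'(|x|)}{|x|}\,\xi(x)$, and let $K_1, K_2$ be the two integral terms in \eqref{def: L}, so that $\L = J + K_1 + K_2$. I will show: (a) $J$ is an isomorphism of $X$; (b) $K_2$ has rank one; (c) $K_1$ is compact. Then $K := K_1 + K_2$ is compact and the decomposition $\L = J + K$ is as desired.

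For (a), set $f(x) = u_0'(|x|)/|x|$. By \Cref{lem: u_0}, $u_0' < 0$ on $(0,1]$; and since $u_0(0) > 0$ keeps $h^{-1}$ smooth there, bootstrapping $\Delta u_0 = -4\pi h^{-1}(u_0)$ yields $u_0 \in C^\infty$ near the origin. Hence $f$ is continuous on $\overline{B_1}$ with $f(0)=u_0''(0)<0$, bounded away from zero. The estimate that places $J\xi$ in $X$ is $|\nabla f(x)|\,|x| = |u_0''(|x|) - u_0'(|x|)/|x||$, which stays bounded as $|x|\to 0$ because both terms tend to $u_0''(0)$; combined with the elementary bound $|\xi(x)|\le \tfrac12\|\xi\|_X|x|^2$ obtainable from the definition of $X$, this gives $\|J\xi\|_X \le C\|\xi\|_X$. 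The same reasoning applied to $1/f$ produces a bounded inverse, and since $f$ is radial the axisymmetry, $x_3$-evenness, and vanishing at $0$ are preserved. For (b), $k(\rho_0)(x) - k(\rho_0)(0)$ is radial with gradient $-\rho_0 h''(\rho_0)\,\rho_0'(|x|)\,\hat x = O(|x|)$ near the origin (since $\rho_0$ is smooth there with $\rho_0'(0)=0$), so it belongs to $X$; the linear functional $\xi \mapsto \int_{B_1}\rho_0'(|y|)\xi(y)/|y|\,dy$ is evidently bounded on $X$. Hence $K_2$ is a continuous rank-one operator, therefore compact.

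The substantive step is (c). Setting $a(y)=\rho_0'(|y|)/|y|$, I rewrite
\[
K_1\xi(x) = -\int_{B_1}\frac{a(y)\xi(y)}{|x-y|}\,dy + c_\xi,
\]
where $c_\xi := \int_{B_1} a(y)\xi(y)/|y|\,dy$ absorbs the second term. Because $\rho_0 \in C^{1,\alpha}(\R^3)$ by \Cref{lem: rho_0 C1alpha}, with $\rho_0'$ vanishing at $0$ (smoothness of $\rho_0$ near the origin) and at $\partial B_1$ (since $(h^{-1})'(0)=0$), the coefficient $a$ lies in $C^\alpha$ with compact support in $\overline{B_1}$; so for $\|\xi\|_X \le 1$ the product $a\xi$ is uniformly in $C^\alpha$, and standard Newtonian potential theory supplies a uniform $C^{2,\alpha}(\overline{B_2})$ bound on $K_1\xi$. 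Arzel\`a--Ascoli then extracts a subsequence converging in $C^2(\overline{B_1})$. The main obstacle, and the place where the unusual $X$-norm comes into play, is upgrading $C^2$-convergence to $X$-convergence. The key identity is the symmetry cancellation
\[
\nabla K_1\xi(0) = -\int_{B_1}\frac{a(y)\xi(y)\,y}{|y|^3}\,dy = 0,
\]
where axisymmetry of $a\xi$ about the $x_3$-axis kills the $y_1, y_2$ components and $x_3$-evenness kills the $y_3$ component. Once $\nabla K_1\xi(0)=0$, Taylor's theorem yields $|\nabla K_1\xi(x)|/|x| \le \sup_{\overline{B_1}}|D^2 K_1\xi|$, so a $C^2$-Cauchy sequence for $\{K_1\xi_n\}$ is automatically $X$-Cauchy. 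This gives compactness of $K_1$ and completes the decomposition.
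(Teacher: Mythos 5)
Your proof is correct and follows essentially the same route as the paper: the same splitting into the multiplication operator $J$, the rank-one term, and the Newtonian-potential term, with Schauder estimates and the compact embedding $C^{2,\alpha}\hookrightarrow C^2$ doing the work. The only difference is that you make explicit the symmetry cancellation $\nabla K_1\xi(0)=0$ needed to pass from $C^2$-bounds to the $X$-norm, a point the paper's proof uses but states only implicitly via ``$\|A\xi\|_X\le C\|A\xi\|_{C^2(\overline{B_1})}$''.
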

\begin{proof}
In fact, let
\eqn
[J\xi](x) = \frac{u_0'(x)}{|x|} \xi(x),
\eeqn
and
\eqn\label{def: K}
[K\xi](x) =  -\int_{B_1}  \frac{\rho_0'(y)}{|y|} \xi(y) \left(\frac1{|x-y|}-\frac1{|y|}\right)    ~ dy+\frac1M [k(\rho_0)(x) - k(\rho_0)(0)]  \int \frac{\rho_0'(y)}{|y|} \xi(y)~ dy.
\eeqn
To show $J$ is bounded, we write the spatial derivative of $J\xi$ as
\eqn
\partial_i [J\xi](x)  = \left[|x|\partial_i\left(\frac{u_0'(x)}{|x|}\right)\frac{\xi(x)}{|x|^2}+\frac{u_0'(x)}{|x|}\frac{\partial_i\xi(x)}{|x|}\right]|x|. 
\eeqn
We need the terms in the square brackets to be bounded by $C\|\xi\|_X$. 
Since $\frac{\xi(x)}{|x|^2}$ and $\frac{\nabla \xi(x)}{|x|}$ are both bounded by $C\|\xi\|_X$, 
we only need $|x|\partial_i\left(\frac{u_0'(x)}{|x|}\right)$ and $\frac{u_0'(x)}{|x|}$ 
to be bounded in $B_1$. 
This is true because  $u_0'(0)=0$. On the other hand, 
$[J^{-1}\xi](x) = \frac{|x|}{u_0'(x)}\xi(x).$
As above, in order for $J^{-1}$ to be bounded, we need $|x|\partial_i\left(\frac{|x|}{u_0'(x)}\right)$ and $\frac{|x|}{u_0'(x)}$ to be bounded in $B_1$. This is again easy to verify with the help of Lemma \ref{lem: u_0}. The key facts are that $u_0'(0)=0$, $u_0''(0)\ne 0$, and $u_0'(r)<0$ for all $0<r\le 1$.

Now we write $K=A+B$, where $A$ and $B$ are the two terms in \eqref{def: K}. The operator $B$ has rank one, 
hence is compact once we verify that it maps $X$ to $X$. 
To that end we just need $k(\rho_0)(x) - k(\rho_0)(0)$ to belong to $X$.  We compute 
$$
\partial_i k(\rho_0) = k'(\rho_0)\partial_i \rho_0 
= -\rho_0h''(\rho_0)\partial_i\rho_0 
= -\rho_0\frac{h''(\rho)}{h'(\rho_0)}\partial_iu_0.  $$
By  \eqref{eq: lim h''} and the fact that $|\partial_i u_0(x)|\le C|x|$, 
we see that $|\pa_ik(\rho_0)|$  is bounded by $C|x|$ on $B_1$.
We now show compactness of $A$.
Recall that $\frac{\rho_0'(y)}{|y|}\in C^{\alpha}(\overline{B_1})$ 
and $\|\xi\|_{C^1(\overline{B_1})}\le C\|\xi\|_X$ so that 
\eqn
\|{\rho_0'(y)}\xi(y)/|y|\|_{C^{\alpha}(\overline{B_1})}\le C\|\xi\|_X.
\eeqn
But the standard Schauder estimates  assert that 
\eqn
\|A\xi\|_{C^{2,\alpha}(\overline{B_1})}\le C\|{\rho_0'(y)}\xi(y)/|y|\|_{C^{\alpha}(\overline{B_1})} \eeqn
since $\rho_0'$ vanishes on the boundary of $B_1$. 
Because $C^{2,\alpha}(\overline{B_1})$ is compactly embedded in $C^2(\overline{B_1})$  
and because $A\xi(0)=0$ and $\|A\xi\|_X\le C\|A\xi\|_{C^2(\overline{B_1})}$, 
we conclude that $A:X\to X$ is compact. 
\end{proof}

Invoking the Fredholm alternative for $J^{-1}\L$ and combining the last three lemmas, 
we deduce that $\L$ is an isomorphism. By the discussion in Section \ref{sec: construction}, this completes the proof of Theorem \ref{main Euler theorem}. 
It only remains to prove the Fr\'echet differentiability of $\F$.




\subsection{Realization of the mass condition}
Theorem \ref{injectivity} shows that injectivity of $\L$ is equivalent to the mass condition $M'(u_0(0))\ne 0$. In this subsection, we prove for the two cases in Theorem \ref{sec Euler theorem} this condition is satisfied.

First, we consider the power laws.
\begin{lemma}
Suppose $p(s)=s^{\gamma}$, $\gamma\in(\frac65, 2)$. Then $M'(u_0(0))\ne 0$ if and only if $\gamma\ne \frac43$.
\end{lemma}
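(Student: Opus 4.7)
The plan is to exploit the scaling invariance of the Lane--Emden equation in \eqref{eq: sigma} when $p(s)=s^\gamma$, so as to pin down the mass $M(a)$ as an explicit power of $a$ and then read off when its derivative vanishes.

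First I would record the explicit form of the enthalpy and its inverse. Since $p(\rho)=\rho^\gamma$, one has $h(\rho)=\frac{\gamma}{\gamma-1}\rho^{\gamma-1}$, hence $h^{-1}(u)=C_\gamma u^{1/(\gamma-1)}$ with $C_\gamma=\left(\frac{\gamma-1}{\gamma}\right)^{1/(\gamma-1)}$. Then \eqref{eq: sigma} becomes
\begin{equation}
v'' + \tfrac{2}{r}v' + 4\pi C_\gamma v^{1/(\gamma-1)}=0, \qquad v(0;a)=a,\quad v'(0;a)=0,
\end{equation}
which is purely a power nonlinearity and therefore scale invariant: if $v(r;a)$ solves this, then $w(r):=\lambda^{\alpha}v(\lambda r;a)$ also solves it provided $\lambda^{\alpha+2}=\lambda^{\alpha/(\gamma-1)}$, which forces
\begin{equation}
\alpha=\frac{2(\gamma-1)}{2-\gamma}.
\end{equation}
Since $w(0)=\lambda^\alpha a$, uniqueness of the IVP yields the identity
\begin{equation}\label{eq:scal}
v(r;\lambda^\alpha a)=\lambda^{\alpha}\,v(\lambda r;a).
\end{equation}

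Next I would extract from \eqref{eq:scal} the scaling of the support radius and of the mass. Evaluating \eqref{eq:scal} at $r=R(a)/\lambda$ shows $v(R(a)/\lambda;\lambda^\alpha a)=0$, so $R(\lambda^\alpha a)=R(a)/\lambda$. Using the substitution $s=\lambda r$ in \eqref{mass function},
\begin{align}
M(\lambda^\alpha a)
&= 4\pi C_\gamma \int_0^{R(a)/\lambda} v(r;\lambda^\alpha a)^{1/(\gamma-1)} r^2\,dr \notag\\
&= 4\pi C_\gamma \lambda^{\alpha/(\gamma-1)} \int_0^{R(a)/\lambda} v(\lambda r;a)^{1/(\gamma-1)} r^2\,dr
= \lambda^{\alpha/(\gamma-1)-3}\,M(a).
\end{align}
The scaling relation $\alpha+2=\alpha/(\gamma-1)$ simplifies the exponent to $\alpha-1$, giving $M(\lambda^\alpha a)=\lambda^{\alpha-1}M(a)$. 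Writing $b=\lambda^\alpha a$ this reads
\begin{equation}
M(b)=\left(\tfrac{b}{a}\right)^{(\alpha-1)/\alpha} M(a),
\end{equation}
so $M$ is a nonzero constant multiple of $a^{(\alpha-1)/\alpha}$ on the relevant interval of $a$'s. In particular $M'(u_0(0))\neq 0$ if and only if the exponent $(\alpha-1)/\alpha$ is nonzero, i.e. $\alpha\neq 1$.

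Finally I would check for which $\gamma$ the exponent $\alpha=\frac{2(\gamma-1)}{2-\gamma}$ equals $1$: solving $2(\gamma-1)=2-\gamma$ gives $\gamma=\tfrac{4}{3}$. Hence $M'(u_0(0))\neq 0$ iff $\gamma\neq \tfrac{4}{3}$, which is exactly what is claimed. The only mild obstacle is checking that the manipulation above respects the hypothesis $\gamma\in(\tfrac65,2)$ so that the exponents are well defined and $h^{-1}$ is smooth enough to legitimize differentiation in $a$, but this is immediate from $\gamma>1$ and the standing assumptions already invoked in Lemma \ref{lem: u_0} and Proposition \ref{thm: existence of Lane-Emden stars}; no further regularity work is needed since the scaling identity \eqref{eq:scal} is an equality of classical solutions.
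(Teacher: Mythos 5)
Your proof is correct. The key identity you use, $v(r;\lambda^{\alpha}a)=\lambda^{\alpha}v(\lambda r;a)$ with $\alpha=\tfrac{2(\gamma-1)}{2-\gamma}$, is exactly the scaling relation \eqref{eq: v scaling} that the paper's proof also starts from, so the underlying idea is the same; but the way you extract the conclusion is genuinely different. The paper differentiates the scaling identity in $R$ at $R=1$, then in $r$ at $r=1$, uses the ODE to eliminate $v''(1)$, and feeds the result into the relation $M'(u_0(0))=-v_a'(1;u_0(0))$ established in the proof of Lemma \ref{lem: w radial uniqueness} — an infinitesimal argument at the level of the variational equation. You instead apply the scaling directly to the mass integral \eqref{mass function}, obtaining the closed-form law $M(b)=c\,b^{(\alpha-1)/\alpha}$ with $c>0$, from which the dichotomy $\alpha\neq 1 \Leftrightarrow \gamma\neq\tfrac43$ is immediate. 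Your route is more transparent and yields more (the explicit homogeneity of the mass in the central enthalpy, and hence the "white dwarf" mass invariance at $\gamma=\tfrac43$ as a global statement, not just a vanishing derivative); the paper's route stays within the framework it needs anyway, since $v_a'(1)$ is precisely the quantity appearing in Lemma \ref{lem: w radial uniqueness}. Two small points you should make explicit: (i) the $R(a)$ in \eqref{mass function} is defined via the implicit function theorem near $(1,u_0(0))$, so you need to note that it coincides with the scaled first zero $R(a)/\lambda$ for $a$ near $u_0(0)$ — immediate from $v'(1;u_0(0))=u_0'(1)\neq0$ and uniqueness in the IFT; and (ii) the constant $c=M(a)a^{-(\alpha-1)/\alpha}$ is nonzero because $M(a)=M>0$, which is what makes the "only if" direction go through.
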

\begin{proof}
We begin by noticing that $p(s)=s^{\gamma}$, $\gamma\in (\frac65,2)$ satisfies \eqref{assum: 2}, \eqref{assum: 3} and \eqref{assum: 4}. As a consequence the radial density $\rho_0$, and the function $u_0$ exist as are constructed in Section \ref{sec: radial Euler}. By the proof of Lemma \ref{lem: w radial uniqueness}, $M'(u_0(0))=-v_a'(1;u_0(0))$. To find out about $v_a'(1;u_0(0))$, we compute $h^{-1}(s)=\left(\frac{\gamma-1}{\gamma}s\right)^{\frac{1}{\gamma-1}}$. By \eqref{eq: sigma}, $v(r;a)$ solves
\eqn\label{eq: v power ode}
v''+\frac2r v' +4\pi\left(\frac{\gamma-1}{\gamma}v\right)^{\frac{1}{\gamma-1}}=0,\quad v(0)=a, v'(0)=0.
\eeqn
By a simple scaling, we see that 
\eqn\label{eq: v scaling}
R^{\frac{2(\gamma-1)}{2-\gamma}}v(Rr;a) = v(r;R^{\frac{2(\gamma-1)}{2-\gamma}}a).
\eeqn
In fact, the two sides of \eqref{eq: v scaling} solve the same ODE with the same initial data. We differentiate \eqref{eq: v scaling} with respect to $R$ and set $R=1$, $a=u_0(0)$ to get
\eqn
\frac{2(\gamma-1)}{2-\gamma}v(r;u_0(0))+ rv'(r;u_0(0))=u_0(0)\frac{2(\gamma-1)}{2-\gamma}v_a(r;u_0(0)).
\eeqn
Now taking the $r$ derivative and setting $r=1$, we get
\begin{align}
u_0(0)\frac{2(\gamma-1)}{2-\gamma}v_a'(1;u_0(0)) &= \frac{2(\gamma-1)}{2-\gamma}v'(1;u_0(0))+ v'(1;u_0(0))+v''(1;u_0(0))\notag \\
&=\frac{2(\gamma-1)}{2-\gamma}v'(1;u_0(0))+ v'(1;u_0(0))-2v'(1;u_0(0))\notag\\
&=\frac{3\gamma-4}{2-\gamma}v'(1;u_0(0)), \label{eq: 4/3}
\end{align}
where we used \eqref{eq: v power ode} to write $v''$ in terms of $v'$. Since $v(r;u_0(0))=u_0(r)$, and $u_0(1)=0$, it follows that $v'(1;u_0(0))=u_0'(1) \ne 0$. Noticing that $u_0(0)\ne 0$, $\frac{2(\gamma-1)}{2-\gamma}\ne 0$ for $\gamma \in (\frac65, 2)$, we get $v_a'(1;u_0(0))\ne 0$ if and only if $\gamma \ne \frac43$ from \eqref{eq: 4/3}.
\end{proof}

The case with condition \eqref{p extra cond} is a lot more difficult to prove, because we no longer have the scaling symmetry provided by an exact power law. We first restate \eqref{p extra cond} in terms of $h^{-1}$.

\begin{lemma}\label{lem: h extra cond}
\eqref{p extra cond} implies
\eqn\label{h inv extra cond}
h^{-1}(s)< s(h^{-1})'(s) \le 2h^{-1}(s) \text{ for }s>0.
\eeqn
\end{lemma}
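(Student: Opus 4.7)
The plan is to perform a direct change of variables from $\rho$ to $s = h(\rho)$, turning the stated inequality on $p$ and $h$ into the claimed inequality on $h^{-1}$ and its derivative. There is essentially no analytic obstacle: everything follows from the relation $h'(\rho)=p'(\rho)/\rho$ (definition \eqref{def: h from p}) and the inverse function rule.

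First I would rewrite the hypothesis \eqref{p extra cond} in terms of $h$ alone. Since $h'(\rho)=p'(\rho)/\rho$, we have $p'(\rho)=\rho\,h'(\rho)$, so \eqref{p extra cond} is equivalent to
\begin{equation}
\rho\,h'(\rho) < h(\rho) \le 2\rho\,h'(\rho) \quad \text{for all } \rho>0.
\end{equation}

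Next I would substitute $s=h(\rho)$, which is legitimate since $h$ is strictly increasing on $[0,\infty)$ with $h(0)=0$ and range $[0,\infty)$ (as established just before Lemma \ref{lem: C1alpha h inv}). Under this substitution $\rho=h^{-1}(s)$, and by the inverse function rule
\begin{equation}
(h^{-1})'(s) = \frac{1}{h'(h^{-1}(s))} = \frac{1}{h'(\rho)}.
\end{equation}
Multiplying by $s=h(\rho)$ gives $s\,(h^{-1})'(s) = h(\rho)/h'(\rho)$. Dividing the displayed chain of inequalities through by $h'(\rho)>0$ then yields
\begin{equation}
h^{-1}(s) < \frac{h(\rho)}{h'(\rho)} \le 2h^{-1}(s),
\end{equation}
which is precisely \eqref{h inv extra cond}.

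Since each step is an equivalence, this also proves the converse implication, though only one direction is needed for the lemma. The only point to check is that the strict inequality on the left is preserved under the substitution, which is immediate because $h'(\rho)>0$ for $\rho>0$. No estimate or regularity argument is required beyond what has already been established.
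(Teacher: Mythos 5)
Your proof is correct and is essentially the same change-of-variables argument as the paper's: the paper sets $t=h^{-1}(s)$ and rewrites \eqref{h inv extra cond} as $th'(t)<h(t)\le 2th'(t)$, which is \eqref{p extra cond} since $th'(t)=p'(t)$, while you run the identical substitution in the forward direction. No gap; nothing further is needed.
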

\begin{proof}
Denote $h^{-1}(s)$ by $t$. \eqref{h inv extra cond} can be rewritten as
\eqn
th'(t)<h(t)\le 2 th'(t),
\eeqn
which is \eqref{p extra cond} by the definition of $h$ in terms of $p$.
\end{proof}

To simplify equation, we make a change of variable $v(r)=\frac{w(r)}{r}$. If $v$ satisfies \eqref{eq: sigma}, then $w$ satisfies
\begin{equation}\label{eq: w g}
w''(r) + 4\pi g(w,r)=0, \quad w(0)=0, w'(0) = a,
\end{equation}
where $g(w,r)=rh^{-1}(\frac wr)$. Denote the solution to \eqref{eq: w g} by $w(r;a)$, then $v(r;a)=\frac{w(r;a)}{r}$, and $v_a'(1;u_0(0))=w_a'(1;u_0(0))-w_a(1;u_0(0))$. By the proof of Lemma \ref{lem: w radial uniqueness}, this is equal to $-M'(u_0(0))$. Hence we are left to show $w_a'(1;u_0(0))-w_a(1;u_0(0))\ne 0$.

To that end, we need the following Lemma, which extends Theorem 2.4 in \cite{ni1985uniqueness}.
\begin{lemma}\label{lem: Ni Nussbaum}
Suppose $g:\mathbb{R}\times (0,\infty) \to \mathbb{R}$ is a $C^1$ map such that $g(w,r)>0$ when $w>0$ and $g(0,r)=0$ when $r>0$. Let $w(r;a)$ be the solution to the initial value problem
\begin{equation}
w''(r)+g(w,r)=0, \quad w(0)=0, w'(0)=a,
\end{equation} 
and $w(1;a_0)=0$, $w(r;a_0)>0$ for $0<r<1$. In other words, $w(r;a_0)$ is a positive solution to the boundary value problem
\begin{equation}\label{eq: w ode}
w''(r)+g(w,r)=0, \quad w(0)=0, w(1)=0.
\end{equation} 
If we further assume the following conditions on $g$:
\begin{equation}
\lim_{r\to 0^+}g(w(r;a),r)=0 \text{ for } a \text{ close to }a_0.
\end{equation}
\begin{equation}\label{cond: g 1}
g-wg_w<0 \text{ for }w>0 , 0< r\le 1.
\end{equation}
\begin{equation}\label{cond: g 2}
g_r\le 0 \text{ for }w>0 , 0< r\le 1.
\end{equation}
\begin{equation}\label{cond: g 3}
rg_r+3g -wg_w\ge 0 \text{ for }w>0, 0< r\le 1.
\end{equation}
Then $w_a'(1,a_0)-w_a(1,a_0)< 0$.
\end{lemma}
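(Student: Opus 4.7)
The plan is to prove $\phi'(1) - \phi(1) < 0$ where $\phi(r) := w_a(r;a_0)$, via a Pohozaev-type multiplier identity coupled with sign analysis, adapting Ni--Nussbaum's treatment to the explicit $r$-dependence of $g$. Differentiating the initial-value problem in $a$ at $a_0$ shows $\phi$ satisfies
\[
\phi'' + g_w(w(r;a_0), r)\,\phi = 0, \qquad \phi(0) = 0, \qquad \phi'(0) = 1.
\]
Multiplying this equation by $rw'(r)$ and integrating by parts twice on $[0, 1]$ — using $w'' = -g$, $w''' = -g_w w' - g_r$, the conditions $w(0) = w(1) = 0$ and $g(0, 1) = 0$ (whence $w''(1) = 0$), and observing that the $rg_w w'$ contributions cancel — will yield the identity
\[
w'(1)\bigl[\phi'(1) - \phi(1)\bigr] \;=\; \int_0^1 \phi(r)\,(2g + rg_r)(w(r), r)\,dr.
\]
Since $w > 0$ on $(0, 1)$ with $w(1) = 0$ forces $w'(1) < 0$, the target inequality is equivalent to strict positivity of the right-hand integral.

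Two preparatory steps localize the analysis. First, rearranging \eqref{cond: g 3} gives $2g + rg_r \ge wg_w - g > 0$ by \eqref{cond: g 1}, so the weight is strictly positive on $(0, 1]$. Second, a Sturm comparison between $\phi'' + g_w\phi = 0$ and $w'' + (g/w)w = 0$ — valid since $g_w > g/w$ from \eqref{cond: g 1} — produces a first interior zero $\tau \in (0, 1)$ of $\phi$. The Wronskian
\[
U(r) := (rw')\phi'(r) - (rw')'(r)\phi(r), \qquad U(0) = 0, \qquad U'(r) = \phi(r)(2g + rg_r),
\]
where the derivative formula follows from $(rw')'' + g_w(rw') = -(2g + rg_r)$, then shows $\tau$ lies strictly past the unique maximum point $r_\ast$ of $w$ (where $w'(r_\ast) = 0$ and $w$ is strictly concave on $(0,1)$), since otherwise $U(\tau) = \tau w'(\tau)\phi'(\tau) \le 0$ would contradict $U(\tau) > 0$ (with $\phi'(\tau) < 0$ ruled in by ODE uniqueness).

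The delicate step is then to conclude $U(1) > 0$ despite the sign change of $\phi$ at $\tau \in (r_\ast, 1)$. The plan is to couple $U$ with a companion Wronskian built from $\Psi := rw' + w$. This $\Psi$ vanishes at $r=0$ and at a unique interior point $\sigma \in (r_\ast, 1)$, and satisfies $\Psi'' + g_w\Psi = -(3g + rg_r - wg_w) \le 0$ by \eqref{cond: g 3}; a combined sign analysis using both Wronskians will sharpen the location of $\tau$ to $\tau \ge \sigma$. In parallel, combining the identities produced by multiplying the linearized equation by $w$ and by $w'$ and matching the result against the Pohozaev identity above yields the supplementary identity
\[
\int_0^1 \bigl[g + (r - 1)g_r + wg_w\bigr]\phi(r)\,dr \;=\; a_0 \;>\; 0,
\]
whose integrand has strictly positive weight on $(0, 1]$ thanks to \eqref{cond: g 2} (which makes $(r-1)g_r \ge 0$ when $r \le 1$) together with \eqref{cond: g 1}. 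Since the two weights satisfy the pointwise ordering $W_1 - W_2 = (wg_w - g) + (-g_r) \ge 0$, this positive weighted integral will transfer, via the sign structure of $\phi$ on $(0,\tau)$ and $(\tau, 1)$, to the desired positivity $\int_0^1 \phi(2g + rg_r)\,dr > 0$, giving $U(1) > 0$ and hence $\phi'(1) - \phi(1) < 0$. This final transfer is the main obstacle: it is precisely where all three hypotheses \eqref{cond: g 1}, \eqref{cond: g 2}, \eqref{cond: g 3} must cooperate nontrivially, whereas the earlier stages are comparatively routine Pohozaev and Sturm calculations.
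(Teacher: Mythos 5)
Your setup is sound and runs closely parallel to the paper's: your Pohozaev identity $w'(1)[\phi'(1)-\phi(1)]=\int_0^1\phi\,(2g+rg_r)\,dr$ is exactly the Wronskian identity $W(rw',\phi)'=\phi\,(rg_r+2g)$ integrated over $[0,1]$; your supplementary identity $\int_0^1\phi\,[g+(r-1)g_r+wg_w]\,dr=a_0$ is the correct combination of the three Wronskian identities for $rw'$, $w'$ and $w$; and your localization of the first zero $\tau$ of $\phi$ past the maximum of $w$ is fine. The gap is in the final ``transfer'' step, which is the whole content of the lemma and which you yourself flag as the main obstacle. Knowing $\int_0^1\phi W_1\,dr=a_0>0$ with $W_1\ge W_2>0$ pointwise, $\phi>0$ on $(0,\tau)$ and $\phi<0$ on $(\tau,1)$ does \emph{not} imply $\int_0^1\phi W_2\,dr>0$: the inequality $\phi W_2\le\phi W_1$ holds exactly where $\phi>0$, i.e.\ precisely where you need a lower bound, so the best you can extract is $\int_0^1\phi W_2\,dr\ge a_0-\int_0^\tau\phi\,(W_1-W_2)\,dr$, whose right-hand side has no sign. (A toy example: $\phi=+1,-1$ and $W_1=10,1$, $W_2=1,1$ on the two halves of $[0,1]$ gives $\int\phi W_1>0$ but $\int\phi W_2=0$.) So the argument as proposed does not close. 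Moreover, you never actually prove that $\phi<0$ on all of $(\tau,1]$ --- your companion function $\Psi=rw'+w$ is aimed at locating $\tau$, not at pinning the sign of $\phi$ beyond it --- and any version of this step requires that sign information.

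The paper avoids the difficulty by never integrating a single identity over all of $[0,1]$. It splits at the first zero $r_0$ of $\phi$ and uses \emph{different} combinations on each piece: $W(rw',\phi)'+r_0\,W(w,\phi)'$ on $(0,r_0)$, whose integrand $\phi\,(rg_r+2g+r_0(g-wg_w))$ is non-negative there by \eqref{cond: g 1} and \eqref{cond: g 3}, and $W(w',\phi)'+W(w,\phi)'$ on $(r_0,1)$, whose integrand $\phi\,(g_r+g-wg_w)$ is positive there by \eqref{cond: g 1}, \eqref{cond: g 2} and the separately established fact that $\phi<0$ on $(r_0,1]$. The coefficients are chosen so that the boundary contribution $[w'(r_0)+w(r_0)]\phi'(r_0)$ produced at $r_0$ by the first identity is exactly the term consumed by the second, and each integrand has a definite sign on its own subinterval. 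To complete your proof you need this kind of interval-by-interval bookkeeping with matched boundary terms (or an equivalent device), not a global pointwise comparison of the two weights.
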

\begin{proof}
We define several auxiliary functions. Let 
\begin{equation}
x(r;a) = rw'(r;a), ~y(r;a)=w'(r;a), ~z(r;a)=w_a(r;a).
\end{equation}
One has 
\begin{equation}
x(0^+;a)=0, ~x'(0^+;a) = w'(0;a)-\lim_{r\to 0^+}rg(w,r)=a.
\end{equation}
\begin{equation}
y(0^+;a) = a,~ y'(0^+;a) = -\lim_{r\to 0^+}g(w,r)=0.
\end{equation}
\begin{equation}\label{phi int}
z(0^+;a) = 0,~z'(0^+;a) =1. 
\end{equation}
We have
\begin{align}
x'' &= (rw')'' \notag\\
&= rw'''+2w''\notag\\
&= r(-g_r-g_ww')-2g\notag\\
&=-rg_r-g_w x-2g.
\end{align}
Here we used \eqref{eq: w ode}.
Hence $x$ satisfies the equation
\begin{equation}
x'' + g_w x +rg_r +2 g=0.
\end{equation}
Similarly $y$ satisfies the equation
\begin{equation}
y'' + g_w y + g_r=0.
\end{equation}
$z$ satisfies
\begin{equation}
z'' + g_wz=0.
\end{equation}
From these we get the ODEs of various Wronskians between $w$, $x$, $y$ and $z$:
\begin{equation}\label{wsk 1}
W(x,z)'=\begin{vmatrix} x &z \\ x'& z'\end{vmatrix}' = z(rg_r+2g).
\end{equation} 
\begin{equation}\label{wsk 2}
W(y,z)'=\begin{vmatrix} y &z \\ y' &z'\end{vmatrix}' = z g_r.
\end{equation} 
\begin{equation}\label{wsk 3}
W(w,z)'=\begin{vmatrix} w &z \\ w'& z'\end{vmatrix}' = z(g-wg_w).
\end{equation}
In the following, we set $a$ equal to $a_0$ in all functions. Since $w>0$ and $w''=-g<0$ for $r\in (0,1)$, we know that $w$ is a positive convex function with a unique maximum and zero boundary value on $[0,1]$. By \eqref{phi int}, $z(r)>0$ for $r$ close to $0$. We claim that there is an $r_0\in (0,1)$ such that $z(r_0)=0$. If not, then $z(r)>0$ for all $r\in (0,1)$. Integrating \eqref{wsk 3} on $(0,1)$ and using the boundary conditions of $w$ and $z$, we have
\begin{equation}\label{eq: contra 1}
-w'(1)z(1) = \int_0^1 z(g-wg_w)~dr<0.
\end{equation}
The inequality is a consequence of \eqref{cond: g 1}. However, we know that $w'(1)<0$ and $z(1)\ge 0$, making the left hand side of \eqref{eq: contra 1} non-negative. Such a contradiction implies the existence of $r_0$ mentioned above. We assume $r_0$ is the smallest value in $(0,1)$ for which $z(r_0)=0$. \eqref{cond: g 1} and \eqref{cond: g 3} imply $rg_r+2g>0$. If we now integrate \eqref{wsk 1} on $(0,r_0)$,
\begin{align}
x(r_0)z'(r_0)&= \int_0^{r_0}z(rg_r+2g)~dr>0
\end{align}
The last inequality follows from the fact that $z(r)>0$ for $r\in (0,r_0)$. Since $z'(r_0)<0$, we have $x(r_0)<0$, which also implies that $w'(r_0)<0$.   

Our next claim is that $z(r)<0$ for all $r_0<r\le 1$. If not, let $r_1\in (r_0,1]$ be the first zero of $z$ strictly bigger than $r_0$.   Integrating \eqref{wsk 2} on $(r_0,r_1)$ and recalling the definition of $y$, we obtain 
\begin{equation}\label{eq: contra 2}
w'(r_1)z'(r_1) - w'(r_0)z'(r_0) =y(r_1)z'(r_1) - y(r_0)z'(r_0) = \int_{r_0}^{r_1}z g_r~dr\ge 0.
\end{equation}
The last inequality follows from \eqref{cond: g 2} and the fact that $z(r)<0$ for $r\in (r_0,r_1)$. However, the fact that $w'(r_0)<0$ implies $w'(r_1)<0$. We also have $z'(r_0)<0$, and $z'(r_1)>0$. These conditions together imply that the left hand side of \eqref{eq: contra 2} is negative. Such a contradiction implies $z(r)<0$ for all $r_0<r\le 1$.   

We integrate $W(x,z)'+r_0W(w,z)'$ between $0$ and $r_0$, and integrate $W(y,z)'+W(w,z)'$ between $r_0$ and 1 to get
\eqn\label{eq: sign 1}
r_0[w'(r_0)+w(r_0)]z'(r_0) = \int_0^{r_0}z(rg_r+2g+r_0g - r_0wg_w)~dr.
\eeqn
\eqn\label{eq: sign 2}
w'(1)[z'(1)-z(1)] = z'(r_0)(w'(r_0)+w(r_0))+\int_{r_0}^1 z(g_r+g-wg_w)~dr.
\eeqn
Notice \eqref{cond: g 1} and \eqref{cond: g 2} imply $rg_r+2g+r_0g - r_0wg_w\ge 0$ for all $r_0\in (0,1)$. This together with the fact that $z(r)>0$ for $r\in (0,r_0)$ imply that \eqref{eq: sign 1} is non-negative. Since $r_0\in (0,1)$, it follows that the first term on the right hand side of \eqref{eq: sign 2} is non-negative. But the second term on the right hand side of \eqref{eq: sign 2} is positive by \eqref{cond: g 1}, \eqref{cond: g 2}, and the fact that $z(r)<0$ for $r\in (r_0,1)$. Therefore \eqref{eq: sign 2} is positive. Since $w'(1)<0$ as is explained above, we get $z'(1)-z(1)<0$, which is the conclusion of the lemma by the definition of $z$.
\end{proof}
 
To apply Lemma \ref{lem: h extra cond}, we verify the conditions on 
\begin{equation}
g(w,r) = 4\pi r h^{-1}\left(\frac w r\right).
\end{equation}
Condition \eqref{cond: g 1} reads:
\begin{equation}
rh^{-1}\left(\frac w r\right)-w \left(h^{-1}\right) '\left(\frac w r\right)<0 \text{ for } w>0 \text{ and }0<r\le 1,
\end{equation}
which is a consequence of
\begin{equation}\label{cond: g 11}
s(h^{-1})'(s)>h^{-1}(s) \text{ for } s>0.
\end{equation}
This is included in \eqref{h inv extra cond} in Lemma \ref{lem: h extra cond}.
Condition \eqref{cond: g 2} reads:
\begin{equation}
h^{-1}\left(\frac w r\right) -\frac w r \left(h^{-1}\right)'\left(\frac w r\right)\le 0, 
\end{equation}
which is implied by \eqref{cond: g 11}. 
Condition \eqref{cond: g 3} reads
\eqn
h^{-1}\left(\frac w r\right) -\frac w r \left(h^{-1}\right)'\left(\frac w r\right) + 3h^{-1}\left(\frac w r\right) - \frac wr \left(h^{-1}\right)'\left(\frac w r\right)\ge 0
\eeqn
or 
\eqn
2h^{-1}\left(\frac w r\right)-\frac w r \left(h^{-1}\right)'\left(\frac w r\right)\ge 0.
\eeqn
This is a consequence of 
\eqn
s(h^{-1})'(s)\le 2h^{-1}(s) \text{ for }s>0,
\eeqn
which is included in \eqref{h inv extra cond}. By the discussion above Lemma \ref{lem: Ni Nussbaum}, it implies the second case in Theorem \ref{sec Euler theorem}.
  
\subsection{Example: oblateness for constant rotation}

In this subsection, we exhibit the approximate support of $\rho_{\kappa}$ as constructed in 
Theorem \ref{main Euler theorem} when $\kappa$ is small in case  the rotation profile $\omega$ is constant.   
In particular, the approximation shows that the support of $\rho_{\kappa}$ is wider at the equator than at the poles. 
Thus the body has an oblate shape. 

As is explained in Section \ref{sec: construction}, $\rho_{\kappa}$ is constructed by deforming the ball 
using $g_{\zeta_{\kappa}}$. Therefore the boundary of the support of $\rho_{\kappa}$ 
is precisely $g_{\zeta_{\kappa}}(\partial B_1) = \{x(1+\zeta_{\kappa}(x)): x\in \partial B_1\}$. 
We just need to find out the value of $\zeta_{\kappa}$ on the boundary.   
If the conditions of Theorem \ref{main Euler theorem} are satisfied, we  expand $\zeta_{\kappa}$ 
 near $\kappa=0$ and use the implicit function theorem to evaluate the first derivative:
\eqn\label{eq: zeta near zero}
\zeta_\kappa= -\kappa\left(\frac{\partial \F}{\partial \zeta}(0,0)\right)^{-1}\frac{\partial \F}{\partial \kappa}(0,0) + R(\kappa)
\eeqn
where $\|R(\kappa)\|_X=o(\kappa)$ as $\kappa \to 0$.   
We now study the dominant term.  
 By \eqref{def: Euler F} and \eqref{eq: rotationless Euler-Poisson}, 
\eqn
\frac{\partial \F}{\partial \kappa}(0,0) = \int_0^{r(x)}\omega^2(s)s~ds=\frac12 r^2(x)=\frac{1}{2}r^2\sin^2\theta_1 = r^2\left(\frac23\sqrt{\pi}Y_{00}-\frac23\sqrt{\frac{\pi}{5}}Y_{20}\right).
\eeqn
Here we assume $\omega(r)=1$ as an example of a constant rotation profile, 
and used spherical coordinates $(r,\theta_1,\theta_2)$ to write the last expression.  
The spherical harmonics $Y_{00}$ and $Y_{20}$ are 
\eqn
Y_{00}(\theta_1,\theta_2) = \frac12\sqrt{\frac{1}{\pi}},\quad Y_{20}(\theta_1,\theta_2) = \frac14\sqrt{\frac{5}{\pi}}(3\cos^2\theta_1-1).
\eeqn
Using the expression for $\frac{\partial \F}{\partial \zeta}(0,0)$ given in \eqref{def: L}, we have
\eqn\label{eq: xi as deriv}
-\left(\frac{\partial \F}{\partial \zeta}(0,0)\right)^{-1}\frac{\partial \F}{\partial \kappa}(0,0)=\xi
\eeqn
where $\xi\in X$ is the unique solution to the equation 
\begin{align}\label{eq: xi omega}
&\frac{u_0'(x)}{|x|} \xi(x)   -\int_{B_1}  \frac{\rho_0'(y)}{|y|} \xi(y) \left(\frac1{|x-y|}-\frac1{|y|}\right) dy 
 +\frac{k(\rho_0)(x) - k(\rho_0)(0)}{M}  \int_{B_1} \frac{\rho_0'(y)}{|y|} \xi(y)dy \notag\\
 &= r^2\left(-\frac23\sqrt{\pi}Y_{00}+\frac23\sqrt{\frac{\pi}{5}}Y_{20}\right).
\end{align}
The existence and uniqueness of $\xi$ follows from the fact that $\frac{\partial \F}{\partial \zeta}(0,0)$ 
is an isomorphism, as is established in Section \ref{sec: compactness}.  
Since we are only interested in the deviation from spherical symmetry of the support of $\rho_{\kappa}$, 
we pay attention only to the non-radial components of $\xi$.  
We proceed in a fashion parallel to Section \ref{sec: non-radial kernel}.  
Defining $\varphi$ as in \eqref{def: varphi} and 
taking the Laplacian of $\varphi$, we again arrive at \eqref{eq: delta varphi}. 
Projecting onto spherical harmonics $Y_{lm}$ for $l\ge 1$ as before, 
we get  \eqref{eq: ode varphi lm} so long as $(l,m)\ne (2,0)$. 
The same argument as in section \ref{sec: non-radial kernel} shows that $\varphi_{lm}=0$ for $(l,m)\ne (2,0)$. 
However, for $(l,m)=(2,0)$, we get
\eqn
\Delta \varphi_{20}-\frac{6}{r^2}\varphi_{20}=\begin{cases}-4\pi \frac{\rho_0'}{|x|}\xi_{20} &\text{if }|x|\le 1,\\ 0 &\text{if }|x|>1.\end{cases}
\eeqn
					Projecting 
\eqref{eq: xi omega} onto $Y_{20}$, we get
\eqn\label{eq: relation xi varphi}
\frac{u_0'}{|x|}\xi_{20}-\varphi_{20}=\frac{2}{3}\sqrt{\frac{\pi}{5}}r^2.
\eeqn
Hence
\eqn
\Delta \varphi_{20}-\frac{6}{r^2}\varphi_{20}=\begin{cases}-4\pi \frac{\rho_0'}{u_0'}\left(\varphi_{20}+\frac{2}{3}\sqrt{\frac{\pi}{5}}r^2\right) &\text{if }|x|\le 1,\\ 0 &\text{if }|x|>1.\end{cases}
\eeqn
As before, we deduce $\varphi_{20}=\frac{C}{r^3}$ for $r\ge 1$.   
For $r<1$, the function $\psi_{20}=\frac{\varphi_{20}}{u_0'}$ satisfies 
\eqn
\Delta \psi_{20}+2\frac{u_0''}{u_0'}\psi_{20}'-\frac{4}{r^2}\psi_{20}=-4\pi \frac{\rho_0'}{(u_0')^2}\frac{2}{3}\sqrt{\frac{\pi}{5}}r^2>0
\eeqn
for $0<r<1$. The same maximum principle argument as in Section \ref{sec: non-radial kernel} 
yields $\psi_{20}(r)\le 0$ for $0<r\le1$. 
Since $u_0'<0$ for $0<r\le 1$ by Lemma \ref{lem: u_0}, 
$\varphi_{20}(r) \ge 0$ for $0<r\le 1$. By \eqref{eq: relation xi varphi},
\eqn
\xi_{20}(1) = \frac{1}{u_0'(1)}\left(\varphi_{20}(1)+\frac{2}{3}\sqrt{\frac{\pi}{5}}\right)\le \frac{1}{u_0'(1)}\frac{2}{3}\sqrt{\frac{\pi}{5}}<0.
\eeqn
The non-radial component of $\xi$ at $\partial B_1$ is given by 
\eqn
\xi_{20}(1) Y_{20}(\theta_1,\theta_2)=|\xi_{20}(1)| \frac14\sqrt{\frac{5}{\pi}}\ (1-3\cos^2\theta_1)
\eeqn
Of course, the poles correspond to $\theta_1=0,\pi$ and the equator  to $\theta_1=\frac{\pi}{2}$. This means $\xi$ is larger at the equator than at the poles. In view of \eqref{eq: zeta near zero} and \eqref{eq: xi as deriv}, we have proved
\begin{theorem}
If the conditions of Theorem \ref{main Euler theorem} are satisfied, and the angular velocity profile $\omega(r)$ is constant, then for $\kappa$ sufficiently small, the support of $\rho_{\kappa}$ is larger at the equator than at the poles.
\end{theorem}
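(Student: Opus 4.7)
The plan is to exploit the fact that the boundary of the support of $\rho_\kappa$ is exactly $g_{\zeta_\kappa}(\partial B_1)$, so that oblateness is encoded purely in the values of $\zeta_\kappa$ on the unit sphere. Since $\L = \frac{\pa \F}{\pa \zeta}(0,0)$ has been shown to be an isomorphism under the hypotheses of Theorem \ref{main Euler theorem}, the implicit function theorem delivers the first-order expansion
\begin{equation*}
\zeta_\kappa = \kappa\,\xi + R(\kappa), \qquad \xi := -\L^{-1}\frac{\pa \F}{\pa \kappa}(0,0),\qquad \|R(\kappa)\|_X = o(\kappa).
\end{equation*}
Thus, for $\kappa$ small, the shape of the support is governed by $\xi|_{\partial B_1}$, and it is enough to show that the non-radial part of $\xi$ pushes the boundary outward at the equator and inward at the poles.

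Next I would compute $\frac{\pa \F}{\pa \kappa}(0,0)$ explicitly. With $\omega\equiv 1$ this is simply $\tfrac12 r^2\sin^2\theta_1$, which decomposes cleanly into a radial $Y_{00}$ piece plus a pure $Y_{20}$ piece. Since $\L$ commutes with the axisymmetric spherical-harmonic decomposition, the equation $\L\xi = -\frac{\pa\F}{\pa\kappa}(0,0)$ splits mode-by-mode, and only the $\xi_{00}$ and $\xi_{20}$ components are nonzero. The radial $\xi_{00}$ is irrelevant to oblateness (it just rescales the ball), so the entire physical content sits in $\xi_{20}(1)$.

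The main obstacle is extracting the sign of $\xi_{20}(1)$. I would mimic the spherical harmonic analysis of Section \ref{sec: non-radial kernel}: define $\varphi$ as in \eqref{def: varphi}, take the Laplacian to recover \eqref{eq: delta varphi}, and project onto $Y_{20}$. The resulting equation for $\varphi_{20}$ is an inhomogeneous version of \eqref{eq: ode varphi lm} with a strictly positive right-hand side coming from the $Y_{20}$ component of $\tfrac12 r^2$. Passing to $\psi_{20} = \varphi_{20}/u_0'$ (well-defined because $u_0'<0$ on $(0,1]$ by Lemma \ref{lem: u_0}), the inhomogeneous analogue of \eqref{eq: del psi lm 1} satisfies the sign conditions needed to apply the strong maximum principle. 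The key point is that, because $2-l(l+1) = -4 \le 0$ for $l=2$ and the inhomogeneity has a definite sign, the maximum-principle argument of Section \ref{sec: non-radial kernel} forces $\psi_{20}\le 0$ on $\overline{B_1}$, hence $\varphi_{20}\ge 0$. Combined with the algebraic relation \eqref{eq: relation xi varphi} and the sign of $u_0'(1)$, this yields $\xi_{20}(1) < 0$.

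Finally I would translate this back to geometry. Writing $\xi_{20}(1) Y_{20}(\theta_1,\theta_2) = |\xi_{20}(1)|\tfrac14\sqrt{5/\pi}(1-3\cos^2\theta_1)$, the deformation of $\partial B_1$ at leading order in $\kappa$ is $x \mapsto x(1 + \kappa\,\xi(x))$, which at the poles ($\theta_1 = 0,\pi$) corresponds to $1-3\cos^2\theta_1 = -2$, and at the equator ($\theta_1 = \pi/2$) corresponds to $1-3\cos^2\theta_1 = 1$. Since $|\xi_{20}(1)|>0$, this means the radial displacement is strictly larger at the equator than at the poles, and the $o(\kappa)$ remainder $R(\kappa)$ cannot spoil this strict inequality for small enough $\kappa$. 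This establishes oblateness.
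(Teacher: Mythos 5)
Your proposal is correct and follows essentially the same route as the paper's own argument: expand $\zeta_\kappa$ via the implicit function theorem, isolate the $Y_{20}$ component of $-\L^{-1}\partial_\kappa\F(0,0)$, determine its sign at $r=1$ through the $\varphi_{20}$, $\psi_{20}$ equations and the maximum principle, and read off the oblateness from $1-3\cos^2\theta_1$. No substantive differences to report.
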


\section{Euler Model: Fr\'{e}chet differentiability}\label{sec: 5}

In this section, we prove the Fr\'{e}chet differentiability of $\F(\zeta,\kappa)$ under the assumptions of Theorem \ref{main Euler theorem}.

\begin{theorem} \label{Frechet theorem}
The operator $\F: B_{\ep}(X)\times \real \to X$ with $\ep>0$ sufficiently small is continuously Fr\'echet differentiable with $\frac{\pa \F}{\pa \zeta}$ given by  \eqref{eq: F' formula} -   \eqref{eq:  F3'}. 
\end{theorem}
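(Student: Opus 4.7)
The plan is to exploit the decomposition $\F = \M\F_1 + \kappa\F_2 + \F_3$ announced in the outline and handle each piece separately. The crucial preliminary move, for $\F_1$, is the change of variables $y' = g_\zeta(y)$ which converts the integral over the moving domain $g_\zeta(B_1)$ into
\begin{equation*}
\F_1(\zeta)(x) = \int_{B_1}\rho_0(y)\left(\frac{1}{|g_\zeta(x)-g_\zeta(y)|}-\frac{1}{|g_\zeta(y)|}\right)\det Dg_\zeta(y)~dy,
\end{equation*}
an integral over the \emph{fixed} ball $B_1$. Since $g_\zeta(x)-x$ is linear in $\zeta$, both $\det Dg_\zeta$ and the kernel are smooth functions of $\zeta$ (pointwise in $x,y\ne 0$), and the formal derivative is obtained by differentiating under the integral. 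Doing so produces three contributions: one from $\partial_{\gz(x)}$ and $\partial_{\gz(y)}$ of the Newtonian kernel paired with $\dot g := \xi(\cdot)(\cdot)/|\cdot|^2$, and one from $\partial \det Dg_\zeta[\xi]$. For $\F_2$ the derivative is an elementary chain rule on $\omega^2(r(g_\zeta(x)))\,r(g_\zeta(x))$ composed with $\dot g$. For $\F_3$ and for the mass factor $\M(\zeta)$, the derivatives are algebraic; in particular $\M'(\zeta)\xi$ is read off by differentiating the quotient in \eqref{def: script M} and using the divergence identity $\partial \det Dg_\zeta[\xi] = \nabla\cdot(\xi(\cdot)/|\cdot|^2\,\cdot)+$ lower-order terms valid away from the origin.

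The next step is to verify that the candidate derivative, call it $L(\zeta,\kappa)\xi$, actually lands in $X$. The enthalpy and rotation terms are easily controlled using the $C^{1,\alpha}$ regularity of $\rho_0$ (Lemma \ref{lem: rho_0 C1alpha}), the behavior of $h'$ near zero from \eqref{eq: lim h}, and $\omega^2\in C^{1,\beta}_{loc}$. The delicate piece is $\F_1'$, a Newtonian potential. Applied to a function in $X$ it produces something that is $C^1$ on $\overline{B_1}$ and vanishes at $0$; the bound
\begin{equation*}
|\nabla(\F_1'(\zeta)\xi)(x)|\le C\,\|\xi\|_X\,|x|
\end{equation*}
follows from Lemma \ref{lem: Lip estimate at zero}, applied to integrands of the form $f(y) = \rho_0'(y)\xi(y)/|y|\cdot(\text{smooth}(y))$ that are Lipschitz at the origin and even/axisymmetric — exactly the hypothesis of that lemma. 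This confirms $L(\zeta,\kappa):X\to X$ boundedly.

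The main work — and the main obstacle — is upgrading the Gateaux calculation to genuine Fréchet differentiability, i.e.\ proving
\begin{equation*}
\|\F(\zeta+\xi,\kappa)-\F(\zeta,\kappa)-L(\zeta,\kappa)\xi\|_X = o(\|\xi\|_X)
\end{equation*}
uniformly on $B_\epsilon(X)\times[-\bar\kappa,\bar\kappa]$. The strategy is to write the remainder as a one-variable integral $\int_0^1 [L(\zeta+s\xi,\kappa)-L(\zeta,\kappa)]\xi\,ds$, then use the difference estimates \eqref{est: Dg 12}--\eqref{est: deviation from x-x' 12} for $g_\zeta$ together with the Hölder-in-$\zeta$ refinement \eqref{V_f difference} of Lemma \ref{lem: Lip estimate at zero}. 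The obstruction is that the Newtonian kernel is singular, and the norm we must control is $\sup_{x}|\nabla(\cdot)(x)|/|x|$, so we cannot afford any loss of regularity at $x=0$. This forces us to combine the geometric decomposition used in Lemma \ref{lem: Lip estimate at zero} (splitting the $y'$-integral into three regions based on the distance to $\bar x = \tfrac12(g_{\zeta_1}(x)+g_{\zeta_2}(x))$) with careful bookkeeping of the Hölder exponent $\beta<1$ that estimate provides, exactly as was needed for the boundedness in Step 2.

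Finally, continuity of $\zeta\mapsto L(\zeta,\kappa)$ in the operator norm on $X$ follows from the same family of estimates, now applied with $\zeta_1,\zeta_2$ ranging over $B_\epsilon(X)$: the pointwise formulas for $L$ depend on $\zeta$ only through $g_\zeta, Dg_\zeta, \det Dg_\zeta$ and $\M(\zeta)$, each of which is Lipschitz (or Hölder, in the potential part) in $\zeta$ by \eqref{est: Dg 12}--\eqref{est: deviation from x-x' 12} and \eqref{V_f difference}. Continuity of $L$ in $\kappa$ is trivial since $\kappa$ enters only multiplicatively in $\F_2$.
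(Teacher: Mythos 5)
Your overall architecture --- compute a formal derivative term by term, show it is bounded on $X$, show it is H\"older continuous in $\zeta$, then upgrade to Fr\'echet differentiability by a mean-value/fundamental-theorem argument --- is the same as the paper's (Lemmas \ref{lem: gateaux diff}, \ref{prop: est F'}, \ref{prop: continuity of formal deriv}, \ref{prop: Frechet}). However, two of your steps do not go through as written.

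First, your opening change of variables $y'=g_\zeta(y)$ back to the fixed ball $B_1$ changes the structure of every subsequent estimate. After differentiating under the integral you obtain kernels of the form $\frac{g_\zeta(x)-g_\zeta(y)}{|g_\zeta(x)-g_\zeta(y)|^3}\cdot(\dot g(x)-\dot g(y))$ together with the factor $\det Dg_\zeta(y)$. But Lemma \ref{lem: Lip estimate at zero}, which you invoke for the bound $|\nabla(\F_1'(\zeta)\xi)(x)|\le C\|\xi\|_X|x|$ and again for the H\"older-in-$\zeta$ estimates, applies only to potentials of the exact form $V_f(g_\zeta(x))=\int f(y')\frac{-(g_\zeta(x)-y')}{|g_\zeta(x)-y'|^3}\,dy'$, with the singular kernel in the integration variable and the deformation entering only through the evaluation point. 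Your kernels carry the factor $\dot g(x)-\dot g(y)$, which depends on both variables, so the lemma's hypothesis on $f$ is not met. To use the lemma you must change variables back to $y'=g_\zeta(y)$ --- at which point you recover precisely the paper's formulation over $B_2$ with $\rho_0\circ g_\zeta^{-1}$ inside the integral. The paper's choice to work in the $y'$ variable from the start is exactly what makes Lemma \ref{lem: Lip estimate at zero} directly applicable; your route is not wrong, but it either forces you to reprove that lemma for the modified kernels or to undo the substitution.

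Second, and more seriously: the identity $\F(\zeta+\xi,\kappa)-\F(\zeta,\kappa)=\int_0^1 L(\zeta+s\xi,\kappa)\xi\,ds$, as an equation in $X$, is not free. The formal derivative is defined only as a pointwise limit in $x$, while the $X$-norm is $\sup_x|\nabla(\cdot)(x)|/|x|$; to pass from the pointwise fundamental theorem of calculus to the $X$-valued one you must know that $\partial_i$ and $\partial_s$ commute on $F(x,s)=\F(\zeta+s\xi,\kappa)(x)$, i.e.\ that the difference quotients converge in the $C^1$ sense and not merely pointwise. This is exactly the content of Lemma \ref{lem: mixed partial F}, whose proof requires a cutoff regularization of the Newtonian kernel and an integration by parts arranged so that the boundary contributions of the cutoff cancel. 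It is one of the genuinely technical steps of Section \ref{sec: 5} and cannot be absorbed into ``write the remainder as a one-variable integral.'' Once that commutation is supplied, your plan coincides with the paper's: the remainder is controlled by $\sup_{s}\|L(\zeta+s\xi,\kappa)-L(\zeta,\kappa)\|$, and the H\"older continuity estimate yields the $O(\|\xi\|_X^{1+\alpha})$ bound.
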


We will prove this theorem step by step by means of the following lemmas.  We write $\F$ as
\begin{equation} \label{F terms}
\F(\zeta,\kappa)=\M(\zeta)\F_1(\zeta)+\kappa\F_2(\zeta) + \F_3(\zeta)
\end{equation}
where
\begin{align} \label{F1 term}
\F_1(\zeta)(x) 
&= \int_{g_{\zeta}(B_1)}\rho_0(g_{\zeta}^{-1}(y'))\left(\frac{1}{|g_{\zeta}(x)-y'|}-\frac{1}{|y'|}\right)~dy'  
\\  \notag 
&=\int_{B_2}\rho_0(g_{\zeta}^{-1}(y'))\left(\frac{1}{|g_{\zeta}(x)-y'|}-\frac{1}{|y'|}\right)~dy'.\label{def: F1}
\end{align}
The last equality follows from the fact that $\rho_0$ is supported on $\overline{B_1}$ and that $g_{\zeta}(\overline{B_1})\subset B_2$.
\begin{equation}\label{F2 term}
\F_2(\zeta) (x)= \int_0^{r(\gz(x))}\omega^2(s)s~ds, 
\qquad r(\gz(x)) = \left(1+\frac{\zeta(x)}{|x|^2}\right)\sqrt{x_1^2+x_2^2},
\end{equation}
and
\begin{equation}\label{F3 term}
\F_3(\zeta)(x) = -h\left(\M(\zeta)\rho_0(x)\right)+h\left(\M(\zeta)\rho_0(0)\right).
\end{equation}

First of all, we want $\F$ to map $B_{\epsilon}(X)\times \mathbb{R}$ into $X$. It is easy to see that $\F(\zeta,\omega)$ has the symmetry requirements of $X$. 

\begin{lemma}\label{prop: est F}
There is a constant $C>0$ depending on $\rho_0$ and $\ep$ such that 
\begin{equation}
\|\F(\zeta,\kappa)\|_X\le C(1+\kappa)
\end{equation}
if $\zeta\in B_{\epsilon}(X)$ for $\epsilon$ small enough.
\end{lemma}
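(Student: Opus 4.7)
The plan is to estimate each piece in the decomposition $\F = \M(\zeta)\F_1(\zeta) + \kappa\F_2(\zeta) + \F_3(\zeta)$ separately. Before bounding norms, I would record that each piece belongs to $X$: each is $C^1$ away from the origin, axisymmetric and even in $x_3$ (since $\rho_0$, $\omega^2$, and $\zeta$ are), and each vanishes at the origin, since $g_\zeta(0)=0$ (using $\zeta(0)=0$ and the boundedness of $|\zeta(x)|/|x|^2$ from \eqref{est: zeta/x^2}), $r(g_\zeta(0))=0$, and the two enthalpy terms in $\F_3$ cancel at $x=0$. From \eqref{est: det Dg} and the definition \eqref{def: script M}, $\M(\zeta)$ is bounded above and below by positive constants when $\epsilon$ is small. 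The task therefore reduces to showing $\sup_{x\in \dot B_1} |\nabla \F_i(\zeta)(x)|/|x| \le C$ for $i=1,2,3$.

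For $\F_1$, I differentiate under the integral sign to obtain
\[
\nabla_x \F_1(\zeta)(x) = (Dg_\zeta(x))^T\, V_f(g_\zeta(x)), \qquad V_f(z) = \int_{B_2} f(y')\,\frac{-(z-y')}{|z-y'|^3}\,dy',
\]
where $f(y') = \rho_0(g_\zeta^{-1}(y'))$, extended by zero outside $g_\zeta(\overline B_1)\subset B_2$. Then $f$ is continuous with the required axial symmetry, and by \eqref{est: g inv-x} combined with the $C^{1,\alpha}$-regularity of $\rho_0$ (Lemma \ref{lem: rho_0 C1alpha}), $|f(y)-f(0)|\le C|y|$. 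Lemma \ref{lem: Lip estimate at zero} then yields $|V_f(g_\zeta(x))|\le C|x|$, and \eqref{est: Dg} gives the desired bound on $|\nabla \F_1(\zeta)(x)|/|x|$.

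For $\F_2$, differentiating \eqref{F2 term} gives
\[
\nabla_x \F_2(\zeta)(x) = \omega^2(r(g_\zeta(x)))\,r(g_\zeta(x))\,\nabla_x\bigl[(1 + \zeta(x)/|x|^2)\sqrt{x_1^2+x_2^2}\bigr].
\]
Since $r(g_\zeta(x))\le C|x|$ by \eqref{est: g-x} and $\omega^2$ is bounded on $[0, 1+C\epsilon]$, while the gradient in brackets is bounded by $C(1+\|\zeta\|_X)$ thanks to the norm \eqref{norm X}, we obtain $|\nabla \F_2(\zeta)(x)|/|x|\le C$. For $\F_3$, the gradient is $-h'(\M(\zeta)\rho_0(x))\,\M(\zeta)\,\nabla\rho_0(x)$; rewriting it as $-H'(u_0(x))\nabla u_0(x)$ with $H(u) = h(\M(\zeta)\,h^{-1}(u))$ and using the asymptotics \eqref{eq: lim h}--\eqref{eq: lim h''} to check that $H'$ is bounded near $0$, the bound reduces to the boundedness of $|\nabla u_0(x)|/|x|$ on $\dot B_1$, which holds since $u_0\in C^2(\overline{B_1})$ with $u_0'(0)=0$ (Lemma \ref{lem: u_0}). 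Summing the three contributions yields $\|\F(\zeta,\kappa)\|_X \le C(1+|\kappa|)$.

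The substantial step is the bound on $\F_1$: the Newtonian kernel's singularity must be balanced against both the linear vanishing of $f-f(0)$ at $0$ and the axial symmetry of $f$, while the factor $|x|^{-1}$ in the $X$-norm forces a sharp linear-in-$|x|$ estimate on $V_f(g_\zeta(x))$. Lemma \ref{lem: Lip estimate at zero} provides exactly this, and the rest of the estimate is essentially bookkeeping around the non-smoothness of $h'$ at $0$, handled through the reformulation in terms of $u_0$.
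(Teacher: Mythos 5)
Your proposal is correct and follows essentially the same route as the paper: bound $\M(\zeta)$ using \eqref{est: det Dg}, estimate $\nabla\F_1$ via Lemma \ref{lem: Lip estimate at zero} applied to $f=\rho_0\circ g_\zeta^{-1}$, bound $\nabla\F_2$ from the local boundedness of $\omega^2$ and \eqref{est: g-x}, and control $\nabla\F_3$ by rewriting it in terms of $\nabla u_0$ and the ratio $h'(\M s)/h'(s)$, whose boundedness near $s=0$ follows from \eqref{eq: lim h}. Your reformulation of the $\F_3$ term via $H(u)=h(\M(\zeta)h^{-1}(u))$ is just the paper's quotient $\M(\zeta)h'(\M(\zeta)\rho_0)/h'(\rho_0)$ in disguise, so there is no substantive difference.
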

\begin{proof} 
To estimate the norm, we calculate the spatial derivatives of $\F(\zeta,\kappa)$.  
\begin{equation}
\partial_i \F(\zeta,\kappa)(x) = \M(\zeta)\partial_i \F_1(\zeta)(x) 
+ \kappa \partial_i \F_2(\zeta)(x) + \partial_i \F_3(\zeta)(x).
\end{equation}
We consider the three terms separately.  Note that 
\begin{equation}
|\M(\zeta)|\le \frac{1}{1-C\|\zeta\|_{X}} \le 2 
\end{equation}
if $\|\zeta\|_X<\epsilon$ is small enough, 
by \eqref{def: script M} and \eqref{est: det Dg}.
By  \Cref{lem: Lip estimate at zero}, \eqref{est: Dg} and \eqref{est: g inv-x},
\begin{equation}\label{eq: diff F_1}
\partial_i \F_1(\zeta)(x)=\left(\int_{B_2}\rho_0(g_{\zeta}^{-1}(y'))\frac{-(g_{\zeta}(x)-y')}{|g_{\zeta}(x)-y'|^3}~dy' \right)\cdot\partial_i g_{\zeta}(x) 
\end{equation}
satisfies 
\begin{align}\label{est: partial_i F_1}
|\partial_i \F_1(\zeta)(x)| 
&= |V_{\rho_0(g_{\zeta}^{-1}(\cdot))}(g_{\zeta}(x))\cdot \partial_ig_{\zeta}(x)| \notag \\
&\le C(\|\rho_0\|_{C^1(B_{1/2})}+\|\rho_0\|_{C^0(B_1)})(1+\|\zeta\|_X)|x|
\end{align}
for all $x\in \overline{B_1}$.
					Next, notice from  \eqref{est: zeta/x^2} that 
\begin{equation}\label{eq: d_i gz er}
\partial_i [r(\gz(x))]= \partial_i\left(\frac{\zeta(x)}{|x|^2}\right)\sqrt{x_1^2+x_2^2}+\left(1+\frac{\zeta(x)}{|x|^2}\right)\frac{x_1\delta_{i1}+x_2\delta_{i2}}{\sqrt{x_1^2+x_2^2}},
\end{equation}
\begin{equation}\label{est: gz er}
|r(\gz(x))|\le C|x|, \qquad |\partial_i [r(\gz(x))]|\le C. 
\end{equation}
					Hence 
\begin{equation}\label{est: d_i gz er}
|\partial_i \F_2(\zeta)(x)| = 
 \omega^2[r(\gz(x))]\ r(\gz(x))\ |\partial_i[r(\gz(x))] |  \le C|x|
\end{equation}
since $\omega^2$ is locally bounded.
To estimate $\F_3$, we recall that by \eqref{eq: partial rho partial U},
we have
\eqn \label{eq: partial_i F_3 1}
\partial_i \F_3(\zeta)(x)= -h'(\M(\zeta)\rho_0(x))\M(\zeta)\partial_i\rho_0(x)   
= -\M(\zeta)\frac{h'(\M(\zeta)\rho_0(x))}{h'(\rho_0(x))}\partial_i u_0(x). 
\eeqn 
By \eqref{eq: lim h}, we have 
\begin{equation}\label{eq: lim h' scaling}
\lim_{s\to 0^+}\frac{h'(\M s)}{h'(s)}=\M^{\gamma-2}.
\end{equation}
Together with the regularity of $u_0$ given in Lemma \ref{lem: u_0}, this implies that \eqref{eq: partial_i F_3 1} is continuous on $\overline{B_1}$. Furthermore it is easy to see that
$|\partial_i \F_3(\zeta)(x) |\le C|x|   $
because  $u_0\in C^2(\overline{B_1})$ and $\partial_i u_0(0)=0$.
This completes the proof.      
\end{proof}

We now compute the {\it formal} derivative of $\F$ with respect to $\zeta$, which we denote by $\F'(\zeta,\kappa)$. In order to facilitate future estimates of the formal derivative, we introduce the new notation 
\begin{equation}  \label {F(x,s)}
F(x,s) = \F(\zeta+s\xi,\kappa)(x)
\end{equation}
where $\zeta\in B_{\epsilon}(X)$, $\xi\in X$ are chosen, and $s$ is restricted to a sufficiently small neighborhood of $0$ so that $\zeta+s\xi\in B_{\epsilon}(X)$.   
We define the {\it formal derivative} of $\F$ with respect to $\zeta$ as the pointwise limit 
\begin{equation}
[\F'(\zeta,\kappa)\xi](x) = (\partial_sF)(x,0) = \partial_s\bigg|_{s=0}\F(\zeta+s\xi,\kappa)(x).
\end{equation}
for every {\it fixed} $x$. 

\begin{lemma}\label{lem: gateaux diff} 
The formal derivative is 
\begin{equation}\label{eq: F' formula}
\F'(\zeta,\kappa)\xi = [\M'(\zeta)\xi] \F_1(\zeta) + \M(\zeta)\F_1'(\zeta)\xi + \kappa\F_2'(\zeta)\xi + \F_3'(\zeta)\xi,
\end{equation}
where $\M'(\zeta)\xi =  $  
\eqn   \label{eq: M'}
~\frac{-M}{\left(\int_{B_1}\rho_0(x) \det Dg_{\zeta}(x)~dx\right)^2}  
\int_{B_1}\rho_0(x)\det Dg_{\zeta}(x)   
\tr \left[(Dg_{\zeta})^{-1}(x) D\left(\xi(x)\frac{x}{|x|^2}\right)\right]~dx,
\eeqn 
and   $[\F_1'(\zeta)\xi] (x)  =  $ 
\begin{align}     \label{eq: F1' 0}
&~ \int_{B_2}\bigg[-\nabla\rho_0(g_{\zeta}^{-1}(y'))  
(Dg_{\zeta})^{-1}(y')\xi(g_{\zeta}^{-1}(y'))\frac{g_{\zeta}^{-1}(y')}{|g_{\zeta}^{-1}(y')|^2}  
\left(\frac{1}{|g_{\zeta}(x)-y'|}-\frac{1}{|y'|}\right)       \notag\\
& \qquad \qquad +\rho_0(g_{\zeta}^{-1}(y'))\frac{-(g_{\zeta}(x)-y')}{|g_{\zeta}(x)-y'|^3}\cdot \xi(x)\frac{x}{|x|^2}\bigg]~dy',
\end{align}
\begin{equation}
[\F_2'(\zeta)\xi](x) = \omega^2[r(\gz(x))]\ r(\gz(x))\ \frac{\xi(x)}{|x|^2}\sqrt{x_1^2+x_2^2}.
\end{equation}
\begin{equation}\label{eq: F3'}
[\F_3'(\zeta)\xi](x) = \left[-h'(\M(\zeta)\rho_0(x))\rho_0(x)+h'(\M(\zeta)\rho_0(0))\rho_0(0)\right]\ \M'(\zeta)\xi.
\end{equation}
\end{lemma}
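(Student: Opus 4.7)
The plan is to differentiate the decomposition $\F=\M(\zeta)\F_1(\zeta)+\kappa\F_2(\zeta)+\F_3(\zeta)$ term-by-term using the product rule; then \eqref{eq: F' formula} reduces to four separate pointwise calculations. The basic infinitesimal input is that $g_{\zeta+s\xi}(x)$ is affine in $s$, namely $g_{\zeta+s\xi}(x)=g_\zeta(x)+s\,\xi(x)\,x/|x|^2$, so
\begin{equation*}
\partial_s g_{\zeta+s\xi}(x)\bigr|_{s=0}=\xi(x)\,\frac{x}{|x|^2}.
\end{equation*}
Differentiating the identity $g_{\zeta+s\xi}(g_{\zeta+s\xi}^{-1}(y'))=y'$ in $s$ at $s=0$ and solving then gives
\begin{equation*}
\partial_s g_{\zeta+s\xi}^{-1}(y')\bigr|_{s=0}=-(Dg_\zeta)^{-1}(g_\zeta^{-1}(y'))\,\xi(g_\zeta^{-1}(y'))\,\frac{g_\zeta^{-1}(y')}{|g_\zeta^{-1}(y')|^2}.
\end{equation*}
Every claimed formula is a chain-rule computation from these two identities, modulo justifying differentiation under the integral sign in $\F_1$ and $\M$.

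For $\M'(\zeta)\xi$, Jacobi's formula gives $\partial_s\det Dg_{\zeta+s\xi}=\det Dg_{\zeta+s\xi}\,\tr\bigl[(Dg_{\zeta+s\xi})^{-1}\partial_s Dg_{\zeta+s\xi}\bigr]$; at $s=0$, $\partial_s Dg_{\zeta+s\xi}=D(\xi\,x/|x|^2)$, and the quotient rule applied to \eqref{def: script M} yields \eqref{eq: M'}. Interchanging $\partial_s$ and $\int_{B_1}$ is immediate because $\rho_0$ is compactly supported in $\overline{B_1}$ and the integrand is jointly continuous in $(x,s)$ by \Cref{lem: basic estimates g_zeta}. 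For $\F_2'(\zeta)\xi$, apply the fundamental theorem of calculus to $\int_0^{r(g_{\zeta+s\xi}(x))}\omega^2(t)\,t\,dt$ and use the explicit expression $r(g_{\zeta+s\xi}(x))=\bigl(1+(\zeta+s\xi)(x)/|x|^2\bigr)\sqrt{x_1^2+x_2^2}$; the $s$-derivative of the upper limit is $\xi(x)\sqrt{x_1^2+x_2^2}/|x|^2$, giving the stated formula. For $\F_3'(\zeta)\xi$, directly apply the chain rule to $h(\M(\zeta+s\xi)\rho_0(\cdot))$ evaluated at $x$ and at $0$, and substitute the already-derived formula for $\M'(\zeta)\xi$.

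The only delicate case is $\F_1'(\zeta)\xi$. Here the integrand depends on $s$ both through the density $\rho_0(g_{\zeta+s\xi}^{-1}(y'))$ and through the Newtonian kernel $1/|g_{\zeta+s\xi}(x)-y'|$. Formally differentiating inside the integral via the chain rule, using the identity for $\partial_s g_{\zeta+s\xi}^{-1}$ on the density and differentiating $1/|g_{\zeta+s\xi}(x)-y'|$ directly on the kernel, produces exactly the two summands in \eqref{eq: F1' 0}. To interchange $\partial_s$ with $\int_{B_2}$ I apply dominated convergence to the difference quotient. The first summand involves $\nabla\rho_0$ (bounded on $\overline{B_1}$ by \Cref{lem: rho_0 C1alpha}) times the integrable kernel $|1/|g_\zeta(x)-y'|-1/|y'||$; the second summand produces a more singular kernel $1/|g_\zeta(x)-y'|^2$, which is nonetheless in $L^1(B_2)$ because dimension is three. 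Uniform $L^1$ bounds on these kernels for $|s|$ small follow from the distance estimates \eqref{est: g-x 12}--\eqref{est: Dg inv 12}.

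The main obstacle is therefore producing a clean $s$-uniform $L^1(B_2)$ majorant despite the Newtonian singularity, and keeping track of where $(Dg_\zeta)^{-1}$ is evaluated when the chain rule for $g_{\zeta+s\xi}^{-1}$ is pushed through. Once both kernels are bounded above by $s$-independent integrable functions and the pointwise $s$-limits are taken in the integrand, the difference quotient of $\F_1(\zeta+s\xi)$ at $s=0$ converges pointwise in $x$ to the right-hand side of \eqref{eq: F1' 0}. Combining this with the three earlier chain-rule computations and the product rule applied to \eqref{F terms} yields \eqref{eq: F' formula}.
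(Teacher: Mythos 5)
Your term-by-term decomposition and all four chain-rule computations match the paper's, including the key identities $\partial_s g_{\zeta+s\xi}(x)|_{s=0}=\xi(x)x/|x|^2$ and the formula for $\partial_s g_{\zeta+s\xi}^{-1}$ obtained from differentiating $g_{\zeta+s\xi}(g_{\zeta+s\xi}^{-1}(y'))=y'$; the treatments of $\M'$, $\F_2'$ and $\F_3'$ are essentially identical to the paper's. Where you diverge is the justification of differentiation under the integral in $\F_1$. The paper does not apply dominated convergence to the difference quotient; instead it regularizes the Newtonian kernel with a cutoff $\chi(|g_{\zeta+s\xi}(x)-y'|/\epsilon)$, shows the regularized integrals $I_{1,\epsilon}(s)$ and their $s$-derivatives $I_{1,\epsilon}'(s)$ converge \emph{uniformly in $s$} as $\epsilon\to0$ (the boundary-layer term contributed by $\chi'$ being $O(\epsilon)$ since it lives on a shell of measure $O(\epsilon^3)$ with integrand $O(\epsilon^{-2})$), and then invokes the classical theorem on uniform convergence of derivatives. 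That route entirely avoids the need for a single $s$-independent pointwise majorant, which is precisely the point where your argument is soft.

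Concretely: what you call the main obstacle --- an $s$-uniform integrable majorant for $\partial_s\bigl[\rho_0(g_{\zeta+s\xi}^{-1}(y'))/|g_{\zeta+s\xi}(x)-y'|\bigr]$ --- is asserted but not produced. ``Uniform $L^1$ bounds'' on the family of kernels are not a hypothesis of dominated convergence (one needs a pointwise dominating function, or at least uniform integrability for Vitali's theorem), and the cited estimates \eqref{est: g-x 12}--\eqref{est: Dg inv 12} only control how far the singularity $g_{\zeta+s\xi}(x)$ moves; they do not by themselves dominate $|g_{\zeta+s\xi}(x)-y'|^{-2}$ by something independent of $s$, since the singular set itself depends on $s$. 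The gap is fixable within your framework: because $s\mapsto g_{\zeta+s\xi}(x)$ is affine, the singularities for $|s|\le s_0$ sweep out a line segment $S_x$, and $C\,\mathrm{dist}(y',S_x)^{-2}$ is an $s$-independent majorant that is integrable over $B_2\subset\real^3$ (the inverse-square of the distance to a segment is locally integrable in three dimensions). You should either supply that majorant explicitly or switch to the paper's cutoff-and-uniform-convergence argument; as written the interchange of $\partial_s$ and $\int_{B_2}$ for the $\F_1$ term is not justified.
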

\begin{proof}
Note that $(Dg_\zeta)^{-1}$ denote the inverse matrix of $Dg_\zeta$.  
\eqref{eq: F' formula} is obvious.   We compute
\begin{align}
&~\M'(\zeta)\xi   
= ~\partial_s\bigg|_{s=0}\M(\zeta+s\xi)  \\ 
= &~\frac{-M}{\left(\int_{B_1}\rho_0(x) \det Dg_{\zeta}(x)~dx\right)^2}  
\partial_s\bigg|_{s=0}\int_{B_1}\rho_0(x) \det Dg_{\zeta+s\xi}(x)~dx\notag\\
= &~\frac{-M}{\left(\int_{B_1}\rho_0(x) \det Dg_{\zeta}(x)~dx\right)^2}  
\int_{B_1}\rho_0(x) \det D\gz(x) \tr \left[D\gz^{-1}\partial_s  
\bigg|_{s=0}Dg_{\zeta+s\xi}\right](x)~dx\notag\\
= &~\frac{-M}{\left(\int_{B_1}\rho_0(x) \det Dg_{\zeta}(x)~dx\right)^2}  
\int_{B_1}\rho_0(x)\det Dg_{\zeta}(x)  
\tr \left[Dg_{\zeta}^{-1}(x) D\left(\xi(x)\frac{x}{|x|^2}\right)\right]~dx.\notag\\
\end{align}
Differentiation under the integral sign is justified by   dominated convergence.  
We use 
\begin{equation}
\frac{d(\det F)}{ds}  =  (\det F) \tr \left(F^{-1} \frac{dF}{ds}\right), 
\qquad  \frac d{ds} Dg_{\zeta+s\xi}  =  D\left(\xi(x)\frac x{|x|^2} \right)  
\end{equation}
to obtain \eqref{eq: M'}. 
						Next
\eqn   \label{eq: F1' 1}
~[\F_1'(\zeta)\xi](x)  
= ~\partial_s\bigg|_{s=0}\int_{B_2}\rho_0(g_{\zeta+s\xi}^{-1}(y'))  
\left(\frac{1}{|g_{\zeta+s\xi}(x)-y'|}-\frac{1}{|y'|}\right)~dy'.
\eeqn  
To carry out the derivative  we split the integral into two terms 
\begin{equation}
I_1(s) = \int_{B_2}\rho_0(g_{\zeta+s\xi}^{-1}(y'))\frac{1}{|g_{\zeta+s\xi}(x)-y'|}~dy', 
\quad I_2(s) = \int_{B_2}\rho_0(g_{\zeta+s\xi}^{-1}(y'))\frac{1}{|y'|}~dy' , 
\end{equation}
					where 
$s$ belongs to a small neighborhood of 0. 
We use a cutoff function to avoid the singularity. Let $\chi$ be a non-negative, 
smooth, compactly supported function on the real line such that 
$\chi(s) = 1$ for $|s|<1$, $\chi(s)=0$ for $|s|>2$, and $\|\chi'\|_{\infty}\le 2$. Let 
\begin{equation}
I_{1,\epsilon}(s) = \int_{B_2}\rho_0(g_{\zeta+s\xi}^{-1}(y'))\frac{1}{|g_{\zeta+s\xi}(x)-y'|}\left[1-\chi\left(\frac{|g_{\zeta+s\xi}(x)-y'|}{\epsilon}\right)\right]~dy'
\end{equation}
					We see easily 
that $I_{1,\epsilon}(s)$ converges uniformly to $I_{1}(s)$ as $\epsilon \to 0$, and that $I_{1,\epsilon}(s)$ is $C^1$ with
\begin{align}
I_{1,\epsilon}'(s) = &~\int_{B_2}\partial_s\left(\rho_0(g_{\zeta+s\xi}^{-1}(y'))\frac{1}{|g_{\zeta+s\xi}(x)-y'|}\right)\left[1-\chi\left(\frac{|g_{\zeta+s\xi}(x)-y'|}{\epsilon}\right)\right]~dy'\notag\\
&~-\int_{B_2}\rho_0(g_{\zeta+s\xi}^{-1}(y'))\frac{1}{|g_{\zeta+s\xi}(x)-y'|}\partial_s\chi\left(\frac{|g_{\zeta+s\xi}(x)-y'|}{\epsilon}\right)~dy' .
\end{align}
Now
\begin{align}\label{est: 1/r^2}
&~\bigg|\partial_s\left(\rho_0(g_{\zeta+s\xi}^{-1}(y'))\frac{1}{|g_{\zeta+s\xi}(x)-y'|}\right)\bigg|    \\
= &~\bigg|-\nabla\rho_0(g_{\zeta+s\xi}^{-1}(y'))Dg_{\zeta+s\xi}^{-1}(y')\xi(g_{\zeta+s\xi}^{-1}(y'))\frac{g_{\zeta+s\xi}^{-1}(y')}{|g_{\zeta+s\xi}^{-1}(y')|^2}\frac{1}{|g_{\zeta+s\xi}(x)-y'|}\notag\\
&~+\rho_0(g_{\zeta+s\xi}^{-1}(y'))\frac{-(g_{\zeta+s\xi}(x)-y')}{|g_{\zeta+s\xi}(x)-y'|^3}\cdot \xi(x)\frac{x}{|x|^2}\bigg|  
\le ~C\frac{1}{|g_{\zeta+s\xi}(x)-y'|^2}.  \notag 
\end{align}
In the above calculation, we have used the formula
\eqn
\pa_s g_{\zeta+s\xi}^{-1}(y') = -Dg_{\zeta+s\xi}^{-1}(y')\xi(g_{\zeta+s\xi}^{-1}(y'))\frac{g_{\zeta+s\xi}^{-1}(y')}{|g_{\zeta+s\xi}^{-1}(y')|^2},
\eeqn					 
which follows if we differentiate $g_{\zeta+s\xi}(g_{\zeta+s\xi}^{-1}(y'))=y'$ with respect to $s$. In addition,
\begin{align}\label{est: 1/r}
\bigg|\rho_0(g_{\zeta+s\xi}^{-1}(y'))\frac{1}{|g_{\zeta+s\xi}(x)-y'|}\bigg| 
\le C\frac{1}{|g_{\zeta+s\xi}(x)-y'|}.  
\end{align}
					Also 
\begin{align}\label{est: 1/epsilon}
\bigg|\partial_s\chi\left(\frac{|g_{\zeta+s\xi}(x)-y'|}{\epsilon}\right)\bigg| &= \bigg|\chi'\left(\frac{|g_{\zeta+s\xi}(x)-y'|}{\epsilon}\right)\frac{g_{\zeta+s\xi}(x)-y'}{\epsilon|g_{\zeta+s\xi}(x)-y'|}\cdot \xi(x)\frac{x}{|x|^2}\bigg|\notag\\
& \le \frac{C}{\epsilon}  \bigg|\chi'\left(\frac{|g_{\zeta+s\xi}(x)-y'|}{\epsilon}\right)\bigg|,
\end{align}
					so that  
that $I_{1,\epsilon}'(s)$ converges uniformly to 
\begin{equation}\label{eq: F_1' 2}
\int_{B_2}\partial_s\left(\rho_0(g_{\zeta+s\xi}^{-1}(y'))\frac{1}{|g_{\zeta+s\xi}(x)-y'|}\right)~dy'
\end{equation}
as $\epsilon \to 0$.
Therefore $I_1(s)$ is $C^1$ with derivative given by \eqref{eq: F_1' 2}. 
The calculation of $I_2'(s)$ as well as of  $\F_2'$ and $\F_3'$ is similar.   
\end{proof}


Before we prove that the formal derivative $\F'(\zeta,\kappa)$ computed in \Cref{lem: gateaux diff} is really a Fr\'{e}chet derivative, we will show that it is a bounded linear map on $X$, and that the dependence of $\F'$ on $\zeta$ is continuous. To that end, we compute the spatial derivatives of $\F'(\zeta,\kappa)\xi$. 

\begin{lemma}
\begin{align}\label{eq: partial_i F'}
&~\partial_i[\F'(\zeta,\kappa)\xi](x) \notag\\
= &~[\M'(\zeta)\xi]\partial_i[\F_1(\zeta)](x) +\M(\zeta)\partial_i[\F_1'(\zeta)\xi](x) + \kappa\partial_i[\F_2'(\zeta)\xi](x) + \partial_i [\F_3'(\zeta)\xi](x),
\end{align}
					where $\partial_i [\F_1(\zeta)] (x)$ is given  in \eqref{eq: diff F_1},  
\begin{align}\label{eq: d_i of F_1'}
&~\partial_i [\F_1'(\zeta)\xi](x)  \\
= &~\int_{B_2} -\left(\nabla\rho_0(g_{\zeta}^{-1}(y'))Dg_{\zeta}^{-1}(y') 
\xi(g_{\zeta}^{-1}(y'))\frac{g_{\zeta}^{-1}(y')}{|g_{\zeta}^{-1}(y')|^2}\right)  
\frac{-(g_{\zeta}(x)-y')}{|g_{\zeta}(x)-y'|^3}~dy'\cdot \partial_i g_{\zeta}(x) \notag\\
&~+\sum_j \int_{B_2}\left(\nabla\rho_0(g_{\zeta}^{-1}(y'))\partial_j g_{\zeta}^{-1}(y')\right)  
\left(\frac{-(g_{\zeta}(x)-y')}{|g_{\zeta}(x)-y'|^3}  
\cdot \partial_i g_{\zeta}(x)\right)~dy'~ \xi(x)\frac{x_j}{|x|^2}\notag\\
&~+ \int_{B_2}\rho_0(g_{\zeta}^{-1}(y'))\frac{-(g_{\zeta}(x)-y')}{|g_{\zeta}(x)-y'|^3} ~dy'  
\cdot \partial_i\left(\xi(x)\frac{x}{|x|^2}\right),  \notag
\end{align}

\begin{align}    \label{eq: d_i of F_2'}
&\partial_i [\F_2'(\zeta)\xi](x) = (\omega^2)'[r(\gz(x))]\ \partial_i[r(\gz(x))]\ r(\gz(x))\ 
\frac{\xi(x)}{|x|^2}\sqrt{x_1^2+x_2^2}\notag\\
&\quad +\omega^2(r(\gz(x)))\bigg[\partial_i[r(\gz(x))]\frac{\xi(x)}{|x|^2}\sqrt{x_1^2+x_2^2}  
  + [r(\gz(x))]\partial_i\left(\frac{\xi(x)}{|x|^2}\sqrt{x_1^2+x_2^2}\right)\bigg],
\end{align}

\begin{align}\label{eq: d_i of F_3'}
\partial_i [\F_3'(\zeta)\xi](x) = [-h''(\M(\zeta)\rho_0(x))\M(\zeta)\rho_0(x)\partial_i\rho_0(x)-h'(\M(\zeta)\rho_0(x))\partial_i\rho_0(x)]\M'(\zeta)\xi.
\end{align}

\end{lemma}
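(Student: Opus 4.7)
The plan is to verify the four identities \eqref{eq: partial_i F'}--\eqref{eq: d_i of F_3'} by applying the product and chain rules to the formulas already established in \Cref{lem: gateaux diff}, with essentially all the technical work concentrated on the singular integral appearing in $[\F_1'(\zeta)\xi](x)$. Identity \eqref{eq: partial_i F'} is immediate: since $\M'(\zeta)\xi$ is a scalar independent of $x$, one simply applies $\partial_i$ term by term to the decomposition \eqref{eq: F' formula}, using \eqref{eq: diff F_1} for $\partial_i\F_1(\zeta)$.

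For \eqref{eq: d_i of F_1'} I would split $[\F_1'(\zeta)\xi](x)=A_1(x)+A_2(x)\cdot\xi(x)\frac{x}{|x|^2}$, where $A_1(x)$ is the first integral in \eqref{eq: F1' 0} and
\[
A_2(x)=\int_{B_2}\rho_0(g_\zeta^{-1}(y'))\frac{-(g_\zeta(x)-y')}{|g_\zeta(x)-y'|^3}\,dy'.
\]
The derivative $\partial_i A_1(x)$ falls only on the factor $1/|g_\zeta(x)-y'|$, producing via the chain rule the first term of \eqref{eq: d_i of F_1'}; differentiation under the integral sign is justified by dominated convergence since the resulting integrand is $O(|g_\zeta(x)-y'|^{-2})$, which is locally integrable in $\mathbb{R}^3$. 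Applying the product rule to $A_2(x)\cdot\xi(x)\frac{x}{|x|^2}$ yields $[\partial_iA_2(x)]\cdot\xi(x)\frac{x}{|x|^2}+A_2(x)\cdot\partial_i\!\left(\xi(x)\frac{x}{|x|^2}\right)$, the second summand of which is exactly the third term of \eqref{eq: d_i of F_1'}.

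The main obstacle is the naive computation of $\partial_iA_2(x)$, which produces the non-integrable kernel $|g_\zeta(x)-y'|^{-3}$. I would resolve this by integrating by parts before differentiating: using $\frac{-(g_\zeta(x)-y')_k}{|g_\zeta(x)-y'|^3}=-\partial_{y'_k}\frac{1}{|g_\zeta(x)-y'|}$ together with the fact that $\rho_0\circ g_\zeta^{-1}$ vanishes in a neighborhood of $\partial B_2$ (for $\epsilon$ small, $g_\zeta(\overline{B_1})\subset B_{3/2}$ by \eqref{est: g-x}), one obtains
\[
A_{2,k}(x)=\int_{B_2}\nabla\rho_0(g_\zeta^{-1}(y'))\cdot\partial_{y'_k}g_\zeta^{-1}(y')\,\frac{1}{|g_\zeta(x)-y'|}\,dy'.
\]
The origin $y'=0$, where $Dg_\zeta^{-1}$ may misbehave, is handled by excising $B_\delta$ from the domain and letting $\delta\to 0$; the boundary contribution is of order $\delta^2$ and vanishes in the limit. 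Now the kernel is only weakly singular, and $\partial_i$ passes through the integral by the same cutoff-and-dominated-convergence argument used for $I_{1,\epsilon}$ in the proof of \Cref{lem: gateaux diff}. Dotting the result into $\xi(x)\frac{x}{|x|^2}$ and relabeling the summation index as $j$ produces precisely the second term of \eqref{eq: d_i of F_1'}.

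Formulas \eqref{eq: d_i of F_2'} and \eqref{eq: d_i of F_3'} are then routine pointwise computations with no singular integrals: for $\F_2'$, one differentiates the product $\omega^2(r(g_\zeta(x)))\,r(g_\zeta(x))\,\xi(x)|x|^{-2}\sqrt{x_1^2+x_2^2}$ using \eqref{assump omega} together with \eqref{eq: d_i gz er} for $\partial_i r(g_\zeta(x))$; for $\F_3'$, the second summand in the bracket of \eqref{eq: F3'} is constant in $x$, so only the first summand contributes, and two applications of the chain rule (once on $h'(\M(\zeta)\rho_0(\cdot))$, once on its product with $\rho_0$) deliver \eqref{eq: d_i of F_3'}.
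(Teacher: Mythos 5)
Your proposal is correct and follows essentially the same route as the paper: the central device in both is to rewrite the singular kernel as $-\nabla_{y'}|g_\zeta(x)-y'|^{-1}$ and integrate by parts onto $\rho_0\circ g_\zeta^{-1}$ (with no boundary terms, since $\rho_0\circ g_\zeta^{-1}$ vanishes near $\partial B_2$), reducing to a weakly singular integral that can then be differentiated in $x$ by the cutoff argument of Lemma \ref{lem: gateaux diff}. The only difference is cosmetic — you apply the product rule before integrating by parts, whereas the paper integrates by parts first and then differentiates — and the remaining terms are the same routine chain-rule computations.
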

\begin{proof}
The $\partial_i$ derivative of the first term in \eqref{eq: F1' 0} can be computed in a way similar 
to \Cref{lem: gateaux diff} by using the cutoff function $\chi$.   
This gives the first line of \eqref{eq: d_i of F_1'}.  
We rewrite the second line of \eqref{eq: F1' 0} as 
\begin{align}
&~\int_{B_2}\rho_0(\gz^{-1}(y'))\frac{-(\gz(x)-y')}{|\gz(x)-y'|^3}~dy'\cdot\xi(x)\frac{x}{|x|^2}\notag\\
=&~ -\int_{B_2}\rho_0(\gz^{-1}(y'))\nabla_{y'}\left(\frac{1}{|\gz(x)-y'|}\right)~dy'\cdot\xi(x)\frac{x}{|x|^2}\notag\\
=&~\sum_j\int_{B_2}\left(\nabla\rho_0(g_{\zeta}^{-1}(y'))\partial_j g_{\zeta}^{-1}(y')\right)\frac{1}{|g_{\zeta}(x)-y'|}~dy'~\xi(x)\frac{x_j}{|x|^2}
\end{align}
Now we take the $\partial_i$ derivative using the cutoff method of \Cref{lem: gateaux diff}.  
The last two lines of \eqref{eq: d_i of F_1'} follow. The calculations of the other terms are straightforward.

\end{proof}


\begin{lemma}\label{lem: mixed partial F} 
With $F(x,s)$ defined in \eqref{F(x,s)}, the mixed derivatives are  
\begin{equation}\label{eq: mixed partial F}
\partial_s \partial_ i F(x,s) = \partial_i \partial_s F (x,s) = \partial_i [\F'(\zeta+s\xi,\kappa)\xi](x)
\end{equation}
for every $s$ in a neighborhood of $0$.
\end{lemma}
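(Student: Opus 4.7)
The plan is to split $F(x,s) = \M(\zeta+s\xi)\F_1(\zeta+s\xi)(x) + \kappa \F_2(\zeta+s\xi)(x) + \F_3(\zeta+s\xi)(x)$ and verify the interchange of mixed partial derivatives term by term. For $\kappa\F_2$ and $\F_3$, the integrand or expression is a smooth composition of $g_{\zeta+s\xi}(x)$, $\omega^2$, $h$, and the $s$-independent quantities $\rho_0, |x|$, etc.; since $\omega^2\in C^{1,\beta}$ and $h\in C^3$, each is jointly $C^1$ in $(x,s)$ with continuous mixed partials, so Clairaut's theorem gives $\partial_s\partial_i = \partial_i\partial_s$ directly, and the outputs match \eqref{eq: d_i of F_2'}, \eqref{eq: d_i of F_3'} by elementary chain rule. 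The factor $\M(\zeta+s\xi)$ depends on $s$ but not on $x$; consequently the Leibniz product rule guarantees the interchange for $\M\F_1$ reduces to the interchange for $\F_1$ alone.

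For the integral term $\F_1$, my plan is to mimic the cutoff approach of \Cref{lem: gateaux diff}. The $1/|y'|$ piece of the integrand in \eqref{F1 term} is $x$-independent and so is irrelevant to the mixed partial, while the $1/|g_\zeta(x)-y'|$ piece carries the singularity. Introduce the regularized $\F_{1,\epsilon}$ by inserting the factor $1-\chi(|g_{\zeta+s\xi}(x)-y'|/\epsilon)$ as in the earlier proof. Now the integrand of $\F_{1,\epsilon}$ is jointly $C^\infty$ in $(x,s,y')$ on a compact $y'$-domain, so differentiation under the integral sign is legitimate in both variables and the classical Clairaut theorem yields $\partial_s\partial_i F_\epsilon = \partial_i\partial_s F_\epsilon$, both equal to the explicit expression obtained by differentiating under the integral.

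The technical heart is passing to the limit $\epsilon\to 0$. I will show, for each fixed $x\in \overline{B_1}\setminus\{0\}$ and all $s$ in a small neighborhood of $0$, that $\partial_i F_{1,\epsilon}(x,s)\to \partial_i F_1(x,s)$ and that $\partial_s\partial_i F_{1,\epsilon}(x,s)$ converges uniformly in $s$ to the expression $\partial_i[\F_1'(\zeta+s\xi)\xi](x)$ given in \eqref{eq: d_i of F_1'}. The dominant singularities of the latter integrand are of the order $|g_{\zeta+s\xi}(x)-y'|^{-2}$ (coming from first-derivative cross terms) and $|g_{\zeta+s\xi}(x)-y'|^{-3}$ (from the $\partial_s$ acting on $-(g_\zeta(x)-y')/|g_\zeta(x)-y'|^3$), the latter being borderline. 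Exactly as in \Cref{lem: Lip estimate at zero}, the $|\cdot|^{-3}$ kernel is the gradient of $|\cdot|^{-1}$, which when integrated against a smooth factor can be converted via the divergence theorem to a boundary integral plus an integrable volume integral. The contribution of the excised annulus $\{|g(x)-y'|<2\epsilon\}$ is controlled by estimates entirely parallel to \eqref{est: 1/r^2}--\eqref{est: 1/epsilon} and tends to zero as $\epsilon\to0$.

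Once the uniform (in $s$) convergence is established, a standard exchange-of-limits argument closes the proof: from $\partial_iF_\epsilon\to\partial_iF$ pointwise and $\partial_s(\partial_iF_\epsilon)\to G_i$ uniformly in $s$, it follows that $\partial_i F$ is $C^1$ in $s$ with $\partial_s\partial_i F = G_i = \partial_i\partial_s F$, the last equality having been computed in the preceding lemma. The principal obstacle, then, is the careful bookkeeping of the singular kernels in $\partial_s\partial_i F_{1,\epsilon}$; beyond that, the argument is a direct adaptation of the cutoff-and-pass-to-limit machinery already developed in \Cref{lem: gateaux diff}.
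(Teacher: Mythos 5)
Your overall strategy (term-by-term splitting, cutoff regularization of $\F_1$, differentiation under the integral, passage to the limit $\epsilon\to 0$) is the same as the paper's, and the treatment of $\M$, $\F_2$, $\F_3$ is fine. But there is a genuine gap at exactly the point you call ``the technical heart.'' You assert that the contribution of the cutoff region $\{|g_{\zeta+s\xi}(x)-y'|<2\epsilon\}$ ``is controlled by estimates entirely parallel to \eqref{est: 1/r^2}--\eqref{est: 1/epsilon} and tends to zero as $\epsilon\to0$.'' That is false for the borderline term. When you compute $\partial_s$ of the regularized $\partial_i\F_1$, the derivative hitting the cutoff produces a term with kernel $\bigl(\partial_j|g(x)-y'|^{-1}\bigr)\,\chi'\bigl(|g(x)-y'|/\epsilon\bigr)\epsilon^{-1}\,\partial_s|g(x)-y'|$, which is of size $\epsilon^{-3}$ on an annulus of volume $\epsilon^3$; it converges to a \emph{nonzero} multiple of $\rho_0(x)\,\xi(x)x_j/|x|^2$ (this is the usual ``delta-function'' contribution of the distributional second derivative of the Newtonian kernel). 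In the first-derivative computation of \Cref{lem: gateaux diff} the analogous term has one fewer power of the singularity and is $O(\epsilon)$, which is why \eqref{est: 1/epsilon} suffices there; that estimate does not carry over here.

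The paper's resolution is not a size estimate but an \emph{exact algebraic cancellation}: after integrating by parts in $y'$ the term containing $\partial_j\partial_s|g(x)-y'|^{-1}$ (to move $\partial_{y'_j}$ onto $\rho_0\circ g_{\zeta+s\xi}^{-1}$ and onto the cutoff), one obtains a second $\chi'$ term, and the identity
\begin{equation*}
\Bigl(\partial_s\tfrac{1}{|g_{\zeta+s\xi}(x)-y'|}\Bigr)\partial_j|g_{\zeta+s\xi}(x)-y'|
=\Bigl(\partial_j\tfrac{1}{|g_{\zeta+s\xi}(x)-y'|}\Bigr)\partial_s|g_{\zeta+s\xi}(x)-y'|
\end{equation*}
makes the two $O(1)$ cutoff terms cancel identically, leaving only kernels of order $|g(x)-y'|^{-2}$ that converge uniformly. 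Your sketch does invoke an integration by parts for the $|\cdot|^{-3}$ kernel, but without identifying this cancellation the argument does not close: dropping the cutoff terms as ``tending to zero'' would produce a limit differing from $\partial_i[\F_1'(\zeta+s\xi)\xi](x)$ by a nonzero local term. You need to either exhibit this cancellation explicitly or restructure the computation so that the $|\cdot|^{-3}$ kernel never appears under $\partial_s$ before the integration by parts has been performed.
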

\begin{proof}
The proof of \Cref{lem: gateaux diff} actually shows
\begin{equation}
\partial_sF(x,s) = [\F'(\zeta+s\xi,\omega)\xi](x),
\end{equation}
from which the second equality of \eqref{eq: mixed partial F} follows immediately. The other equality is certainly true if $F$ is $C^2$, but such a regularity is not yet established. Instead, we directly compute
\begin{align}
~\partial_iF(x,s) 
=& ~\partial_i\F(\zeta+s\xi,\omega)(x)\notag\\
= &~\M(\zeta+s\xi)\partial_i\F_1(\zeta+s\xi)(x)  
+\partial_i\F_2(\zeta+s\xi,\omega)(x)+\partial_i\F_3(\zeta+s\xi)(x).
\end{align}
Thus
\begin{align}
~\partial_s\partial_iF(x,s)  
&= ~[\partial_s\M(\zeta+s\xi)]\partial_i\F_1(\zeta+s\xi)(x) + \M(\zeta+s\xi)[\partial_s\partial_i\F_1(\zeta+s\xi)(x)]\notag\\
&\ \ \ ~+\partial_s\partial_i\F_2(\zeta+s\xi,\omega)(x) + \partial_s\partial_i\F_3(\zeta+s\xi)(x).
\end{align}
We now compute $\partial_s\partial_i\F_1(\zeta+s\xi)(x)$. By \eqref{eq: diff F_1},
\begin{equation}\label{eq: partial F1 z+se}
\partial_i\F_1(\zeta+s\xi)(x) = \left(\int_{B_2}\rho_0(g_{\zeta+s\xi}^{-1}(y'))\frac{-(g_{\zeta+s\xi}(x)-y')}{|g_{\zeta+s\xi}(x)-y'|^3}~dy'\right)\cdot \partial_i g_{\zeta+s\xi}(x).
\end{equation}
We again use the cutoff function $\chi$ defined in \Cref{lem: gateaux diff}, and write for every $x$ and component $j$:
\begin{equation}\label{eq: def I(s)}
I(s) = \int_{B_2}\rho_0(g_{\zeta+s\xi}^{-1}(y'))\frac{-(g_{\zeta+s\xi}(x)-y')_j}{|g_{\zeta+s\xi}(x)-y'|^3}~dy',
\end{equation}
and
\begin{equation}
I_{\epsilon}(s) = \int_{B_2}\rho_0(g_{\zeta+s\xi}^{-1}(y'))\frac{-(g_{\zeta+s\xi}(x)-y')_j}{|g_{\zeta+s\xi}(x)-y'|^3}\left[1-\chi\left(\frac{|g_{\zeta+s\xi}(x)-y'|}{\epsilon}\right)\right]~dy'.
\end{equation}
For $s$ in suitable neighborhood of $0$, $I_{\epsilon}(s)$ converges uniformly to $I(s)$, whereas 
$I_{\epsilon}'(s)  =  $  
\begin{align}\label{eq: I_e'(s)}
 &~\int_{B_2}\left(\nabla\rho_0(g_{\zeta+s\xi}^{-1}(y'))\partial_s g_{\zeta+s\xi}^{-1}(y')\right)\frac{-(g_{\zeta+s\xi}(x)-y')_j}{|g_{\zeta+s\xi}(x)-y'|^3}\left[1-\chi\left(\frac{|g_{\zeta+s\xi}(x)-y'|}{\epsilon}\right)\right]~dy'\notag\\
&~-\int_{B_2}\rho_0(g_{\zeta+s\xi}^{-1}(y'))\left(\partial_j\partial_s\frac{1}{|g_{\zeta+s\xi}(x)-y'|}\right)\left[1-\chi\left(\frac{|g_{\zeta+s\xi}(x)-y'|}{\epsilon}\right)\right]~dy'\notag\\
&~-\int_{B_2}\rho_0(g_{\zeta+s\xi}^{-1}(y'))\left(\partial_j\frac{1}{|g_{\zeta+s\xi}(x)-y'|}\right)\chi'\left(\frac{|g_{\zeta+s\xi}(x)-y'|}{\epsilon}\right)\frac{1}{\epsilon}\partial_s|g_{\zeta+s\xi}(x)-y'|~dy'.
\end{align}
After integration by parts, the second term in \eqref{eq: I_e'(s)} equals  
\begin{align}\label{eq: I_e'(s) 1}
&~\int_{B_2}\left(\nabla\rho_0(g_{\zeta+s\xi}^{-1}(y'))\partial_j g_{\zeta+s\xi}^{-1}(y')\right)\left(\partial_s\frac{1}{|g_{\zeta+s\xi}(x)-y'|}\right)\left[1-\chi\left(\frac{|g_{\zeta+s\xi}(x)-y'|}{\epsilon}\right)\right]~dy'\notag\\
&~+\int_{B_2}\rho_0(g_{\zeta+s\xi}^{-1}(y'))\left(\partial_s\frac{1}{|g_{\zeta+s\xi}(x)-y'|}\right)\chi'\left(\frac{|g_{\zeta+s\xi}(x)-y'|}{\epsilon}\right)\frac{1}{\epsilon}\partial_j|g_{\zeta+s\xi}(x)-y'|~dy'.
\end{align}
					Noticing that 
\begin{equation}
\left(\partial_s\frac{1}{|g_{\zeta+s\xi}(x)-y'|}\right)\partial_j|g_{\zeta+s\xi}(x)-y'| = \left(\partial_j\frac{1}{|g_{\zeta+s\xi}(x)-y'|}\right)\partial_s|g_{\zeta+s\xi}(x)-y'|,
\end{equation}
we see that the last term in \eqref{eq: I_e'(s) 1} cancels the last term in \eqref{eq: I_e'(s)}. Therefore
\begin{align}\label{eq: I_e'(s) 2}
&~I_{\epsilon}'(s)\notag\\
= &~\int_{B_2}\left(\nabla\rho_0(g_{\zeta+s\xi}^{-1}(y'))\partial_s g_{\zeta+s\xi}^{-1}(y')\right)\frac{-(g_{\zeta+s\xi}(x)-y')_j}{|g_{\zeta+s\xi}(x)-y'|^3}\left[1-\chi\left(\frac{|g_{\zeta+s\xi}(x)-y'|}{\epsilon}\right)\right]~dy'\notag\\
&~+\int_{B_2}\left(\nabla\rho_0(g_{\zeta+s\xi}^{-1}(y'))\partial_j g_{\zeta+s\xi}^{-1}(y')\right)\left(\partial_s\frac{1}{|g_{\zeta+s\xi}(x)-y'|}\right)\left[1-\chi\left(\frac{|g_{\zeta+s\xi}(x)-y'|}{\epsilon}\right)\right]~dy'.
\end{align}
By \eqref{eq: I_e'(s) 2} $I_{\epsilon}'(s)$ converges uniformly to
\begin{align}\label{eq: I'(s)}
&~\int_{B_2}\left(\nabla\rho_0(g_{\zeta+s\xi}^{-1}(y'))\partial_s g_{\zeta+s\xi}^{-1}(y')\right)\frac{-(g_{\zeta+s\xi}(x)-y')_j}{|g_{\zeta+s\xi}(x)-y'|^3}~dy'\notag\\
&~+\int_{B_2}\left(\nabla\rho_0(g_{\zeta+s\xi}^{-1}(y'))\partial_j g_{\zeta+s\xi}^{-1}(y')\right)\left(\partial_s\frac{1}{|g_{\zeta+s\xi}(x)-y'|}\right)~dy'
\end{align}
as $\epsilon\to 0$.
Therefore $I(s)$ is $C^1$ and $I'(s)$ is given by \eqref{eq: I'(s)}. \eqref{eq: partial F1 z+se}, \eqref{eq: def I(s)}, and \eqref{eq: I'(s)} now give
\begin{align}
&~\partial_s\partial_i \F_1(\zeta+s\xi)(x)\notag\\
&= ~ \int_{B_2}\left(\nabla\rho_0(g_{\zeta+s\xi}^{-1}(y'))\partial_s g_{\zeta+s\xi}^{-1}(y')\right)\frac{-(g_{\zeta+s\xi}(x)-y')}{|g_{\zeta+s\xi}(x)-y'|^3}~dy'\cdot \partial_i g_{\zeta+s\xi}\notag\\
&~+\sum_j \int_{B_2}\left(\nabla\rho_0(g_{\zeta+s\xi}^{-1}(y'))\partial_j g_{\zeta+s\xi}^{-1}(y')\right)\left(\frac{-(g_{\zeta+s\xi}(x)-y')}{|g_{\zeta+s\xi}(x)-y'|^3}\cdot \partial_i g_{\zeta+s\xi}(x)\right)~dy' ~\xi(x)\frac{x_j}{|x|^2}\notag\\
&~+\left(\int_{B_2}\rho_0(g_{\zeta+s\xi}^{-1}(y'))\frac{-(g_{\zeta+s\xi}(x)-y')}{|g_{\zeta+s\xi}(x)-y'|^3}~dy'\right)\cdot \partial_i \left(\xi(x)\frac{x}{|x|^2}\right)\notag \\
&= ~\partial_i[\F_1'(\zeta+s\xi)\xi](x).
\end{align}
The calculations related to the other terms are straightforward and therefore omitted.
\end{proof}

Next we show that the formal derivative is indeed a bounded linear map on $X$.

\begin{lemma}\label{prop: est F'}
If $\zeta\in B_{\epsilon}(X)$ and $\epsilon$ is small enough, there is a constant $C$ such that
\begin{equation}
\|\F'(\zeta,\kappa)\xi\|_X\le C(1+\kappa)\|\xi\|_X.
\end{equation}
\end{lemma}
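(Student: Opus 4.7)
The plan is to estimate $|\partial_i[\F'(\zeta,\kappa)\xi](x)|$ term-by-term from the decomposition \eqref{eq: partial_i F'}, showing that each of the four summands is bounded by $C(1+\kappa)\|\xi\|_X\,|x|$ on $\overline{B_1}\setminus\{0\}$. Dividing by $|x|$ and taking the supremum then gives the stated norm bound. The basic pointwise observation is that every $\xi\in X$ satisfies $\xi(0)=0$, $\nabla\xi(0)=0$, and $|\xi(x)|\le \tfrac12\|\xi\|_X|x|^2$, so the vector field $v_\xi(x) := \xi(x)\,x/|x|^2$ extends continuously to the origin with $v_\xi(0)=0$, $|v_\xi(x)|\le C\|\xi\|_X|x|$, and $|Dv_\xi(x)|\le C\|\xi\|_X$ on $\overline{B_1}$. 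Combined with \eqref{est: Dg inv} this already yields $|\M'(\zeta)\xi|\le C\|\xi\|_X$ from \eqref{eq: M'}.

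First I would dispatch the routine summands. Using $|\M'(\zeta)\xi|\le C\|\xi\|_X$ together with the pointwise bound $|\partial_i\F_1(\zeta)(x)|\le C|x|$ already proved inside \Cref{prop: est F}, the first summand $[\M'(\zeta)\xi]\,\partial_i\F_1(\zeta)(x)$ is $\le C\|\xi\|_X|x|$. For $\kappa\,\partial_i[\F_2'(\zeta)\xi](x)$, every factor in \eqref{eq: d_i of F_2'} is estimated directly: $|r(g_\zeta(x))|\le C|x|$ and $|\partial_i[r(g_\zeta(x))]|\le C$ by \eqref{est: gz er}; $\omega^2$ and $(\omega^2)'$ are locally bounded by \eqref{assump omega}; and the factor $\xi(x)\sqrt{x_1^2+x_2^2}/|x|^2$ together with its $x$-derivatives is $O(\|\xi\|_X)$ by the same argument that bounds $v_\xi$ and $Dv_\xi$. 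For $\partial_i[\F_3'(\zeta)\xi](x)$, formula \eqref{eq: d_i of F_3'} is handled exactly as in the proof of \Cref{prop: est F}: one writes $\partial_i\rho_0=\partial_iu_0/h'(\rho_0)$ and invokes \eqref{eq: lim h' scaling}, \eqref{eq: lim h''}, together with $\partial_iu_0(0)=0$, to obtain the bound $\le C\|\xi\|_X|x|$.

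The substantive work lies in the middle summand $\M(\zeta)\,\partial_i[\F_1'(\zeta)\xi](x)$, for which \Cref{lem: Lip estimate at zero} is applied to each of the three integrals comprising \eqref{eq: d_i of F_1'}. For the first integral I would take
\begin{equation*}
f_1(y') := \nabla\rho_0(g_\zeta^{-1}(y'))\,Dg_\zeta^{-1}(y')\,v_\xi(g_\zeta^{-1}(y')),
\end{equation*}
which is continuous and has the required axisymmetry and evenness in $y'_3$ (these descend from the radiality of $\rho_0$, the symmetry of $\xi$, and the symmetry-preserving extension of \Cref{lem: extension}). Moreover $f_1(0)=0$ and $|f_1(y')|\le C\|\xi\|_X|y'|$, because $|v_\xi|\le C\|\xi\|_X|\cdot|$ near $0$, $Dg_\zeta^{-1}$ is bounded by \eqref{est: Dg inv}, and $\nabla\rho_0$ is bounded on $\overline{B_1}$ by \Cref{lem: rho_0 C1alpha}. \Cref{lem: Lip estimate at zero} then gives $|V_{f_1}(g_\zeta(x))|\le C\|\xi\|_X|x|$, and multiplication by the bounded vector $\partial_ig_\zeta(x)$ preserves the bound. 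The second integral is a sum over $j$ of $V_{f_{2,j}}(g_\zeta(x))$ dotted with $\partial_ig_\zeta(x)$ and multiplied by the scalar $\xi(x)x_j/|x|^2$, where $f_{2,j}(y') := \nabla\rho_0(g_\zeta^{-1}(y'))\,\partial_jg_\zeta^{-1}(y')$; each $f_{2,j}$ is bounded and Lipschitz at $0$ (using $\nabla\rho_0(0)=0$ from the radiality of $\rho_0$), \Cref{lem: Lip estimate at zero} gives the $V$-factor $\le C|x|$, and $|\xi(x)x_j/|x|^2|\le C\|\xi\|_X|x|$. Finally, the third integral is $V_{\rho_0(g_\zeta^{-1}(\cdot))}(g_\zeta(x))$ dotted with $\partial_iv_\xi(x)$; \Cref{lem: Lip estimate at zero} applied to $\rho_0\circ g_\zeta^{-1}$ gives the $V$-factor $\le C|x|$, and $|\partial_iv_\xi(x)|\le C\|\xi\|_X$.

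The main obstacle I anticipate is the uniform verification, for $\zeta\in B_\epsilon(X)$, that the pullback-built integrands $f_1$ and $f_{2,j}$ satisfy both the symmetry hypothesis and the Lipschitz-at-the-origin estimate of \Cref{lem: Lip estimate at zero}; this brings together the symmetry-preserving extension of \Cref{lem: extension}, the estimate $|v_\xi(x)|\le C\|\xi\|_X|x|$ at the origin, and the vanishing $\nabla\rho_0(0)=0$, simultaneously. Once those three applications of \Cref{lem: Lip estimate at zero} are in place, summing the four contributions gives $|\partial_i[\F'(\zeta,\kappa)\xi](x)|\le C(1+\kappa)\|\xi\|_X|x|$ on $\overline{B_1}\setminus\{0\}$, which is exactly the asserted bound after dividing by $|x|$ and taking the supremum.
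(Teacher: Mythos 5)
Your overall strategy --- estimating $\partial_i[\F'(\zeta,\kappa)\xi]$ summand by summand from \eqref{eq: partial_i F'}, bounding $\M'(\zeta)\xi$ and the $\F_2'$, $\F_3'$ contributions directly, and applying \Cref{lem: Lip estimate at zero} to the three pieces of \eqref{eq: d_i of F_1'} --- is exactly the paper's, and your treatment of the first summand, of $I_1$, of $I_3$, and of the $\F_2'$ and $\F_3'$ terms matches the paper's proof essentially line for line.

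The one place you deviate is $I_2$, and there the argument as written does not go through. You apply \Cref{lem: Lip estimate at zero} to $f_{2,j}(y')=\nabla\rho_0(g_{\zeta}^{-1}(y'))\,\partial_j g_{\zeta}^{-1}(y')=\partial_{y_j'}\bigl[\rho_0(g_{\zeta}^{-1}(y'))\bigr]$, but this function fails both hypotheses of that lemma: it is a fixed Cartesian derivative of an axisymmetric function even in $y_3'$, hence is itself neither axisymmetric nor even (for $j=3$ it is odd in $y_3'$), and the symmetry is what the lemma's proof uses to kill the term $\int_{B_2}(f(y')-f(0))\,y'/|y'|^3\,dy'$; moreover, since $\nabla\rho_0$ is only $C^{\alpha}$ with $\nabla\rho_0(0)=0$, one gets $|f_{2,j}(y')|\le C|y'|^{\alpha}$ rather than the Lipschitz bound $\le C_f|y'|$ that the lemma requires. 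So the claimed $O(|x|)$ bound on that $V$-factor is not justified. Fortunately it is also not needed: the accompanying factor $\xi(x)x_j/|x|^2$ already supplies the decay $C\|\xi\|_X|x|$, so it suffices to bound the integral by a constant, which follows at once from the boundedness of $\nabla\rho_0$ and of $\int_{B_2}|g_{\zeta}(x)-y'|^{-2}\,dy'$ --- exactly the direct estimate the paper uses in \eqref{eq: I2 1}. With that one substitution your proof coincides with the paper's.
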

\begin{proof}
We estimate the terms in \eqref{eq: partial_i F'} one by one. First of all,
\begin{equation}
|\M'(\zeta)\xi|\le C\|\rho_0\|_1 \|\xi\|_X
\end{equation}
by \eqref{est: det Dg} and \eqref{est: Dg inv}. $\partial_i [\F_1(\zeta)](x)$ was already estimated in \eqref{est: partial_i F_1}.
					Next we estimate 
$\partial_i[\F_1'(\zeta)\xi](x)$ from \eqref{eq: d_i of F_1'}.   
We call the three lines in \eqref{eq: d_i of F_1'} $I_1$, $I_2$ and $I_3$ respectively. 
We apply \Cref{lem: Lip estimate at zero} to $I_1$ with
\begin{equation}
f(y')= f_1(y') = \left(\nabla\rho_0(g_{\zeta}^{-1}(y'))Dg_{\zeta}^{-1}(y')
\frac{g_{\zeta}^{-1}(y')}{|g_{\zeta}^{-1}(y')|^2}\right)\xi(g_{\zeta}^{-1}(y')).
\end{equation}
Note that $f_1$ is continuous  and
$|f_1(y')|\le C\|\nabla \rho_0\|_{\infty}\|\xi\|_X|\gz^{-1}(y')|\le C\|\xi\|_X|y'|.$  
Thus $|I_1|\le C\|\xi\|_X|x| $
for all $x\in \overline{B_1}$.
Next we estimate the integral in $I_2$ directly as 
\begin{align}\label{eq: I2 1}
&~\left|\int_{B_2}\left(\nabla\rho_0(g_{\zeta}^{-1}(y'))\partial_j g_{\zeta}^{-1}(y')\right)\left(\frac{-(g_{\zeta}(x)-y')}{|g_{\zeta}(x)-y'|^3}\cdot \partial_i g_{\zeta}(x)\right)~dy'\right|\notag\\
\le&~C\|\nabla\rho_0\|_{\infty}\int_{B_2}\frac{1}{|g_{\zeta}(x)-y'|^2}~dy'  
\le ~C\|\nabla\rho_0\|_{\infty}\int_{B_4}\frac{1}{|y'|^2}~dy'  
\le ~C_1
\end{align}
if $x\in \overline{B_1}$. 
					On the other hand, 
$ \frac{|\xi(x)x_j|}{|x|^2}\le \frac{|\xi(x)|}{|x|}\le C\|\xi\|_X|x| $
so that $|I_2|\le C\|\xi\|_X|x|.$
					Next we apply 
\Cref{lem: Lip estimate at zero} to $I_3$ with 
$ f(y') = f_3(y') = \rho_0(\gz^{-1}(y')).$ 
Note that
$ |f_3(y')-f_3(0)|\le \|\nabla \rho_0\|_{\infty}|\gz^{-1}(y')|\le C|y'|. $
Therefore 
\begin{equation}
\left|\int_{B_2}\rho_0(g_{\zeta}^{-1}(y'))\frac{-(g_{\zeta}(x)-y')}{|g_{\zeta}(x)-y'|^3} ~dy'\right|\le C|x|
\end{equation}
for all $x\in \overline{B_1}$. 
Together with the fact that 
$ \left|\partial_i\left(\xi(x)\frac{x}{|x|^2}\right)\right|\le C\|\xi\|_X $, 
this implies 
$  |I_3|\le C\|\xi\|_X|x|.  $ 
					Combining 
the estimates on $I_1$, $I_2$ and $I_3$, we get
\begin{equation}
|\partial_i [\F_1'(\zeta)\xi](x)|\le C\|\xi\|_X|x|.
\end{equation}

We next estimate $\partial_i[\F_2'(\zeta)\xi](x)$ from \eqref{eq: d_i of F_2'}. 
Due to  \eqref{est: gz er} and \eqref{est: d_i gz er}, 
the first line of \eqref{eq: d_i of F_2'} is bounded by $C\|\xi\|_X|x|$ 
since $(\omega^2)'$ is locally bounded. The second and third lines are bounded by $C\|\xi\|_X|x|$ 
since  $\omega^2$ is locally bounded. 
$\partial_i[\F_3'(\zeta,\kappa)\xi](x)$ is given in \eqref{eq: d_i of F_3'}. 
It can be rewritten as
\begin{equation}\label{eq: d_i of F_3' 1}
-(\M'(\zeta)\xi)\left[\M \frac{h''(\M\rho_0)\rho_0}{h'(\rho_0)} + \frac{h'(\M\rho_0)}{h'(\rho_0)}\right]\partial_i u_0.
\end{equation}
By  \eqref{eq: lim h''},
\begin{equation}\label{eq: lim h'' h' scaling}
\lim_{s\to 0^+}\frac{h''(\M s)s}{h'(s)} = (\gamma-2)\M^{\gamma-3}.
\end{equation}
\eqref{eq: lim h'' h' scaling} and \eqref{eq: lim h' scaling} imply that \eqref{eq: d_i of F_3' 1} is continuous on $\overline{B_1}$. Again using the fact that $\partial_i u_0(0)=0$ we get
\begin{equation}
|\partial_i[\F_3'(\zeta,\kappa)\xi](x)|\le C|\M'(\zeta)\xi||x|\le C\|\xi\|_X|x|.
\end{equation}
This completes all the estimates.
\end{proof}

The next proposition asserts that the formal derivative depends continuously on $\zeta$.

\begin{lemma}\label{prop: continuity of formal deriv}
Let $\alpha = \min\{\frac{2-\gamma}{\gamma-1}, \beta(\omega)\}$, where 
$\beta(\omega)$ is given in Assumption \eqref{assump omega}.  
If $\zeta_1,\zeta_2\in B_{\epsilon}(X)$ and $\epsilon$ is small enough, there is a constant C such that
\begin{equation}\label{est: continuity of formal deriv}
\|(\F'(\zeta_1,\kappa)-\F'(\zeta_2,\kappa))\xi\|_X\le C(1+\kappa)\|\zeta_1-\zeta_2\|_X^{\alpha}\|\xi\|_X.
\end{equation}
\end{lemma}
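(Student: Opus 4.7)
The strategy is to mirror the proof of Lemma \ref{prop: est F'}, but applied to the difference. Using \eqref{eq: partial_i F'} we write
$$\partial_i\bigl[(\F'(\zeta_1,\kappa)-\F'(\zeta_2,\kappa))\xi\bigr](x)$$
as a sum of four groups corresponding to the four summands of $\F'$ in \eqref{eq: F' formula}. For each group the goal is to establish a pointwise estimate of the form $C(1+\kappa)\|\zeta_1-\zeta_2\|_X^{\alpha}\|\xi\|_X|x|$, which after division by $|x|$ gives \eqref{est: continuity of formal deriv}. In most places the $\zeta$-dependence enters through quantities that are already Lipschitz in $\zeta$: $\M(\zeta)$, $\M'(\zeta)\xi$, $Dg_\zeta$, $(Dg_\zeta)^{-1}$, and $g_\zeta(x)-x$, all controlled by the 12-estimates \eqref{est: Dg 12}--\eqref{est: deviation from x-x' 12}. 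The singular integrals appearing in $\partial_i\F_1(\zeta)$ (see \eqref{eq: diff F_1}) and in the three pieces $I_1,I_2,I_3$ of $\partial_i[\F_1'(\zeta)\xi]$ in \eqref{eq: d_i of F_1'} are reduced to Lemma \ref{lem: Lip estimate at zero}: for each integral we select the non-kernel part as the function $f$ (e.g.\ $\rho_0\circ g_\zeta^{-1}$, or $(\nabla\rho_0\cdot Dg_\zeta^{-1})(\xi\,\cdot/|\cdot|^2)\circ g_\zeta^{-1}$), verify the Lipschitz-at-zero hypothesis, and invoke the difference estimate \eqref{V_f difference}. Since \eqref{V_f difference} supplies any Hölder exponent $\beta<1$, this block never limits $\alpha$.

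For the $\kappa\F_2'$ block, the spatial derivative \eqref{eq: d_i of F_2'} contains $\omega^2$ and $(\omega^2)'$ evaluated at $r(g_\zeta(x))$, together with factors that are Lipschitz in $\zeta$ by the 12-estimates. Using the local $C^{1,\beta(\omega)}$ regularity of $\omega^2$ from Assumption \eqref{assump omega} together with \eqref{est: g-x 12}, we get a factor $\|\zeta_1-\zeta_2\|_X^{\beta(\omega)}$; this is the source of the $\beta(\omega)$ piece in $\alpha$. For the $\F_3'$ block, the spatial derivative is given by \eqref{eq: d_i of F_3'} and depends on $\zeta$ only through $\M(\zeta)$. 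Writing the difference as
$$\int_0^1 \frac{d}{d\M}\bigl[\,\text{bracket in \eqref{eq: d_i of F_3'}}\,\bigr]\bigl|_{\M=\M(\zeta_2)+t(\M(\zeta_1)-\M(\zeta_2))}\,dt\cdot\bigl(\M(\zeta_1)-\M(\zeta_2)\bigr)$$
(and similarly for the $\M'(\zeta)\xi$ prefactor) reduces matters to bounding an $\M$-derivative of the bracket. Together with $\partial_i\rho_0=(h^{-1})'(u_0)\partial_iu_0$, the scaling limits \eqref{eq: lim h}, \eqref{eq: lim h''}, \eqref{eq: lim h' scaling}, \eqref{eq: lim h'' h' scaling}, and the $C^{1,\frac{2-\gamma}{\gamma-1}}$ regularity of $h^{-1}$ from Lemma \ref{lem: C1alpha h inv}, one sees that this derivative is bounded by $C|x|$, but that the ratios entering the control near $\rho_0=0$ are only Hölder-continuous in $\M$ with exponent $\frac{2-\gamma}{\gamma-1}$; this is the source of the second piece of $\alpha$.

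The main obstacle is the bookkeeping for the three integrals $I_1,I_2,I_3$ in \eqref{eq: d_i of F_1'}, each of which contains $\zeta$ in several places (inside $\rho_0\circ g_\zeta^{-1}$, inside $Dg_\zeta^{-1}(y')$, inside $g_\zeta(x)$, and inside $\xi\circ g_\zeta^{-1}$). My plan is to telescope each difference by adding and subtracting intermediate integrals in which all but one occurrence of $\zeta$ is replaced by $\zeta_2$; each resulting link then has a single ``live'' $\zeta$-dependence, and can be bounded either by \eqref{V_f difference} with an appropriate $f$, or by combining a 12-estimate for a Lipschitz factor with the pointwise bound already proved in Lemma \ref{prop: est F'} for the remaining part. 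Summing the links via the triangle inequality and choosing $\beta=\alpha$ in every application of \eqref{V_f difference} yields the pointwise bound and hence \eqref{est: continuity of formal deriv}.
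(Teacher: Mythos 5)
Your overall architecture --- splitting the difference according to the summands of \eqref{eq: F' formula}, telescoping one occurrence of $\zeta$ at a time, and dispatching each link either to a ``12''-estimate or to \Cref{lem: Lip estimate at zero} --- is exactly the paper's proof. However, you have misattributed where the exponent $\tfrac{2-\gamma}{\gamma-1}$ is forced, and as written one step of your plan does not close. In the telescoping of $(\F_1'(\zeta_1)-\F_1'(\zeta_2))\xi$ (and likewise of $\M'(\zeta_1)\xi-\M'(\zeta_2)\xi$) there is necessarily a link whose ``live'' $\zeta$-dependence sits inside $\nabla\rho_0\circ g_\zeta^{-1}$: one must bound $f=\G(\zeta_1)-\G(\zeta_2)$ with $\G(\zeta)=\nabla\rho_0(\gz^{-1}(y'))Dg_{\zeta}^{-1}(y')\xi(\gz^{-1}(y'))\gz^{-1}(y')/|\gz^{-1}(y')|^2$ and then feed this difference function into the \emph{plain} bound \eqref{V_f bound}. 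That link is covered neither by the 12-estimates \eqref{est: Dg 12}--\eqref{est: deviation from x-x' 12} (which control $g_\zeta$, $Dg_\zeta^{-1}$, $\det Dg_\zeta$, but not compositions with $\nabla\rho_0$) nor by \eqref{V_f difference} (which only handles a live $\zeta$ in the kernel argument $g_\zeta(x)$). Since $\rho_0$ is only $C^{1,\alpha}$ with $\alpha=\min\bigl(\tfrac{2-\gamma}{\gamma-1},1\bigr)$ by \Cref{lem: rho_0 C1alpha}, the best available bound is $|\nabla\rho_0(\gzone^{-1}(y'))-\nabla\rho_0(\gztwo^{-1}(y'))|\le C\|\rho_0\|_{C^{1,\alpha}}\|\zeta_1-\zeta_2\|_X^{\alpha}$, hence $|f(y')|\le C\|\zeta_1-\zeta_2\|_X^{\alpha}\|\xi\|_X|y'|$ and no better. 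Your assertion that the $\F_1$/$\F_1'$ block ``never limits $\alpha$'' is therefore false once $\gamma>\tfrac32$; this block is precisely the source of the $\tfrac{2-\gamma}{\gamma-1}$ piece of $\alpha$.

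Conversely, the Hölder loss you predict in the $\F_3'$ block does not occur: that difference is genuinely Lipschitz in $\|\zeta_1-\zeta_2\|_X$. Writing $h''(\M_1\rho_0)-h''(\M_2\rho_0)=(\M_1-\M_2)\rho_0\,h'''(\overline{\M}\rho_0)$ and using \eqref{eq: lim h''} to see that $h'''(\overline{\M}s)s^2/h'(s)$ stays bounded as $s\to0^+$, one gets $\|I_6\|_X\le C\|\zeta_1-\zeta_2\|_X\|\xi\|_X$ with exponent $1$, not $\tfrac{2-\gamma}{\gamma-1}$. So your plan is salvageable --- the decomposition and tools are the right ones --- but the exponent bookkeeping must be redone: $\tfrac{2-\gamma}{\gamma-1}$ enters through the H\"older regularity of $\nabla\rho_0$ in the $\F_1'$ differences, and $\beta(\omega)$ through the $C^{1,\beta}$ regularity of $\omega^2$ in the $\F_2'$ difference (the one attribution you do have right).
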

\begin{proof}

$$\F'(\zeta_1,\kappa)-\F(\zeta_2,\kappa) = I_1+I_2+I_3+I_4+I_5+I_6\ ,  $$
$$ I_1 = [\M'(\zeta_1)\xi-\M'(\zeta_2)\xi]\F_1(\zeta_1), 
\quad I_2= [\M'(\zeta_2)\xi](\F_1(\zeta_1)-\F_1(\zeta_2)), $$
$$I_3=(\M(\zeta_1)-\M(\zeta_2))\F_1'(\zeta_1)\xi, 
\quad I_4=\M(\zeta_2)[(\F_1'(\zeta_1)-\F_1'(\zeta_2))\xi], $$
$$I_5=\kappa(\F_2'(\zeta_1)-\F_2'(\zeta_2))\xi,  
\quad I_6=(\F_3'(\zeta_1)-\F_3'(\zeta_2))\xi .$$
We estimate the terms $I_1$ through $I_6$ one by one. 
Starting with $I_1$, the $X$ norm of $\F_1(\zeta_1)$ was already estimated in \Cref{prop: est F}. 
We just need to estimate the size of $\M'(\zeta_1)\xi-\M'(\zeta_2)\xi$. 
In \eqref{eq: M'}  the function $\zeta$ appears in three different places. 
Here and in the following discussion 
we take the difference of the occurrences of $\zeta$ one at a time. 
For $\M'(\zeta)\xi$ the key differences to estimate are
$\det Dg_{\zeta_1}(x)-\det Dg_{\zeta_2}(x), Dg_{\zeta_1}^{-1}(x)-Dg_{\zeta_2}^{-1}(x)$ and 
\begin{equation}\label{eq: I11}
{\left(\int_{B_1}\rho_0(x) \det Dg_{\zeta_1}(x)~dx\right)^{-2}}
-   {\left(\int_{B_1}\rho_0(x) \det Dg_{\zeta_2}(x)~dx\right)^{-2}}.
\end{equation}
					We observe that 
all three differences are bounded by $C\|\zeta_1-\zeta_2\|_X$, either by directly using \eqref{est: Dg inv 12}, \eqref{est: det Dg 12}, or by combining them with a simple application of the mean value theorem (in the case of \eqref{eq: I11}).  Thus
\begin{equation}
\|I_1\|_{X}\le C\|\zeta_1-\zeta_2\|_X\|\xi\|_X.
\end{equation}

We now turn our attention to $I_2$.   
Again $\M'(\zeta_2)\xi$ was already estimated in \Cref{prop: est F'} and satisfies 
$\|\M'(\zeta_2)\xi\|_X\le C\|\xi\|_X.$ 
In order to estimate the $X$ norm of $\F_1(\zeta_1)-\F_1(\zeta_2)$, we use \eqref{eq: diff F_1}, 
which we break up into the three parts 
\begin{equation}
I_{21} = \left(\int_{B_2}\left(\rho_0(\gzone^{-1}(y'))-\rho_0(\gztwo^{-1}(y'))\right)\frac{-(g_{\zeta_j}(x)-y')}{|g_{\zeta_j}(x)-y'|^3}~dy'\right)\cdot \partial_i g_{\zeta_k}(x),
\end{equation}

\begin{equation}
I_{22} = \left(\int_{B_2}\rho_0(g_{\zeta_j}^{-1}(y'))\left(\frac{-(\gzone(x)-y')}{|\gzone(x)-y'|^3}-\frac{-(\gztwo(x)-y')}{|\gztwo(x)-y'|^3}\right)~dy'\right)\cdot \partial_i g_{\zeta_k}(x)
\end{equation}
and
\begin{equation}
I_{23} = \left(\int_{B_2}\rho_0(g_{\zeta_j}^{-1}(y'))\frac{-(g_{\zeta_k}(x)-y')}{|g_{\zeta_k}(x)-y'|^3}~dy'\right)\cdot \partial_i (g_{\zeta_1}(x)-g_{\zeta_2}(x)).
\end{equation}
					Here $j,k$ can be 1 or 2.   
To estimate $I_{21}$, we apply \Cref{lem: Lip estimate at zero} to 
$f(y') = f_1(y') = \rho_0(\gzone^{-1}(y'))-\rho_0(\gztwo^{-1}(y')).$ 
Note that $f_1$ is continuous, $f_1(0)=0$ and 
\begin{equation}
|f_1(x)|\le \|\nabla\rho_0\|_{\infty}|\gzone^{-1}(y')-\gztwo^{-1}(y')|\le C\|\zeta_1-\zeta_2\|_X|y'|
\end{equation}
by \eqref{est: g inv-x 12}. Therefore 
$|I_{21}|\le C\|\zeta_1-\zeta_2\|_X|x|. $ 
To estimate $I_{22}$ we apply \Cref{lem: Lip estimate at zero} to 
$ f(y') = f_2(y') = \rho_0(g_{\zeta_j}^{-1}(y')),$  
which is a continuous function that satisfies 
\begin{equation}
|f_2(y')-f_2(0)|\le \|\nabla\rho_0\|_{\infty}|g_{\zeta_j}^{-1}(y')|\le C|y'|.
\end{equation}
Therefore 
$|I_{22}|\le C\|\zeta_1-\zeta_2\|_X^{\beta}|x|. $ 
Finally,  
$|I_{23}|\le C\|\zeta_1-\zeta_2\|_X|x| $
by a similar application of \Cref{lem: Lip estimate at zero} and \eqref{est: g-x 12}.
Thus 
$ \|I_2\|_X\le C\|\zeta_1-\zeta_2\|_X^{\beta}\|\xi\|_X. $ 
The estimation of $I_3$ is similar to that of $I_1$ and is omitted. 

We focus our attention now on $I_4=\M(\zeta_2)[(\F_1'(\zeta_1)-\F_1'(\zeta_2))\xi]$. 
We need to estimate $\|(\F_1'(\zeta_1)-\F_1'(\zeta_2))\xi\|_X$ starting from \eqref{eq: d_i of F_1'}.  
This is a long equation with three lines, hence $\partial_i[(\F_1'(\zeta_1)-\F_1'(\zeta_2))\xi](x)$ 
can also be broken into three lines, which we denote by $I_{41}$, $I_{42}$ and $I_{43}$ respectively. 
For the first line we must estimate 
\begin{equation}
I_{41a} = \int_{B_2} \left(\G(\zeta_1)-\G(\zeta_2)\right)\frac{-(g_{\zeta_j}(x)-y')}{|g_{\zeta_j}(x)-y'|^3}~dy'\cdot \partial_i g_{\zeta_k}(x),
\end{equation}

\begin{equation}
I_{41b} = \int_{B_2}\G(\zeta_j)\left(\frac{-(g_{\zeta_1}(x)-y')}{|g_{\zeta_1}(x)-y'|^3}-\frac{-(g_{\zeta_2}(x)-y')}{|g_{\zeta_2}(x)-y'|^3}\right)~dy'\cdot \partial_i g_{\zeta_k}(x),
\end{equation}

\begin{equation}
I_{41c} = \int_{B_2} \G(\zeta_j)\frac{-(g_{\zeta_k}(x)-y')}{|g_{\zeta_k}(x)-y'|^3}~dy'\cdot \partial_i (g_{\zeta_1}(x)-g_{\zeta_2}(x)),
\end{equation}
$(j,k=1,2)$, 					where for brevity we denote 
\begin{equation}
\G(\zeta) = \nabla\rho_0(g_{\zeta}^{-1}(y'))Dg_{\zeta}^{-1}(y')\xi(g_{\zeta}^{-1}(y'))  
\frac{g_{\zeta}^{-1}(y')}{|g_{\zeta}^{-1}(y')|^2}. 
\end{equation}
 							We again apply 
 \Cref{lem: Lip estimate at zero} to these integrals. For $I_{41a}$, the function $f(y')= \G(\zeta_1)-\G(\zeta_2)$ is continuous and satisfies 
\begin{align}
&~|f(y')| 
\le ~C\|\rho_0\|_{C^{1,\alpha}}|\gzone^{-1}(y')-\gztwo^{-1}(y')|^{\alpha}\|\xi\|_X|y'|\notag \\
&~+ C\|\nabla \rho_0\|_{\infty}|D\gzone^{-1}(y')-D\gztwo^{-1}(y')|\|\xi\|_X|y'|  
~+ C\|\nabla \rho_0\|_{\infty}\|\nabla \xi\|_{\infty}|\gzone^{-1}(y')-\gztwo^{-1}(y')|\frac{1}{|g_{\zeta_j}(y')|}\notag \\
&~+ C\|\nabla \rho_0\|_{\infty}\|\xi\|_X|y'|^2\frac{1}{|\theta \gzone^{-1}(y')+(1-\theta)\gztwo^{-1}(y')|^2}|\gzone^{-1}(y')-\gztwo^{-1}(y')|\notag \\
\le &~ C\|\rho_0\|_{C^{1,\alpha}}\|\zeta_1-\zeta_2\|_X^{\alpha}\|\xi\|_{X}|y'|.  \notag 
\end{align}
					\Cref{lem: Lip estimate at zero} now implies 
$ |I_{41a}|\le C\|\zeta_1-\zeta_2\|_X^{\alpha}\|\xi\|_{X}|x|. $ 
The terms $I_{41b}$ and $I_{41c}$ are estimated similarly.  
As for $I_{42}$, we write 
\begin{equation}
I_{42a}=\int_{B_2}(\G(\zeta_1)-\G(\zeta_2))\left(\frac{-(g_{\zeta_k}(x)-y')}{|g_{\zeta_k}(x)-y'|^3}\cdot\partial_i g_{\zeta_l}(x)\right)~dy',
\end{equation}
\begin{equation}
I_{42b}=\int_{B_2}\G(\zeta_k)\left(\frac{-(g_{\zeta_1}(x)-y')}{|g_{\zeta_1}(x)-y'|^3}-\frac{-(g_{\zeta_2}(x)-y')}{|g_{\zeta_2}(x)-y'|^3}\right)\cdot\partial_i g_{\zeta_l}(x)~dy',
\end{equation}
\begin{equation}
I_{42c}=\int_{B_2}\G(\zeta_k)\frac{-(g_{\zeta_l}(x)-y')}{|g_{\zeta_l}(x)-y'|^3}\cdot(\partial_i g_{\zeta_1}(x)-\partial_i g_{\zeta_2}(x))~dy,  
\end{equation}
					where now 
$\G(\zeta) = \nabla\rho_0(\gz^{-1}(y'))\partial_j\gz^{-1}(y'). $  
 These terms are estimated in a similar fashion, as are  
the estimates on $I_{43}$.   Thus we get
\begin{equation}
\|I_4\|_X\le C\|\zeta_1-\zeta_2\|_{X}^{\alpha}\|\xi\|_X.
\end{equation}

To estimate $I_5$, we use \eqref{eq: d_i of F_2'}.   We only need
\begin{equation}
(\omega^2)'[r(\gzone(x))]-(\omega^2)'[r(\gztwo(x))]  
\quad \text{ and  }\quad  (\omega^2)[r(\gzone(x))]-(\omega^2)[r(\gztwo(x))]
\end{equation}
to be bounded by $C\|\zeta_1-\zeta_2\|_X^{\beta}$, which is true 
since  $\omega^2$ is locally $C^{1,\beta}$ on $[0,\infty)$.
Thus $$|\partial_i [\F_2'(\zeta_1)\xi-\F_2'(\zeta_2)\xi](x)|   \le  
C\|\zeta_1-\zeta_2\|_X^{\alpha}\|\xi\|_X|x|  $$ 
so that 
$ \|I_5\|_X\le C\kappa\|\zeta_1-\zeta_2\|_X^{\alpha}\|\xi\|_X.  $

						Finally 
to estimate $I_6$, we calculate $\partial_i [(\F_3'(\zeta_1)-\F_3'(\zeta_2))\xi]$ 
using \eqref{eq: d_i of F_3' 1} and  
splitting it into several pieces each involving a single difference as before.  
For instance, one piece is 
\begin{equation}\label{eq: I63}
I_{63} = [(\M_j')\xi]\left[\M_k \frac{[h''(\M_1\rho_0)-h''(\M_2\rho_0)]\rho_0}{h'(\rho_0)} \right]\partial_i u_0 , 
\end{equation}
where we use the shorthand $\M_j = \M(\zeta_j)$ with $j,k,l=1,2$. 
We write 
\begin{align}\label{est: I63 1}
\frac{[h''(\M_1\rho_0)-h''(\M_2\rho_0)]\rho_0}{h'(\rho_0)} = (M_1-M_2)\frac{h'''(\overline{\M}\rho_0)\rho_0^2}{h'(\rho_0)},
\end{align}
where $\overline{\M}$ is between $\M_1$ and $\M_2$. By \eqref{eq: lim h''},
$
\frac{h'''(\overline{\M}s)s^2}{h'(s)}
$
is bounded if $\overline{\M}$ is close to 1 and $s$ is close to zero.   
So the expression in \eqref{est: I63 1} is bounded by 
$C|\M_1-\M_2|\le C\|\zeta_1-\zeta_2\|_X  $  
and we get 
$ |I_{63}|\le C\|\zeta_1-\zeta_2\|_X\|\xi\|_X|x|. $ 
Therefore 
$\|I_6\|_X\le C\|\zeta_1-\zeta_2\|_X\|\xi\|_X.  $ 
This completes all the estimates needed to establish \eqref{est: continuity of formal deriv}.
\end{proof}

We are finally ready to show that the formal derivative is a genuine Fr\'{e}chet derivative.
\begin{lemma}     \label{prop: Frechet}
Let $\zeta\in B_{\epsilon}(X)$, and $\xi$ be such that $\zeta+s\xi\in B_{\epsilon}(X)$ for all $s\in [-1,1]$. Then
\begin{equation}\label{est: Frechet}
\|\F(\zeta+\xi,\kappa)-\F(\zeta,\kappa)-\F'(\zeta,\kappa)\xi\|_X\le C(1+\kappa)\|\xi\|_{X}^{1+\alpha},
\end{equation}
where  $\F'(\zeta,\kappa)\xi$ denotes the formal derivative defined in \eqref{eq: F' formula}.
\end{lemma}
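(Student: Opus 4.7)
The plan is to deduce this estimate from the Hölder continuity of $\F'(\cdot,\kappa)$ with respect to $\zeta$ (Lemma \ref{prop: continuity of formal deriv}) via a fundamental-theorem-of-calculus argument applied pointwise in $x$, using the mixed-derivative identity of Lemma \ref{lem: mixed partial F} to pass the spatial derivative inside. Concretely, fix $x\in \dot B_1$ and set $F(x,s)=\F(\zeta+s\xi,\kappa)(x)$ as in \eqref{F(x,s)}. The computation in Lemma \ref{lem: gateaux diff} shows that $F(x,\cdot)$ is $C^1$ on $[-1,1]$ with $\partial_sF(x,s)=[\F'(\zeta+s\xi,\kappa)\xi](x)$, so by the fundamental theorem of calculus
\begin{equation*}
\F(\zeta+\xi,\kappa)(x)-\F(\zeta,\kappa)(x)-[\F'(\zeta,\kappa)\xi](x)
= \int_0^1 \bigl[(\F'(\zeta+s\xi,\kappa)-\F'(\zeta,\kappa))\xi\bigr](x)\,ds.
\end{equation*}

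Next I would take the spatial derivative $\partial_i$. Since $F(x,\cdot)$ is $C^1$ on $[-1,1]$ and the proof of Lemma \ref{lem: mixed partial F} shows that $\partial_s \partial_i F(x,s)=\partial_i[\F'(\zeta+s\xi,\kappa)\xi](x)$ is continuous in $s$, another application of the fundamental theorem of calculus (now to $s\mapsto\partial_i F(x,s)$) gives
\begin{equation*}
\partial_i\bigl[\F(\zeta+\xi,\kappa)-\F(\zeta,\kappa)-\F'(\zeta,\kappa)\xi\bigr](x)
= \int_0^1 \partial_i\bigl[(\F'(\zeta+s\xi,\kappa)-\F'(\zeta,\kappa))\xi\bigr](x)\,ds.
\end{equation*}
Dividing by $|x|$, taking the supremum over $x\in\dot B_1$, and using Lemma \ref{prop: continuity of formal deriv} with $\zeta_1=\zeta+s\xi$ and $\zeta_2=\zeta$ (both of which lie in $B_\epsilon(X)$ by hypothesis), we bound the integrand pointwise by
\begin{equation*}
\|(\F'(\zeta+s\xi,\kappa)-\F'(\zeta,\kappa))\xi\|_X
\le C(1+\kappa)\|s\xi\|_X^{\alpha}\|\xi\|_X
= C(1+\kappa)s^{\alpha}\|\xi\|_X^{1+\alpha}.
\end{equation*}
Integrating in $s$ produces the factor $\tfrac{1}{1+\alpha}$ and yields \eqref{est: Frechet}.

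Finally I would check that the left-hand side genuinely lies in $X$ so that its norm makes sense: $\F(\zeta+\xi,\kappa)$, $\F(\zeta,\kappa)$ and $\F'(\zeta,\kappa)\xi$ are each in $X$ by Lemmas \ref{prop: est F} and \ref{prop: est F'}, which also give the axisymmetry, evenness in $x_3$, vanishing at the origin, and finiteness of the $X$-seminorm; thus their linear combination is in $X$. The only slight subtlety in the argument is justifying that $s\mapsto\partial_iF(x,s)$ is genuinely $C^1$ rather than merely differentiable at $s=0$; this is exactly the content of Lemma \ref{lem: mixed partial F}, whose cutoff-regularization and integration-by-parts argument handles the Newtonian singularity, and I expect no further obstacle beyond invoking that lemma. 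The rest is a one-line application of the Hölder bound and integration in $s$.
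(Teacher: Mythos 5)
Your proposal is correct and follows essentially the same route as the paper: both reduce the estimate to the mixed-derivative identity of Lemma \ref{lem: mixed partial F} combined with the H\"older continuity of the formal derivative from Lemma \ref{prop: continuity of formal deriv}. The only (immaterial) difference is that you integrate $\partial_s\partial_iF(x,s)$ over $s\in[0,1]$ via the fundamental theorem of calculus, whereas the paper evaluates it at an intermediate point $\theta(x)$ via the mean value theorem; both yield the same bound.
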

\begin{proof}
To estimate the $X$ norm in \eqref{est: Frechet}, we write 
\begin{align}
&\partial_i(\F(\zeta+\xi,\kappa)-\F(\zeta,\kappa))(x) = \partial_iF(x,1)-\partial_iF(x,0)\notag\\
&= \partial_s\partial_i F(x,\theta)  = \partial_i[\F'(\zeta+\theta(x)\xi,\kappa)\xi](x)
\end{align}
for some $\theta(x)\in (0,1)$ by  \Cref{lem: mixed partial F}. 
Thus 
\begin{align}\label{est: Frechet 1}
&~\left|\partial_i(\F(\zeta+\xi,\kappa)-\F(\zeta,\kappa))(x) - \partial_i[\F'(\zeta,\kappa)\xi](x)\right|\notag\\
= &~\left|\partial_i[\F'(\zeta+s\xi,\kappa)\xi](x)-\partial_i[\F'(\zeta,\kappa)\xi](x)\right|\bigg|_{s=\theta(x)}  
\le ~C(1+\kappa)\|\xi\|_{X}^{1+\alpha}|x|  
\end{align}
by  \Cref{prop: continuity of formal deriv}, as desired.  
\end{proof}

\Cref{prop: Frechet} means that $\zeta\mapsto\F(\zeta,\kappa)$ is $C^1$ Fr\'{e}chet differentiable with Fr\'{e}chet derivative given by $\F'(\zeta,\kappa)$. In fact, $\F$ is jointly Fr\'{e}chet differentiable in both variables as the derivative with respect to $\kappa$ is very simple.

\begin{lemma}
$\F:B_{\epsilon}(X)\times \real \to X$ is continuously Fr\'{e}chet differentiable with Fr\'{e}chet derivative at $(\zeta,\kappa)$ given by
\begin{equation}\label{eq: joint Frechet}
(\xi, \upsilon) \mapsto \F'(\zeta,\kappa)\xi +  \upsilon \F_2(\zeta).
\end{equation}
\end{lemma}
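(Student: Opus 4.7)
The plan is to reduce the claim to the $\zeta$-differentiability already established in \Cref{prop: Frechet}, exploiting the fact that $\F$ depends affinely on $\kappa$. Since $\F(\zeta,\kappa) = \M(\zeta)\F_1(\zeta) + \kappa\F_2(\zeta) + \F_3(\zeta)$, the candidate for the partial derivative in $\kappa$ is just $\F_2(\zeta)$, yielding \eqref{eq: joint Frechet}.

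For the Fr\'echet estimate at $(\zeta,\kappa)$, I split the residual
\begin{equation*}
\F(\zeta+\xi,\kappa+\upsilon) - \F(\zeta,\kappa) - \F'(\zeta,\kappa)\xi - \upsilon\F_2(\zeta) = A + B,
\end{equation*}
where $A = \F(\zeta+\xi,\kappa) - \F(\zeta,\kappa) - \F'(\zeta,\kappa)\xi$ and $B = \upsilon[\F_2(\zeta+\xi) - \F_2(\zeta)]$. By \Cref{prop: Frechet}, $\|A\|_X \le C(1+|\kappa|)\|\xi\|_X^{1+\alpha}$. For $B$, I require the Lipschitz bound $\|\F_2(\zeta_1) - \F_2(\zeta_2)\|_X \le C\|\zeta_1-\zeta_2\|_X$. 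This I would obtain by differencing $\partial_i\F_2$ using \eqref{eq: d_i gz er}, \eqref{est: gz er}, \eqref{est: g-x 12}, \eqref{est: zeta/x^2}, and the local $C^{1,\beta}$ regularity of $\omega^2$, in direct analogy with the $I_5$ estimate in the proof of \Cref{prop: continuity of formal deriv}. Then $\|B\|_X \le C|\upsilon|\|\xi\|_X$, and the full residual is bounded by $C(1+|\kappa|)\bigl(\|\xi\|_X^{1+\alpha} + |\upsilon|\|\xi\|_X\bigr)$, which is $o(\|\xi\|_X + |\upsilon|)$ as $(\xi,\upsilon)\to(0,0)$.

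For continuity of the derivative in $(\zeta,\kappa)$ as an element of the space of bounded linear operators from $X\times\real$ to $X$, I exploit the explicit affine structure of $\F'$ in $\kappa$ visible in \eqref{eq: F' formula} to split
\begin{equation*}
\F'(\zeta_1,\kappa_1) - \F'(\zeta_2,\kappa_2) = \bigl[\F'(\zeta_1,\kappa_1) - \F'(\zeta_2,\kappa_1)\bigr] + (\kappa_1 - \kappa_2)\F_2'(\zeta_2).
\end{equation*}
The first bracket is bounded in operator norm by $C(1+|\kappa_1|)\|\zeta_1-\zeta_2\|_X^\alpha$ from \Cref{prop: continuity of formal deriv}, and the second by $C|\kappa_1 - \kappa_2|$ using the uniform bound on $\F_2'$ from \Cref{prop: est F'}. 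The contribution from the $\upsilon\F_2(\zeta)$ piece of the derivative is handled by the Lipschitz estimate on $\F_2$ just established. Summing yields continuity of the full derivative map.

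The only ingredient not already contained in earlier results is the Lipschitz continuity of $\F_2$, and I anticipate no serious obstacle: given \Cref{lem: basic estimates g_zeta} and the techniques already used in \Cref{prop: continuity of formal deriv}, this is essentially bookkeeping. Apart from that, the entire argument is a clean reduction to the $\zeta$-only case via the affine $\kappa$-dependence.
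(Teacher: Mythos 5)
Your proposal is correct and follows essentially the same route as the paper: the residual is split into the pure-$\zeta$ increment (handled by Lemma \ref{prop: Frechet}) plus $\upsilon[\F_2(\zeta+\xi)-\F_2(\zeta)]$, which is controlled by a Lipschitz bound on $\F_2$ obtained exactly as in the $I_5$ estimate, and continuity of the derivative follows from Lemma \ref{prop: continuity of formal deriv} together with the affine dependence on $\kappa$. The paper leaves the Lipschitz bound on $\F_2$ and the continuity argument as brief remarks, so your slightly more explicit bookkeeping is entirely consistent with it.
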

\begin{proof}
By \Cref{prop: continuity of formal deriv} and \Cref{prop: Frechet},
\begin{align}
&~\|\F(\zeta+\xi,\kappa+\upsilon)-\F(\zeta,\kappa)-\F'(\zeta,\kappa)\xi - \upsilon \F_2(\zeta)\|_X\notag\\
\le &~\|\F(\zeta+\xi,\kappa+\upsilon)-\F(\zeta+\xi,\kappa)-\upsilon \F_2(\zeta)\|_X+\|\F(\zeta+\xi,\kappa)-\F(\zeta,\kappa)-\F'(\zeta,\kappa)\xi \|_X\notag\\
\le &~|\upsilon|\| \F_2(\zeta+\xi)-\F_2(\zeta)\|_X+\|\F(\zeta+\xi,\kappa)-\F(\zeta,\kappa)-\F'(\zeta,\kappa)\xi \|_X\notag\\
\le &~C|\upsilon|\|\xi\|_X+C(1+\kappa)\|\xi\|_X^{1+\alpha}\notag
\end{align}
if $\|\xi\|_X$ and $|\upsilon|$ are small enough, and $\zeta\in B_{\epsilon}(X)$. Hence $\F$ is Fr\'{e}chet differentiable with Fr\'{e}chet derivative given in \eqref{eq: joint Frechet}.   
The continuity of the mapping in \eqref{eq: joint Frechet} is easily obtained in a similar fashion 
using \Cref{prop: continuity of formal deriv}.
\end{proof}
This completes the proof of Theorem  \ref{Frechet theorem}.


\vskip 1in  
 \section{Vlasov model}\label{sec: 6}
 \subsection{Main theorem}
The Vlasov-Poisson system (VP) is 
\eqn \label{Vlasov} 
\pa_tf + v\cdot\nabla_x f  + \nabla_x U\cdot  \nabla_v f  = 0, \quad f\ge0, \eeqn 

\eqn \label{Poisson}
-\Delta U = 4\pi\rho, \quad \rho=\int_{\real^3} f\, dv \eeqn 
for $x,v\in\real^3$.  
Any function of the energy $\frac12v^2 -  U$ and the angular momentum $x \times v$ 
automatically satisfies the steady Vlasov equation \eqref{Vlasov} with $\pa_tf=0$.  
Therefore we specialize to the following form for the microscopic density $f(x,v)$.  
Let 
\eqn \label{microdensity}
f(x,v) = \frac M {D(\kappa,U)}\ 
\phi \left(\tfrac12 v^2-U(x), \kappa(x_1v_2-x_2v_1)\right),   \qquad x,v\in\real^3, \eeqn
where 
\eqn \label{D-scalar}
D(\kappa,U) = \iint_{\real^6} \phi\left(\tfrac12 v^2-U(x), \kappa(x_1v_2-x_2v_1)\right)\ dv\ dx,   \eeqn
where we denote $v^2=|v|^2$ for brevity.                
Division by the integral $D(\kappa,U)$ assures us that 
the mass $\iint_{\real^6} f~dvdx=M$ is a constant independent of the perturbation. 
The constant parameter $\kappa$ quantifies the smallness of the rotation.  
If $\ka=0$, there is no rotation.  
Such an $f$ automatically satisfies \eqref{Vlasov}.  
Of course, the potential $U$ must still be chosen to satisfy the Poisson equation \eqref{Poisson}.   

The following assumptions, which imply that the integrals are finite, are made on $\phi$.    
\eqn       \label{cond: phi 1}
 \phi(E,L)>0 \text{ for } E<0, \quad \phi(E,L)=0 \text { for } E > 0,  \eeqn  
 \eqn 
 \phi \in C^1((-\infty,0) \times (-\infty,\infty)) \text{ and }\pa_L^2\phi\in C((-\infty,0) \times (-\infty,\infty)), \eeqn
 \eqn  \label{cond: phi 2}
 \lim_{E\to 0^-}  (-E)^{\mu} \phi(E,L) = \lim_{E\to 0^-}  (-E)^{1/2} \pa_L^2\phi(E,L) = 0  \ \  
 				\text{ for some } 0<\mu<1/2,  \eeqn 
 \eqn  \label{cond: phi 3}
 \lim_{E\to -\infty}  (-E)^{1/2} \phi(E,L) = \infty,  \eeqn 
 \eqn   \label{cond: phi 4}
  \lim_{E\to -\infty}  (-E)^{-7/2} \phi(E,L) = 0,  \eeqn
 \eqn    \label{cond: phi 5}
         \pa_L\phi(E,0)=0 \text{ for } E<0.  \eeqn  
Each of the limits is assumed to be uniform for bounded $L$.  
We remark that the only place that $\mu\ne 1/2$ is needed is in the proof of \eqref{w3}    below.
As mentioned in the introduction, 
our prime example is $\phi(E,L) = (\max(-E,0))^{-\alpha}\ \psi(L)$, where $-\frac72 < \alpha < \frac12$   
and $\psi$ is any function such that $\psi'(0)=0$.

The existence of {\it radial} (spherically symmetric) solutions with $\ka=0$, as we now state, is well-known 
in the kinetic literature.   
By a radial solution we mean that $\ka=0$ and $U$ is a function of $|x|$ only.  

\begin{prop}			\label{Vlasov radial}
Let $\phi$ satisfy the assumptions above.   
Given $R>0$, there exists a solution $(f_0, U_0)$ 
of \eqref{Poisson}, \eqref{microdensity}  with $\ka=0$ that is 
radial,   for which 

\begin{itemize}
\item $U_0>0$ in $B_R$, $U_0=0$ on $\pa B_R$, $U_0<0$ in $\real^3\setminus B_R$,

\item $U_0'(|x|)<0$ for all $|x|>0$,   

\item $\rho_0>0$ in $B_R,\ \rho_0=0$ in $\real^3\backslash B_R$, 

\item $\rho_0 \in C^{1,\nu}(\real^3)$ and $U_0 \in C^{3,\nu}(\real^3)$ where  $\nu = \frac12-\mu$ 
and $\mu$ is given in \eqref{cond: phi 2}.
\end{itemize}
\end{prop}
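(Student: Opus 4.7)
The plan is to reduce the Vlasov-Poisson problem with $\kappa=0$ to a scalar radial ODE for $U_0$ and then argue in parallel with the Euler model (cf.\ Lemma \ref{lem: u_0}).

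First I would integrate $v$ out of the ansatz \eqref{microdensity}. Using spherical coordinates in $v$ together with the substitution $E=\tfrac12|v|^2-U$, one obtains $\rho_0(x)=(M/D)\,g(U_0(x))$, where
\begin{equation*}
g(U):=4\sqrt{2}\,\pi\int_{-U}^0\sqrt{U+E}\;\phi(E,0)\,dE\quad(U>0),\qquad g(U)=0\text{ for }U\le 0.
\end{equation*}
Assumptions \eqref{cond: phi 2}--\eqref{cond: phi 4} ensure that the integral converges, $g\in C^1([0,\infty))$, and $g>0$ on $(0,\infty)$; a further change of variable $E=-Us$ yields the key asymptotic $g(U)\sim c_*U^{3/2-\mu}$ as $U\to 0^+$, which drives every regularity statement to come. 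The Poisson equation then reduces to the scalar elliptic problem
\begin{equation*}
-\Delta U_0=4\pi c\,g(U_0),\qquad c=\frac{M}{D(0,U_0)},
\end{equation*}
coupled to the self-consistency condition defining $c$.

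Second, I would solve the radial IVP $(r^2U_0')'=-4\pi c\,r^2g(U_0)$, $U_0(0)=\alpha>0$, $U_0'(0)=0$, treating $c>0$ as a free parameter. Local existence and uniqueness come from a standard contraction argument on the integral equation $U_0(r)=\alpha-4\pi c\int_0^r s^{-2}\int_0^s t^2g(U_0(t))\,dt\,ds$. The identity $r^2U_0'(r)=-4\pi c\int_0^r s^2g(U_0(s))\,ds$ immediately yields $U_0'(r)<0$ wherever $U_0>0$, while the energy relation
\begin{equation*}
\tfrac{d}{dr}\bigl[\tfrac12(U_0')^2+4\pi c\,G(U_0)\bigr]=-\tfrac{2}{r}(U_0')^2,\qquad G(u):=\int_0^u g,
\end{equation*}
provides an a priori bound. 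Because the effective polytropic exponent $n=3/2-\mu$ lies in $(1,3)$ (as $\mu\in(0,\tfrac12)$), a barrier argument by comparison with the exact power-law Lane--Emden solution shows that $U_0$ hits zero at some first radius $R_{\alpha,c}<\infty$ with $U_0'(R_{\alpha,c})<0$. Beyond $R_{\alpha,c}$ I extend $U_0$ by the unique radial harmonic matching of the form $A/r+B$ with $A=-R_{\alpha,c}^2U_0'(R_{\alpha,c})>0$ and $B=-A/R_{\alpha,c}<0$; this produces $U_0<0$ strictly decreasing on $\real^3\setminus\overline{B_{R_{\alpha,c}}}$, giving the pointwise sign of $U_0'$ globally.

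Third, I would exploit the scale covariance $U(r)\mapsto U(\lambda r)$, which converts a solution with prefactor $c$ and first zero $R_{\alpha,c}$ into one with prefactor $c\lambda^2$ and first zero $R_{\alpha,c}/\lambda$, to fit simultaneously the prescribed support radius and the prescribed mass. Solving at $c=1$ for each $\alpha$ gives an un-normalized family $(u_\alpha,R_\alpha,M_\alpha)$; rescaling by $\lambda=R_\alpha/R$ produces a solution with support $\overline{B_R}$, total mass $M_\alpha R/R_\alpha$, and prefactor $(R_\alpha/R)^2$, and the self-consistency $c=M/D(0,U_0)$ then holds iff $M_\alpha R/R_\alpha=M$. \emph{I expect this matching step to be the main obstacle:} one must show that the continuous map $\alpha\mapsto M_\alpha/R_\alpha$ covers all of $(0,\infty)$. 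The small-$\alpha$ and large-$\alpha$ power-law asymptotics of $g$ (governed by \eqref{cond: phi 2} and by \eqref{cond: phi 3}--\eqref{cond: phi 4} respectively, with both effective Lane--Emden indices in the subcritical range $(1,5)$) yield the model computation $M_\alpha/R_\alpha\sim c_\pm\alpha$ in each limit, so the ratio tends to $0$ as $\alpha\to 0^+$ and to $\infty$ as $\alpha\to\infty$; an intermediate value argument then produces the required $\alpha$.

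Finally, the regularity follows from the sharp asymptotics of $g$. Differentiating under the integral and performing the same substitution gives $g'(U)\sim c_*'\,U^{1/2-\mu}$ as $U\to 0^+$, so $g\in C^{1,\nu}_{\mathrm{loc}}([0,\infty))$ with $\nu=\tfrac12-\mu$. Since $U_0$ vanishes linearly at $\partial B_R$ (because $U_0'(R)<0$), the chain rule together with $g(U_0)\equiv 0$ outside $B_R$ gives $\rho_0\in C^{1,\nu}(\real^3)$. Global Schauder estimates applied to $-\Delta U_0=4\pi\rho_0$, using that $\rho_0$ and its first derivatives match continuously across $\partial B_R$, finally deliver $U_0\in C^{3,\nu}(\real^3)$, completing the verification.
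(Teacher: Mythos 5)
Your reduction to the scalar radial equation $-\Delta U_0 = 4\pi c\,g(U_0)$ and your closing regularity argument are essentially what the paper does (its $G$ in \eqref{def: G(s)} is your $g$, and Lemmas \ref{lem: w reg} and \ref{lem: G property} supply exactly the $C^{1,\nu}$ facts you invoke). The existence mechanism, however, has a genuine gap. Your barrier step, which is supposed to produce a finite first zero $R_{\alpha,c}$ for \emph{every} center value $\alpha$, rests on the two-sided asymptotic $g(U)\sim c_*U^{3/2-\mu}$ as $U\to 0^+$. Assumption \eqref{cond: phi 2} only says $(-E)^{\mu}\phi(E,L)\to 0$, i.e.\ it gives the one-sided bound $g(U)=o(U^{3/2-\mu})$; there is no lower bound near the vacuum boundary, so $g$ may vanish far faster than any subcritical power (even to infinite order), the comparison with an exact Lane--Emden solution of index in $(1,5)$ is unavailable, and the shooting solution need not reach zero at all. (Even granting the asymptotic, $\mu\in(0,\tfrac12)$ gives index in $(1,\tfrac32)$, not $(1,3)$; and in the paper's prime example $(-E)_+^{-\alpha}$ with $\alpha$ down to $-\tfrac72$ the effective index runs up to the critical value $5$.) The paper avoids shooting entirely: after verifying \eqref{cond: G 1}--\eqref{cond: G 2} (sublinearity at $0$, superlinearity and subcriticality at $\infty$), it solves the Dirichlet problem $-\Delta U_0=4\pi G(U_0)$, $U_0=0$ on $\partial B_R$, by the variational results cited in Lemma \ref{lem: u_0} (Ambrosetti--Rabinowitz together with the a priori bounds of de Figueiredo--Lions--Nussbaum), and then obtains radiality and $U_0'<0$ from Gidas--Ni--Nirenberg.

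Your self-identified ``main obstacle,'' the matching of a prescribed mass $M$, is not actually part of the statement. Only $R$ is prescribed; the paper \emph{defines} $M:=D(0,U_0)=\int\rho_0\,dx$ after constructing $U_0$, so the factor $M/D(0,U_0)$ in \eqref{microdensity} equals $1$ automatically and no fitting is needed. Beyond being superfluous, your intermediate-value argument is supported only by ``model computations'' of $M_\alpha/R_\alpha$ valid for exact power laws; under the stated hypotheses neither limit is established, and the behavior of the mass along the radial family is precisely the delicate quantity that, in the Euler model, must be assumed nondegenerate via \eqref{cond: mass cond}. Drop the matching, fix the ball $B_R$, and solve the boundary value problem directly; the rest of your outline then goes through.
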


We shall prove this proposition later in this section. 
The mass of the radial solution is defined as $M=\int_{\real^3} \rho_0~dx=\iint_{\real^6} f_0~dvdx = D(0,U_0)$ 
by \eqref{microdensity}.  
This is the constant $M$ that appears in \eqref{microdensity}.   
Our main theorem regarding {\it non-radial} solutions of the Vlasov model is as follows.  

\begin{theorem}   \label{main Vlasov thm}
There exists $\bar\kappa>0$ such that for all $|\kappa|<\bar\kappa$ there exists a solution 
$(f_\kappa,U_\kappa)$ of VP of the form \eqref{microdensity} 
with $f_\kappa$ axisymmetric and even in $x_3$, 
$U_\kappa \in C^3(\real^3)$,  
$$\iint_{\real^6} f_\kappa(x,v)\,dvdx = \int_{\real^3} \rho_\ka(x)\,dx =M $$ 
and the support of $\rho_\ka$ is a compact set close to $\bar B_1$.  
The mapping $\kappa \to U_\kappa$ is continuous from $(-\bar{\kappa},\bar{\kappa})$ to $C^3(\real ^3)$. 
\end{theorem}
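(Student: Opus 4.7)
The plan is to mirror the implicit-function-theorem strategy already used for the Euler model (Theorem \ref{main Euler theorem}): formulate the steady Vlasov--Poisson system as a single nonlinear equation $\F(\zeta,\kappa)=0$ in a function space $X$ of axisymmetric, $x_3$--even deformations (with a norm analogous to \eqref{norm X}), where $\F(0,0)=0$ corresponds to the radial solution of Proposition \ref{Vlasov radial} (normalizing $R=1$). Because the ansatz \eqref{microdensity} already encodes the total--mass constraint through division by $D(\kappa,U)$, the VP system reduces to the scalar Poisson equation
\begin{equation*}
-\Delta U(x)=\frac{4\pi M}{D(\kappa,U)}\int_{\real^3}\phi\!\left(\tfrac{1}{2}v^2-U(x),\,\kappa(x_1v_2-x_2v_1)\right)dv,
\end{equation*}
and I would parametrize perturbations by $U_\zeta(x)=U_0(g_\zeta^{-1}(x))$, with $g_\zeta$ as in \eqref{def: gzeta}, defining $\F(\zeta,\kappa)$ as the residual of the above equation precomposed with $g_\zeta$ so as to land in $X$.

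The proof then splits into three ingredients, the first two of which closely follow the Euler analysis. First, Fr\'echet differentiability of $\F:B_\epsilon(X)\times\real\to X$ by the method of Section \ref{sec: 5}; the one genuinely new task is differentiating the velocity integral and the normalizing constant $D(\kappa,U)$, and the hypotheses \eqref{cond: phi 1}--\eqref{cond: phi 4} on the decay of $\phi$ and $\partial_L^2\phi$ ensure convergence of all resulting integrals, with Hölder exponent $\nu=\tfrac{1}{2}-\mu$ controlling the modulus of continuity of $\F'$ as in Lemma \ref{prop: continuity of formal deriv}. Second, the structural decomposition $\L=J+K$ at $(0,0)$ as in Lemma \ref{lem: structure L}: $J$ is a pointwise multiplier built from $U_0'(|x|)/|x|$, which is an isomorphism of $X$ since $U_0'(0)=0$, $U_0''(0)\ne 0$ and $U_0'<0$ for $|x|>0$; and $K$ consists of a Newtonian convolution piece (compact by Schauder estimates together with the compact embedding $C^{2,\nu}\hookrightarrow C^2$) plus a finite-rank correction coming from the Fr\'echet derivative of $1/D(\kappa,U)$. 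By the Fredholm alternative it then suffices to prove that $\L$ is injective.

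The main obstacle, as flagged in the introduction, is showing $\ker\L=\{0\}$, and this argument differs substantially from both the EP case and from Rein \cite{rein2000stationary}. I would decompose any $\xi\in\ker\L$ into spherical--harmonic components $\xi_{lm}$. For $l\ge 1$ the nonlocal mass--correction term in $\L$ projects to zero against $Y_{lm}$, and the resulting equation admits a maximum-principle/monotonicity argument in the spirit of Section \ref{sec: non-radial kernel}, exploiting $U_0'(|x|)<0$ together with the sign condition \eqref{cond: phi 5}, to force $\xi_{lm}\equiv 0$. The truly delicate step is the radial component: the mass normalization produces a nontrivial nonlocal term in $\L$ on radial functions, but unlike EP there should be no Lane--Emden--type scaling obstruction. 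Instead, the strict positivity $\phi(E,L)>0$ for $E<0$ and the $L$--evenness \eqref{cond: phi 5} should supply the definiteness needed to rule out any nontrivial radial kernel element without an auxiliary condition on $\phi$, consistent with the introduction's claim that the full range $\tfrac65<\gamma<2$ is admissible, including the critical value $\gamma=\tfrac43$.

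With these three pieces in hand, the implicit function theorem produces a continuous curve $\kappa\mapsto\zeta_\kappa\in X$ on some interval $(-\bar\kappa,\bar\kappa)$ with $\F(\zeta_\kappa,\kappa)=0$. Setting $U_\kappa=U_{\zeta_\kappa}$ and defining $f_\kappa$ by \eqref{microdensity} then produces the asserted solution: axisymmetry and $x_3$--evenness come from the constraints built into $X$, total mass $M$ is built into the ansatz, the $C^3$ regularity of $U_\kappa$ follows from $U_0\in C^{3,\nu}$ combined with the $C^1$ diffeomorphism property of $g_{\zeta_\kappa}$ in Lemma \ref{lem: basic estimates g_zeta}, and the support of $\rho_\kappa$ equals $g_{\zeta_\kappa}(\bar B_1)$, which lies close to $\bar B_1$ once $\|\zeta_\kappa\|_X$ is small.
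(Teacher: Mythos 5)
Your overall architecture matches the paper's: reduce to the single Poisson equation with the mass normalization built into the ansatz, perturb via $U_\zeta=U_0\circ g_\zeta^{-1}$, prove Fr\'echet differentiability as in Section \ref{sec: 8}, decompose $\L=J+K$ with $K$ compact (the mass-correction term is rank one), handle the non-radial kernel components exactly as in the Euler case, and close with the implicit function theorem. Those parts are fine, modulo a small misattribution: condition \eqref{cond: phi 5} enters through the regularity estimates on $w$ in Lemma \ref{lem: w reg} (to get $|\partial_r w|\le Cr$), not through the kernel analysis itself.

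The genuine gap is the radial part of $\ker\L$, which you yourself flag as ``the truly delicate step'' but then do not prove. Asserting that strict positivity of $\phi$ and $L$-evenness ``should supply the definiteness needed'' is not an argument: the linearized radial problem is
$\Delta\alpha+4\pi G'(U_0)\alpha-\tfrac{1}{M}\bigl(\int_{B_1}G'(U_0)\alpha\,dy\bigr)4\pi G(U_0)=0$
with $\alpha(0)=\alpha'(0)=\alpha'(1)=0$, and the operator $\Delta+4\pi G'(U_0)$ carries no sign-definite quadratic form that positivity of $\phi$ would provide. The paper's actual mechanism is a scaling argument --- just not a Lane--Emden one. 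It normalizes $\beta=\frac{M}{\int G'(U_0)\alpha}\,\alpha$, observes that $\beta$ solves the same inhomogeneous linear ODE as $-v_S$, where $v(r;a,S)$ solves $v''+\tfrac2r v'+4\pi S\,G(v)=0$ with $v(0)=a$, $v'(0)=0$ and $v_S$ is the derivative in the artificial parameter $S$ multiplying the nonlinearity; uniqueness for the linear initial value problem gives $\beta=-v_S$. The exact identity $v(Rr;a,S)=v(r;a,R^2S)$ then yields $rv'=2v_S$, hence $2v_S'(1)=v'(1)+v''(1)=-v'(1)$ using the ODE and $v(1)=G(0)=0$. The boundary condition $\beta'(1)=0$ forces $v'(1)=0$, which together with $v(1)=0$ forces $v\equiv0$, contradicting $v=U_0>0$. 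This is precisely why no analogue of the mass condition \eqref{cond: mass cond} is needed for VP: the scaling acts on the coefficient $S$, not on the density profile, so the $\gamma=\tfrac43$ degeneracy never appears. Without this (or an equivalent) argument, your proof of injectivity of $\L$ --- and hence the whole theorem --- is incomplete.
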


\subsection{Construction} 
As is explained above, with the given ansatz \eqref{microdensity}, the unknown is $U$ and the equation to be solved is the Poisson equation, $-\Delta U =4\pi \rho$. A change of variables in \eqref{microdensity} leads to the formula 
\eqn
\rho(x) = \frac M{D(\kappa,U) } w(\kappa, r(x), U(x)),
\eeqn 
 where  
$$D(\kappa,U) =\int_{\real^3}w(\kappa,r,U)~dy, $$
and
\begin{align}   \label{w-equation}
w(\kappa,r,u)    &=   \int_{\real^3} \phi\left(\tfrac12v^2-u , \kappa(x_1v_2-x_2v_1)\right) ~dv \notag\\  
&=  2\pi\int_{-u}^{0}    \int_{-\sqrt{2(E+u)}} ^{\sqrt{2(E+u)}}  \phi(E,\kappa rs) ~dsdE.  
\end{align}
Here $r=r(x)=\sqrt{x_1^2+x_2^2}$.  Note that, due to \eqref{cond: phi 1}, $w(\ka,r,u)>0$ for $u>0$, while $w(\ka,r,u)=0$ for $u<0$. Thus $w$ is supported essentially where $u$ is positive.

Indeed, we justify formula \eqref{w-equation} as follows. 
Because  we may choose coordinates $v_1=\sigma\cos\theta, v_2=s, v_3=\sigma\sin\theta$  
and $x_2=0, x_1=r$,  the $v$-integral equals 
$$
w(\ka,r,u)  =  2\pi\int_\real  \int_0^\infty 
\phi(\tfrac12 s^2 + \tfrac12 \sigma^2 - u, \ka rs) ~\sigma d\sigma ds
 ,   $$
after which we introduce the variable $E=\frac12s^2+\frac12\sigma^2-u$ 
to obtain the double integral in  \eqref{w-equation}.  

With this notation the Poisson equation to be solved takes the form 
\eqn\label{eq: Poisson for U}
-\Delta U =  \frac{4\pi M}{D(\kappa,U) } w(\kappa,r,U) \qquad \text{ in } \real^3,
\eeqn
or equivalently,
\eqn\label{eq: Vlasov basic original}
-U(z) + \frac{M}{D(\kappa,U) }  \int_{\real^3} w\left(\ka,r(y),U(y)\right) \frac1{|z-y|}~ dy = constant
\eeqn
for $z\in \real^3$.

As with the Euler model, the solutions will be perturbations of the radial solution given in Proposition \ref{Vlasov radial}, as we shall now describe.  
As in Section 2 we can assume without loss of generality that $R=1$. 
We deform the domain exactly as in the Euler model, namely, by the homeomorphism
\eqn \label{scaling}
g_\zeta(x) = \left(1+\frac{\zeta(x)}{|x|^2}\right)x, 
\quad x\in \overline{B_1},   \eeqn 
for some axisymmetric functions $\zeta :\overline{B_1}\to \real$ in $X$ (and extend it when necessary to $\real^3$ by Lemma \ref{lem: extension}). The Banach space $X$ is the same as in the Euler case, namely 
\eqref{space X}, \eqref{norm X}. 
Our construction  differs from that of \cite{rein2000stationary}  in three respects.  
\begin{enumerate}[(i)]
\item the rescaling factor in \eqref{microdensity} keeps the total mass unchanged, 

\item the axisymmetric scaling has an $|x|^2$ factor on the denominator, and 

\item the form of $\phi$ is generalized.
\end{enumerate}
\noindent  
The difference (i) is our main improvement of the basic physical construction,    
(ii)  has certain technical advantages, and (iii) allows more general axisymmetric solutions.   


We look for solutions to \eqref{eq: Poisson for U} of the form $U = U_0(\gz^{-1}(z))$. 
An important observation is that, we essentially only need \eqref{eq: Vlasov basic original} to hold for $z\in \gz(B_1)$.
\begin{lemma}
Suppose $\zeta\in X$ with $\|\zeta\|_X$ small, and such that
\begin{align}\label{eq: perturb reduced}
&~U_0(\gz^{-1}(z)) \notag\\
= &~ U_0(0) + \frac{M}{D(\kappa, U_0\circ \gz^{-1})}  \int_{\real^3} w\left(\ka,r(y),U_0(\gz^{-1}(y))\right) \left[\frac1{|z-y|}-\frac{1}{|y|}\right]~ dy
\end{align}
for all $z\in \gz(\overline{B_1})$, then
\eqn
U(z) = U_0(0) + \frac{M}{D(\kappa, U_0\circ \gz^{-1})}  \int_{\real^3} w\left(\ka,r(y),U_0(\gz^{-1}(y))\right) \left[\frac1{|z-y|}-\frac{1}{|y|}\right]~ dy
\eeqn
solves \eqref{eq: Poisson for U} for all $z\in \real^3$. Here the function $w\left(\ka,r(y),U_0(\gz^{-1}(y))\right)$ is supported on $\gz(\overline{B_1})$.
\end{lemma}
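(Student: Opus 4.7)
The plan is to reduce the lemma to the single pointwise inequality $U\le 0$ on $\real^3\setminus\gz(\overline{B_1})$; once this is in hand, everything else follows from elementary potential theory.

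First I would unpack the structure of $U$. Set
$$\rho(y)=\frac{M}{D(\kappa,U_0\circ\gz^{-1})}\,w\bigl(\kappa,r(y),U_0(\gz^{-1}(y))\bigr),$$
which is nonnegative and bounded. Because $w(\kappa,r,u)=0$ whenever $u\le 0$ (immediate from \eqref{w-equation} together with \eqref{cond: phi 1}) and $U_0\le 0$ off $B_1$ by Proposition \ref{Vlasov radial}, the density $\rho$ is supported in $\gz(\overline{B_1})$. The formula defining $U$ then reads $U(z)=U_0(0)-\Phi(0)+\Phi(z)$, where $\Phi(z)=\int_{\real^3}\rho(y)\,|z-y|^{-1}\,dy$ is the Newtonian potential of $\rho$. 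Standard potential theory yields $-\Delta U=4\pi\rho$ on $\real^3$, so \eqref{eq: Poisson for U} reduces to the identity $\rho(z)=\frac{M}{D(\kappa,U)}w(\kappa,r(z),U(z))$. Both this identity and $D(\kappa,U)=D(\kappa,U_0\circ\gz^{-1})$ (obtained by integrating) would follow at once from the pointwise equality
$$w\bigl(\kappa,r(z),U(z)\bigr)=w\bigl(\kappa,r(z),U_0(\gz^{-1}(z))\bigr),\qquad z\in\real^3.$$

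For $z\in\gz(\overline{B_1})$ the required equality is the hypothesis \eqref{eq: perturb reduced}. For $z\notin\gz(\overline{B_1})$ the right side vanishes since $U_0(\gz^{-1}(z))\le 0$, so it suffices to prove $U(z)\le 0$ there; this is the main obstacle. My argument: because $\rho$ is supported in $\gz(\overline{B_1})$, $U$ is harmonic on the exterior domain $\Omega=\real^3\setminus\overline{\gz(B_1)}$. On $\partial\Omega=\gz(\partial B_1)$ the hypothesis \eqref{eq: perturb reduced} combined with $U_0|_{\partial B_1}=0$ yields $U\equiv 0$. At infinity $\Phi(z)=O(1/|z|)\to 0$, so $U(z)\to c:=U_0(0)-\Phi(0)$ uniformly; and inserting $U=0$ on $\partial\Omega$ into the defining formula gives $\Phi(z)=-c$ for $z\in\partial\Omega$. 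The crucial positivity observation is that $\rho\ge 0$ with $\int_{\real^3}\rho=M>0$ by construction, so the positive kernel $|z-y|^{-1}$ forces $\Phi>0$ everywhere on $\real^3$. Hence $c<0$, and the classical exterior maximum principle, applied to the harmonic function $U$ on $\Omega$ with boundary value $0$ and uniform limit $c<0$ at infinity, delivers $c\le U\le 0$ on $\Omega$.

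With $U\le 0$ off $\gz(\overline{B_1})$ established, the pointwise equality of the two $w$'s holds throughout $\real^3$; integrating gives $D(\kappa,U)=D(\kappa,U_0\circ\gz^{-1})$, and combining with $-\Delta U=4\pi\rho$ produces \eqref{eq: Poisson for U}. The ancillary technical facts needed along the way (continuity of $\Phi$ up to $\partial\Omega$, its $O(1/|z|)$ decay, and the exterior maximum principle) are standard for bounded, compactly supported densities and require no separate argument.
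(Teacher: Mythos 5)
Your proposal is correct and follows essentially the same route as the paper: reduce everything to the pointwise identity $w(\kappa,r,U)=w(\kappa,r,U_0\circ\gz^{-1})$, which holds on $\gz(\overline{B_1})$ by hypothesis and on the complement because $U\le 0$ there, the latter obtained from the maximum principle applied to the harmonic exterior function with zero boundary data and a negative limit at infinity. The only difference is that you supply the (correct) justification that the constant $c=U_0(0)-\Phi(0)$ at infinity is negative, via $\Phi=-c>0$ on $\gz(\partial B_1)$, a point the paper asserts without detail.
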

\begin{proof}
We may think of $\gz$ as being extended to $\real^3$, so that $U_0\circ \gz^{-1}$ is a globally defined function. The definition of $U$ does not depend on how $\gz$ is extended outside $B_1$, because $w(\ka,r,U_0\circ \gz^{-1})$ is supported on $\gz(\overline{B_1})$. That in turn follows from Proposition \ref{Vlasov radial} and the property $w(\kappa, r, u)=0$ if $u\le 0$ mentioned above. By the regularity of $w$ established in Lemma \ref{lem: w reg} below, we have $U\in C^2(\real^3)$ with 
\eqn
-\Delta U = \frac{4\pi M}{D(\kappa,U_0\circ \gz^{-1})}w(\kappa, r, U_0\circ \gz^{-1}).
\eeqn
It remains to show that $w(\kappa, r, U_0\circ \gz^{-1})=w(\kappa, r, U)$. In fact, this is obviously true on $\gz(\overline{B_1})$, since $U=U_0\circ \gz^{-1}$ there by definition. To show that $w(\kappa, r, U_0\circ \gz^{-1})$ and $w(\kappa, r, U)$ also agree on $\real^3\setminus \gz(\overline{B_1})$, we just need $U<0$ on $\real^3\setminus \gz(\overline{B_1})$. That this is true follows immediately from the facts that $U$ is harmonic on $\real^3\setminus \gz(\overline{B_1})$, that $U=U_0\circ \gz^{-1}=0$ on $ \gz(\pa B_1)$, and that $U$ tends to a negative constant at infinity, by an application of the maximum principle.
\end{proof}

We have reduced the problem to solving \eqref{eq: perturb reduced}. Let us substitute $z=\gz(x)$.   
Define the operator 
\eqn \label{F-operator}
\F(\zeta,\kappa)(x)  = -U_0(x)+U_0(0)  + \frac M {D(\kappa,U_0\circ g_\zeta^{-1})} 
  \int_{B_2} w\left(\kappa,r(y), U_0(g_\zeta^{-1}(y))\right)  
  \left[ \frac1{|g_\zeta(x)-y|}  -  \frac1{|y|} \right]\ dy  
\eeqn
for $x$ in the unit ball $B_1$. 
Thus  our goal reduces to solving $\F(\zeta,\kappa)=0$ for the function $\zeta\in X$ as a function 
of the parameter $\kappa$ for small $\kappa$.   
We have chosen the constant terms conveniently so that $\F(0,0)=0$, which corresponds to the 
radial solution. 
\subsection{Radial solutions}

With $\kappa=0$ 
we have, from \eqref{Poisson} and \eqref{w-equation},  
\eqn \label{U_0 eqn}
-\Delta U_0  =  4\pi\rho_0  =  4\pi w(0,0,U_0) 
=  16\pi^2 \sqrt{2} \int_{-U_0(x)}^0   \phi(E,0)  \sqrt{U_0(x)+E}~dE. 
  \eeqn 
Defining 
\begin{equation}\label{def: G(s)}
 G(u)=w(0,0,u)=4\pi \sqrt{2} \int_{-u}^0  \phi(E,0) \ \sqrt{u+E}~dE,    \end{equation}
we can rewrite \eqref{U_0 eqn} as 
\eqn \label{U_0 eqn 1}
-\Delta U_0 = 4\pi G(U_0).    \eeqn
Due to \eqref{cond: phi 1}, $G(u)$ vanishes for $u<0$ and  is positive for $u>0$.   
					We can convert 
the conditions on $\phi$ given above to some conditions of $G$ as follows.
\begin{lemma}             \label{lem: G property}
If $\phi$ satisfies the conditions \eqref{cond: phi 1}-\eqref{cond: phi 4}, then 
$G\in C^{1,\nu}(\real)$, $G(u) = 0$ for $u\le 0$, $G(u)>0$ for $u>0$, and
\eqn \label{cond: G 1}\lim_{u\to 0^+} u^{-1}G(u)=0, ~\lim_{u\to \infty} u^{-1}G(u)=\infty,\eeqn 
\eqn  \label{cond: G 2}\lim_{u\to \infty}u^{-5}G(u)=0,\eeqn
\end{lemma}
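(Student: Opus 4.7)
My plan hinges on the substitution $E = -ut$, $t\in[0,1]$, which would transform the definition of $G$ into
\begin{equation*}
G(u) = 4\pi\sqrt{2}\, u^{3/2} \int_0^1 \phi(-ut,0)\sqrt{1-t}\,dt, \qquad u>0.
\end{equation*}
From this form the basic facts are immediate: for $u\le 0$ the original integration range $[-u,0]$ lies in the set where $\phi(\cdot,0)=0$ by \eqref{cond: phi 1}, so $G\equiv 0$ on $(-\infty,0]$; for $u>0$ the integrand is strictly positive on $(0,1)$ and so $G(u)>0$. Differentiating, the boundary contribution at $E=-u$ vanishes because $\sqrt{u+E}|_{E=-u}=0$, giving
\begin{equation*}
G'(u) = 2\pi\sqrt{2}\int_{-u}^{0}\frac{\phi(E,0)}{\sqrt{u+E}}\,dE = 2\pi\sqrt{2}\,u^{1/2}\int_0^1 \phi(-ut,0)(1-t)^{-1/2}\,dt.
\end{equation*}

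Using \eqref{cond: phi 2} in the quantitative form $|\phi(E,0)|\le C_\delta(-E)^{-\mu}$ for $-\delta<E<0$, and since $\mu<1/2$, the $t$-integral will converge and yield $|G'(u)|\le C u^{1/2-\mu}=Cu^\nu$ for small $u>0$. Hence $G'(u)\to 0 = G'(0^-)$ as $u\to 0^+$, so $G'$ is continuous on $\real$; and since $G(u)=O(u^{3/2-\mu})$, I immediately obtain $u^{-1}G(u)\to 0$. Away from $u=0$, continuity of $\phi$ and $\phi_E$ on compact subsets of $(-\infty,0)\times\real$ makes $G$ smooth. For the Hölder estimate on $G'$ near $u=0$, I would split, for $0<u_1<u_2$,
\begin{equation*}
G'(u_2)-G'(u_1) = 2\pi\sqrt{2}\int_{-u_2}^{-u_1}\frac{\phi(E,0)}{\sqrt{u_2+E}}\,dE + 2\pi\sqrt{2}\int_{-u_1}^{0}\phi(E,0)\left[\frac{1}{\sqrt{u_2+E}}-\frac{1}{\sqrt{u_1+E}}\right]dE,
\end{equation*}
estimate each piece by the same $(-E)^{-\mu}$ bound, and apply the dichotomy $u_2\le 2u_1$ (direct estimation using $\sqrt{u_2-u_1}$) versus $u_2>2u_1$ (use $|G'(u_i)|\le Cu_i^\nu$ together with $u_2-u_1\ge u_2/2$) to close the estimate at the exponent $\nu=\tfrac12-\mu$.

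The two limits at infinity use \eqref{cond: phi 3} and \eqref{cond: phi 4} respectively. For $u^{-1}G(u)\to\infty$, I would rewrite
\begin{equation*}
u^{-1}G(u) = 4\pi\sqrt{2}\int_0^1 \bigl[u^{1/2}\phi(-ut,0)\bigr]\sqrt{1-t}\,dt
\end{equation*}
and observe that for each fixed $t\in(0,1)$, $u^{1/2}\phi(-ut,0)=t^{-1/2}(ut)^{1/2}\phi(-ut,0)\to\infty$ as $u\to\infty$ by \eqref{cond: phi 3}; Fatou's lemma then forces the divergence. For $u^{-5}G(u)\to 0$, given $\epsilon>0$ I would choose $A$ with $\phi(E,0)\le\epsilon(-E)^{7/2}$ for $-E\ge A$ by \eqref{cond: phi 4}, split $\int_0^1\phi(-ut,0)\sqrt{1-t}\,dt$ at $t=A/u$, bound the $\{ut<A\}$ piece using the integrable singularity at $E=0$ and the $\{ut\ge A\}$ piece by $\epsilon u^{7/2}$ times a beta integral, obtaining $\limsup u^{-5}G(u)\le C\epsilon$ and then letting $\epsilon\to 0$.

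The only nonroutine step is the $C^{1,\nu}$ regularity at the free boundary $u=0$: the gain from the factor $u^{3/2}$ and the loss from the borderline singularity $(-E)^{-\mu}$ of $\phi$ combine to produce exactly the Hölder exponent $\nu=\tfrac12-\mu$, and a slightly careless handling of $G'(u_2)-G'(u_1)$ loses this sharp exponent.
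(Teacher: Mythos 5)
Your proposal follows essentially the same route as the paper: the substitution $E=-ut$, the quantitative bound $\phi(E,0)\le C(-E)^{-\mu}$ near $E=0$, the split of the integral at $t=A/u$ for \eqref{cond: G 2}, and, for the $C^{1,\nu}$ regularity, the same two-piece decomposition of $G'(u_2)-G'(u_1)$ with the dichotomy $u_2\le 2u_1$ versus $u_2>2u_1$ that the paper uses (the paper just delegates this to the more general H\"older estimate \eqref{w3} for $\partial_u w$ in Lemma \ref{lem: w reg}, of which your computation is the $\kappa=r=0$ special case). The limits \eqref{cond: G 1} and \eqref{cond: G 2} are handled correctly and match the paper's argument.

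One imprecision: the claim that ``away from $u=0$, continuity of $\phi$ and $\phi_E$ on compact subsets of $(-\infty,0)\times\real$ makes $G$ smooth'' is not justified, because for any fixed $u>0$ the integration range $[-u,0]$ still reaches $E=0$, where the hypotheses give no control on $\partial_E\phi$ (only on $\phi$ and $\partial_L^2\phi$ via \eqref{cond: phi 2}), so one cannot differentiate $G'$ under the integral sign there. This is harmless: your H\"older estimate for $G'(u_2)-G'(u_1)$ does not actually require $u_1,u_2$ to be near $0$ and holds uniformly on bounded subsets of $(0,\infty)$, so you should simply run it there rather than invoke smoothness.
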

				\begin{proof}
The regularity of $G$ follows from Lemma \ref{lem: w reg}, since $G(u)=w(0,0,u)$. 
We have 
\eqn 
G(u) = 4\pi\sqrt{2}\ u^{3/2} \int_0^1 \phi(-u\tau,0)\ \sqrt{1-\tau}\ d\tau. \eeqn  				
By \eqref{cond: phi 2}, $\forall\ep>0, \exists\delta>0$ such that for $u<\delta$ we have 
$$ 
 u^{-1}G(u) <  4\pi \sqrt2\ u^{1/2} \int_0^1 \ep (u\tau)^{-1/2} \ \sqrt{1-\tau}\ d\tau  
 = C\ep.  $$ 				
This proves the first limit in \eqref{cond: G 1}.  				
Similarly by \eqref{cond: phi 3}, $\forall\ep>0, \exists\delta>0$ such that for $u>1/\delta$ we have 
$$
 u^{-1}G(u) > C  u^{1/2} \int_{1/2}^1 \frac1\ep  (u\tau)^{-1/2} \ \sqrt{1-\tau}\ d\tau  = \frac C\ep,   $$
which proves the second limit in \eqref{cond: G 1}.  
Finally, by \eqref{cond: phi 2} and \eqref{cond: phi 4}, $\forall\ep>0, \exists\delta>0$ such that for $K>1/\delta$ we have  
\begin{align*}
u^{-5}\,G(u)  &\le    C u^{-7/2} \int_0^{K/u}\phi (-u\tau,0) \ \sqrt{1-\tau}\ d\tau +C u^{-7/2} \int_{K/u}^1\phi (-u\tau,0) \ \sqrt{1-\tau}\ d\tau  \\
 &\le Cu^{-7/2}\sup_{-K\le t \le 0}|t|^{1/2}\phi(t,0)\int_0^{K/u}(u\tau)^{-1/2}\sqrt{1-\tau}~d\tau+ Cu^{-7/2}\int_{K/u}^1\ep (u\tau)^{7/2}\sqrt{1-\tau}~d\tau.
\end{align*}
Now choose $u$ sufficiently large to make the above less than $C\ep$.
This proves \eqref{cond: G 2}.   
\end{proof}

As in Lemma \ref{lem: u_0}, conditions \eqref{cond: G 1} and \eqref{cond: G 2} 
are precisely what is needed for the existence of 
a positive radial solution $U_0$ to  \eqref{U_0 eqn 1}  with zero boundary condition defined on $B_1$  
and satisfying $U_0'(|x|)<0$ for $0<|x| \le1$.  
We extend $U_0(|x|)$ to be radial and harmonic in $\{|x|>1\}$ such that $U_0 \in C^1(\real^3)$.  
That is, $U_0(|x|) = (1-1/|x|) U_0'(1)$ for $|x|>1$.  
This completes the proof of Proposition \ref{Vlasov radial} since the regularity assertion follows easily.

\section{Vlasov model: analysis of the linearized operator}\label{sec: 7}
\subsection{Linearization}
\begin{theorem}  \label{deriv Vlasov}
The operator $\F: B_{\epsilon}(X)\times \real \to X$ with $\ep>0$ sufficiently small is continuously Fr\'echet differentiable with 
$\frac{\pa \F}{\pa \zeta}$ given by \eqref{eq: F' Rein} and \eqref{eq: M' Rein} below. 
\end{theorem}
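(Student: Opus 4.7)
The plan is to mimic closely the Euler argument of Section \ref{sec: 5}, adapting the estimates to the new integrand $w\big(\kappa,r(y),U_0(g_\zeta^{-1}(y))\big)$ and the scalar normalization $M/D(\kappa,U_0\circ g_\zeta^{-1})$ in place of $\rho_0(g_\zeta^{-1}(y))$ and $\M(\zeta)$. First I would split $\F(\zeta,\kappa)$ as
\begin{equation*}
\F(\zeta,\kappa)=-U_0(x)+U_0(0)+\frac{M}{D(\kappa,U_0\circ g_\zeta^{-1})}\,\widetilde{\F}_1(\zeta,\kappa)(x),
\end{equation*}
where $\widetilde{\F}_1$ is the integral term. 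Since $U_0\in C^{3,\nu}(\real^3)$ by Proposition \ref{Vlasov radial}, the first two terms are trivially in $X$ and contribute nothing to $\frac{\pa\F}{\pa\zeta}$. The bulk of the work is on $\widetilde{\F}_1$, which is structurally analogous to $\F_1$ of the Euler case, with $\rho_0(g_\zeta^{-1}(y))$ replaced by $w(\kappa,r(y),U_0(g_\zeta^{-1}(y)))$. Using the regularity of $w$ (Lemma \ref{lem: w reg}, cited but not yet proved here) and the bounds on $g_\zeta$ from Lemma \ref{lem: basic estimates g_zeta}, one verifies that $y\mapsto w(\kappa,r(y),U_0(g_\zeta^{-1}(y)))$ belongs to $C^{1,\nu}$ with compact support in $B_2$, vanishing to first order on $\pa(g_\zeta(B_1))$, so that Lemma \ref{lem: Lip estimate at zero} applies just as in the Euler argument to bound $\|\widetilde{\F}_1\|_X$.

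Next I would compute the formal derivative $\frac{\pa\F}{\pa\zeta}(\zeta,\kappa)\xi$. By the chain rule, differentiating $s\mapsto\F(\zeta+s\xi,\kappa)$ at $s=0$ yields an expression of the form
\begin{equation*}
[\F'(\zeta,\kappa)\xi](x)=-\frac{M\,[D'(\zeta,\kappa)\xi]}{D(\kappa,U_0\circ g_\zeta^{-1})^2}\widetilde{\F}_1(\zeta,\kappa)(x)+\frac{M}{D(\kappa,U_0\circ g_\zeta^{-1})}[\widetilde{\F}_1'(\zeta,\kappa)\xi](x),
\end{equation*}
where $D'(\zeta,\kappa)\xi$ and $\widetilde{\F}_1'(\zeta,\kappa)\xi$ are computed by differentiating under the integral sign using the identity $\pa_s g_{\zeta+s\xi}^{-1}(y)=-Dg_{\zeta+s\xi}^{-1}(y)\,\xi(g_{\zeta+s\xi}^{-1}(y))\,g_{\zeta+s\xi}^{-1}(y)/|g_{\zeta+s\xi}^{-1}(y)|^2$ and the change-of-variables Jacobian as in Lemma \ref{lem: gateaux diff}. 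The kernel $1/|g_\zeta(x)-y|$ is handled, as in the Euler case, by the cutoff function $\chi$ on scale $\e$, with the $\e^{-1}$ piece in the derivative of $\chi$ cancelled by integration by parts in $y$, giving the pointwise Gâteaux derivative. This yields explicit formulas \eqref{eq: F' Rein}, \eqref{eq: M' Rein} referenced in the statement.

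I would then verify, in analogy with Lemmas \ref{prop: est F'} and \ref{prop: continuity of formal deriv}, that $\F'(\zeta,\kappa):X\to X$ is bounded and H\"older continuous in $\zeta$. Taking $\pa_i$ of the formal derivative produces three families of terms (one for each occurrence of $\zeta$ in $\widetilde{\F}_1$, plus the $D$-term), and each is estimated by invoking Lemma \ref{lem: Lip estimate at zero} with an appropriate choice of $f$, exactly as in Sections \ref{sec: 5}. The estimates on differences $\gzone$ vs.\ $\gztwo$ in \eqref{est: Dg 12}--\eqref{est: deviation from x-x' 12} give the H\"older continuity with exponent $\alpha'=\min(\nu,\beta(\omega))$, where the loss from exponent $1$ to $\alpha'$ comes as before from the kernel difference estimate \eqref{V_f difference}. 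Combining boundedness and continuity via the mean-value identity $\pa_iF(x,1)-\pa_iF(x,0)=\pa_i[\F'(\zeta+\theta\xi,\kappa)\xi](x)$, exactly as in Lemma \ref{prop: Frechet}, then yields Fr\'echet differentiability with respect to $\zeta$.

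The main obstacle I anticipate is the regularity of $w(\kappa,r,u)$ as a function of its three arguments near $u=0$. The integrand $\phi(E,\kappa rs)$ in \eqref{w-equation} has only limited regularity in $E$ as $E\to 0^-$ (controlled by \eqref{cond: phi 2}), so one gets $C^{1,\nu}$ regularity with $\nu=\frac12-\mu$, but no better; this is precisely what feeds into the H\"older exponent $\nu$ of $\rho_0$ and forces the use of $\alpha'=\min(\nu,\beta(\omega))$ above. Once Lemma \ref{lem: w reg} is in hand and the cancellation in the Poisson kernel is handled via Lemma \ref{lem: Lip estimate at zero} as in the Euler case, joint Fr\'echet differentiability in $(\zeta,\kappa)$ follows by writing the difference in $\kappa$ as an explicit parameter derivative of $w$ and of $D$, which is straightforward since $\kappa$ enters smoothly under the integral sign.
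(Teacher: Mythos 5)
Your proposal is correct and follows essentially the same route as the paper: split off the harmless $-U_0(x)+U_0(0)$ terms, rely on the regularity of $w$ (Lemma \ref{lem: w reg}) in place of that of $\rho_0$, compute the formal derivative by the cutoff-function method, bound it and its $\zeta$-dependence via Lemma \ref{lem: Lip estimate at zero}, and upgrade to Fr\'echet differentiability through the mixed-partials/mean-value argument of Lemma \ref{prop: Frechet}. One small correction: the H\"older exponent here is just $\nu=\tfrac12-\mu$ coming from \eqref{w3}, not $\min(\nu,\beta(\omega))$ --- the Vlasov operator contains no angular-velocity profile $\omega(r)$, so $\beta(\omega)$ plays no role (though since any positive exponent suffices, this slip is harmless).
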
 
We postpone the proof of this theorem until Section \ref{sec: 8}. The operator $\F(\zeta,\kappa)$ is re-described in \eqref{eq: F Vlasov new}-\eqref{def: w}. We now compute $\L=\frac{\pa \F}{\pa \zeta}(0,0)$ using \eqref{eq: F' Rein} and \eqref{eq: M' Rein}. Noticing 
$$
\M(0,0)=D(0,U_0) = M, \quad w(0,0,u)=G(u),\quad \rho_0=G(U_0),
$$
which are merely definitions, we soon get
\begin{align} \label{derivF at bif} 
&[\L\xi](x) = 
\left[\frac{\pa\F}{\pa\zeta} (0,0)\xi \right](x)  \notag\\ 
  =  &~- \int_{B_1} \rho_0'(y)\frac{\xi(y)}{|y|} \left( \frac1{|x-y|}  -  \frac1{|y|} \right)~dy  
      - \left\{ \int_{B_1}\rho_0(|y|)\frac{x-y}{|x-y|^3}~dy \right\} \cdot \frac{\xi(x)x}{|x|^2}   \notag \\
      &~+\frac 1{M} \left\{  \int_{B_1} \rho_0'(y)\frac{\xi(y)}{|y|} ~dy\right\}\left\{\int_{B_1}  \rho_0(y) \left( \frac1{|x-y|}  -  \frac1{|y|} \right)~dy \right\}.
\end{align}
In the above, prime ($'$) means radial derivative $\partial_r$, where $r=|x|$. Now using again the calculation \eqref{eq: u_0'/x reduction} and the equivalent expression of $\F(0,0)=0$:
$$U_0(x)-U_0(0) = \int_{B_1}\rho_0(y)\left(\frac{1}{|x-y|}-\frac1{|y|}\right)~dy,$$
we get

\begin{align}   \label{kernel eqn0}
&[L\xi](x) = \left[\frac{\pa\F}{\pa\zeta} (0,0)\xi \right](x)  \notag\\ 
=  &~\frac{U_0'(x)}{|x|}\xi(x)- \int_{B_1} \rho_0'(y)\frac{\xi(y)}{|y|} \left( \frac1{|x-y|}  -  \frac1{|y|} \right)~dy  \notag\\
&~~+\frac 1{M} (U_0(x)-U_0(0))\int_{B_1} \rho_0'(y)\frac{\xi(y)}{|y|} ~dy.  
\end{align} 

Comparing with the linearized operator in the Euler model, we observe that the first two terms in \eqref{def: L} and \eqref{kernel eqn0} are the same. The difference in the last term results from the different ways the mass balancing factor appears in the Euler model and the Vlasov model. In the Euler model, the mass factor appears inside the enthalpy function $h$, whereas in the Vlasov model, it appears directly in front of the last term in the equation. The difference in the last term results in some crucial alteration in the analysis of the radial part of the kernel of $\L$. In particular, for the Vlasov model, the triviality of the kernel can be established in general and does not require a condition on the total mass as in the Euler model.
\subsection{Analysis of the Vlasov kernel} 
In this section we prove that  $\L$ is an isomorphism on $X$.  
\begin{theorem} \label{Vlasov injectivity}
The linearized operator $\L$  is injective. 
\end{theorem}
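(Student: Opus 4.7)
The plan is to mirror the architecture of Theorem \ref{injectivity}, splitting $N(\L)$ into non-radial and radial parts, but to replace the delicate scaling/mass argument of Section \ref{sec: triviality Euler} by an explicit particular solution that obviates any mass condition.

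For the non-radial part, I first observe that the new last term in \eqref{kernel eqn0} factors as $\tfrac{1}{M}(U_0(x)-U_0(0))\cdot c$ with $c=\int_{B_1}\rho_0'(y)\xi(y)/|y|\,dy$ a scalar and $U_0(x)-U_0(0)$ radial. Thus this term projects to zero onto every spherical harmonic $Y_{lm}$ with $l\geq 1$. Consequently, for $l\geq 1$ the projection of $\L\xi=0$ reduces exactly to the equation $U_0'\xi_{lm}/|x|=\varphi_{lm}$ appearing in Section \ref{sec: non-radial kernel}, and the same maximum principle argument applied to $\psi_{lm}:=\varphi_{lm}/U_0'$ forces $\xi_{lm}=0$, so $\xi$ must be radial.

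For the radial part, I set $\alpha(r):=U_0'(r)\xi(r)/r$, which belongs to $C(\overline B_1)$ with $\alpha(0)=0$ since $U_0'(r)/r$ is continuous on $[0,1]$ and $\xi(0)=0$. Applying $\Delta_x$ to \eqref{kernel eqn0} and using $\Delta U_0=-4\pi\rho_0$ on the last term produces the inhomogeneous ODE
\begin{equation*}
\Delta\alpha + 4\pi\frac{\rho_0'}{U_0'}\alpha \;=\; \frac{4\pi\rho_0}{M}\,c, \qquad c=\int_{B_1}\frac{\rho_0'(y)\alpha(y)}{U_0'(y)}\,dy.
\end{equation*}
The key new ingredient I need is the identity
\begin{equation*}
\Delta(rU_0') + 4\pi\frac{\rho_0'}{U_0'}(rU_0') \;=\; -8\pi\rho_0,
\end{equation*}
which follows from $U_0''+\tfrac{2}{r}U_0'=-4\pi\rho_0$ and its radial derivative by a short computation. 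This identifies $\alpha_p(r)=-\tfrac{c}{2M}rU_0'(r)$ as a particular solution. The bounded homogeneous solutions are spanned by $V_a:=\partial_aU(\cdot;a)\big|_{a=U_0(0)}$ with $V_a(0)=1$ (the second independent solution blows up at the origin), so the condition $\alpha(0)=(rU_0')(0)=0$ forces the $V_a$-component to vanish and gives $\alpha(r)=-\tfrac{c}{2M}rU_0'(r)$.

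To close the argument, I substitute this expression back into the defining formula for $c$. Integration by parts, using $\rho_0(1)=0$ and $M=4\pi\int_0^1\rho_0(r)r^2\,dr$, yields $\int_{B_1}|y|\rho_0'(y)\,dy=-3M$, whence
\begin{equation*}
c \;=\; -\frac{c}{2M}\int_{B_1}|y|\rho_0'(y)\,dy \;=\; -\frac{c}{2M}(-3M) \;=\; \frac{3}{2}c,
\end{equation*}
forcing $c=0$. Thus $\alpha\equiv 0$, and therefore $\xi\equiv 0$. The main obstacle is purely conceptual: the crux is the discovery of the explicit particular solution $rU_0'$ for $\Delta+4\pi\rho_0'/U_0'$ against right-hand side $\rho_0$; this identity, together with the self-consistency check on $c$, replaces the mass condition $M'(u_0(0))\neq 0$ of the Euler theorem and explains why no analogous hypothesis appears in Theorem \ref{main Vlasov thm}.
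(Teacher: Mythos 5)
Your proof is correct, and while it shares the paper's overall architecture (non-radial part by the Euler maximum-principle argument, radial part by pinning down $\alpha$ as an explicit multiple of a special solution), the radial part is closed by a genuinely different route. In fact you and the paper land on the same function: your particular solution $rU_0'$ is exactly $2v_S$ in the paper's notation, since the scaling identity $v(Rr;a,S)=v(r;a,R^2S)$ for the family \eqref{radial v eqn} gives $rv'=2v_S$; so both arguments conclude $\alpha=-\tfrac{c}{2M}\,rU_0'$. The difference is at the two ends. The paper \emph{discovers} this solution by embedding $U_0$ in the two-parameter family and differentiating in $S$, whereas you \emph{verify} the identity $\Delta(rU_0')+4\pi(\rho_0'/U_0')\,rU_0'=-8\pi\rho_0$ directly from the radial Poisson equation and its derivative — more elementary, and no solution family is needed. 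To finish, the paper integrates \eqref{kernel eqn} over $B_1$ to obtain $\alpha'(1)=0$, transfers this to $v_S'(1)=0$ and then $v'(1)=0$ via \eqref{eq: last eqn}, and reaches a contradiction with $U_0>0$; you instead substitute $\alpha=-\tfrac{c}{2M}rU_0'$ into the definition of $c$ and use $\int_{B_1}|y|\rho_0'(|y|)\,dy=-3M$ to get $c=\tfrac32 c$, hence $c=0$. Your closing step is shorter and purely computational, and it makes transparent why no analogue of the mass condition \eqref{cond: mass cond} is needed: the self-consistency factor $\tfrac32$ is universal, independent of $\phi$. The one place where you assert slightly more than you prove is the claim that the second homogeneous solution is unbounded at $r=0$ (so that boundedness plus $\alpha(0)=0$ kills all homogeneous components); this follows from reduction of order ($y_2=V_a\int s^{-2}V_a(s)^{-2}\,ds\sim -1/r$ near $0$ since $V_a(0)=1$), and the paper relies on exactly the same fact, without comment, when it deduces $\beta=-v_S$ from matching data at the singular point.
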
 

By \eqref{kernel eqn0}, $\xi\in X$ belongs to the kernel of $\L$ if and only if 
the expression in \eqref{kernel eqn0} vanishes.   
Defining $\alpha(x) = \frac{U_0'(x)}{|x|}\xi(x)$ for convenience, 
we have   
\eqn\label{alpha eqn}
0 = \alpha(x) -  \int_{B_1}\frac{\rho_0'(y)}{U_0'(y)} \alpha (y) \left( \frac1{|x-y|}  -  \frac1{|y|} \right) ~dy
+\frac 1{M} (U_0(x)-U_0(0))\int_{B_1} \frac{\rho_0'(y)}{U_0'(y)}\xi(y) ~dy.  
\eeqn 
			 Our goal is  to prove that 
$\alpha\equiv0$.   
Note that $\alpha(0)=0$.    
Taking the Laplacian of both sides, we get 
\eqn \label{kernel eqn}
\Delta\alpha + 4\pi \frac{\rho_0'}{U_0'} \alpha  
-  \frac1M \left(\int_{B_1}\frac{\rho_0'}{U_0'} \alpha~dy\right) 4\pi\rho_0 =0. 
\eeqn
 
\begin{lemma} 
No radial function belongs to the nullspace of the linearized operator $\L$.
\end{lemma}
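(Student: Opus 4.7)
Given a radial $\xi \in X$ with $\L\xi = 0$, set $\alpha(x) := \frac{U_0'(|x|)}{|x|}\,\xi(x)$. Since $\xi(0)=0$ and $U_0'(r)/r$ has a finite limit at the origin (equal to $U_0''(0)$), $\alpha$ is radial, continuous on $\real^3$, and $\alpha(0)=0$. Taking the Laplacian of \eqref{alpha eqn}, and using $-\Delta U_0 = 4\pi\rho_0$ together with the identity $\rho_0'/U_0' = G'(U_0)$, one recovers the radial form of \eqref{kernel eqn}:
\[
\Delta \alpha + 4\pi G'(U_0)\,\alpha \;=\; 4\pi C\,\rho_0, \qquad C := \frac{1}{M}\int_{B_1} \frac{\rho_0'(y)}{U_0'(y)}\,\alpha(y)\, dy.
\]
My plan is to display every smooth radial solution of this equation explicitly and then show that the self-consistency requirement on $C$ forces $C=0$, and hence $\alpha \equiv 0$.

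The key step is to produce a particular solution from the dilation vector field $x\cdot\nabla$. Using the classical identity $\Delta(x\cdot\nabla U_0) = 2\,\Delta U_0 + x\cdot\nabla(\Delta U_0)$ together with $\Delta U_0 = -4\pi G(U_0)$, a short calculation yields
\[
\Delta(r U_0') + 4\pi G'(U_0)\,(r U_0') \;=\; -8\pi\,\rho_0,
\]
so $\alpha_p := -\tfrac{C}{2}\, r U_0'$ is a smooth radial particular solution. The smooth radial solutions of the homogeneous equation $\Delta w + 4\pi G'(U_0)\,w=0$ form a one-dimensional space spanned by $v_a(r) := \partial_a v(r;a)\big|_{a=U_0(0)}$, where $v(\cdot\,;a)$ is the Emden family used in the Euler analysis, normalized so that $v_a(0)=1$. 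Hence every admissible $\alpha$ has the form
\[
\alpha(r) \;=\; -\tfrac{C}{2}\, r U_0'(r) + A\, v_a(r)
\]
for some constant $A$, and the condition $\alpha(0)=0$ combined with $v_a(0)=1$ forces $A=0$.

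I then close the loop by substituting $\alpha = -\tfrac{C}{2}\, r U_0'$ into the defining formula for $C$ and integrating by parts, using $G'(U_0)U_0' = \partial_r \rho_0$, $\rho_0(1)=0$, and $4\pi\int_0^1 r^2\rho_0(r)\, dr = M$:
\[
MC \;=\; -\tfrac{C}{2}\int_0^1 \rho_0'(r)\, r \cdot 4\pi r^2\, dr \;=\; 6\pi C \int_0^1 r^2\,\rho_0(r)\, dr \;=\; \tfrac{3}{2}\, MC.
\]
This forces $C=0$, so $\alpha \equiv 0$. Since $U_0'(r)/r < 0$ on $(0,1]$ by Proposition~\ref{Vlasov radial}, this in turn forces $\xi \equiv 0$ on $B_1$, proving the lemma.

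The main obstacle is recognizing that $r U_0'$ yields a closed-form particular solution. This is not an arbitrary ansatz: it is the dilation of $U_0$, and the identity works precisely because the mass-constraint term appearing in $\L$ for the Vlasov model is proportional to $U_0 - U_0(0)$ (rather than the more intricate $k(\rho_0)$ combination of the Euler case), so that its Laplacian contributes the clean source $4\pi C \rho_0$. The remaining technical points — one-dimensionality of the smooth radial homogeneous space, the sign of $U_0'/r$, and the regularity of $\alpha$ needed to justify taking the Laplacian (which follows from the $C^{1,\nu}$ bound on $G$ in Lemma~\ref{lem: G property} and standard potential-theoretic regularity of the integral term in \eqref{alpha eqn}) — are routine given the setup of Proposition~\ref{Vlasov radial}.
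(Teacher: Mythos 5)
Your proof is correct and rests on the same key identity as the paper's: your dilation computation $\Delta(rU_0') + 4\pi G'(U_0)\,rU_0' = -8\pi\rho_0$ is precisely the paper's scaling relation $rv' = 2v_S$ with $v_S$ solving \eqref{vS eqn}, so both arguments reduce the radial kernel element $\alpha$ to an explicit multiple of $rU_0'$. The only difference is in how the contradiction is extracted — the paper uses the Neumann condition $\alpha'(1)=0$ (from integrating \eqref{kernel eqn} over $B_1$) together with $U_0'(1)=-M\ne 0$, while you use the self-consistency requirement $MC=\tfrac32 MC$ — but these are the same computation, since integrating your equation $\Delta\alpha+4\pi G'(U_0)\alpha=4\pi C\rho_0$ over $B_1$ shows that the vanishing of $\alpha'(1)$ is exactly the statement that $C$ equals its defining integral.
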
 
\begin{proof}
This is the most delicate and novel part of the isomorphism proof. 
Let  $\alpha=\alpha(|x|)$ be a radial function satisfying \eqref{alpha eqn}.  
Note that $\alpha'(0)=0$, where the prime denotes the radial derivative.    
Integrating \eqref{kernel eqn} over $|x|<1$, 
the second and third terms exactly cancel each other and so we get 
$\alpha'(1)=0$.  
Summarizing, we have \eqref{kernel eqn}  together with 
\eqn\label{alpha vanishing}
\alpha(0)=\alpha'(0)=\alpha'(1)=0 . \eeqn
  We want to prove that $\alpha\equiv 0$.  
			
By \eqref{U_0 eqn} and \eqref{U_0 eqn 1}, we have
\eqn 
\frac{\rho_0'}{U_0'} =  \frac{(G(U_0))'}{U_0'}= G'(U_0)  \eeqn 
on $B_1$.
So we can rewrite \eqref{kernel eqn} as  
\eqn \label{alpha eqn3}
\Delta\alpha +  4\pi G'(U_0)\alpha - \frac{1}{M} \left(\int_{B_1}G'(U_0)\alpha~dy\right)4\pi G(U_0)=0.  \eeqn 
This is the basic equation for $\alpha$ from which, together with \eqref{alpha vanishing}, 
 we want to prove that it vanishes.  
 Note that, because of \eqref{alpha vanishing}, if $\int_{B_1}G'(U_0)\alpha ~dy=0$, then $\alpha=0$.   

On the contrary let us assume that $\int_{B_1}G'(U_0)\alpha ~dy\ne0$ and 
define $\beta = \frac{M}{\int_{B_1}G'(U_0)\alpha ~dy} \alpha$, so that 
\eqn \label{beta eqn}
\Delta\beta +  4\pi G'(U_0)\beta - 4\pi G(U_0) =0 \eeqn 
and 
\eqn\label{beta bdry value}
\beta(0)=\beta'(0)=\beta'(1)=0.  \eeqn 
To show that $\beta=0$, we consider the family of solutions to the following variation of \eqref{U_0 eqn 1}:
\eqn\label{radial v eqn}
v''+\frac2r v' + 4\pi SG(v)=0,\quad v(0)=a, v'(0)=0.
\eeqn
Here $S$ and $a$ are parameters, $r=|x|$. 
We denote the unique solution to \eqref{radial v eqn} by $v(r;a,S)$. 
By \eqref{U_0 eqn 1},  $v(r;U_0(0),1)=U_0(r)$. 
Differentiating \eqref{radial v eqn} with respect to $S$, and setting $S=1$, $a=U_0(0)$, we get
\eqn \label{vS eqn}
\Delta v_S + 4\pi G'(U_0)v_S + 4\pi G(U_0)=0,\quad v_S(0)=0, v_S'(0)=0,
\eeqn
where $v_S(r)=v_S(r;U_0(0),1)$, $v_a(r)=v_a(r;U_0(0),1)$.  
The subscripts in $v_S$ and $v_a$ denote partial derivatives.  
Since \eqref{beta eqn} and \eqref{vS eqn} are essentially linear ODEs with the same vanishing initial conditions and homogeneous terms and opposite nonhomogeneous terms, 
we must have 
\eqn  \label{constant multiple}
\beta = -v_S.
\eeqn
Note that a simple rescaling yields us $v(Rr;a,S) = v(r;a,R^2S)$. Differentiating with respect to $R$ and setting $R=S=1$, $a=u_0(0)$, we get
\eqn
rv' = 2v_S.
\eeqn
Taking the $r$ derivative and setting $r=1$, we get
\eqn\label{eq: last eqn}
2v_S'(1) = v'(1)+v''(1) = v'(1)-2v'(1)-G(v(1))=-v'(1).
\eeqn
Here we have used \eqref{radial v eqn}, $v(1)=U_0(1)=0$ and $G(0)=0$. The condition $\beta'(1)=0$ in \eqref{beta bdry value} and the relation \eqref{constant multiple} imply $v_S'(1)=0$, which implies $v'(1)=0$ by \eqref{eq: last eqn}. 
But the boundary condition is $v(1)=0$, which means that the initial data for $v(r)$ at $r=1$ vanish.  
Thus $v\equiv0$, which contradicts $v=U_0 >0$.    
\end{proof}

\begin{proof}[Proof of Theorem \ref{Vlasov injectivity}.]
It remains to consider the non-radial part of the kernel of $\L$. 
But the argument is identical to the Euler case, because the first two terms in \eqref{def: L} 
and \eqref{kernel eqn0} are the same. 
The only thing we used from the last term of \eqref{def: L} is that it is radial 
and hence is orthogonal to any non-radial spherical harmonic. 
This is still the case for the last term in \eqref{kernel eqn0}.
\end{proof}


 
 Now we prove the compactness.  
\begin{lemma}\label{lem: structure L 2}
$\L: X\to X$ has the form $\L=J+K$ where $J$ is an isomorphism, and $K$ is a compact operator.
\end{lemma}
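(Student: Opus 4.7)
The plan is to mirror the argument given for the Euler case in Lemma \ref{lem: structure L}, with the three term structure of \eqref{kernel eqn0} suggesting the natural decomposition
\begin{equation*}
[J\xi](x) = \frac{U_0'(x)}{|x|}\xi(x), \qquad K = A+B,
\end{equation*}
where $A$ is the Newtonian-type integral operator (the second term of \eqref{kernel eqn0}) and $B$ is the rank-one operator (the third term). First I would verify that $J:X\to X$ is an isomorphism by the same pointwise computation as before: writing $\partial_i[J\xi](x) = [\,|x|\partial_i(U_0'(x)/|x|)\cdot \xi(x)/|x|^2 + (U_0'(x)/|x|)\,\partial_i\xi(x)/|x|\,]\cdot|x|$, boundedness follows from $U_0 \in C^{3,\nu}$ together with $U_0'(0)=0$, and the explicit inverse $[J^{-1}\xi](x) = (|x|/U_0'(x))\xi(x)$ is bounded by the same computation, now using the key facts from Proposition \ref{Vlasov radial}: $U_0'(0)=0$, $U_0''(0)\neq 0$, and $U_0'(r)<0$ for $0<r\le 1$.

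For the rank-one operator $B$, its range is spanned by $U_0(x)-U_0(0)$, so once we check that this function lies in $X$ the operator is automatically compact. Since $U_0$ is radial and $C^3$, we have $\nabla U_0(0)=0$ and $|\nabla U_0(x)|\le C|x|$ on $\overline{B_1}$, and the axisymmetric/parity requirements are trivial, so $U_0-U_0(0)\in X$. The scalar linear functional $\xi \mapsto \int_{B_1}\rho_0'(y)\xi(y)/|y|\,dy$ is clearly bounded on $X$ since $\rho_0'(y)/|y|$ is continuous and bounded on $\overline{B_1}$ (note $\rho_0\in C^{1,\nu}$ by Proposition \ref{Vlasov radial} and $\rho_0'(0)=0$ by radial symmetry, giving $\rho_0'(y)/|y|$ a continuous extension to $0$).

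For compactness of $A$, I would proceed exactly as in the Euler proof via Schauder theory. By Proposition \ref{Vlasov radial}, $\rho_0\in C^{1,\nu}(\real^3)$ with $\rho_0'$ vanishing on $\partial B_1$, so $\rho_0'(y)/|y|\in C^\nu(\overline{B_1})$. Combined with the embedding $X\hookrightarrow C^1(\overline{B_1})$, we obtain
\begin{equation*}
\|\rho_0'(y)\xi(y)/|y|\|_{C^\nu(\overline{B_1})}\le C\|\xi\|_X.
\end{equation*}
Since $A\xi$ is (up to a harmless constant subtraction which does not affect derivatives) a Newtonian potential of this source, standard Schauder estimates give $\|A\xi\|_{C^{2,\nu}(\overline{B_1})}\le C\|\xi\|_X$; the vanishing of $\rho_0'$ on $\partial B_1$ removes the usual boundary obstruction. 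The compact embedding $C^{2,\nu}(\overline{B_1})\hookrightarrow C^2(\overline{B_1})$, together with $A\xi(0)=0$ and $\|A\xi\|_X\le C\|A\xi\|_{C^2(\overline{B_1})}$, then delivers compactness of $A:X\to X$.

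The only step requiring any care beyond bookkeeping is the Schauder bound on $A$, since one must confirm that the $1/|x-y|-1/|y|$ kernel (rather than the plain Newtonian kernel) still falls under the Schauder framework; this is handled by noting that the subtracted $1/|y|$ term contributes only an $x$-independent constant, so all derivatives of $A\xi$ agree with those of the corresponding Newtonian potential. Combining the three points via $\L=J+A+B$ and invoking the Fredholm alternative for $J^{-1}\L=I+J^{-1}(A+B)$ with Theorem \ref{Vlasov injectivity} then shows $\L$ is an isomorphism, completing the ingredients needed for Theorem \ref{main Vlasov thm}.
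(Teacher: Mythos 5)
Your proposal is correct and takes essentially the same approach as the paper: the paper's own proof is a one-line remark that only the last (rank-one) term of \eqref{kernel eqn0} differs from the Euler case and defers everything else to Lemma \ref{lem: structure L}, whereas you have simply written out that Euler argument in full for the Vlasov operator ($J$ as multiplication by $U_0'/|x|$, $B$ rank one with range spanned by $U_0-U_0(0)$, $A$ compact via Schauder). The only cosmetic caveat is that $U_0''(0)\ne 0$ is not literally listed in Proposition \ref{Vlasov radial}, but it follows at once from $3U_0''(0)=\Delta U_0(0)=-4\pi\rho_0(0)<0$, exactly as in the Euler case.
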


\begin{proof} 
Recall that in the definition of  $\L$  given in \eqref{kernel eqn0}, only the last term differs from the Euler case. 
Since the last term is a rank one operator, it is compact.
\end{proof}

By the standard implicit function theorem, Theorem \ref{main Vlasov thm}   
follows by combining the preceding results, as in the Euler case. 

\section{Vlasov model: Fr\'{e}chet differentiability} \label{sec: 8}

In this section we prove the Fr\'{e}chet differentiability of the map
\begin{align}\label{eq: F Vlasov new}
&~\F(\zeta,\kappa)(x)\notag\\
=&~-U_0(x)+U_0(0)  
+\frac{M}{\M(\zeta,\kappa)}\int_{B_2}w\left(\kappa, r(y),U_0(g_{\zeta}^{-1}(y))\right)  
\left[\frac{1}{|g_{\zeta}(x)-y|}-\frac{1}{|y|}\right]~dy
\end{align}
where
\begin{equation}
\M(\zeta,\kappa)=\int_{B_2}w\left(\kappa, r(y),U_0(g_{\zeta}^{-1}(y))\right)~dy  
=  D(\kappa,U_0\circ g_\zeta^{-1})
\end{equation}
and
\eqn    \label{def: w}
w(\kappa,r,u) =     2\pi\int_{-u}^{0}    \int_{-\sqrt{2(E+u)}} ^{\sqrt{2(E+u)}} \phi(E,\kappa rs)~dsdE .   \eeqn

This is a different map from the one in the Euler model but the space $X$ will be  the same. 
We wish to prove Theorem \ref{deriv Vlasov} which states that the operator $\F$ is continuously Fr\'echet differentiable on $B_{\ep}(X)\times \real$.  
We first state a lemma that describes the regularity of $w(\kappa,r,u)$. 
\begin{lemma}\label{lem: w reg}
$w(\kappa,r,u)$ defined by \eqref{def: w} is in 
$C^1(\real\times [0,\infty)\times \real)$, $\pa_r w\in C^1(\real\times [0,\infty)\times \real)$, 
and for every bounded set $B\subset \real\times[0,\infty)\times \real$ there exists a  constant $C>0$ with
\eqn \label{w1}
|\pa_r w(\kappa,r,u)|\le Cr,
\eeqn
\eqn  \label{w2}
|w(\kappa_1,r,u_1)-w(\kappa_2,r,u_2)|\le C(|\kappa_1-\kappa_2|r+|u_1-u_2|),
\eeqn
\eqn  \label{w3}
|\pa_u w(\kappa_1,r,u_1)-\pa_u w(\kappa_2,r,u_2)|\le C(|\kappa_1-\kappa_2|+|u_1-u_2|^{\nu})
\eeqn
for all $(\kappa_1,r,u_1)$, $(\kappa_2,r,u_2)\in B$. Here $\nu = \frac12-\mu$, with $\mu$ given as in \eqref{cond: phi 2}.
\end{lemma}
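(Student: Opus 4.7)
The plan is to derive explicit integral representations for $w$ and its first-order partial derivatives by differentiating \eqref{def: w} directly, and then read off all the required regularity and estimates from those formulas. Since $\phi(E, L) = 0$ for $E > 0$ by \eqref{cond: phi 1}, we have $w \equiv 0$ on $\{u \le 0\}$, and since $\phi$ vanishes continuously at $E = 0$ by \eqref{cond: phi 2}, all boundary contributions at $E = 0$ and $s = \pm\sqrt{2u}$ drop out when differentiating under the integral sign. For $u>0$ this yields
\begin{equation*}
\pa_u w = 2\pi \int_{-\sqrt{2u}}^{\sqrt{2u}} \phi(\tfrac{s^2}{2}-u,\kappa r s)\, ds, \quad \pa_r w = 2\pi\kappa\int_{-\sqrt{2u}}^{\sqrt{2u}} s\int_{s^2/2-u}^0 \pa_L\phi(E,\kappa r s)\,dE\,ds,
\end{equation*}
and an analogous formula for $\pa_\kappa w$ with the prefactor $2\pi\kappa$ replaced by $2\pi r$. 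Continuity on $\real\times[0,\infty)\times\real$, including matching at $u=0^+$ with the trivial values for $u\le 0$, follows by dominated convergence once the pointwise bounds below are in place.

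Two consequences of the hypotheses drive everything. From \eqref{cond: phi 2}, on bounded subsets of $\{E<0\}\times\real$ one has $|\phi(E,L)|\le C(-E)^{-\mu}$ and $|\pa_L^2\phi(E,L)|\le C(-E)^{-1/2}$. Combining the latter with the symmetry \eqref{cond: phi 5} $\pa_L\phi(E,0)=0$ via Taylor expansion yields the workhorse bound
\begin{equation*}
|\pa_L\phi(E,L)|\le C|L|(-E)^{-1/2}.
\end{equation*}
Plugging this into the formula for $\pa_r w$ and using $|L|=|\kappa r s|$ gives $|\pa_r w|\le C\kappa^2 r\int s^2\sqrt{u-s^2/2}\,ds\le Cr$, which is \eqref{w1}. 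The Lipschitz bound \eqref{w2} splits as $|w(\kappa_1,u_1)-w(\kappa_2,u_1)|+|w(\kappa_2,u_1)-w(\kappa_2,u_2)|$, controlled respectively by $|\pa_\kappa w|\le Cr$ (same calculation as for $\pa_r w$) and by $|\pa_u w|\le Cu^{1/2-\mu}\le C$: the substitution $s=\sqrt{2u}\,t$ converts the $s$-integral into $\sqrt{u}$ times $\int_{-1}^1(u(1-t^2))^{-\mu}\,dt$, which is finite exactly because $\mu<1/2$. The claim that both $w$ and $\pa_r w$ are $C^1$ follows from differentiating the displayed formulas once more, each extra derivative bringing either a further factor of $|L|$ (from \eqref{cond: phi 5}) or one application of the bound on $|\pa_L^2\phi|$.

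The main obstacle is \eqref{w3}, the H\"older bound on $\pa_u w$ with exponent $\nu=\frac12-\mu$. Splitting the difference in $\kappa$ and $u$ separately, the $\kappa$-piece is Lipschitz: $\pa_\kappa\pa_u w=2\pi r\int s\,\pa_L\phi(s^2/2-u,\kappa r s)\,ds$ is bounded by $Cr^2$ via the Taylor bound on $\pa_L\phi$ and an integrable singularity. For the $u$-piece (with $u_1>u_2\ge 0$), change variables by $\tau=u-s^2/2$ to rewrite
\begin{equation*}
\pa_u w(\kappa,r,u)=2\pi\int_0^u\bigl[\phi(-\tau,\kappa r\sqrt{2(u-\tau)})+\phi(-\tau,-\kappa r\sqrt{2(u-\tau)})\bigr]\frac{d\tau}{\sqrt{2(u-\tau)}},
\end{equation*}
an Abel-type fractional integral. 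Expand the bracket around $L=0$: using $\pa_L\phi(E,0)=0$ one obtains $2\phi(-\tau,0)+O(\kappa^2 r^2(u-\tau)\tau^{-1/2})$, and the correction contributes an $O(\kappa^2 r^2 u)$ piece which is Lipschitz in $u$, hence trivially $\nu$-H\"older. The essential point is then the regularity of the leading fractional integral $\int_0^u\phi(-\tau,0)(u-\tau)^{-1/2}\,d\tau$ of a function satisfying $|\phi(-\tau,0)|\le C\tau^{-\mu}$.

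I handle this leading term by dichotomy. If $u_2\ge u_1/2$, so that $u_2\ge|u_1-u_2|$, split the integral difference on the three subintervals $(0,u_2/2)$, $(u_2/2,u_2)$, $(u_2,u_1)$: on the first, apply the mean value theorem to the kernel $(u-\tau)^{-1/2}$; on the second, use $|\phi(-\tau,0)|\le Cu_2^{-\mu}$ together with an elementary estimate $\int_0^{u_2/2}|(\delta+\sigma)^{-1/2}-\sigma^{-1/2}|\,d\sigma\le 4\sqrt{\delta}$ where $\delta=u_1-u_2$; on the third, use $\tau^{-\mu}\le u_2^{-\mu}$ and $\int_{u_2}^{u_1}(u_1-\tau)^{-1/2}\,d\tau=2\sqrt{\delta}$. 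Each piece is bounded by $Cu_2^{-\mu}|u_1-u_2|^{1/2}\le C|u_1-u_2|^{1/2-\mu}=C|u_1-u_2|^\nu$, the last step using $u_2\ge|u_1-u_2|$. If instead $u_2<u_1/2$, then $u_1\le 2|u_1-u_2|$, and the crude pointwise bound $|\pa_u w(u)|\le Cu^\nu$ (an immediate consequence of $|\phi|\le C(-E)^{-\mu}$ and the scaling $s=\sqrt{2u}\,t$) already gives $|\pa_u w(u_1)-\pa_u w(u_2)|\le C(u_1^\nu+u_2^\nu)\le C|u_1-u_2|^\nu$. Throughout, the strict inequality $\mu<1/2$ is indispensable: it is precisely what makes $\nu$ positive and renders the Abel kernel and $\phi(\cdot,0)$ jointly integrable.
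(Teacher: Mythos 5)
Your proposal is correct, and for everything except \eqref{w3} it essentially coincides with the paper's argument: the same integral representations for $\pa_r w$ and $\pa_u w$ obtained by differentiating under the integral sign, the same use of \eqref{cond: phi 5} together with the bound on $\pa_L^2\phi$ to get $|\pa_L\phi(E,L)|\le C|L|\,|E|^{-1/2}$ (which gives \eqref{w1}), and the same scaling to bound $w$ and $\pa_u w$. For the H\"older estimate \eqref{w3} you take a genuinely different route. The paper works directly with $\pa_u w=\pi\sqrt2\int_{-u}^0\{\phi(E,L)+\phi(E,-L)\}(E+u)^{-1/2}\,dE$ and splits the difference into three pieces --- the extra domain $[-u_1,-u_2]$, the change in the $L$-argument, and the change in the kernel $(E+u)^{-1/2}$ --- estimating each via the substitution $u_1=su_2$ with the case analysis $s\le2$ versus $s>2$. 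You instead first Taylor-expand in $L$ to replace the bracket by $2\phi(-\tau,0)$ plus a remainder, reducing the problem to the pure Abel integral $\int_0^u\phi(-\tau,0)(u-\tau)^{-1/2}\,d\tau$ of a function with a $\tau^{-\mu}$ singularity, which you then handle by the dichotomy $u_2\ge u_1/2$ (three-subinterval splitting) versus $u_2<u_1/2$ (pointwise bound $|\pa_u w|\le Cu^{\nu}$); your dichotomy is the paper's $s\le 2$ vs.\ $s>2$ in different clothing. Your version cleanly isolates where the exponent $\nu=\frac12-\mu$ comes from, and you also treat the $\kappa$-difference in \eqref{w3} explicitly, which the paper leaves implicit. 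The one place you must add detail is the assertion that the Taylor remainder $\int_0^u[\phi(-\tau,L)+\phi(-\tau,-L)-2\phi(-\tau,0)]\,(2(u-\tau))^{-1/2}\,d\tau$ is Lipschitz in $u$: being $O(\kappa^2r^2u)$ pointwise does not give this. You need to differentiate under the integral (the boundary contribution at $\tau=u$ vanishes because the bracket is $O((u-\tau)\tau^{-1/2})$) and check that the $u$-derivative of the integrand is $O(\tau^{-1/2}(u-\tau)^{-1/2})$, which is integrable uniformly on bounded sets. That computation does go through, so the gap is only expository, not mathematical.
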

					\begin{proof}
The proof generalizes that of Lemma 2.1 in \cite{rein2000stationary}.
By assumption \eqref{cond: phi 2} we have $0\le \phi(E,L) \le C|E|^{-1/2}$ for bounded $L$ and $E$.  Thus 
$$ 
|w(\ka,r,u)|  \le  C_1 \int_{-u}^0  \sqrt{\frac{E+u}{|E|}} ~dE =  C_1\int_0^u  \sqrt{\frac{u-s}{s}}~ds \le C_2.   $$
In a similar way, \eqref{w2} follows.  
Now 
$$
\pa_rw(\ka,r,u) = 2\pi\ka \int_{-u}^{0}    \int_{-\sqrt{2(E+u)}} ^{\sqrt{2(E+u)}}  \pa_L\phi(E,\kappa rs)s~dsdE .$$
Because $\pa_L\phi(E,0)=0$, we have 
$ |\partial_L\phi(E,\kappa rs)|\le |\pa_L^2\phi(E,\theta \kappa rs)|\kappa r s  $  
for some $\theta\in(0,1)$. Furthermore $|\pa_L^2\phi(E,L)| \le C|E|^{-1/2}$ for bounded $E$ and $L$ by \eqref{cond: phi 2}. Thus \eqref{w1} follows.  

Finally we have 
$$
\pa_uw(\ka,r,u)  =  \pi\sqrt2 \int_{-u}^0  
\left\{ \phi(E,\ka r\sqrt{2(E+u)}) + \phi(E,-\ka r\sqrt{2(E+u)}) \right\} \frac1{\sqrt{E+u}}~dE.     $$
					Without loss of generality we assume $u_1\ge u_2>0$.  
By \eqref{cond: phi 2} we have 
\begin{align} \label{eq: pa w three lines}
&~|\pa_u w(\kappa,r,u_1)-\pa_u w(\kappa,r,u_2)| \notag\\
 \le  & 
~C\left[\sup_{|E|,|L|\le B_0} |E|^{\mu}|\phi(E,L)|\right]  \int_{-u_1}^{-u_2}   \frac{|E|^{-\mu}}{\sqrt{E+u_1}}~dE  \\
& ~+  C   \left[\sup_{|E|,|L|\le B_0} |E|^{1/2}|\pa_L\phi(E,L)|\right] 
\int_{-u_2}^0 \frac1{\sqrt{|E|(E+u_1)}}  
\left[  \sqrt{E+u_1} - \sqrt{E+u_2}  \right] ~dE   \notag\\
& ~+ C  \left[\sup_{|E|,|L|\le B_0} |E|^{\mu}|\phi(E,L)|\right] \int_{-u_2}^0   |E|^{-\mu} 
\left [ \frac1{\sqrt{E+u_2}} - \frac1{\sqrt{E+u_1}} \right ]  ~dE  \notag\\
\le &~C|u_1-u_2|^{\frac12 - \mu} +C|u_1-u_2|^{\frac12 } + C|u_1-u_2|^{\frac12 - \mu}.\notag
\end{align}
					The last inequality is justified as follows.
In the first integral in \eqref{eq: pa w three lines} we change variables 
$F=-E$ and then $F=tu_2$ and $u_1=su_2$ with $1\le s\le2$ to obtain 
\begin{align*}\label{eq: ratio sub}
&  \int_{u_2}^{u_1}\frac{1}{F^{\mu}(u_1-F)^{\frac12}}~dF
=  u_2^{\frac12-\mu}\int_1^s\frac{1}{t^{\mu}(s-t)^{\frac12}}~dt  
\le   u_2^{\frac12-\mu}\int_1^s\frac{1}{(s-t)^{\frac12}}~dt    \notag\\
& =   2u_2^{\frac12-\mu}(s-1)^{\frac12}  
\le 2[u_2(s-1)]^{\frac12-\mu} 
= 2(u_1-u_2)^{\frac12-\mu}.
\end{align*}
On the other hand, if $s>2$, the same integral is bounded by
\eqn 
 u_2^{\frac12-\mu}  \int_{0}^s\frac{1}{t^{\mu}(s-t)^{\frac12}}~dt   
= Cu_2^{\frac12-\mu}s^{\frac12-\mu}\le C(u_2 2(s-1))^{\frac12-\mu} = C[2(u_1-u_2)]^{\frac12-\mu}.
\eeqn 
The other terms in \eqref{eq: pa w three lines} can be estimated in a similar fashion, 
which leads to \eqref{w3}. 
\end{proof}

To show that $\F(\zeta,\kappa)\in X$, we estimate its spatial derivatives. As for the Euler case, we can differentiate \eqref{eq: F Vlasov new} under the integral sign. Consequently, there exists $\epsilon>0$, such that for all $\zeta\in B_{\epsilon}(X)$,
\eqn \label{eq: partial_i F Rein}
\partial_i\F(\zeta,\kappa)(x) 
= -\partial_i U_0(x) 
+\frac{M}{\M(\zeta, \kappa)} 
\int_{B_2}w\left(\kappa, r(y),U_0(g_{\zeta}^{-1}(y))\right) 
\frac{-(g_{\zeta}(x)-y)}{|g_{\zeta}(x)-y|^3}~dy\cdot \partial_i \gz(x).
\eeqn

Next we estimate the second term in \eqref{eq: partial_i F Rein}.  
\begin{align}
~|w\left(\kappa, r(y),U_0(g_{\zeta}^{-1}(y))\right) - &w\left(\kappa, 0,U_0(g_{\zeta}^{-1}(0))\right)| 
\le ~C(r(y)+|U_0(g_{\zeta}^{-1}(y))-U_0(g_{\zeta}^{-1}(0))|) \notag\\
\le &~C(r(y)+\|\nabla U_0\|_{\infty}|\gz^{-1}(y)|)
\le ~C(1+\|\zeta\|_X)|y|.
\end{align}
Therefore 
if $|\kappa|\le \bar{\kappa}$, and $\zeta\in B_{\epsilon}(X)$ for some $\epsilon$ small enough, there is a constant $C>0$ such that
\begin{equation}
\|\F(\zeta,\kappa)\|_X\le C(1+\|\zeta\|_X).
\end{equation}

Next we compute the formal derivative. As in the Euler model, we define 
$F(x,s) = \F(\zeta+s\xi, \kappa)(x) $ 
and we define the formal derivative as
\begin{equation}
[\F'(\zeta,\kappa)\xi](x) = \partial_s F(x,0) = \partial_s\bigg|_{s=0}\F(\zeta+s\xi,\kappa)(x).
\end{equation}

						\begin{lemma}
The formal derivative $[\F'(\zeta,\kappa)\xi](x)$ exists and equals 
\begin{align}\label{eq: F' Rein}  
&~  -\frac{M}{\M(\zeta,\kappa)}\int_{B_2}\partial_u w\left(\kappa,r(y),U_0(\gz^{-1}(y))\right)\notag\\
&\qquad \qquad  \nabla U_0(\gz^{-1}(y))D(\gz^{-1})(y)\ \xi(\gz^{-1}(y))\frac{\gz^{-1}(y)}{|\gz^{-1}(y)|^2}\left[\frac{1}{|\gz(x)-y|}-\frac{1}{|y|}\right]~dy\notag\\
&~-\frac{M}{\M(\zeta,\kappa)}\int_{B_2}w\left(\kappa,r(y),U_0(\gz^{-1}(y))\right)\frac{\gz(x)-y}{|\gz(x)-y|^3}~dy\cdot \frac{x\xi(x)}{|x|^2}\notag\\
&~-\frac{M[\M'(\zeta,\kappa)\xi]}{\M^2(\kappa,\zeta)}\int_{B_2}w\left(\kappa, r(y),U_0(g_{\zeta}^{-1}(y))\right)\left[\frac{1}{|g_{\zeta}(x)-y|}-\frac{1}{|y|}\right]~dy,
\end{align}
						where
\begin{equation}\label{eq: M' Rein}
\M'(\zeta,\kappa)\xi = -\int_{B_2}\partial_u w\left(\kappa,r(y),U_0(\gz^{-1}(y))\right)\nabla U_0(\gz^{-1}(y))D\gz^{-1}(y)\xi(\gz^{-1}(y))\frac{\gz^{-1}(y)}{|\gz^{-1}(y)|^2}~dy
\end{equation}
\end{lemma}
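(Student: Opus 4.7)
The plan is to differentiate $\F(\zeta+s\xi,\kappa)(x)$ in the scalar parameter $s$ at $s=0$ via the product and chain rules, and to justify the interchange of $\partial_s$ with the $y$-integral by a cutoff argument modelled on Lemma \ref{lem: gateaux diff}.

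First I would identify the three sources of $s$-dependence in the expression \eqref{eq: F Vlasov new}: the normalization factor $M/\M(\zeta+s\xi,\kappa)$; the inner factor $w\bigl(\kappa,r(y),U_0(g_{\zeta+s\xi}^{-1}(y))\bigr)$ entering through the third argument of $w$; and the Newtonian kernel $1/|g_{\zeta+s\xi}(x)-y|$. The two chain-rule identities needed are $\partial_s|_{s=0} g_{\zeta+s\xi}(x)=x\xi(x)/|x|^2$, read off directly from \eqref{scaling}, and the formula for $\partial_s|_{s=0}g_{\zeta+s\xi}^{-1}(y)$ derived in the proof of Lemma \ref{lem: gateaux diff} by implicit differentiation of the identity $g_{\zeta+s\xi}\circ g_{\zeta+s\xi}^{-1}=\mathrm{id}$.

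Next I would apply the product rule term by term. The derivative of $M/\M$ produces $-M\,[\M'(\zeta,\kappa)\xi]/\M^2$; expanding $\M'(\zeta,\kappa)\xi$ by differentiating $w$ through its third argument and invoking the chain rule above yields formula \eqref{eq: M' Rein}, and multiplying by the remaining unchanged integral gives the third line of \eqref{eq: F' Rein}. The derivative hitting $w$ inside the main integral, via $\partial_u w\cdot\nabla U_0\cdot\partial_s g_{\zeta+s\xi}^{-1}(y)$, yields the first line of \eqref{eq: F' Rein}. Finally the derivative hitting the kernel produces $-(g_\zeta(x)-y)/|g_\zeta(x)-y|^3$ dotted with $x\xi(x)/|x|^2$, which multiplied by $w$ and the constant $M/\M(\zeta,\kappa)$ gives the second line.

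The main obstacle is justifying differentiation under the $y$-integral for the kernel contribution, since $1/|g_{\zeta+s\xi}(x)-y|$ has a singularity at $y=g_{\zeta+s\xi}(x)\in B_2$ that the naive $s$-derivative makes non-integrable. I would replace the kernel by its product with the smooth cutoff $1-\chi(|g_{\zeta+s\xi}(x)-y|/\epsilon)$ introduced in Lemma \ref{lem: gateaux diff}, differentiate the resulting non-singular integral classically, and pass to $\epsilon\to 0$. The uniform-in-$s$ bounds required are $|w|\le C$ and $|\partial_u w|\le C$ on bounded sets, both supplied by Lemma \ref{lem: w reg}, which make the candidate limit integrand locally integrable in $y$. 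The boundary term produced by $\partial_s\chi$ carries a factor $1/\epsilon$ on a shell of thickness $O(\epsilon)$ around the singularity; the same cancellation exhibited in the proof of Lemma \ref{lem: mixed partial F}, arising because the $\partial_s$ and $\partial_j$ derivatives of $\chi$ couple to matching derivatives of $1/|\cdot|$, causes it to vanish in the limit. Continuity of the Fr\'echet derivative in $(\zeta,\kappa)$ would then be established in parallel with Lemma \ref{prop: continuity of formal deriv}, using the Lipschitz estimates \eqref{w2}, \eqref{w3} from Lemma \ref{lem: w reg} together with the comparison estimates \eqref{est: Dg inv 12}--\eqref{est: deviation from x-x' 12} for $g_\zeta$.
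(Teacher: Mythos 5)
Your proposal is correct and follows essentially the same route as the paper: the paper's proof is exactly the cutoff-function method of Lemma \ref{lem: gateaux diff} transplanted to the Vlasov integrand, resting on the two uniform bounds $|w(\cdot)\,|g_{\zeta+s\xi}(x)-y|^{-1}|\le C|g_{\zeta+s\xi}(x)-y|^{-1}$ and $|\partial_s[w(\cdot)\,|g_{\zeta+s\xi}(x)-y|^{-1}]|\le C|g_{\zeta+s\xi}(x)-y|^{-2}$, which your appeal to Lemma \ref{lem: w reg} supplies. One small correction: the boundary term produced by $\partial_s\chi$ does not vanish by the cancellation mechanism of Lemma \ref{lem: mixed partial F} (that cancellation pairs a $\partial_s\chi$ term against a $\partial_j\chi$ term arising from integration by parts, and is only needed for the mixed second derivatives); for the first $s$-derivative computed here there is no $\partial_j$ term to cancel against, and the term simply vanishes by the direct estimate of \eqref{est: 1/epsilon} — the integrand is $O(\epsilon^{-2})$ on a shell of volume $O(\epsilon^3)$, hence $O(\epsilon)$ uniformly in $s$. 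This misattribution does not affect the validity of the argument, since the correct justification is more elementary than the one you cite.
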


\begin{proof}
We use the cutoff function method as before. As is the case for estimates \eqref{est: 1/r^2} and \eqref{est: 1/r}, the key estimates 
\begin{equation}
\left|w\left(\kappa,r(y),U_0(g_{\zeta+s\xi}^{-1}(y))\right)\frac{1}{|g_{\zeta+s\xi}(x)-y|}\right|\le \frac{C}{|g_{\zeta+s\xi}(x)-y|},
\end{equation}
\begin{equation}
\left|\partial_s \left[w\left(\kappa,r(y),U_0(g_{\zeta+s\xi}^{-1}(y))\right)\frac{1}{|g_{\zeta+s\xi}(x)-y|}\right]\right|\le \frac{C}{|g_{\zeta+s\xi}(x)-y|^2} 
\end{equation}
are easily proven. 
\end{proof}

\begin{lemma}\label{lem: partial_i F' Rein} 
The spatial derivatives of $\F'(\zeta,\kappa)\xi$ are 
\begin{align}\label{eq: partial_i F' Rein}
&~\partial_i[\F'(\zeta,\kappa)\xi](x)\notag\\
= &~  \frac{M}{\M(\zeta,\kappa)}\int_{B_2}\partial_u w\left(\kappa,r(y),U_0(\gz^{-1}(y))\right)\notag\\
&\qquad  \nabla U_0(\gz^{-1}(y))D\gz^{-1}(y)\xi(\gz^{-1}(y))\frac{\gz^{-1}(y)}{|\gz^{-1}(y)|^2}\frac{\gz(x)-y}{|\gz(x)-y|^3}~dy\cdot \partial_i \gz(x)\notag\\
&+\sum_j \frac{M}{\M(\zeta,\kappa)}\int_{B_2}\bigg[w_r\left(\kappa,r(y),U_0(\gz^{-1}(y))\right)\partial_{j}r(y) \notag\\
&+ w_u\left(\kappa,r(y),U_0(\gz^{-1}(y))\right)\cdot\nabla U_0(\gz^{-1}(y))\partial_j\gz^{-1}(y)\bigg]\frac{\gz(x)-y}{|\gz(x)-y|^3}~dy\cdot \partial_i \gz(x)\frac{x_j\xi(x)}{|x|^2}\notag\\
&-\frac{M}{\M(\zeta,\kappa)}\int_{B_2}w\left(\kappa,r(y),U_0(\gz^{-1}(y))\right)\frac{\gz(x)-y}{|\gz(x)-y|^3}~dy\cdot \partial_i\left(\frac{x\xi(x)}{|x|^2}\right)\notag\\
&~+\frac{M[\M'(\zeta,\kappa)\xi]}{\M^2(\zeta,\kappa)}\int_{B_2}w\left(\kappa, r(y),U_0(g_{\zeta}^{-1}(y))\right)\frac{g_{\zeta}(x)-y}{|g_{\zeta}(x)-y|^3}~dy\cdot \partial_i \gz(x).
\end{align}
\end{lemma}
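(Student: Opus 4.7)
My plan is to prove \eqref{eq: partial_i F' Rein} by applying $\partial_i$ to each of the three summands of $\F'(\zeta,\kappa)\xi$ in \eqref{eq: F' Rein} separately, using the cutoff-regularization technique set up in \Cref{lem: gateaux diff} and \Cref{lem: mixed partial F} for the Euler case. For the first summand (involving $\partial_u w$) and the third summand (involving $\M'(\zeta,\kappa)\xi$), the only $x$-dependent factor inside the integral is $\frac{1}{|\gz(x)-y|}$, whose $x_i$-derivative $-\frac{(\gz(x)-y)\cdot\partial_i\gz(x)}{|\gz(x)-y|^3}$ is integrable in three dimensions. With the prefactors controlled by the uniform bounds on $w$ and $\partial_u w$ from \Cref{lem: w reg} and the bounds on $\gz^{-1}$, $D\gz^{-1}$ from \Cref{lem: basic estimates g_zeta}, the standard cutoff argument with $\chi(|\gz(x)-y|/\ep)$ passes $\partial_i$ inside the integral as $\ep\to 0$, with the $\chi'/\ep$ boundary-layer terms vanishing exactly as in \eqref{est: 1/r^2}--\eqref{est: 1/epsilon}. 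This produces lines 1 and 5 of \eqref{eq: partial_i F' Rein}.

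The genuine difficulty is the second summand of \eqref{eq: F' Rein}, whose integrand $w\cdot\frac{\gz(x)-y}{|\gz(x)-y|^3}\cdot\frac{x\xi(x)}{|x|^2}$ has a $1/|x-y|^2$ kernel, so that a naive $\partial_i$ would yield a non-integrable $1/|x-y|^3$ singularity. I would circumvent this by first expanding componentwise, writing $\frac{(\gz(x)-y)_j}{|\gz(x)-y|^3} = \partial_{y_j}\frac{1}{|\gz(x)-y|}$, and integrating by parts in $y$. The boundary term on $\partial B_2$ vanishes because $w(\kappa,r(y),U_0(\gz^{-1}(y)))$ is supported in $\gz(\overline{B_1})$ --- a consequence of the property $w(\kappa,r,u)=0$ for $u\le 0$ recorded after \eqref{def: w}, together with $U_0\le 0$ outside $B_1$ from \Cref{Vlasov radial} --- and $\gz(\overline{B_1})$ lies strictly inside $B_2$ for $\zeta\in B_\ep(X)$ by \Cref{lem: basic estimates g_zeta}. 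The chain rule then yields
\begin{equation*}
\partial_{y_j}w(\kappa,r(y),U_0(\gz^{-1}(y))) = w_r\,\partial_j r(y) + w_u\,\nabla U_0(\gz^{-1}(y))\cdot\partial_j\gz^{-1}(y),
\end{equation*}
which supplies the bracketed factor in lines 2--3 of \eqref{eq: partial_i F' Rein} and leaves the regularized kernel $\frac{1}{|\gz(x)-y|}$, now amenable to direct cutoff differentiation in $x_i$. The product rule in $x_i$ splits the contribution in two: when $\partial_i$ hits $\frac{1}{|\gz(x)-y|}$, it recovers $-\frac{(\gz(x)-y)\cdot\partial_i\gz(x)}{|\gz(x)-y|^3}$ and gives lines 2--3; when $\partial_i$ hits the external factor $\frac{x_j\xi(x)}{|x|^2}$, I would reverse the integration by parts to restore the compact form with $\frac{\gz(x)-y}{|\gz(x)-y|^3}$, producing line 4.

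The main obstacle is tracking the $\chi'/\ep$ boundary-layer terms during the combined operation of $x_i$-differentiation and $y$-integration by parts. The cancellation identity $\bigl(\partial_{x_i}\tfrac{1}{|\gz(x)-y|}\bigr)\,\partial_{y_j}|\gz(x)-y| = \bigl(\partial_{y_j}\tfrac{1}{|\gz(x)-y|}\bigr)\,\partial_{x_i}|\gz(x)-y|$ --- which underlay the Euler-case cancellation between \eqref{eq: I_e'(s)} and \eqref{eq: I_e'(s) 1} in the proof of \Cref{lem: mixed partial F} --- transfers verbatim to the Vlasov setting once $\rho_0$ is replaced by $w(\kappa,r(y),U_0(\gz^{-1}(y)))$. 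After this cancellation all remaining $\ep\to 0$ limits are uniform on $\overline{B_1}$ by \Cref{lem: w reg}, and \eqref{eq: partial_i F' Rein} follows.
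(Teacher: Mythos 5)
Your proposal is correct and follows essentially the same route as the paper: the paper's own (very terse) proof likewise rewrites the singular second term of \eqref{eq: F' Rein} as $\sum_j\int w\,\partial_{y_j}(1/|\gz(x)-y|)\,dy\,\frac{x_j\xi(x)}{|x|^2}$, integrates by parts in $y$ (legitimate since $w(\kappa,r(y),U_0(\gz^{-1}(y)))$ is supported in $\gz(\overline{B_1})\subset\subset B_2$), and then applies the cutoff differentiation of \Cref{lem: gateaux diff}/\Cref{lem: mixed partial F}, simply stating ``Details are omitted.'' Your filling-in of those details --- the chain-rule expansion of $\partial_{y_j}w$, the product rule in $x_i$ with the reverse integration by parts producing line 4, and the $\chi'/\epsilon$ cancellation identity carried over from \eqref{eq: I_e'(s)}--\eqref{eq: I_e'(s) 1} --- is exactly what the paper intends.
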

\begin{proof}
We apply the cutoff function method to \eqref{eq: F' Rein}. 
The second term of \eqref{eq: F' Rein} is written as
\begin{align}
\sum_j \int_{B_2}w\left(\kappa,r(y),U_0(\gz^{-1}(y))\right)\partial_{y_j}\left(\frac{1}{|\gz(x)-y|}\right)~dy ~\frac{x_j\xi(x)}{|x|^2}
\end{align}
and then integrated by parts to get
\begin{equation}
-\sum_j \int_{B_2}\partial_{y_j}\left(w\left(\kappa,r(y),U_0(\gz^{-1}(y))\right)\right)\frac{1}{|\gz(x)-y|}~dy~\frac{x_j\xi(x)}{|x|^2}.
\end{equation}
Details are omitted. 
\end{proof}
\begin{lemma}
The mixed partials of $F$ are 
\begin{equation}\label{eq: mixed partial Rein}
\partial_s\partial_i F(x,s)=\partial_i\partial_sF(x,s) = \partial_i[\F'(\zeta+s\xi,\kappa)](x).
\end{equation}
\end{lemma}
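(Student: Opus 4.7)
The plan is to follow the strategy used in the Euler case (\Cref{lem: mixed partial F}). The equality $\partial_i\partial_s F = \partial_i[\F'(\zeta+s\xi,\kappa)\xi]$ is immediate, because the proof of \Cref{lem: partial_i F' Rein} (applied with base point $\zeta+s\xi$) shows $\partial_s F(x,s) = [\F'(\zeta+s\xi,\kappa)\xi](x)$ for every $s$ in a neighborhood of zero, and differentiating this identity pointwise in $x_i$ produces the desired formula. The content of the lemma is therefore the equality $\partial_s\partial_i F = \partial_i\partial_s F$, i.e.\ the interchange of the $s$- and $x_i$-derivatives, which cannot be justified directly from a $C^2$-regularity claim, since such regularity is not yet established.

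To establish the interchange, I would begin from the explicit expression \eqref{eq: partial_i F Rein} for $\partial_i F(x,s) = \partial_i \F(\zeta+s\xi,\kappa)(x)$. Writing $\zeta_s := \zeta+s\xi$ for brevity, this has two $s$-dependent pieces: the scalar prefactor $M/\M(\zeta_s,\kappa)$, which is $C^1$ in $s$ by direct differentiation of the nonsingular integral defining $\M$ (with derivative producing the mass term in \eqref{eq: partial_i F' Rein}), and the singular integral
\[
I(s) \;:=\; \int_{B_2} w\bigl(\kappa, r(y), U_0(g_{\zeta_s}^{-1}(y))\bigr)\,\frac{-(g_{\zeta_s}(x)-y)}{|g_{\zeta_s}(x)-y|^3}\cdot \partial_i g_{\zeta_s}(x)\,dy.
\]
Naively differentiating $I$ under the integral sign would create a non-integrable $|g_{\zeta_s}(x)-y|^{-3}$ singularity, so a cutoff approximation is required.

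I would introduce the cutoff factor $1-\chi(|g_{\zeta_s}(x)-y|/\epsilon)$, with $\chi$ the same bump function used in the Euler proof, to form a regularized integral $I_\epsilon(s)$ that converges uniformly to $I(s)$ as $\epsilon\to 0$. By \Cref{lem: w reg} (which supplies the $C^1$-regularity of $w$ along with the bounds \eqref{w1}--\eqref{w3}) together with the $C^3$-regularity of $U_0$, the map $s\mapsto I_\epsilon(s)$ is $C^1$, and $I_\epsilon'(s)$ decomposes into three pieces according to whether $\partial_s$ lands on $w$ (through $U_0\circ g_{\zeta_s}^{-1}$), on the Newtonian kernel, or on the cutoff. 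Rewriting the kernel as $\partial_{y_j}(1/|g_{\zeta_s}(x)-y|)$ and integrating by parts in $y$ in the piece where $\partial_s$ hits the kernel produces two boundary-type contributions carrying the factor $\chi'(|g_{\zeta_s}(x)-y|/\epsilon)/\epsilon$: one from the integration by parts and one from $\partial_s$ hitting the cutoff. Just as in the Euler proof, these two contributions cancel by means of the elementary identity
\[
\bigl(\partial_s |g_{\zeta_s}(x)-y|^{-1}\bigr)\,\partial_{y_j} |g_{\zeta_s}(x)-y| \;=\; \bigl(\partial_{y_j}|g_{\zeta_s}(x)-y|^{-1}\bigr)\,\partial_s |g_{\zeta_s}(x)-y|.
\]

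After this cancellation, the surviving integrands are dominated by $C/|g_{\zeta_s}(x)-y|^2$, a locally integrable singularity, so $I_\epsilon'(s)$ converges uniformly in $s$ as $\epsilon\to 0$ by dominated convergence. This shows $I$ is $C^1$ and identifies $I'(s)$ with the $\epsilon\to 0$ limit; assembling this with the contribution from $\partial_s[M/\M(\zeta_s,\kappa)]$ reproduces precisely the right-hand side of \eqref{eq: partial_i F' Rein}, completing the proof. The main obstacle is the integration-by-parts cancellation of the $\chi'/\epsilon$ boundary terms; however, this is purely structural and depends only on the Newtonian form of the kernel, so the replacement of $\rho_0$ (as in the Euler case) by $w(\kappa,r(y),U_0(g_{\zeta_s}^{-1}(y)))$ is harmless once \Cref{lem: w reg} is in hand.
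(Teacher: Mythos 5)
Your proposal is correct and follows essentially the same route as the paper: the second equality comes from $\partial_s F(x,s)=[\F'(\zeta+s\xi,\kappa)\xi](x)$ (which is established in the lemma computing the formal derivative \eqref{eq: F' Rein}, not in \Cref{lem: partial_i F' Rein} as you cite, though that is only a labeling slip), and the first equality is obtained by starting from \eqref{eq: partial_i F Rein}, regularizing with the cutoff $\chi$, integrating by parts in $y$, and cancelling the two $\chi'/\epsilon$ contributions exactly as in the Euler computation around \eqref{eq: def I(s)}--\eqref{eq: I'(s)}. The paper's proof is just a terse pointer back to that Euler argument, whereas you spell out the cancellation and the role of \Cref{lem: w reg}; the substance is identical.
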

\begin{proof}
The second equality in \eqref{eq: mixed partial Rein} is the content of \Cref{lem: partial_i F' Rein}. 
To get the first equality, we note that by \eqref{eq: partial_i F Rein}, we have 
\begin{align}\label{eq: partial_i F xs Rein}
\partial_i F(x,s) = &~   -\partial_i U_0(x) \notag\\
&+\frac{M}{\M(\zeta+s\xi,\kappa)}\int_{B_2}w\left(\kappa, r(y),U_0(g_{\zeta+s\xi}^{-1}(y))\right)\frac{-(g_{\zeta+s\xi}(x)-y)}{|g_{\zeta+s\xi}(x)-y|^3}~dy\cdot \partial_i g_{\zeta+s\xi}(x).
\end{align}
To calculate the $s$ derivative of the integral in \eqref{eq: partial_i F xs Rein}, we apply the cutoff function to get
\begin{align}
\int_{B_2}w\left(\kappa, r(y),U_0(g_{\zeta+s\xi}^{-1}(y))\right)\frac{-(g_{\zeta+s\xi}(x)-y)}{|g_{\zeta+s\xi}(x)-y|^3}\left[1-\chi\left(\frac{|g_{\zeta+s\xi}(x)-y|}{\epsilon}\right)\right]~dy
\end{align}
and compute the derivative as we did for \eqref{eq: def I(s)}.
\end{proof}
\begin{lemma}
If $|\kappa|\le \bar{\kappa}$, and $\zeta\in B_{\epsilon}(X)$ for some $\epsilon$ small enough, 
there is a constant $C>0$ such that
\begin{equation}
\|\F'(\zeta,\kappa)\xi\|_X\le C\|\xi\|_X.
\end{equation}
\end{lemma}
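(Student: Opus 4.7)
The argument will mirror its Euler counterpart in \Cref{prop: est F'}, with \eqref{eq: partial_i F' Rein} playing the role of \eqref{eq: partial_i F'}. It suffices to show that each of the four integral terms appearing in $\pa_i[\F'(\zeta,\kappa)\xi](x)$ is pointwise bounded by $C\|\xi\|_X|x|$ on $\overline{B_1}$; dividing by $|x|$ and taking the supremum will then yield the desired $X$-norm bound. For $\zeta\in B_\ep(X)$ and $|\kappa|\le\bar\kappa$, the continuity supplied by \Cref{lem: w reg} will ensure that $\M(\zeta,\kappa)$ stays close to $\M(0,0)=M>0$, so the prefactors $M/\M$ and $M/\M^2$ in \eqref{eq: partial_i F' Rein} are uniformly bounded.

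The plan is to apply \Cref{lem: Lip estimate at zero} to each of the four integrals, choosing $f(y)$ to be the respective factor multiplying the kernel $(g_\zeta(x)-y)/|g_\zeta(x)-y|^3$. The required axisymmetry and $x_3$-evenness of $f$ will follow in each case from the symmetries of $\zeta$ together with the radiality of $U_0$ and of $w(\kappa,r,\cdot)$ in~$r$. The key ingredients that will supply the Lipschitz-at-zero property $|f(y)-f(0)|\le C_f|y|$ with an appropriately small $C_f$ are: (i) the regularity estimates of \Cref{lem: w reg}, in particular $|\pa_r w|\le Cr$ and boundedness of $\pa_u w$; (ii) the fact that $U_0$ is radial and $C^3$, so $\nabla U_0(0)=0$ and $|\nabla U_0(y)|\le C|y|$; (iii) the norm-based bounds $|\xi(x)|\le \tfrac12\|\xi\|_X|x|^2$ (from integration along a radius) and $|\nabla\xi(x)|\le\|\xi\|_X|x|$ built into the definition of $X$; and (iv) the perturbation estimates of \Cref{lem: basic estimates g_zeta} for $g_\zeta$, $g_\zeta^{-1}$ and their differentials.

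Concretely, for the first integral in \eqref{eq: partial_i F' Rein} I would take
\[
f_1(y)=\pa_u w\bigl(\kappa,r(y),U_0(g_\zeta^{-1}(y))\bigr)\,\nabla U_0(g_\zeta^{-1}(y))\,Dg_\zeta^{-1}(y)\,\xi(g_\zeta^{-1}(y))\,\tfrac{g_\zeta^{-1}(y)}{|g_\zeta^{-1}(y)|^2},
\]
and combine (i)--(iv) to obtain $|f_1(y)|\le C\|\xi\|_X|y|$ with $f_1(0)=0$; \Cref{lem: Lip estimate at zero} will then bound the integral by $C\|\xi\|_X|x|$. For the second integral I would take $f(y)=w_r\,\pa_j r(y)+w_u\,\nabla U_0(g_\zeta^{-1}(y))\cdot\pa_j g_\zeta^{-1}(y)$; both summands vanish linearly at $y=0$ by (i) and (ii), giving an $O(|x|)$ integral, which when multiplied by $|x_j\xi(x)|/|x|^2\le C\|\xi\|_X|x|$ produces an $O(\|\xi\|_X|x|^2)$ bound. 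The third integral will use $f(y)=w(\kappa,r(y),U_0(g_\zeta^{-1}(y)))$, for which \eqref{w2} together with the boundedness of $\nabla(U_0\circ g_\zeta^{-1})$ gives $|f(y)-f(0)|\le C|y|$, after which the external factor $|\pa_i(x\xi(x)/|x|^2)|\le C\|\xi\|_X$ will absorb the $|x|$. For the fourth integral I will first bound $|\M'(\zeta,\kappa)\xi|\le C\|\xi\|_X$ by integrating the pointwise estimate $O(\|\xi\|_X|y|)$ supplied by \eqref{eq: M' Rein} over $B_2$; the remaining integral is then handled exactly as in the third term.

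The hard part will be verifying in every term that the Lipschitz-at-zero constant $C_f$ is genuinely controlled by $\|\xi\|_X$ rather than by the larger $\|\xi\|_{C^1}$. This relies on a delicate cancellation in which the apparent singularities at the origin produced by the factors $\xi/|\cdot|^2$ and $g_\zeta^{-1}/|g_\zeta^{-1}|^2$ are exactly neutralized by the linear vanishing at zero of $\nabla U_0$, of $\xi$ (through $|\xi|\le C\|\xi\|_X|\cdot|^2$), and of $\pa_r w$; these cancellations will have to be tracked carefully in each of the four terms.
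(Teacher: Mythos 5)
Your proposal is correct and follows essentially the same route as the paper: a term-by-term application of \Cref{lem: Lip estimate at zero} to \eqref{eq: partial_i F' Rein}, with the same choices of $f$, the same inputs from \Cref{lem: w reg} (boundedness of $\partial_u w$ and $|\partial_r w|\le Cr$), the vanishing of $\nabla U_0$ at the origin, the bounds $|\xi(x)|\le \tfrac12\|\xi\|_X|x|^2$ and $|\nabla\xi(x)|\le\|\xi\|_X|x|$, and the uniform control of $\M(\zeta,\kappa)$ away from zero. One small caveat: for the second term your claimed $O(|x|)$ bound on the integral is not actually available from \Cref{lem: Lip estimate at zero}, since the factor $\partial_j r(y)=y_j/r(y)$ destroys the axisymmetry that the lemma requires; this is harmless, because, as you and the paper both observe, the external factor $x_j\xi(x)/|x|^2$ already supplies $C\|\xi\|_X|x|$, so mere boundedness of the integral (a direct estimate of $\int_{B_2}|g_{\zeta}(x)-y|^{-2}\,dy$, as in the Euler case) suffices.
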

					\begin{proof}
We apply \Cref{lem: Lip estimate at zero} to \eqref{eq: partial_i F' Rein}. 
To work out the first term, we just need 
\begin{equation}\label{est: I1 Rein}
\partial_u w\left(\kappa,r(y),U_0(\gz^{-1}(y))\right)\nabla U_0(\gz^{-1}(y))D(\gz^{-1})(y)\xi(\gz^{-1}(y))  
\frac{\gz^{-1}(y)}{|\gz^{-1}(y)|^2}
\end{equation}
to be bounded by $C\|\xi\|_X|y|$, 
which is similar to our treatment of the first term of \eqref{eq: d_i of F_1'}. 
This bound is indeed achieved by the properties of $w$.  
Since the second term in  \eqref{eq: partial_i F' Rein}    has a factor $\frac{x_j\xi(x)}{|x|^2}$, 
which is already bounded by $C\|\xi\|_X|x|$, we just need 
\begin{equation}
w_r\left(\kappa,r(y),U_0(\gz^{-1}(y))\right)\partial_{j}r(y)+w_u\left(\kappa,r(y),U_0(\gz^{-1}(y))\right)\cdot\nabla U_0(\gz^{-1}(y))\partial_j\gz^{-1}(y)
\end{equation}
to be bounded, which is again true.   
For the third term in \eqref{eq: partial_i F' Rein}, we need 
\begin{equation}
\left|w\left(\kappa,r(y),U_0(\gz^{-1}(y))\right)-w\left(\kappa,0,U_0(0)\right)\right|\le C|y|,
\end{equation}
which is also true since $w$ is $C^1$.  The last term is similar.
\end{proof}

\begin{lemma}
If $|\kappa|\le \bar{\kappa}$, and $\zeta\in B_{\epsilon}(X)$ for some $\epsilon$ small enough, there is a constant $C>0$ such that
\begin{equation}
\|(\F'(\zeta_1,\kappa)-\F'(\zeta_2,\kappa))\xi\|_X\le C\|\zeta_1-\zeta_2\|_X^{\nu}\|\xi\|_X.
\end{equation}
\end{lemma}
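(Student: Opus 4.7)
The proof mirrors that of Lemma \ref{prop: continuity of formal deriv} in the Euler setting. Writing $\F'(\zeta_1,\kappa)-\F'(\zeta_2,\kappa)$ as a sum of the four terms in the formula \eqref{eq: partial_i F' Rein} with $\zeta$ replaced by $\zeta_1$ minus the same with $\zeta_2$, I decompose each term into a telescoping sum in which exactly one occurrence of $\zeta$ is swapped at a time. The occurrences to be swapped appear in $\gz$, $D\gz$, $\partial_i\gz$, $\gz^{-1}$, $D\gz^{-1}$, the Coulomb kernel $(\gz(x)-y)/|\gz(x)-y|^3$, the compositions $U_0\circ\gz^{-1}$ and $\nabla U_0\circ\gz^{-1}$, and the mass factor $\M(\zeta,\kappa)$ (which is itself expanded via \eqref{eq: M' Rein}). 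Each single-difference integral will then be put into the form required by Lemma \ref{lem: Lip estimate at zero}, while the non-integral factors such as $\partial_i\gz(x)$, $x\xi(x)/|x|^2$, or $\partial_i(x\xi(x)/|x|^2)$ are absorbed into uniform multipliers exactly as in the Euler case.

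Most of these single differences are Lipschitz in $\zeta$ with exponent one. Estimates \eqref{est: Dg 12}--\eqref{est: deviation from x-x' 12} handle the $g$-type differences; \eqref{w1}--\eqref{w2} show that $w$ and $\partial_r w$ are Lipschitz in their third argument, so that via the chain bound $|U_0\circ\gz_1^{-1}(y)-U_0\circ\gz_2^{-1}(y)|\le \|\nabla U_0\|_\infty |\gz_1^{-1}(y)-\gz_2^{-1}(y)|\le C\|\zeta_1-\zeta_2\|_X|y|$ one obtains differences of $w$ controlled pointwise by $C\|\zeta_1-\zeta_2\|_X|y|$; and the Coulomb-kernel difference is estimated exactly as in the proof of Lemma \ref{lem: Lip estimate at zero} itself, at the cost of one arbitrary exponent $\beta<1$ that is harmless because it majorizes $\nu$. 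All such contributions satisfy the hypothesis of Lemma \ref{lem: Lip estimate at zero} and therefore yield a bound of the form $C\|\zeta_1-\zeta_2\|_X\|\xi\|_X|x|$.

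The source of the reduced exponent $\nu=\frac12-\mu$ is the Hölder (not Lipschitz) estimate \eqref{w3} for $\partial_u w$, which enters through the first line of \eqref{eq: partial_i F' Rein} and the analogous term coming from $\M'(\zeta,\kappa)\xi$ in \eqref{eq: M' Rein}. Swapping $U_0\circ\gz_1^{-1}$ for $U_0\circ\gz_2^{-1}$ inside $\partial_u w$ gives
$$|\partial_u w(\kappa,r(y),U_0(\gz_1^{-1}(y)))-\partial_u w(\kappa,r(y),U_0(\gz_2^{-1}(y)))|\le C\|\zeta_1-\zeta_2\|_X^{\nu}|y|^{\nu}.$$
To invoke Lemma \ref{lem: Lip estimate at zero} one needs a full factor $|y|$ in front. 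This is provided by $\nabla U_0(\gz^{-1}(y))$: since $U_0$ is radial and smooth with $U_0'(0)=0$ by Proposition \ref{Vlasov radial}, $|\nabla U_0(\gz^{-1}(y))|\le C|\gz^{-1}(y)|\le C|y|$. Thus the entire integrand is dominated by $C\|\zeta_1-\zeta_2\|_X^{\nu}\|\xi\|_X|y|^{1+\nu}\le C\|\zeta_1-\zeta_2\|_X^{\nu}\|\xi\|_X|y|$ on $B_2$, and Lemma \ref{lem: Lip estimate at zero} produces the desired bound $C\|\zeta_1-\zeta_2\|_X^{\nu}\|\xi\|_X|x|$.

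The main obstacle is purely organizational: one must verify that every single-difference piece, after the telescoping, produces an integrand that fits the hypothesis of Lemma \ref{lem: Lip estimate at zero} with $C_f\le C\|\zeta_1-\zeta_2\|_X^{\nu}\|\xi\|_X$, so that the lemma delivers the $|x|$ factor required for the $X$-norm. The worst exponent arising from any piece is $\nu$, coming from the $\partial_u w$ difference described above; every other difference yields exponent one (or any exponent less than one, in the Coulomb-kernel piece). Summing the pieces gives the stated estimate.
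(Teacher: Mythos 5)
Your proposal is correct and follows essentially the same route as the paper: telescope the occurrences of $\zeta$ in \eqref{eq: partial_i F' Rein} (and in $\M'(\zeta,\kappa)\xi$ via \eqref{eq: M' Rein}), control each single swap with Lemma \ref{lem: Lip estimate at zero} together with the Lipschitz estimates for $g_\zeta$, and trace the reduced exponent $\nu$ to the H\"older estimate \eqref{w3} for $\partial_u w$. One small inaccuracy: \eqref{w1}--\eqref{w2} do not by themselves show that $\partial_r w$ is Lipschitz in its third argument --- the paper instead invokes the separate H\"older-$\tfrac12$ estimate \eqref{est: partial r w} for that term --- but since $\tfrac12\ge\nu$ this does not affect the conclusion.
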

\begin{proof}
We compute the spatial derivatives of $(\F'(\zeta_1,\kappa)-\F'(\zeta_2,\kappa))\xi$ using \eqref{eq: partial_i F' Rein} and we estimate as in \Cref{prop: continuity of formal deriv}. We first note that  
$|\M(\zeta_1,\kappa)-\M(\zeta_2,\kappa|\le C\|\zeta_1-\zeta_2\|_X $
because
\begin{equation}
\left|w\left(\kappa,r(y),U_0(\gzone^{-1}(y))\right)-w\left(\kappa,r(y),U_0(\gztwo^{-1}(y))\right)\right|\le C\|\zeta_1-\zeta_2\|_X.
\end{equation}
For the first term in \eqref{eq: partial_i F' Rein}, the key estimate is
\begin{equation}\label{est: key est 1}
\left|\partial_u w\left(\kappa,r(y),U_0(\gzone^{-1}(y))\right)-\partial_u w\left(\kappa,r(y),U_0(\gztwo^{-1}(y))\right)\right|\le C\|\zeta_1-\zeta_2\|_X^{\nu}. 
\end{equation}
 For the second term in \eqref{eq: partial_i F' Rein}, the key estimate is 
\begin{equation}
\left|\partial_r w\left(\kappa,r(y),U_0(\gzone^{-1}(y))\right)-\partial_r w\left(\kappa,r(y),U_0(\gztwo^{-1}(y))\right)\right|\le C\|\zeta_1-\zeta_2\|_X^{\nu}, 
\end{equation}
which  is a consequence of the fact that 
\begin{equation}\label{est: partial r w}
|\partial_r w(\kappa,r,u)-\partial_r w(\kappa,r,u')|\le Cr|u-u'|^{1/2}
\end{equation}
locally uniformly. 
For the third term in \eqref{eq: partial_i F' Rein}, the key estimate is
\begin{equation}
\left|w\left(\kappa,r(y),U_0(\gzone^{-1}(y))\right) - w\left(\kappa,r(y),U_0(\gztwo^{-1}(y))\right)\right|\le C\|\zeta_1-\zeta_2\|_X|y|.
\end{equation}
For the fourth term, we need to estimate $(\M'(\zeta_1,\kappa)-\M'(\zeta_2,\kappa))\xi$ using \eqref{eq: M' Rein}. The key estimate again is \eqref{est: key est 1}.
\end{proof}

\begin{lemma}
If $|\kappa|\le \bar{\kappa}$, and $\zeta\in B_{\epsilon}(X)$ for some $\epsilon$ small enough. Let $\xi$ be such that $\zeta+s\xi\in B_{\epsilon}(X)$ for all $s\in [-1,1]$. Then
\begin{equation}
\|\F(\zeta+\xi,\kappa)-\F(\zeta,\kappa)-\F'(\zeta,\kappa)\xi\|_X\le C\|\xi\|_X^{1+\nu}.
\end{equation}
\end{lemma}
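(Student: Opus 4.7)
The plan is to mimic the proof of Lemma \ref{prop: Frechet} for the Euler model, using the mixed-partial identity \eqref{eq: mixed partial Rein} to convert the pointwise difference into an integral of $\partial_i \F'$, and then invoking the H\"older-continuity estimate for $\F'$ in $\zeta$ just established.

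Fix $x \in \dot B_1$ and set $F(x,s) = \F(\zeta+s\xi,\kappa)(x)$ as in \eqref{F(x,s)}. Then
\begin{equation*}
\partial_i\bigl(\F(\zeta+\xi,\kappa)-\F(\zeta,\kappa)\bigr)(x) \;=\; \partial_i F(x,1) - \partial_i F(x,0).
\end{equation*}
By the mixed-partial equality \eqref{eq: mixed partial Rein}, the function $s\mapsto \partial_i F(x,s)$ is differentiable on $[-1,1]$ with derivative $\partial_i[\F'(\zeta+s\xi,\kappa)\xi](x)$. Hence the classical one-variable mean value theorem yields some $\theta(x) \in (0,1)$ with
\begin{equation*}
\partial_i F(x,1) - \partial_i F(x,0) \;=\; \partial_i\bigl[\F'(\zeta+\theta(x)\xi,\kappa)\xi\bigr](x).
\end{equation*}

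Subtracting $\partial_i[\F'(\zeta,\kappa)\xi](x)$ from both sides and applying the previous lemma to $\zeta_1 = \zeta+\theta(x)\xi$ and $\zeta_2 = \zeta$ (which lie in $B_\epsilon(X)$ by the hypothesis $\zeta + s\xi \in B_\epsilon(X)$ for all $s\in[-1,1]$, and satisfy $\|\zeta_1-\zeta_2\|_X \le \|\xi\|_X$), we get pointwise
\begin{equation*}
\bigl|\partial_i\bigl(\F(\zeta+\xi,\kappa)-\F(\zeta,\kappa)\bigr)(x) - \partial_i[\F'(\zeta,\kappa)\xi](x)\bigr| \;\le\; C\,\|\xi\|_X^{\nu}\,\|\xi\|_X\,|x|.
\end{equation*}
Since $\F(\zeta+\xi,\kappa)-\F(\zeta,\kappa)-\F'(\zeta,\kappa)\xi$ clearly retains the axisymmetry and even-in-$x_3$ symmetry and vanishes at the origin (evaluating each term of \eqref{eq: F Vlasov new} and \eqref{eq: F' Rein} at $x=0$ using $g_\zeta(0)=0$), dividing by $|x|$ and taking the supremum over $\dot B_1$ gives the claimed bound in the $X$-norm.

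The only nontrivial ingredient is the H\"older-continuity estimate $\|(\F'(\zeta_1,\kappa)-\F'(\zeta_2,\kappa))\xi\|_X \le C\|\zeta_1-\zeta_2\|_X^{\nu}\|\xi\|_X$ established in the preceding lemma, which is exactly what the mean-value argument consumes; no further work is needed here since the symmetry and vanishing-at-origin properties of the error term are routine, and the mean-value theorem applied in the scalar variable $s$ for each fixed $x$ bypasses any need to re-examine the (more delicate) singular integrals. Thus the only potential obstacle---making sense of the pointwise mean-value step when $\theta$ depends measurably on $x$---is absent, because we only need the pointwise bound and then pass to the supremum.
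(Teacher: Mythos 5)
Your proposal is correct and follows exactly the route the paper takes: the paper's proof of this lemma simply states that it is "basically identical" to that of Lemma \ref{prop: Frechet}, which is precisely the mixed-partial identity plus mean value theorem in $s$ plus the H\"older continuity of $\F'$ that you spell out. No gaps.
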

\begin{proof}
The proof is basically identical to that of \Cref{prop: Frechet}.
\end{proof}
This completes the proof of Theorem \ref{deriv Vlasov}.

\bibliographystyle{acm}
\bibliography{rotstarbiblio} 
\end{document}